\documentclass[11pt,english]{smfart}
\usepackage{amssymb,amsxtra, amsmath, amsfonts}
\usepackage{smfthm}
\theoremstyle{plain}
\usepackage[french, english]{babel}
\makeatletter
\def\amsbb{\use@mathgroup \M@U \symAMSb}
\makeatother

\usepackage[all]{xy}
\SelectTips{cm}{}
\usepackage{tikz}
\usepackage{tikz-cd}
\usepackage[a4paper, marginparwidth=60pt, margin=1.0 in]{geometry}
\usepackage{mathrsfs}
\usepackage{stmaryrd}
\usepackage{xspace}
\usepackage{csquotes}

\usepackage{enumitem}
\usepackage{tensor}
\usepackage{mathtools}
\usepackage{longtable}
\usepackage{mathtools}
\usepackage{hyperref} %[backref=page]
\numberwithin{equation}{section}
\usepackage{placeins}

\usepackage{pgfplots}
\usetikzlibrary{intersections}

\usepackage{pgfplots}
\usetikzlibrary{shapes}
\usetikzlibrary{arrows}
\usetikzlibrary{arrows.meta}
\usetikzlibrary{math}

\tikzset{vertex/.style={draw,circle,scale=1}}
\tikzset{arrow/.style={->,>={Latex[width=1.5mm,length=2mm]}}}
\tikzset{axearrow/.style={arrows={-triangle 90}}}
\tikzset{backarrow/.style={<-,>=latex}}
\tikzset{dubbel/.style={double distance=2pt}}
\tikzset{label/.style={midway,tiny}}
\tikzset{tiny/.style={scale=0.8}}
\tikzset{emph/.style={blue,thick}}

\tikzset{coo/.style={red,tiny}}

\tikzset{colora/.style={green}}
\tikzset{colorb/.style={orange}}

\tikzset{
  treenode/.style = {draw,circle,align=center, inner sep=0pt, text centered,font=\sffamily}, 
  arn/.style = {treenode,text width=1em}, 
}

\usepackage{esvect}

\pgfplotsset{every axis/.append style={
                    axis x line=middle, 
                    axis y line=middle, 
                    axis line style={->}, 
                    xlabel={$x$}, 
                    ylabel={$y$}, 
                    label style={font=\scriptsize}, 
                    tick label style={font=\tiny}, 
                    unit vector ratio*=1 1 1, 
                    xlabel style={at={(ticklabel* cs:1)},anchor=west}, 
                    ylabel style={at={(ticklabel* cs:1)},anchor=south}
                 }}
\pgfplotsset{compat=1.14}

\setitemize[0]{leftmargin=*,itemsep=\the\smallskipamount}
\setenumerate[0]{leftmargin=*,itemsep=\the\smallskipamount}

\let\shortmapsto\mapsto
\renewcommand{\mapsto}{%
   \ifbool{@display}{\longmapsto}{\shortmapsto}%
   }

\newcommand{\sG}{\ensuremath{\mathscr{G}}\xspace}
\newcommand{\sH}{\ensuremath{\mathscr{H}}\xspace}

\newcommand{\sL}{\ensuremath{\mathscr{L}}\xspace}

\newcommand{\fkm}{\ensuremath{\mathfrak{m}}\xspace}

\newcommand{\fkS}{\ensuremath{\mathfrak{S}}\xspace}

\newcommand{\fkX}{\ensuremath{\mathfrak{X}}\xspace}

\newcommand{\BA}{\ensuremath{\amsbb{A}}\xspace}

\newcommand{\BC}{\ensuremath{\amsbb{C}}\xspace}
\newcommand{\BD}{\ensuremath{\amsbb{D}}\xspace}
\newcommand{\BE}{\ensuremath{\amsbb{E}}\xspace}
\newcommand{\BF}{\ensuremath{\amsbb{F}}\xspace}
\newcommand{\BG}{\ensuremath{\amsbb{G}}\xspace}
\newcommand{\BH}{\ensuremath{\amsbb{H}}\xspace}

\newcommand{\BN}{\ensuremath{\amsbb{N}}\xspace}

\newcommand{\BP}{\ensuremath{\amsbb{P}}\xspace}
\newcommand{\BQ}{\ensuremath{\amsbb{Q}}\xspace}
\newcommand{\BR}{\ensuremath{\amsbb{R}}\xspace}

\newcommand{\BZ}{\ensuremath{\amsbb{Z}}\xspace}

\newcommand{\bF}{\ensuremath{\mathbf{F}}\xspace}

\newcommand{\bM}{\ensuremath{\mathbf{M}}\xspace}

\newcommand{\bU}{\ensuremath{\mathbf{U}}\xspace}
\newcommand{\bV}{\ensuremath{\mathbf{V}}\xspace}

\newcommand{\bX}{\ensuremath{\mathbf{X}}\xspace}

\newcommand{\CA}{\ensuremath{\mathcal{A}}\xspace}
\newcommand{\CB}{\ensuremath{\mathcal{B}}\xspace}
\newcommand{\CC}{\ensuremath{\mathcal{C}}\xspace}
\newcommand{\CD}{\ensuremath{\mathcal{D}}\xspace}
\newcommand{\CE}{\ensuremath{\mathcal{E}}\xspace}
\newcommand{\CF}{\ensuremath{\mathcal{F}}\xspace}

\newcommand{\CH}{\ensuremath{\mathcal{H}}\xspace}
\newcommand{\CI}{\ensuremath{\mathcal{I}}\xspace}

\newcommand{\CL}{\ensuremath{\mathcal{L}}\xspace}
\newcommand{\CM}{\ensuremath{\mathcal{M}}\xspace}
\newcommand{\CN}{\ensuremath{\mathcal{N}}\xspace}
\newcommand{\CO}{\ensuremath{\mathcal{O}}\xspace}
\newcommand{\CP}{\ensuremath{\mathcal{P}}\xspace}

\newcommand{\CX}{\ensuremath{\mathcal{X}}\xspace}

\newcommand{\rmB}{{\ensuremath{\rm{B}}\xspace}}

\newcommand{\rmD}{{\ensuremath{\rm{D}}\xspace}}

\newcommand{\rmM}{{\ensuremath{\rm{M}}\xspace}}

\newcommand{\rmO}{{\ensuremath{\rm{O}}\xspace}}

\newcommand{\rmW}{{\ensuremath{\rm{W}}\xspace}}

\newcommand{\rmd}{{\ensuremath{\rm{d}}\xspace}}

\newcommand{\itemb}{\item[$\bullet$]}
\newcommand{\cf}{{\it cf.}\ }
\newcommand{\resp}{resp.\ }
\newcommand{\ie}{{\it i.e.}\ }
\newcommand{\wt}{\widetilde}
\newcommand{\wh}{\widehat}

\newcommand{\ov}{\overline}
\newcommand{\ud}{\underline}
\newcommand{\incl}{\hookrightarrow}

\newcommand{\id}{\mathrm{id}}
\newcommand{\wotimes}{\wh{\otimes}}

\newcommand{\cris}{{\rm cr}}

\newcommand{\dR}{{\rm dR}} 
\newcommand{\Dr}{{ \rm Dr}}

\newcommand{\rig}{{\rm rig}}

\newcommand{\brv}[1]{\breve{#1}}
\newcommand{\perf}{{\rm pf}}
\newcommand{\crit}{{\rm crit}}
\newcommand{\univ}{{\rm univ}}
\newcommand{\loc}{{\rm loc}}

\DeclareMathOperator{\im}{im}
\DeclareMathOperator{\Card}{Card}

\newcommand{\Aut}{{\rm Aut}}

\DeclareMathOperator{\Ker}{Ker}

\newcommand{\coker}{\mathrm{coker}}

\newcommand{\Gr}{{\rm Gr}\xspace}

\newcommand{\Fil}{{\rm Fil}\xspace}

\DeclareMathOperator{\Sym}{Sym}
\newcommand{\End}{{\rm End}}
\newcommand{\CEnd}{{\mathcal End}}

\newcommand{\Dim}{{\rm Dim}}

\DeclareMathOperator{\heig}{height}
\DeclareMathOperator{\rank}{rank}
\newcommand{\leng}{{\rm length}}
\newcommand{\lcm}{{\rm lcm}}

\newcommand{\rmOD}{\rmO_{\rmD}}

\newcommand{\GL}{{\rm GL}}

\newcommand{\PGL}{{\rm PGL}}

\newcommand{\Lie}{{\rm Lie}\xspace}

\newcommand{\spe}{{\rm sp}}

\newcommand{\Gal}{{\rm Gal}}

\newcommand{\Pic}{{\rm Pic\xspace}}

\DeclareMathOperator{\Spec}{Spec}
\DeclareMathOperator{\Spf}{Spf}
\DeclareMathOperator{\Sp}{Sp}
\DeclareMathOperator{\Proj}{Proj}

\newcommand{\Div}{{\rm Div\xspace}}
\newcommand{\Fl}{\mathscr{F}\ell}
\newcommand{\Bl}{{\rm Bl}}
\newcommand{\Nilp}{ {\rm Nilp}}

\newcommand{\Vect}{\mathrm{Vect}}

\newcommand{\bVectd}[1]{\bVectd{#1}}
\newcommand{\bVectdf}[1]{\bVectdf{#1}}

\newcommand{\Perf}{{\rm Perf}}
\newcommand{\Alg}{{\rm Alg}}

\newcommand{\NCRIS}{{\rm NCRIS}}

\newcommand{\Hom}{{\rm Hom}}
\newcommand{\Map}{{\rm Map}}

\newcommand{\CHom}{{\mathcal Hom}}
\newcommand{\CExt}{{\mathcal Ext}}

\newcommand{\Ext}{{\rm Ext}}

\newcommand{\Bcris}{\rmB_{\cris}}
\newcommand{\Bdr}{\rmB_{\dR}}

\newcommand{\Bcrisp}{\rmB_{\cris}^+}
\newcommand{\Bdrp}{\rmB_{\dR}^+}

\newcommand{\Fpbar}{\bar{\BF}_p}
\newcommand{\Qp}{\BQ_p}

\newcommand{\Zp}{\BZ_p}

\newcommand{\brvO}{\breve{\rmO}}

\newcommand{\boeta}{{\boldsymbol{\eta}\xspace}}

\newcommand{\BrTi}{\mathcal{BT}}
\newcommand{\BaCo}{\mathcal{BC}}

\newcommand\restr[2]{{% we make the whole thing an ordinary symbol
  \left.\kern-\nulldelimiterspace % automatically resize the bar with \right
  #1 % the function
  \vphantom{\big|} % pretend it's a little taller at normal size
  \right|_{#2} % this is the delimiter
  }}

\title{On Drinfeld's representability theorem}

\author{Arnaud Vanhaecke}
\address{{\tiny Morningside Center of Mathematics, No. 55, Zhongguancun East Road, Beijing, 100190, China.} }
\urladdr{https://arnaudvanhaeckemaths.github.io/}
\email{arnaud@amss.ac.cn}
\thanks{This work was supported by the Morningside Center of Mathematics, Chinese Academy of Sciences.}

\begin{document}
\maketitle
\date{\today}
\begin{abstract}
In the seventies, V. G. Drinfeld proved that a moduli problem of deformations by quasi-isogenies of certain $p$-divisible groups with extra actions is representable by an explicit semi-stable model of the $p$-adic symmetric space. This theorem, known as \emph{Drinfeld's representability theorem}, has been one of the cornerstones of geometric aspects in $p$-adic Hodge theory. The purpose of these notes is twofold. On the one hand we give a new and more transparent proof of Drinfeld's representability theorem; on the other hand, we give a detailed presentation of Drinfeld's moduli space and the formal model of the $p$-adic symmetric space.
\end{abstract}
\section{Introduction}
Let $p$ be a prime number and $E$ be a finite extension of $\Qp$. Let $d\geqslant 2$ be an integer, the \emph{$p$-adic symmetric space} is the rigid analytic space over $E$ defined by 
$$
\BH_E^d\coloneqq \BP^{d-1}_E\setminus \bigcup_{H\in \sH_E}H,
$$
where $\BP^{d-1}_E$ is the projective space of dimension $d-1$ over $E$, viewed as a rigid analytic space, and the union runs over all $E$-rational hyperplanes. If $d=2$, $\BH_E^2=\BP^1_E\setminus \BP^1(E)$, which is a $p$-adic analog of the \emph{Poincaré half-plane} $\BP^1_{\BC}\setminus \BP^1(\BR)$. One of the crucial properties of the Poincaré half-plane is that it parametrizes certain abelian groups, in the sense that, to any $\tau \in \BP^1(\BC)\setminus \BP^1(\BR)$, we can associate a complex elliptic curve 
$$
E_{\tau}\coloneqq \BC/\Lambda_{\tau},\quad\text{ where }\quad  \Lambda_{\tau}\coloneqq \BZ\oplus \tau \BZ\subset \BC\text{ is a lattice.}
$$
In the 1970s, Drinfeld \cite{drin} proved a remarkable theorem that characterizes $\BH_E^d$ in a similar way. More precisely, he proves that a semi-stable formal model\footnote{The construction of this model is attributed to Deligne, who personally communicated it to Drinfeld (\cf \cite{mus}).} of $\BH_E^d$ represents a formal moduli problem of (deformations by quasi-isogenies of) certain formal abelian groups called \emph{special formal $\rmOD$-modules} (or \emph{SFD} for short). In an analogous way to the complex case, the main point of this construction is to associate to (a deformation by quasi-isogeny of) an SFD a certain \og lattice \fg\footnote{More precisely, it is a chain of lattices in $E^d$, each embedding in a compatible way in $\BC_p=\wh{\bar{E}}$.}. Unfortunately, unlike the complex case, the construction of these \og lattices \fg is not as straightforward and illusions of the analogy quickly fade away.

Drinfeld's original paper \cite{drin} was fairly short, and the details of his proof were spelled out by Boutot and Carayol \cite{boca} in the case $d=2$. For $d>2$, although the details of Drinfeld's proof have surely been fully worked out, there is not, to our knowledge, any exhaustive account\footnote{Nevertheless, see \cite{razi} and \cite{far} for some details.} in the literature.

The consequences and influence of Drinfeld's theorem are immense. In particular, it allows one to define \emph{Drinfeld's tower}, a system of $\GL_d(E)$-equivariant étale coverings $\CM^n_{\Dr}\rightarrow \BH^d_E$ indexed by $n\in \BN$.

In order to give a sense on the importance of this construction, but without pretending to be exhaustive, let us cite a few applications:
\medskip
\begin{itemize}
\itemb For $d=2$, the $p$-adic half plane $p$-adically uniformizes quaternionic Shimura curves; this theorem is known as the \emph{Cherednik--Drinfeld theorem}. Based on Cherednik's work \cite{che}, a sketch of the proof appears in Drinfeld's original paper \cite{drin}, and the details are also spelled out by Boutot and Carayol \cite{boca}. This result in itself has been a crucial input in various number theory developments, ranging from $p$-adic $L$-functions \cite{beda} to Kudla's program \cite{kuraya}.
\itemb For $\ell\neq p$ a prime number, the $\ell$-adic cohomology of Drinfeld's tower encodes the classical local Langlands correspondence and the Jacquet--Langlands correspondence for $\GL_d(E)$ \cite{boy}, \cite{har}, \cite{dat}. These statements were conjectured by Carayol \cite{car1}, who proved the $d=2$ case \cite{car2}. Carayol dubbed this program \emph{non-abelian Lubin--Tate theory} but it is now also called \emph{Carayol's program}\footnote{This name is preferable since the term \og non-abelian Lubin--Tate theory \fg can be confused with a similar enterprise for the \emph{Lubin-Tate tower}.}.
\itemb For $d=2$ and $E=\Qp$, the $p$-adic étale cohomology of Drinfeld's tower encodes the \emph{$p$-adic Langlands correspondence} \cite{codoni} for $\GL_2(\Qp)$. This opened the path for \emph{the $p$-adic Carayol program}, which initiated the geometrization program of the $p$-adic Langlands program. 

\itemb In chromatic homotopy theory, the computation of the $p$-adic proétale cohomology of Drinfeld's symmetric space \cite{codoniste} has led, through Drinfeld's theorem, to the proof of the \emph{rational chromatic splitting conjecture} \cite{bssw}.
\end{itemize}
\medskip

Let us just mention that many of these applications also involve the isomorphism between Drinfeld's tower and the Lubin--Tate tower. The original proof of this two--tower isomorphism, sketched by Faltings \cite{fal} and detailed by Fargues \cite{far}, made heavy use of Drinfeld's theorem. Later, this theorem was widely generalized by Scholze--Weinstein \cite{scwe}.

On the other hand, Drinfeld's theorem opened the way to studying moduli spaces (of quasi-isogenies) of $p$-divisible groups with extra structures. In their book \cite{razi}, Rapoport and Zink show that these moduli problems are representable by formal schemes, known as \emph{Rapoport--Zink spaces}. 
Later, these spaces were generalized to \emph{local Shimura varieties} \cite{ravi} allowing further, conjectural and effective, generalizations of both the $\ell$-adic and $p$-adic Langlands program. Needless to say, Drinfeld's theorem is one of the most influential statements in $p$-adic aspects of arithmetic geometry in the last century. 

In this paper, using modern techniques in $p$-adic geometry, we give a new proof of Drinfeld's theorem. Although experts might grasp the main ideas fairly quickly, we took the opportunity to spell out some details and make the presentation more accessible to non-experts. This justifies the length of the paper, which we hope may serve as an introduction to some classical topics, as well as a reference on Drinfeld's theorem. 

\subsection{Drinfeld's representability theorem}
Let us state Drinfeld's theorem. 

First, we quickly recall the definition of Drinfeld's moduli space (\cf Section \ref{sec:drinmoduli}). Let $\brv{E}$ be the $p$-adic completion of the maximal unramified extension of $E$, and let $\brvO\subset \brv E$ be its ring of integers. Let $\rmD$ be the central division algebra with invariant $1/d$ over $E$, where $d\geqslant 2$, and let $\rmOD\subset \rmD$ be its maximal order. A \emph{special formal $\rmOD$-module} is a formal $p$-divisible group endowed with an action of $\rmOD$ and whose Lie algebra satisfies a certain \emph{special} condition (\cf Definition \ref{def:sfd}). Let $\bX$ be a special formal $\rmOD$-module of height $d^2$ over $\Fpbar$\footnote{There is a unique one up to isogeny (\cf corollary \ref{cor:autosfd}) and this choice does not affect what follows. Nevertheless, we make the most convenient choice for us (\cf \S \ref{subsubsec:defimoduli}).}. \emph{Drinfeld's moduli problem} is the functor on $\Nilp_{\brvO}$, the category of $\brvO$-algebras such that $p$ is nilpotent, defined by 
\begin{equation}\label{eq:intmodu}
\brv{\CM}_{\Dr}\colon R\in \Nilp_{\brvO} \mapsto \left\{(X,\rho) \mid \begin{array}{c} \text{ $X$ a special formal $\rmOD$-module over $R$, } \\
					\rho \colon \bX \otimes_{ \bar \BF_p } R/p \dashrightarrow X \otimes _R R/p \\
					\text{an $\rmOD$-linear quasi-isogeny }\end{array} \right \}.
\end{equation}
Since $\GL_d(E)$ agrees with $\rmOD$-linear self quasi-isogenies on $\bX$ (\cf Corollary \ref{cor:autosfd}), it naturally acts on this functor for $g\in \GL_d(E)$ by $(X,\rho)\mapsto (X,\rho\circ g^{-1})$.

On the other hand, let $\BH_{\rmO}^d$ be Deligne's semi-stable formal model of $\BH_{E}^d$ (\cf Section \ref{sec:drinsym}). It is naturally endowed with an action of $\GL_d(E)$, which induces the inverse of the standard action\footnote{Meaning that it comes from the (right) action of $g^{-1}$ on $E^d$ for $g\in \GL_d(E)$ which induces a left action on $\BP^{d-1}_E$ viewed as a moduli space of rank $1$ quotients.} on $\BP_E^{d-1}$. Moreover, we consider $\BH_{\brvO}^d\coloneqq \BH_{\rmO}^d\wotimes_{\rmO}\brvO$, its base change to $\brvO$ and
\begin{equation}\label{eq:concompz}
\brv{\BH}_{\brv \rmO}^{d}\coloneqq \bigsqcup_{\BZ}\BH_{\brvO}^d= \BH_{\brvO}^d\times \BZ
\end{equation}
such that $\GL_d(E)$ acts via translation by the valuation of the determinant on the second component. The theorem is the following:
\medskip
\begin{theo}\label{theo:main}
There is a $\GL_d(E)$-equivariant isomorphism of formal schemes over $\brvO$
$$
\brv{\CM}_{\Dr}\xrightarrow{\sim} \brv{\BH}_{\brv \rmO}^{d}.
$$

\end{theo}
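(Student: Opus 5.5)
The plan is to construct a morphism $\brv{\CM}_{\Dr}\to \brv{\BH}^d_{\brv\rmO}$ using the moduli interpretation of Deligne's formal model, and then show it is an isomorphism on $R$-points by building an inverse. Recall that $\BH^d_{\rmO}$ represents the functor of compatible systems $(\eta_i,\varphi_i)$: for each lattice $\Lambda$ in a chain indexed by $\BZ$ one is given a line bundle $\CL_\Lambda$, maps $\varphi\colon\CL_\Lambda\to\CL_{\Lambda'}$ for $\Lambda\subset\Lambda'$, and maps $\eta_\Lambda\colon\Lambda\to\CL_\Lambda$. These are required to satisfy a periodicity condition ($\varphi$ for $\Lambda\subset\pi^{-1}\Lambda$ is multiplication by $\pi$), a transversality condition (the zero loci of the $\varphi$'s are disjoint), and a condition that $\eta$ is nonzero modulo $\pi$ on the primitive vectors. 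Working up to $\GL_d(E)$-translation, and using the $\BZ$-component given by the height of $\rho$ divided by $d$, it suffices to treat the height-zero part and produce such data functorially in $(X,\rho)$.

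\textbf{Step 1 (construction of the lattice data).} Given $(X,\rho)$ over $R$, split $\Lie X=\bigoplus_{i\in\BZ/d}\Lie_i X$ according to the action of $\rmO_{E_d}\subset\rmOD$. The special condition says each $\Lie_i X$ is a line bundle. The uniformizer $\Pi$ of $\rmOD$ induces maps $\Lie_i X\to\Lie_{i+1}X$ whose composite around the cycle is multiplication by $\pi$; this gives $\CL_i$ and $\varphi$. For $\eta$, use the quasi-isogeny $\rho$ and rigidity of quasi-isogenies (Grothendieck--Messing / crystalline nature) to get, for each lattice $\Lambda\subset E^d=\Hom_{\rmOD}$-quasi-isogenies (via Corollary \ref{cor:autosfd}), the subgroup $\Lambda$ of $\rmOD$-linear homomorphisms from a fixed special object $\bX_0$ (with $\End^0=\rmM_d(E)$) into $X$. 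Evaluating these on Lie algebras, or better on Dieudonné crystals, yields the maps $\eta_\Lambda\colon\Lambda\to\CL_\Lambda$. Then check the axioms: periodicity is formal from $\Pi^d=\pi$ up to unit, and the nondegeneracy condition mod $\pi$ is checked on geometric points, using the explicit Dieudonné module classification of SFDs over algebraically closed fields.

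\textbf{Step 2 (bijectivity).} First check bijectivity on $\bar k$-points, where both sides are described by combinatorics: vertices and simplices of the Bruhat--Tits building on one side, and on the other the critical indices of Dieudonné modules (indices where $V$ and $\Pi$ have the same image), using the description from Section \ref{sec:drinmoduli}. Then pass to all $R\in\Nilp_{\brvO}$ by deformation theory. Both sides are formally locally of finite type and the map is equivariant. Grothendieck--Messing identifies deformations of $(X,\rho)$ with lifts of the Hodge filtration in the Dieudonné crystal compatible with $\rmOD$. One compares this with the local equations $x_0\cdots x_r=\pi$ of the semistable model, showing that the map induces isomorphisms on completed local rings. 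Together with bijectivity on points and a properness/quasi-compactness argument on the equivariant covering by the $\GL_d(E)$-translates of the standard simplex affinoids, this gives an isomorphism.

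\textbf{Main obstacle.} The hardest step is the local deformation comparison at non-smooth points, where several $\Pi\colon\Lie_i\to\Lie_{i+1}$ vanish simultaneously. There one must show that the $\rmOD$-stable Hodge filtrations are parametrized exactly by $\brvO[[x_0,\dots,x_r,\ldots]]/(x_0\cdots x_r-\pi)$. I would first attempt this with the display/Zink window of a special module over a small thickening, writing the $\Pi$-action in a normalized basis to see the equation directly.
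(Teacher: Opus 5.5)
The gap is in Step 1. You never actually construct the morphism on $R$-points for a general $R\in\Nilp_{\brvO}$, and that construction is the heart of the matter.

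The only mechanism that produces the lattices and the maps to the Lie algebra is Drinfeld's lemma (Lemma \ref{lem:inva}). It applies when $i$ is critical over the whole base: then $\Pi M_i\subset VM_i$, so $U=V^{-1}\Pi$ is defined, $\eta^i=M_i^{\Pi=V}$ is locally constant, and $u^i$ is the composite $\eta^i\subset M_i\to M_i/VM_{i-1}=\Lie(X)_i$.

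Over non-reduced bases there need not be any critical index. Take $d=2$: at a point where both indices are critical, the deformation ring is $\brvO[[x_0,x_1]]/(x_0x_1-\varpi)$. The $k[x]/(x^2)$-point $x_0,x_1\mapsto x$ has $\Pi_0$ and $\Pi_1$ both equal to $x$ times a unit, so neither vanishes over the base. Yet Definition \ref{def:drin} still demands data $(\eta,L,u,r)$ there, with $u\otimes R$ surjective and conditions 1)--3) satisfied.

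Your substitute is the set of $g$ with $\rho\circ g$ an honest $\rmOD$-linear homomorphism, ``evaluated on Lie algebras or crystals''. Nothing shows it yields such data:
\begin{itemize}
\item $\rho$ exists only over $R/p$.
\item Lie algebras and crystal values are killed by a power of $p$, so a quasi-isogeny cannot be evaluated integrally.
\item The locus where $\rho\circ g$ is a homomorphism is a closed subscheme. Over non-reduced bases, nothing guarantees it gives a constructible sheaf of flat $\rmO[\Pi]$-modules with the required surjectivity and pointwise injectivity modulo $\Pi$.
\end{itemize}

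Your description of the target is also muddled. You index line bundles both by all lattices $\Lambda$ and by $i\in\BZ/d$. Step 1 only yields the $d$ bundles $\Lie(X)_i$, so you are really in Drinfeld's setting, where the lattice chain is a constructible sheaf that jumps with the point. Moreover, several $x_i$ vanish simultaneously on intersections of components, so there is no ``disjoint zero loci'' condition.

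Supplying Step 1 is precisely Drinfeld's modified Dieudonné module, worked out for $d=2$ in \cite{boca}. It is the hardest part of the original proof, and you give neither it nor a replacement.

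By contrast, the step you flag as the main obstacle is comparatively routine once a morphism compatible with Hodge filtrations exists. By Grothendieck--Messing and the local model diagram, the completed local rings of $\brv{\CM}_{\Dr}$ are those of $\rmM^{\loc}_{\brvO}$. By Lemma \ref{lem:semist}, these are those of $\brv{\BH}^{d}_{\brv \rmO}$. So injectivity on tangent spaces would suffice.

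The paper never constructs the morphism over arbitrary $R$. Since $\brv{\CM}_{\Dr}$ is flat and normal (Proposition \ref{prop:isnormflat}), the Lourenço--Scholze theorem (Theorem \ref{thm:lou}) builds the isomorphism from three pieces:
\begin{itemize}
\item the period map on rigid generic fibers, shown to be an open immersion onto $\BH^d_{\brv E}$ via the Scholze--Weinstein classification, modifications of $\CE_{\Dr}$ on the Fargues--Fontaine curve, and Fargues' lemma;
\item an isomorphism of perfected special fibers, where $W_{\rmO}(R)$ is torsion-free and critical indices exist on irreducible pieces because $\rmM_k^{\crit,\perf}\to\rmM_k^{\perf}$ is an isomorphism, so Drinfeld's lemma suffices;
\item compatibility of the specialization maps on classical points.
\end{itemize}
Critical indices are thus only ever needed over perfect bases, which is exactly the restriction your approach cannot make.
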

\medskip
\medskip
\begin{rema}
Bartling \cite{bar} gives a different proof of this theorem for $d=2$ using display theory as presented by Zink \cite{zin}. His strategy is to write down the universal display locally on $\BH^2_{\brvO}$ and then glue it. Even though this approach seems promising, and has the advantage of being very explicit, it appears to be much harder to write down the gluing data for $d>2$.
\end{rema}
\medskip
Let us mention that this isomorphism is compatible with \emph{Weil descent datum} (\cf \S \ref{subsec:weildes}), essentially because it is $\GL_d(E)$-equivariant.

\subsection{Sketch of the proof}
In the remainder of this introduction, we will forget about the components and the Weil descent. Let $\CM_{\Dr}\subset \brv{\CM}_{\Dr}$ be the subfunctor of pairs $(X,\rho)$ such that $\heig \rho =0$; we sketch the proof that $\CM_{\Dr}\xrightarrow{\sim}\BH_{\brvO}^d$.
\subsubsection{Lourenço--Scholze's theorem}
The main idea is to use a theorem due to\footnote{Lourenço attributes this theorem to Scholze \cite[Corollary 4]{lou}; Scholze attributes this theorem to Lourenço \cite[Theorem 18.4.2]{berk}. } Lourenço and Scholze (\cf Theorem \ref{thm:lou}). This theorem states that the following functor is fully faithful:
$$
\fkX\rightsquigarrow (\fkX_{\brv E}^{\rig}, \fkX_k^{\perf}, \spe_{\fkX}),
$$
associating to $\fkX$, a normal formal scheme flat over $\brvO$, the triple consisting of 
\begin{itemize}
\itemb  $\fkX_{\brv E}^{\rig}$ its generic fiber as a rigid space,
\itemb $\fkX_k^{\perf}$ the perfection of its special fiber,
\itemb $\spe_{\fkX}\colon \rvert \fkX_{\brv E}\lvert\rightarrow \lvert \fkX_k \rvert$ its specialization morphism on classical points.
\end{itemize}
In other words, the space $\fkX$ is entirely determined by the triple $(\fkX_{\brv E}, \fkX_k^{\perf}, \spe_{\fkX})$. Using a theorem of Rapoport and Zink (\cf Proposition \ref{prop:razirep}), we get that $\CM_{\Dr}$ is representable by a formal scheme and by the theory of local models we check that this formal scheme is flat and normal over $\brvO$ (\cf Proposition \ref{prop:isnormflat}). Hence, Lourenço--Scholze's theorem breaks Theorem \ref{theo:main} into three fairly independent statements concerning the rigid generic fiber, the perfection of the special fiber and the specialization morphism of Drinfeld's moduli space. The next three paragraphs explain the proof of these three statements.

\subsubsection{The generic fiber}
Let $\rmM_{\brv E}$ be the generic fiber of $\CM_{\Dr}$. In Section \ref{sec:genfib} we prove that there is a $\GL_d(E)$-equivariant isomorphism $\pi\colon \rmM_{\brv E}\xrightarrow{\sim} \BH_{\brv E}^d$. We recall the definition of the Grothendieck--Messing period morphism (\cf \S \ref{subsubsec:defpermap})
$$
\pi\colon\rmM_{\brv E}\rightarrow \BP^{d-1}_{\brv E},
$$
which associates to a special formal $\rmOD$-module its Hodge filtration. This morphism between rigid analytic spaces is $\GL_d(E)$-equivariant and by Grothendieck--Messing theory (\cf Corollary \ref{cor:gmod}) it is étale. We need to show that the period map is an open embedding whose image is $\BH_{\brv E}^d$.  To show this, we will use a lemma by Fargues (\cf Lemma \ref{lem:farlem}) which says the following: since $\pi$ is étale, $\pi$ is an open immersion, if and only if, for any algebraically closed complete field extension $C$ of $\brv E$, $\pi$ induces an injection on $C$-points. Hence, it suffices to prove that $\pi$ induces a bijection on $C$-points:
$$
\pi\colon \rmM_{\brv E}(C)\cong \BH_{\brv E}^d(C).
$$ 
 
So, let $C$ be an algebraically closed complete field extension of $E$. The main input\footnote{Note that on classical points we could use Fontaine's theory of crystalline representations but we need some more involved $p$-adic Hodge theory to treat $C$-points.} is Fargues--Scholze--Weinstein's classification of $p$-divisible groups over $\rmO_C$ (\cf Theorem \ref{thm:farclas}). This classification of $p$-divisible groups in terms of linear algebra is reminiscent of Riemann's theorem for complex abelian varieties (\cf \cite[Section 4.]{schsur}); it states that there is an equivalence of categories
\begin{equation}\label{eq:intclass}
G\rightsquigarrow \{(T,W)\mid W\subset T\otimes_{\Zp} C(-1)\},
\end{equation}
where:
\begin{itemize}
\itemb $G$ is a $p$-divisible group over $\rmO_C$,
\itemb $T=T_p(G)$ is a finite free $\Zp$-module given by the Tate module of $G$,
\itemb $W\subset T\otimes_{\Zp} C(-1)$ is a $C$-vector subspace given by the Hodge--Tate map $W=\Lie(G)\rightarrow T\otimes_{\Zp} C(-1)$.
\end{itemize}

Scholze--Weinstein translate this equivalence into modifications of vector bundles on the Fargues--Fontaine curve (\cf Theorem \ref{thm:scwe}). Let $\CX$ be the Fargues--Fontaine curve\footnote{For us, it is enough to use the schematic one.} and let $\CE_{\Dr}$ be the vector bundle on $\CX$ associated to the special formal $\rmOD$-module $\bX$ over $\Fpbar$ (or more precisely, to its isocrystal twisted by $1$). Then $(X,\rho)\in \rmM_{\brv E}(C)$ defines a modification of $\CE_{\Dr}$:
\begin{equation}\label{eq:intmodif}
0\rightarrow V_{\varpi}(X)\otimes_{E} \CO_{\CX}\rightarrow \CE_{\Dr}\rightarrow \iota_{\infty,*}\Lie(X)\otimes_{\rmO_C}C\rightarrow 0.
\end{equation}
This modification of $\CE_{\Dr}$ is $\rmOD$-linear, trivial, minuscule and of degree $d$. Moreover, using the classification result \eqref{eq:intclass}, we can show that any such modification defines a (height zero) isogeny class of elements in $\rmM_{\brv E}(C)$ (\cf Proposition \ref{prop:modeqpdiv}).

Using Beauville--Laszlo uniformization (\cf Proposition \ref{prop:bpair}) we get a natural bijection (\cf Lemma \ref{lem:odmodif}):
\begin{equation}\label{eq:intbela}
\left \{ 
\text{$\rmOD$-linear minuscule degree $d$ modifications of $\CE_{\Dr}$}
\right \}
\cong {} 
\BP^{d-1}(C)
\end{equation}
This allows us to describe the period map on $C$-points as the composition 
$$
\pi \colon \rmM_{\brv E}(C)\xrightarrow{\eqref{eq:intmodif}+\eqref{eq:intbela}}\BP^{d-1}(C).
$$
Note that if $X$ is an SFD over $\rmO_C$, then its Tate module $T_{p}(X)$ is a free $\rmOD$-module of rank $1$, so its (height zero) quasi-isogeny class consists of a unique element: by the classification \ref{eq:intclass}, this shows that $\pi(C)$ is injective. 

Hence, the last step is to show that $\pi(\rmM_{\brv E}(C))$ is $\BH_{\brv E}^d(C)$. Conversely, this amounts to show that $x\in \BP^{d-1}_{\brv E}(C)$ is contained in a rational hyperplane if and only if the associated modification by \eqref{eq:intbela} is not trivial. For this, we use a trick\footnote{We recall that $\BH_{\brv E}^d\subset \BP^{d-1}_{\brv E}$ is the \emph{weakly-admissible locus} and that we in fact show that the admissible locus coincides with the weakly-admissible one. Hence Chen--Fargues--Shen's theorem \cite{cfs} could be used directly but, since it is very straightforward in our case, we decided to spell it out for convenience.} from Chen--Fargues--Shen \cite{cfs} which reduces this claim to the following (\cf proof of Proposition \ref{prop:cimadr}): If $\CF\incl \CE_{\Dr}$ is a minuscule $\rmOD$-linear modification which is not trivial, then there is a direct summand $\CF'\subset \CF$ such that $\CF'\incl \CE_{\Dr}$ is also a direct summand. To show this, we explicitly compute all the minuscule degree~$d$ modifications of $\CE_{\Dr}$ (\cf Proposition \ref{prop:compodmod}).

These modifications are computed using the \emph{two--tower principle}, which is a translation, in terms of modifications of vector bundles, of the two--tower isomorphism mentioned at the beginning of the introduction. This gives:
\begin{equation}\label{eq:intwotow}
\left \{ \begin{array}{cc}
\text{$\rmOD$-linear minuscule modifications of $\CE_{\Dr}$}\\
\text{of degree $d$}
\end{array}
\right \}
\cong {} 
\left \{ \begin{array}{cc}
\text{Rank $d$ vector bundles $\CF$ with}\\
\text{a trivial modification $\CO^d\incl \CF$}\\
\text{of degree $1$}
\end{array}
\right \}.
\end{equation}
To compute the right-hand side, we give two different proofs, each having its own advantage. 
\begin{itemize}
\itemb The first proof (\cf Proposition \ref{prop:ltmod1}) is very direct but uses\footnote{It seems that this is the only place where this theorem is actually used. Although, this is not transparent in the current presentation.} the classification of vector bundles over the Fargues--Fontaine curve. Note that the original proof of this classification actually depends on Drinfeld's theorem. But Fargues--Fontaine (and later Fargues--Scholze \cite[Theorem II 2.14]{fasc}) give a different proof of this classification, as we will recall (\cf Theorem \ref{thm:clasvb}).
\itemb The second proof (\cf Proposition \ref{prop:ltmod2}) is indirect and can be found in the appendix of \cite{psch}. It does not use the classification of vector bundles on the Fargues--Fontaine curve but uses the representability of Rapoport--Zink spaces and the classification of $p$-divisible groups over a complete discrete valuation ring.
\end{itemize}

\subsubsection{The perfection of the special fiber}
We write $k$ for the residue field of $\brv E$, which is isomorphic to $\bar \BF_p$. We consider $\rmM_{k}^{\perf}$, the perfection (\cf \ref{eq:defperf}) of the special fiber of $\CM_{\Dr}$ and, $(\BH_k^d)^{\perf}$, the perfection of the special fiber of $\BH_{\brvO}^d$. In Section~\ref{sec:perfspe} we prove that there is a $\GL_d(E)$-equivariant isomorphism 
$$
\rmM_{k}^{\perf}\xrightarrow{\sim} (\BH_k^d)^{\perf}.
$$

Let $\BrTi_{\! d}[0]$ be the set of $0$-simplices of the Bruhat--Tits building (\cf \S \ref{subsubsec:btbuild}), which are homothety classes $\bar \eta\coloneqq E^{\times}\cdot \eta$ of lattices $\eta\subset E^d$. We first recall (\cf Subsection \ref{subsec:speh}) that the irreducible components of $\BH_k^d$ can be identified with compactified Deligne--Lusztig varieties $X^{\bar \eta}_k$ (\cf Proposition \ref{prop:cpctdl}), giving a decomposition of $\BH_k^d$ into irreducible components (\cf Corollary \ref{cor:iredcomph}) in the form:
$$
\BH_k^d=\bigcup_{\bar \eta\in \BrTi_{\! d}[0]}X^{\bar \eta}_k. 
$$
The strategy is then to get a similar decomposition of $\rmM_{k}^{\perf}$, to construct an isomorphism between the irreducible components and to glue the isomorphisms together. This strategy is very similar to the one used by Genestier \cite[Chapitre II]{gen} in the function field case; it is remarkable that this works for the perfected special fiber in the $p$-adic case.

As in Drinfeld's original paper, we use classical Cartier theory (\cf Remark \ref{rema:cadeth}). Let $R$ be a perfect $k$-algebra and $W_{\rmO}(R)$ be the ring of ramified Witt vectors over $R$ with its Frobenius automorphism $\sigma\colon W_{\rmO}(R)\rightarrow W_{\rmO}(R)$. Cartier theory (\cf \S \ref{subsubsec:odcarth}) associates to  $X$, an SFD over $R$, a $\BZ/d$-graded $W_{\rmO}(R)$-module:
$$
M\coloneqq M_{\rmO}(X)=\bigoplus_{i\in \BZ/d}M_i,
$$
endowed with 
\begin{itemize}
\itemb $F\colon M\rightarrow M$, a $\sigma$-linear endomorphism of degree $-1$,
\itemb $V\colon M\rightarrow M$, a $\sigma^{-1}$-linear endomorphism  of degree $1$,
\itemb $\Pi \colon M\rightarrow M$, a linear endomorphism  of degree $1$,
\end{itemize}
such that these commute with each other, satisfy $FV=\varpi=\Pi^d$, and satisfy the \emph{special condition}:
$$
\forall i \in \BZ/d,\quad \rank_R M_i/V M_{i-1}=1.
$$
Conversely, such a \emph{special Cartier $R$-module\footnote{In the analogy with complex elliptic curves, the Cartier module can be interpreted (through crystalline cohomology) as a first cohomology group with extra structures. The main difference is that the Cartier module is too big to be a lattice.}} determines an SFD over $R$ (\cf Proposition \ref{prop:classod}).  

The key observation, due to Drinfeld, is that one can construct lattices inside $M$ using \emph{critical indices}: we say that $i\in \BZ/d$ is a critical index for $X$ if the map induced by $\Pi$
$$
\Pi_i\colon M_i/VM_{i-1}\rightarrow M_{i+1}/VM_i,
$$
is zero. If $i\in \BZ/d$ is a critical index for $X$, then we can define $U\coloneqq V^{-1}\Pi\colon M_i\rightarrow M_i$ which turns out to be a \og slope $0$ Frobenius\fg on $M_i$; hence, we set 
\begin{equation}
\eta^i_X\coloneqq M_i^{\Pi=V}=\{x\in M_i \mid U x = x\},
\end{equation}
which defines a Zariski locally constant sheaf of $\rmO$-module of rank $d$ (\cf Lemma \ref{lem:inva}). We choose $\bX$ such that all its indices are critical, and hence we can fix isomorphisms (\cf \S \ref{subsubsec:refob}) $\eta^i_{\bX}\otimes_{\rmO}E\cong E^d$. Now, if we consider $(X,\rho)\in \rmM_k(R)$ such that $i\in \BZ$ is critical, we get, through the quasi-isogeny $\rho$, a Zariski locally constant sheaf of genuine $\rmO$-lattices
\begin{equation}\label{eq:intinclat}
\eta^i_X\incl \eta^i_{\bX}\otimes_{\rmO}E\cong E^d.
\end{equation}
The main issue is that critical indices don't always exist over arbitrary objects in $\Nilp_{\brvO}$ and Drinfeld's original argument, through what he calls \emph{the modified Dieudonné module}, constructs a way to simulate the existence of critical indices. The construction and study of the modified Dieudonné module is the trickiest part of Drinfeld's proof and is detailed for $d=2$ in \cite{boca}. Over perfect rings, the situation is much simpler: first, because Witt vectors are $p$-torsion-free, and hence Cartier theory, behaves much better over perfect rings; second, because, over perfect rings, critical indices always exist on irreducible schemes (\cf Theorem \ref{thm:critexist}). This last fact relies on the existence of critical indices over perfect fields of characteristic $p$ and the fact that universal homeomorphisms between perfect schemes are isomorphisms.

The local existence of critical indices allows us to write:
$$
\rmM_k^{\perf}=\bigcup_{\bar \eta\in \BrTi_{\! d}[0]}\rmM_k^{\bar \eta}
$$
where $\rmM_k^{\bar \eta}\subset \rmM_k^{\perf}$ is the \emph{lattice subscheme} defined by (\cf Definition \ref{def:lattsubsc}):
$$
\rmM_k^{\bar \eta}(R)\coloneqq \{ (X,\rho)\in \rmM_k^{\perf}(R) \mid \text{$i\in \BZ/d$ is critical and $E^{\times}\cdot \eta_X^i=\bar \eta$ through \eqref{eq:intinclat}}\}.
$$

Using Cartier theory, we then construct explicit isomorphisms $f_{\bar \eta}\colon\rmM_k^{\bar \eta}\xrightarrow{\sim}(X_k^{\bar \eta})^{\perf}$ in a compatible way so that we can glue these together and finally obtain:
$$
f\colon\rmM_k^{\perf}\cong (\BH_k^d)^{\perf}.
$$
Without more details, let us mention that this isomorphisms turns out to be $\GL_d(E)$-equivariant.
\subsubsection{The specialization morphism}
The last step (\cf Subsection \ref{subsec:specomp}) is to show that the previous isomorphisms, $\pi$ on the generic fiber and $f$ on the perfected special fiber, commute with the specialization map. More precisely, we need to show that the following diagram:
\begin{center}
\begin{tikzcd}
\rvert \rmM_{\brv E}\rvert  \ar[r,"\pi"]\ar[d,swap,"\spe_{\Dr}"]&\rvert \BH_{\brv E}^d\rvert \ar[d,"\spe_{\BH}" ] \\
\lvert \rmM_{k}\rvert \ar[r,"f"]&\rvert \BH_{k}^d\rvert
\end{tikzcd}
\end{center}
commutes (\cf theorem \ref{thm:specomp}). This can be checked on classical points, and the main point is to carefully describe the maps $\pi$ and $f$ on classical points (\cf Propositions \ref{prop:calsptgen} and \ref{prop:calsptspe} respectively). Although this is quite straightforward, let us mention two of the main points we need to take care of. 

The first point concerns two ways to relate $\BH_{\brv E}^d$ with lattices; we have to show that they are equivalent. 
\begin{itemize}
\itemb The first way is through the construction of the formal model $\BH_{\brvO}^d$ which can be defined as a simplicial scheme over the Bruhat--Tits building $\BrTi_{\! d}$. This gives a map of topological spaces $\lvert \BH_{\brv E}^d\rvert \rightarrow \lvert \BrTi_{\! d} \rvert$. 
\itemb The second way is through the isomorphism $\BH_{\brv E}^d\cong \BP^{d-1}_{\brv E}\setminus \bigcup_{H\in \sH_E}H$: a point $x\in \lvert \BH_{\brv E}^d\rvert$ gives an embedding $x\colon  E^d \incl K$ where $K$ is some valued field extension of $\brv E$: the norm on $K$ induces a norm on $E^d$ through $x$. Now, the topological realization $\lvert \BrTi_{\! d}\rvert$ is naturally isometric to the space of norms on $E^d$ (\cf Proposition \ref{prop:normsp}), which gives a map $\lvert \BH_{\brv E}^d\rvert \rightarrow \lvert \BrTi_{\! d} \rvert$.
\end{itemize}
These two maps coincide through the natural map $\lvert \BrTi_{\! d} \rvert\rightarrow \BrTi_{\! d}$ (\cf Corollary \ref{cor:matchlatnorm}). This allows us to relate the period map $\pi$ with lattices in a compatible way through the integrality of the Hodge filtration.

Let us spell out what this \og integrality of the Hodge filtration \fg means, which is the second point of the proof. Let $K/\brv E$ be a valued finite field extension and $\rmO_K\subset K$ its ring of integers. Let $X$ be an SFD over $\rmO_K$, $X_k=X\otimes_{\rmO_K}k$ its base change to the residue field, and let $M=M_{\rmO}(X_k)$ be the Cartier module of $X_k$. Using Grothendieck--Messing theory, we get a map induced by the Hodge filtration
$$
h_X\colon M\otimes_{\brv O}\rmO_K\rightarrow \Lie(X).
$$
In the moduli problem, the quasi-isogeny gives an isomorphism $E^d\cong M_0\otimes_{\Zp}\Qp$ and composing it with $h_X$ gives an element of $\BH_{\brv E}^d(K)$:
$$
\left ( E^d\rightarrow \Lie(X)_0\otimes_{\Zp}\Qp\right ) \in \BH_{\brv E}^d(K).
$$
This is the description of the period map on classical points. Using the previous compatibility between lattices and norms, we see that the chain of lattices in $M$ defined by \eqref{eq:intinclat} coincides with the lattices defined by the unit ball $\Lie(X)\subset \Lie(X)\bigl [ \tfrac 1 p \bigr ]$. Finally, the compatibility of the specialization morphisms boils down to the commutative diagram
\begin{center}
\begin{tikzcd}
M\otimes_{\brvO}\rmO_K\ar[r,"h_X"]\ar[dr,swap, "\mod V"]& \Lie(X) \ar[d,"\mod \fkm_K"]\\
 & \Lie(X_k)
\end{tikzcd}
\end{center}
where $\fkm_K\subset \rmO_K$ is the maximal ideal. This last step happens to be clear from Grothendieck--Messing theory.
\medskip
\begin{rema}\label{rema:cadeth}
Drinfeld's original proof uses Cartier theory in the classical sense, as described in \cite{laz}. For the perfected special fiber, we also use Cartier theory, but for the generic fiber we use Messing's \emph{Dieudonné crystal} \cite{mes}. To prove that the specialization morphisms are compatible, we recall that over a point, Cartier theory and Messing's theory reduce to classical Dieudonné theory. To avoid this plurality of theories we could simply use the theory of displays as presented by Zink \cite{zin} and its generalization to $\rmO$-displays \cite{acz}. Display theory would also be convenient for our exposition but to avoid its heavier formalism, we decided to stick with the more classical theories as they are used in our main references. 
\end{rema}
\medskip

\tableofcontents

\subsection{Acknowledgment}
This project started a long time ago as a collaboration with Sebastian Bartling. I want to thank him very heartily for sharing the idea of using the Lourenço--Scholze theorem to prove Drinfeld's theorem. I am very grateful to Michael Rapoport who introduced me to Drinfeld spaces and suggested that there should be a new proof of this theorem. I thank Pol Vanhaecke for the wonderful TikZ drawings and his patience in their realization. I also thank Laurent Fargues and Alex Youcis for several discussions on this project. Finally, I thank the Morningside Center of Mathematics, Chinese Academy of Sciences, where this work as been realized, for its excellent working environnement.

\subsection{Notations and conventions}
Let $E$ be a finite extension of $\Qp$, and set $\rmO\subset E$ its ring of integers, $k_E$ its residue field, and $q\coloneqq \Card(k_E)$ the cardinal of $k_E$. We fix  $\varpi\in \rmO$ a uniformizer generating the maximal ideal of $\rmO$. We write $v_E\colon E \rightarrow \BZ\cup\{\infty \}$ for the valuation sending $\varpi$ to $1$ and define the norm $\lvert \cdot \rvert_E=q^{-v_E(\cdot)}$. 

Let $\brv E$ be the completion for the $p$-adic norm of the unramified closure of $E$ and let $\brvO\subset \brv E$ be its ring of integers. We write $k$ for the residue field of $\brv E$ which is an algebraic closure of $k_E$. 

Let $C=\wh{\bar E}$ be the completion for the $p$-adic norm of an algebraic closure of $E$ and let $\rmO_C\subset C$ be the ring of integers.

We write $\kappa$ for an arbitrary algebraically closed field of characteristic $p$. For a ring $R$, we define $\Alg_R$ to be the category of $R$-algebras and $\Nilp_R$ the full subcategory of $R$-algebras such that $p$ is Zariski-locally nilpotent. We write $\Perf_k$ for the category of perfect $k$-algebras. We also write $\Perf_C$ which is the category of perfectoid algebras over $C$.

If $X$ is a scheme (\resp a formal scheme, a rigid space) over some affine base $\Spec R$ (\resp $\Spf R, \Sp R$) we will just say that $X$ is a scheme (\resp a formal scheme, a rigid space) over $R$. Moreover, if $R\rightarrow R'$ is a ring morphism then we write $X\otimes_{R}R'$ or $X_{R'}$ for the corresponding base change. That is, in the context of formal schemes and rigid spaces, the tensor product is implicitly completed. 

For convenience, if $n,m\in \BZ$ are integers, we fix the notation\footnote{This notation is standard during undergraduate studies in France.}
$$
\llbracket n,m \rrbracket \coloneqq \{ j\in \BZ\mid n\leqslant j \leqslant m\},
$$
for the integer interval between $n$ and $m$.

\newpage

\section{Drinfeld moduli space}\label{sec:drinmoduli}
\subsection{Special formal $\rmOD$-modules}

Let $d\geqslant 2$ be an integer. Let $\rmD$ be the central division algebra over $F$ of invariant $1/d$. To fix some additional notations, let us briefly recall what this means.

Let $E_d$ be the unramified extension of $E$ of degree $d$ and let $\rmO_d\subset E_d$ be its ring of integers. The group $\Gal(E_d/E)$ is cyclic of degree $d$ and we fix a generator $\sigma$ such that $\sigma(x)\equiv x^q\ \mod \varpi$. We define $\rmD$ as the non-commutative algebra generated by $E_d$ and an element $\Pi\in \rmD$ satisfying the relations 
$$
\Pi^d=\varpi,\quad \Pi a=\sigma(a) \Pi,\ \forall a\in E_d.
$$
The \emph{maximal order} is the subring $\rmOD\subset \rmD$ generated by $\rmO_d$ and $\Pi$.
\subsubsection{Definition}
Let $R\in \Alg_{\brvO}$. If $X$ is a $p$-divisible group over $R$, $\End_R(X)$ is naturally equipped with the structure of a $\Zp$-algebra : the addition is the pointwise sum, the multiplication is the composition and we easily show that the natural map $\BZ\rightarrow \End_R(X)$ extends to $\BZ_p\rightarrow \End_R(X)$. 
\medskip
\begin{defi}\label{def:pidiv}
\
\begin{itemize}
\itemb A \emph{$\varpi$-divisible $\rmO$-module} on $R$ is a pair $X=(X,\iota)$ where $X$ is a $p$-divisible group and $\iota$ is a non-zero $\Zp$-algebra morphism
$$
\iota \colon \rmO\rightarrow \End_R(X),
$$
which is \emph{strict}, meaning that the induced map $d\iota\colon \rmO\rightarrow \End_R(\Lie(X))$ coincides with the map $\rmO\rightarrow \brvO \rightarrow R\rightarrow\End_R(\Lie(X))$ induced by the $R$-module structure. If, moreover, $X$ is a formal $p$-divisible group, we will simply call $X$ a \emph{formal $\rmO$-module}.
\itemb A \emph{$\Pi$-divisible $\rmOD$-module} over $R$ is a pair $(X,\iota)$ where $X$ is a $p$-divisible group and $\iota$ is a non-zero $\Zp$-linear morphism
$$
\iota\colon \rmOD\rightarrow \End_R(X),
$$
such that the restriction of $\iota$ to $\rmO$ defines a $\varpi$-divisible $\rmO$-module on $R$. If, moreover, $X$ is formal as a $p$-divisible group, we will just call $X$ a \emph{formal $\rmOD$-module}.
\end{itemize}
\end{defi}
\medskip
Note that if $(X,\iota)$ is a $\Pi$-divisible $\rmOD$-module over $R$, $\iota$ is injective since $\rmOD$ is simple. Of course, one can consider $\rmOD$-modules without the strictness assumption (\cf \cite{razion}), but we will not.

We define $R_d\coloneqq R\otimes_{\rmO}\rmO_d$ and get
\begin{equation}\label{eq:aird}
R_d=\prod_{\psi\colon E_d\incl \brv E}R_{\psi}, \quad R_{\psi}\coloneqq R\otimes_{\rmO_d,\psi}\rmO_d,
\end{equation}
where the product is over all the $E$-linear embeddings $\psi\colon E_d\incl \brv{E}$.

We fix once and for all an embedding $\psi_0\colon E_d\incl \brv{E}$. Recall that we fixed a generator $\sigma$ of $\Gal(E_d/E)$ which defines an isomorphism $\Gal(E_d/E)\cong \BZ/d$. Set $\psi_i\coloneqq \psi_0\circ \sigma^i$ for all $i\in \BZ/d$, then $\{\psi\colon E_d\incl \brv{E}\}=\{\psi_i\}_{i\in \BZ/d}$ and the decomposition \eqref{eq:aird} takes the form
\begin{equation}\label{eq;airdi}
R_d\cong \prod_{i\in \BZ/d}R_i,\quad R_i\coloneqq R_{\psi_i}.
\end{equation}

Let $X$ be a $\Pi$-divisible $\rmOD$-module over $R$. Since the $\rmO$-action is strict, $\Lie(X)$ is naturally a $R_d$-module. Hence, we get 
\begin{equation}\label{eq;lidi}
\Lie(X)=\bigoplus_{\psi\colon E_d\incl \brv E}\Lie(X)_{\psi}=\bigoplus_{i\in \BZ/d}\Lie(X)_{i},\quad \Lie(X)_i\coloneqq\Lie(X)_{\psi_i}\coloneqq \Lie(X)\otimes_{R_d}R_{\psi_i}.
\end{equation}
The uniformizer $\Pi \in \rmOD$ induces an endomorphism $\Pi\colon \Lie(X)\rightarrow \Lie(X)$ of degree $+1$ for this grading. The restriction of $\Pi$ to the components of \eqref{eq;lidi} induce $d$ morphisms

$$
\forall i\in \BZ/d,\quad \Pi_i\colon \Lie(X)_i\rightarrow \Lie(X)_{i+1},
$$
and since the action is strict and $\Pi^d=\varpi$, we get $\Pi_{d-1}\circ\dots \circ \Pi_2 \circ \Pi_1\circ \Pi_0=\varpi$. 
\medskip
\begin{defi}\label{def:sfd}
A $\Pi$-divisible $\rmOD$-module $X$ over $R$ is a \emph{special formal $\rmOD$-module} (abbreviated a \emph{SFD-module}) if $X$ is a formal $\rmOD$-module and $\Lie(X)$ is an invertible $R_d$-module, \ie for every $i\in \BZ/d$, $\Lie(X)_i$ is an invertible $R$-module. Moreover, we say that $i\in \BZ/d$ is a \emph{critical index} if $\Pi_i=0$.
\end{defi}
\medskip

\medskip
\begin{lemm}\label{lem:havcrit}
Let $X$ be a special formal $\rmOD$-module over a perfect field $R=k$ of characteristic $p$. Then $X$ admits a critical index.
\end{lemm}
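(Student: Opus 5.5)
Over the field $R=k$ every $\Lie(X)_i$ is a one-dimensional $k$-vector space, and each $\Pi_i\colon \Lie(X)_i\to\Lie(X)_{i+1}$ is a $k$-linear map between lines, so it is either zero or an isomorphism. The plan is to argue by contradiction using the relation recorded just before Definition~\ref{def:sfd}.

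First, I make sure the composite relation holds as stated over $k$. The $\rmO$-action is strict, so $\varpi$ acts on $\Lie(X)$ through the structure map $\rmO\to\brvO\to k$. This sends $\varpi$ to $0$, since $\varpi$ lies in the maximal ideal of $\brvO$, whose residue field is $k$. On the other hand, $\Pi^d=\varpi$ in $\rmOD$, and the action on $\Lie(X)$ is functorial. Hence
$$
\Pi_{d-1}\circ\dots\circ\Pi_1\circ\Pi_0=\varpi=0
$$
as an endomorphism of $\Lie(X)_0$.

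Second, suppose no index is critical. Then every $\Pi_i$ is a nonzero map between one-dimensional $k$-vector spaces, hence an isomorphism. The composite of these isomorphisms is an automorphism of the line $\Lie(X)_0$, which is nonzero because $\Lie(X)_0\neq 0$ (it is invertible over the field $k$, so of rank one). This contradicts the display above, so some $\Pi_i$ must vanish, i.e.\ $X$ has a critical index.

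Nothing here is a serious obstacle. The only points needing care are that ``invertible module'' over a field means exactly one-dimensional, and that strictness forces $\varpi$ to act by zero on $\Lie(X)$ when the base is a field of characteristic $p$. Perfectness of $k$ plays no role beyond its being a field of characteristic $p$ over $\brvO$. The same argument actually shows more: over any field of characteristic $p$ the set of critical indices is nonempty, though it need not be a singleton.
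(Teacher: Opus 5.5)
Your proof is correct and is essentially the paper's argument: strictness forces $\varpi$, and hence the composite $\Pi_{d-1}\circ\dots\circ\Pi_0$, to act by zero on $\Lie(X)$. Since each $\Pi_i$ is a map between one-dimensional $k$-vector spaces, it is either zero or an isomorphism, so some $\Pi_i$ must vanish (and your composite correctly includes $\Pi_0$, which the paper's displayed formula omits).
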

\medskip
\begin{proof}
Since $\varpi\cdot k=0$, we get $\Pi_{d-1}\circ\dots \circ \Pi_2 \circ \Pi_1=0$. But since $\Pi_i$ is a linear map between one-dimensional vector spaces over $k$, it is either $0$ or an isomorphism. Hence, there exists $i\in \BZ/d$ such that $\Pi_i=0$.
\end{proof}

\subsubsection{Cartier theory for formal $\varpi$-divisible $\rmO$-modules over a perfect ring}

Here we recall some basics of Cartier theory over perfect rings. Cartier theory works over arbitrary nilpotent rings, but we will only use it over perfect rings.

We begin with some notations concerning ramified Witt vectors; we follow \cite[2.1]{fafo}. Let $\rmW_{\rmO}\colon \Alg_{\rmO}\rightarrow \Alg_{\rmO}$ be the functor of \emph{ramified Witt vectors}. It comes equipped with a Frobenius endomorphism\footnote{The notation is consistent with the previous $\sigma$ so confusion is harmless.} $\sigma \colon \rmW_{\rmO}\rightarrow \rmW_{\rmO}$ and a Verschiebung endomorphism $\tau \colon \rmW_{\rmO}\rightarrow \rmW_{\rmO}$ such that $\sigma\circ \tau = \varpi$. The endomorphism $\tau$ depends on the choice of the uniformizer $\varpi$ but not $\sigma$. There is a natural transformation $[\cdot]\colon \id\rightarrow\rmW_{\rmO}$ called the \emph{multiplicative lift}. If $R\in \Alg_{\rmO}$, then $\rmW_{\rmO}(R)$ is $\tau$-complete and every element $x\in \rmW_{\rmO}(R)$ has a unique expression
\begin{equation}\label{eq:expel}
x=\sum_{n\geqslant 0}\tau^n[a_{n}],
\end{equation}
where $a_{n}\in R$. 

If $R$ is a perfect $\BF_q$-algebra, we also have the relation $\tau\circ \sigma = \varpi$ and get $\sigma(x)=x=\sum_{n\geqslant 0}\tau^n[a_{n}^q]$ which allows us to simplify \eqref{eq:expel}:
$$
x=\sum_{n\geqslant 0}[b_n]\varpi^n,
$$
where $b_n\coloneqq a_n^{q^{-n}}$. Moreover, $\rmW_{\rmO}(R)$ is then $\varpi$-complete and $\varpi$-torsion-free; it is, up to isomorphism, the unique $\varpi$-complete, $\varpi$-torsion-free lift of $R$.

Let $E'/E$ be a finite extension and $\rmO'\subset E'$ its ring of integers. Let $f\in \BN$ be its unramified degree over $E$, $q'\coloneqq q^f$ the cardinal of its residue field and $\varpi'\in \rmO'$ a uniformizer. We write $\sigma'$ and $\tau'$ respectively the Frobenius and the Verschiebung associated to $\varpi'$ on $W_{\rmO'}$. There is a unique natural transformation (\cf \cite[Lemme 1.2.3]{fafo})
$$
u\coloneqq W_{\rmO}\rightarrow W_{\rmO'},
$$
such that 
$$
\sigma'\circ u=u\circ \sigma^f,\quad \frac{\varpi}{\varpi'}\cdot\tau'\circ {\sigma'}^{f-1} \circ u=u\circ \tau.
$$
Moreover, if $R\in \Perf_{\BF_{q'}}$, let $\rmO'_0\subset \rmO'$ be the ring of integers of the maximal unramified extension of $E$ inside $E'$, then the natural morphism induced by $u$ gives an isomorphism:
$$
W_{\rmO}(R)\otimes_{\rmO_0'}\rmO'\rightarrow W_{\rmO'}(R).
$$
Let $\wh{W}_{\rmO}\subset W_{\rmO}$ be the subfunctor of ramified Witt vectors with finitely many non-zero entries, meaning that for $R\in \Alg_{\rmO}$ and $x\in W_{\rmO}(R)$, we have $x\in \wh{W}_{\rmO}(R)$ if and only if, when written in the form \eqref{eq:expel}, $\{ a_n\neq 0;n\in \BN\}$ is a finite set.

We define the Cartier ring functor as $\BE_{\rmO}\coloneqq \Hom(\wh{W}_{\rmO},\wh{W}_{\rmO})$. Explicitly, for $R\in \Alg_{\rmO}$, the \emph{Cartier ring} $\BE_{\rmO}(R)$ is the $V$-completion of the non-commutative ring $\rmW_{\rmO}(R)\langle F,V\rangle$ with relations
$$
FV=\varpi,\quad Fa=\sigma(a)F,\ Va=\tau(a)V\ \forall a \in \rmW_{\rmO}(R).
$$
Every $x\in\BE_{\rmO}(R)$ can be written uniquely has 
$$
x=\sum_{i,j\geqslant 0}V^i[x_{i,j}]F^j,
$$
where $x_{i,j}\in R$ is a family of elements such that for every $i\in \BN$, $\{x_{i,j}\neq0; j\in \BN\}$ is a finite set.

Cartier theory translates the data of a formal $\rmO$-module into a module over Cartier's ring. We define the corresponding objects.

\medskip
\begin{defi}
Let $R\in \Perf_k$. A \emph{reduced Cartier $R$-module} is a left $\BE_{\rmO}(R)$-module $M$ such that
\begin{itemize}
\itemb $V\colon M\rightarrow M$ is injective,
\itemb $M$ is complete for the $V$-adic topology, \ie the map $M\rightarrow \varprojlim_n M/V^nM$ is an isomorphism,
\itemb $M/VM$ is a locally free $R$-module of finite rank.
\end{itemize}
We call 
$$
\heig_{\rmO} M\coloneqq \rank_RM/\varpi M,\quad \dim M\coloneqq \rank_RM/V M
$$
respectively the \emph{height} of $M$ (which is well defined \cf \cite[Corollary 5.43]{zinc}\footnote{The English translation of Zink's book \cite{zinc} on Cartier theory can be found here: \url{https://imag.umontpellier.fr/~romagny/articles/zink.pdf}}) and the \emph{dimension} of $M$.
\end{defi}
\medskip
Let $M$ be a reduced Cartier $R$-module such that $M/VM$ is free, let $m\geqslant 1$ be its rank and let $\{\gamma_1,\dots,\gamma_m\}\in M$ be a family such that its image in $M/VM$ forms a basis. We then call $\{\gamma_1,\dots, \gamma_m\}$ a \emph{$V$-basis of $M$} and we get that any $v\in M$ can be uniquely written as
$$
v=\sum_{i=1}^m\sum_{n\geqslant 0}V^n[v_{n,i}]\gamma_i.
$$
Hence, the action of $F$ is determined by its action on the $V$-basis, which gives a presentation of $M$ as a $\BE_{\rmO}(R)$-module.

We now recall the main theorem of Cartier theory. For a formal $\rmO$-module $G$ over $R$, we define 
$$
M_{\rmO}(G)\coloneqq \Hom(\wh{W}_{\rmO},G),
$$
where $\Hom$ is taken as natural transformations between functors over perfect algebras. Then $M_{\rmO}(G)$ is naturally a $W_{\rmO}(k)$-module, whose abelian group structure comes from the abelian group structure on $G$ and is equipped with two operations $F$ and $V$ respectively coming from $\sigma$ and $\tau$ on $\wh{W}_{\rmO}$. This naturally defines a $\BE_{\rmO}(R)$-module.
\medskip
\begin{prop}\label{prop:Ecart}
Let $R\in \Perf_k$. There exists an equivalence of categories
$$
\left\{\text{Formal $\rmO$-modules on $R$} \right\}
\xrightarrow{M_{\rmO}}
\left \{\text{Reduced Cartier $R$-modules}\right \}.
$$
Moreover, if $G$ is a formal $\rmO$-module we have
$$
\Lie(G)=M_{\rmO}(G)/VM_{\rmO}(G),\quad \dim G=\dim M_{\rmO}(G),\quad \heig_{\rmO} G=\heig_{\rmO} M_{\rmO}(G).
$$
\end{prop}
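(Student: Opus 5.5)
This is the Cartier–Dieudonné equivalence for formal $\rmO$-modules. I would not reprove it from scratch. The plan is to deduce it from the classical Cartier theory over $\Zp$, in the form of Zink \cite{zinc}, with the $\rmO$-linear version in Lazard \cite{laz} or \cite[Chapter 1]{fafo}, and to carry out the reduction over perfect rings.

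\textbf{Step 1: $M_{\rmO}(G)$ is a reduced Cartier module.} Given a formal $\rmO$-module $G$ over $R\in\Perf_k$, the functor $\wh W_{\rmO}$ is itself a formal $\rmO$-module, the universal one, with tangent space $R$ and coordinate the first Witt component. The group $M_{\rmO}(G)=\Hom(\wh W_{\rmO},G)$ gets its $\BE_{\rmO}(R)=\End(\wh W_{\rmO})$-module structure by precomposition. I would show it is reduced as follows.
\begin{itemize}
\itemb \emph{Injectivity of $V$.} Precomposition with $\tau$ is injective because $\tau\colon\wh W_{\rmO}\to\wh W_{\rmO}$ is an epimorphism of fppf sheaves; its cokernel is the additive group via the first component.
\itemb \emph{Completeness.} This comes from writing $G$ as the colimit of its infinitesimal neighbourhoods, equivalently from choosing an $\rmO$-typical coordinate: a formal group law with strict $\rmO$-action is $\rmO$-typically isomorphic to one of Hazewinkel's $\rmO$-typical form.
\itemb \emph{Tangent space.} Evaluating a curve on its tangent vector gives the identification $M/VM\cong\Lie(G)$. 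In coordinates, $M_{\rmO}(G)$ is the module of $\rmO$-typical curves, which have the form $\sum_n V^n[a_n]\gamma$ for a $V$-basis $\gamma$ coming from a basis of $\Lie(G)$. Strictness of the $\rmO$-action is exactly what makes $\rmO$ act on $M/VM$ through $R$.
\end{itemize}

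\textbf{Step 2: essential surjectivity and full faithfulness.} I would use the standard structure equations. Given a reduced Cartier module $M$ with a $V$-basis $\gamma_1,\dots,\gamma_m$, write $F\gamma_i=\sum_j\sum_n V^n[c_{n,ij}]\gamma_j$. One then reconstructs $G$ as the quotient functor $M\otimes_{\BE_{\rmO}}\wh W_{\rmO}$; concretely it is $\wh W_{\rmO}^m$ modulo the image of $F-\sum V^n[c_{n,ij}]$. This is the Cartier–Lazard reconstruction.

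Full faithfulness comes from the universal property
$$
\Hom(G,G')\cong\Hom_{\BE_{\rmO}(R)}\bigl(M_{\rmO}(G),M_{\rmO}(G')\bigr).
$$
It follows because $\wh W_{\rmO}$ "generates" formal $\rmO$-modules: every $G$ is a quotient of $\wh W_{\rmO}^{m}$ with kernel again of that form. These arguments are the $\rmO$-analogues of \cite[Chapter 3--4]{zinc}. The only modification is replacing $p$-typical Witt vectors by $\rmO$-typical ones, which works because $\wh W_{\rmO}$ represents the functor of $\rmO$-typical curves.

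\textbf{Step 3: height.} The tangent identity gives $\dim G=\dim M$. For the height, over perfect $R$ the ring $W_{\rmO}(R)$ is $\varpi$-torsion-free and $\sigma$ is bijective, so $FV=VF=\varpi$. Multiplication by $\varpi$ on $G$ corresponds to $\varpi$ on $M$. Hence $\ker(\varpi\colon G\to G)$ is the finite flat group scheme whose Dieudonné module is $M/\varpi M$, a locally free $R$-module. Its order is $q^{\rank M/\varpi M}$, so $\heig_{\rmO}G=\rank_R M/\varpi M$. Here the well-definedness result \cite[Corollary 5.43]{zinc} is used.

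\textbf{Main obstacle.} I expect the hard part to be the reconstruction in Step 2 over a non-field perfect base, where $M/VM$ is only locally free. My approach would be to glue Zariski-locally on $\Spec R$, using that both sides are Zariski stacks, which reduces to the case where $M/VM$ is free. After that I would simply cite the $\rmO$-typical structure theorem rather than reproving it.
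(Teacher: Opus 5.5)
Your route is genuinely different from the paper's. The paper never sets up an $\rmO$-typical Cartier theory. Instead, it takes the case $\rmO=\Zp$ from Lazard and Zink as given and, following Drinfeld, shows that the statement passes from $\rmO$ to $\rmO'$ for $E'/E$ finite, in two elementary steps.
\begin{itemize}
\item[$\bullet$] For $E'/E$ unramified of degree $f$, the $\rmO'$-action grades $M_{\rmO}(G)=\bigoplus_{i\in\BZ/f}M_i$ with $\deg V=1$ and $\deg F=-1$. Strictness forces $V\colon M_i\to M_{i+1}$ to be bijective for $i\neq -1$. One then takes $M_0$ with $V'=V^f$ and $F'=V^{1-f}F$.
\item[$\bullet$] For $E'/E$ totally ramified, perfectness gives $W_{\rmO'}(R)=W_{\rmO}(R)\otimes_{\rmO}\rmO'$. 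One keeps the same module with $V'=V$ and $F'=\tfrac{\varpi'}{\varpi}F$.
\end{itemize}
In both cases $M'/V'M'=M/VM$, so the Lie algebra statement is inherited. This costs only classical Cartier theory plus bookkeeping, and it is exactly where perfectness is used.

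You instead rerun Zink's machinery with $\wh W_{\rmO}$ in place of $\wh W$, which is Hazewinkel's $\rmO$-typical theory. That is uniform and not tied to perfect bases, but as written it is a citation rather than an argument. The one substantive input is that, for a \emph{strict} action, the $\rmO$-linear maps $\wh W_{\rmO}\to G$ form a reduced module with $M/VM\cong\Lie(G)$. That is precisely what the paper's reduction avoids having to prove. Likewise, your height step presupposes two things you do not supply: a Dieudonné theory for $G[\varpi]$ over perfect rings, and the comparison of $\rmO$-typical with $p$-typical modules. That comparison is again the paper's unramified/ramified bookkeeping.

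Two local statements in your Step 1 need correcting.

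First, $\wh W_{\rmO}$ is not a formal $\rmO$-module with tangent space $R$. It is infinite-dimensional: on a square-zero $N$ it is $\bigoplus_{n\geqslant 0}N$ with componentwise addition. What is true is that it co-represents $\rmO$-typical curves via $t\mapsto[t]$.

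Second, your injectivity argument for $V$ contradicts itself. The Verschiebung $\tau$ is a monomorphism with cokernel $\hat{\BG}_a$, not an epimorphism, so precomposition with $\tau$ has kernel $\Hom(\hat{\BG}_a,G)$. For $\phi\colon\wh W_{\rmO}\to G$ with associated curve $t\mapsto\phi([t])$, one has $\phi(\sigma[t])=\phi([t^q])$. So the Cartier operator $V$, which acts on curves by $\gamma(t)\mapsto\gamma(t^q)$, is precomposition with the Witt Frobenius $\sigma$, and $F$ corresponds to $\tau$. Injectivity of $V$ then holds because $\sigma$ is an fppf epimorphism in characteristic $p$, or simply because $\gamma(t^q)=0$ forces $\gamma=0$.
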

\medskip
\begin{proof}
If $\rmO=\BZ_p$ the theorem can be found in \cite{laz} or \cite[4.23]{zinc}. From there, it is adapted to formal $\rmO$-modules by Drinfeld in \cite[\S 1 Theorem]{drin}. We sketch his argument for convenience. Let $E'/E$ be a finite extension and $\rmO'\subset E'$ be its integer ring. The strategy is to show that if the theorem is true for $\rmO$ then it is true for $\rmO'$, which is enough since it is true for $\rmO=\Zp$ by the references.

If $E'/E$ is unramified of degree $f$, then decomposing along the $E$-linear embeddings $E'\incl \brv E$ as in \eqref{eq;airdi}, we get
$$
W_{\rmO}(R)\otimes_{\rmO}\rmO'=\bigoplus_{i\in \BZ/f}W_{\rmO'}(R)_i.
$$
Hence $M_{\rmO}(G)\otimes_{\rmO}\rmO'=\bigoplus_{i\in \BZ/f} M_i$ such that $F$ is of degree $-1$ and $V$ is of degree $1$. Moreover,$V\colon M_i\rightarrow M_{i+1}$ is an isomorphism if $i\neq -1$ by the strictness condition. Set $M_{\rmO'}(G)\coloneqq M_0$ and define 
$$
V'\coloneqq V^f,\quad F'=V^{1-f}F.
$$
It is clear that this defines an equivalence of categories. Finally, 
$$
M'/V'M'=M/VM=\Lie(G)
$$
which concludes the proof in that case.

Suppose that $E'/E$ is totally ramified. The hypothesis that $R$ is perfect will simplify the proof since $W_{\rmO'}(R)\cong W_{\rmO}(R)\otimes_{\rmO}\rmO'$ such that $\sigma=\sigma'$ and $\tau'=\tfrac{\varpi'}{\varpi}\tau$. Hence $M_{\rmO'}(G)\coloneqq M_{\rmO}(G)$ is naturally a $W_{\rmO'}(R)$-module and
$$
F'\coloneqq \tfrac{\varpi'}{\varpi}F,\quad V'\coloneqq V,
$$
which defines a $\BE_{\rmO'}(R)$-module. Finally, since $M_{\rmO}(G)$ is reduced, 
$$
M_{\rmO'}(G)/V'M_{\rmO'}(G)=M_{\rmO}(G)/VM_{\rmO}(G).
$$

Let us mention that another reference for formal $\rmO$-modules is \cite[26.3]{haz}. Finally, let us mention that the theory of $\rmO$-displays in \cite{acz} is developed based on the proof we sketched.
\end{proof}

We recall the compatibility between Cartier theory and Dieudonné theory over an algebraically closed field $k$ of characteristic $p$. If $G$ is a $\varpi$-divisible $\rmO$-module over an algebraically closed field $k$, we define 
$$
D_{\rmO}(G)\coloneqq \Hom(W_{\rmO},G)
$$
where $\Hom$ is taken as natural transformations between functors over perfect algebras. In the same way as $M_{\rmO}(G)$,  $D_{\rmO}(G)$ is a $W_{\rmO}(k)$-module, equipped with two operations $F$ and $V$. Moreover, if $G$ is a formal $\rmO$-module, the action of $V$ on $D_{\rmO}(G)$ is topologically nilpotent, hence $D_{\rmO}(G)$ is naturally a $\BE_{\rmO}(k)$-module. The result is the following:
\medskip
\begin{prop}\label{prop:pdivtoisoc}
Let $G$ be a formal $\rmO$-module over an algebraically closed field $k$. Then there is a natural isomorphism of $\BE_{\rmO}(k)$-modules $D_{\rmO}(G)\cong M_{\rmO}(G)$.
\end{prop}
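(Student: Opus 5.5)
The plan is to compare the two functors $\Hom(\wh W_{\rmO},G)$ and $\Hom(W_{\rmO},G)$ through the natural inclusion $\wh W_{\rmO}\incl W_{\rmO}$. Restriction along this inclusion gives a map
$$
r\colon D_{\rmO}(G)=\Hom(W_{\rmO},G)\longrightarrow \Hom(\wh W_{\rmO},G)=M_{\rmO}(G).
$$
Since $\sigma$ and $\tau$ preserve $\wh W_{\rmO}$ and the $W_{\rmO}(k)$-action is defined compatibly on both sides, $r$ is $W_{\rmO}(k)$-linear and commutes with $F$ and $V$. Once we know it is bijective, it is therefore an isomorphism of $\BE_{\rmO}(k)$-modules. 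Naturality in $G$ is clear from the construction. The task is thus to show that $r$ is bijective, and I would reduce this to the case $\rmO=\Zp$ in the same way as in the proof of Proposition \ref{prop:Ecart}.

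For $\rmO=\Zp$, this is the classical comparison between Cartier theory and the Dieudonné module, \cf \cite{zinc} or \cite{laz}. I would recall the argument as follows.

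\emph{Injectivity.} Over perfect $k$-algebras, an element of $W(R)$ is a sum of a Witt vector in $\wh W(R)$ (its first $n$ components) and an element of $V^nW(R)$. Because $G$ is formal, we can use the topological nilpotence of $V$ on $D(G)$, that is, the fact that $G$ is a connected $p$-divisible group, so that $G=\varinjlim G[p^n]$ with each $G[p^n]$ infinitesimal. Any morphism $W\to G$ then factors through some quotient $W/V^n$ after restricting to $G[p^m]$. Combining these, a homomorphism $W\to G$ is determined by its restriction to $\wh W$.

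\emph{Surjectivity.} A homomorphism $\wh W\to G$ extends to $W$, because $G$ is formal and hence $V$-adically complete, so it extends by continuity: the series $\sum V^n[a_n]$ maps to a convergent sum in $G(R)$ once we work with the nilpotent (or perfect) points.

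I expect this extension-by-continuity step to be the main obstacle. Points of $W$ over perfect rings are not nilpotent, so "convergence" has to be made precise. I would handle it by working with the pro-system $W=\varprojlim W_n$ and the finite connected group schemes $G[p^m]$. On each $G[p^m]$, a morphism from $\wh W$ kills $V^N\wh W$ for $N$ large, using the Frobenius-kernel description of connected finite groups. It therefore factors through $\wh W/V^N\cong W_N$ as functors on perfect algebras, and these factorizations glue to give a map $W\to G$.

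For general $\rmO$, I would follow the two reductions used for Proposition \ref{prop:Ecart}. When $\rmO'/\rmO$ is totally ramified, $W_{\rmO'}(R)=W_{\rmO}(R)\otimes_{\rmO}\rmO'$ with $\wh W_{\rmO'}=\wh W_{\rmO}\otimes\rmO'$, and the identifications $F'=\tfrac{\varpi'}{\varpi}F$, $V'=V$ are the same on both sides. When $\rmO'/\rmO$ is unramified, one takes the degree-$0$ graded piece on both sides and sets $V'=V^f$, $F'=V^{1-f}F$, again identically on both sides. In both cases $r$ is compatible with these constructions, so its bijectivity for $\Zp$ propagates to every $\rmO$.
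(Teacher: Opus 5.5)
Your reduction from $\Zp$ to a general $\rmO$ matches the paper. That is the totally ramified step with $F'=\tfrac{\varpi'}{\varpi}F$, $V'=V$, and the unramified step taking the degree-$0$ piece with $V'=V^f$, $F'=V^{1-f}F$. For $\rmO=\Zp$ the paper simply cites \cite[Chapitre V \S 3]{fon} and \cite{hed}.

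The gap is in your own treatment of $\rmO=\Zp$. You take the comparison isomorphism to be the restriction $r$ along $\wh W\incl W$, but $r$ is not an isomorphism. The classical theorem you cite is about a different map, so it cannot be applied to $r$. The ``extension by continuity'' step you flagged does in fact fail.

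Concretely, take $G=\wh{\BG}_m$ and the Artin--Hasse homomorphism $\mathrm{AH}\colon \wh W\rightarrow \wh{\BG}_m$, defined by $\mathrm{AH}(\ud a)=\exp\bigl(\sum_{n\geqslant 0}w_n(\ud a)/p^n\bigr)=\prod_{n\geqslant 0}E_p(a_n)$. Here $w_n$ are the ghost components and $E_p$ is the Artin--Hasse exponential. It is a nonzero element of the Cartier module.

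\textbf{Your key claim fails.} For the Verschiebung $\tau$ one has $w_n(\tau\ud a)=p\,w_{n-1}(\ud a)$, hence $\mathrm{AH}\circ\tau=\mathrm{AH}$. So $\mathrm{AH}$ kills no $\tau^N\wh W=V^N\wh W$, even on the preimage of $G[p^m]$: take $\tau^N[x]$ with $x$ nilpotent, $x\neq 0$ and $x^{p^m}=0$.

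\textbf{Surjectivity of $r$ fails.} $\mathrm{AH}$ does not extend to $W$. On the square-zero algebra $N=\bigoplus_{i\geqslant 0}k\,x_i$, an extension $\phi$ would send $(x_0,x_1,\dots)\in W(N)$ to some $1+\sum_i c_ix_i$ with only finitely many $c_i\neq 0$. Naturality under the projections $N\rightarrow k\,x_j$ forces $c_j=1$ for every $j$, a contradiction.

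\textbf{More fundamentally, the source of $r$ is zero.} $W$ is represented by the reduced ring $k[x_0,x_1,\dots]$ (or its perfection over perfect test algebras). A formal group has no nontrivial points in a reduced ring, so every morphism $W\rightarrow G$ is trivial. The formula $D_{\rmO}(G)=\Hom(W_{\rmO},G)$ must therefore be read as shorthand for the classical Dieudonné module (compare Corollary \ref{cor:diecarcomp}), not as something you can restrict from.

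The real comparison needs a different mechanism: Fontaine's Witt covectors and duality, Hedayatzadeh's explicit Artin--Hasse construction, or displays \cite{zin}. So drop $r$. Then either cite the $\Zp$ case as the paper does, or supply one of those arguments, keeping your reduction steps unchanged.
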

\medskip
\begin{proof}
In the case $\rmO=\Zp$, we point to \cite[Chapitre V \S 3]{fon} and \cite{hed} for an explicit isomorphism. For formal $\rmO$-modules, it can then be deduced from the proof of Proposition \ref{prop:Ecart}.

\end{proof}
\begin{rema}\label{rem:pdivtoisoc}
Recall that if $k$ is a perfect field of characteristic $p$, an \emph{$E$-isocristal over $k$} (\cf \cite[Section 3]{kott}) is a finite-dimensional $\rmW_{\rmO}(k)\bigl [\tfrac{1}{p}\bigr]$-vector space with a $\sigma$-linear endomorphism. So if $G$ is a formal $\rmO$-module, then $(D_{\rmO}(G)\otimes_{\BZ_p}\Qp, F)$ is an $E$-isocrystal over $k$ which, by Proposition \ref{prop:Ecart}, determines the quasi-isogeny class of $G$.
\end{rema}
\subsubsection{Grothendieck--Messing theory for $p$-divisible $\rmO$-modules}
In this paragraph we recall Grothendieck--Messing theory for $p$-divisible $\rmO$-modules as presented by Fargues \cite[Annexe B Chapitre I]{far} based on the work of Messing \cite{mes}. We also point to \cite[section 3]{acz} in the context of $\rmO$-displays for formal $\rmO$-modules.

Let $\Sigma \coloneqq \Spec(\rmO)$ and $S$ be a scheme over $\Sigma$. We endow $\Sigma$ with its \emph{$\rmO$-divided power structure} coming from $\varpi\in \rmO$, defined in \cite[Annexe B.5 Chapitre 1]{far}. We denote by $\NCRIS_{\rmO}(S/\Sigma)$ (\cf \cite[Définition B.5.7]{far}) the \emph{$\rmO$-crystalline nilpotent site of $S$} with respect to the $\rmO$-divided power structure. This site comes with a structure sheaf $\CO_{S/\Sigma}$. We recall that a \emph{crystal of $\CO_{S/\Sigma}$-modules} is a sheaf $\CF$ on $\NCRIS_{\rmO}(S/\Sigma)$ such that for any map $f\colon (U',T',\delta')\rightarrow (U,T,\delta)$ between objects in this site, the natural map 
\begin{equation}\label{eq:crisprop}
f^*\CF_{(U,T,\delta)}=\CF_{(U,T,\delta)}\otimes_{\CO_{T}}\CO_{T'}\rightarrow \CF_{(U',T',\delta')}
\end{equation}
is an isomorphism.

Let $G$ be a $\varpi$-divisible $\rmO$-module on $S$. Then one can define a universel vector extension (\cf \cite[Proposition B.3.3]{far} and see \cite[Proposition 3.16]{acz} in the case of formal $\rmO$-modules)
\begin{equation}\label{eq:vectext}
0\rightarrow V_{\rmO}(G)\rightarrow E_{\rmO}(G)\rightarrow G \rightarrow 0
\end{equation}
which is an exact sequence of $fppf$ sheaves in $\rmO$-modules over $S$. This exact sequence is functorial in $G$ and in $S$ (\cf \cite[Chapter IV, Proposition 1.15]{mes}). We set 
$$
\BD_{\rmO}(G)_S\coloneqq \Lie (E_{\rmO}(G))
$$ 
the Lie algebra of this $fppf$ sheaf of $\rmO$-modules which defines a sheaf of $\CO_S$-modules. In particular, by taking the Lie algebra of the sequence \eqref{eq:vectext}, we get a map:
$$
h_G\colon\BD_{\rmO}(G)_S\rightarrow \Lie(G).
$$
Recall that this map is surjective as a morphism of $fppf$-sheaves on $S$ (\cf \cite[Chapter IV, Proposition 1.22]{mes}). The kernel of $h_G$ is usually called \emph{the Hodge filtration} and we will say that $h_G$ is induced by the Hodge filtration. The map $h_G$ is functorial in both $S$ and $G$. The main proposition is that $\BD(G)_S$ extends to a crystal on $\NCRIS_{\rmO}(S/\Sigma)$:

\medskip
\begin{prop}
The Lie algebra of $E_{\rmO}(G)$ defines a crystal of $\CO_{S/\Sigma}$-modules $\BD_{\rmO}(G)$, which we will call the \emph{Dieudonné crystal}.
\end{prop}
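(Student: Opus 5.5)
The plan is to deduce the statement from Messing's theorem for $p$-divisible groups, \cite[Chapter IV]{mes}, together with its $\rmO$-linear refinement in \cite[Annexe B]{far}. The key input from Messing is that the universal vector extension \eqref{eq:vectext} is functorial under deformations: its Lie algebra can be evaluated on any divided power thickening, and the result is independent of the chosen lift.

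First I fix an object $(U,T,\delta)$ of $\NCRIS_{\rmO}(S/\Sigma)$. Here $U\to T$ is a nilpotent thickening whose ideal carries $\rmO$-divided powers compatible with those on $\varpi\rmO$. I then define $\BD_{\rmO}(G)_{(U,T,\delta)}$ as follows.
\begin{enumerate}
\item Work locally on $T$ and choose a lift $\tilde G$ of $G_U$ to a $\varpi$-divisible $\rmO$-module over $T$. Such a lift exists locally, by Grothendieck's deformation theory for $p$-divisible groups with the $\rmO$-action deformed strictly; this is exactly where strictness, and the $\rmO$-divided powers rather than ordinary ones, are used.
\item Set $\BD_{\rmO}(G)_{(U,T,\delta)}\coloneqq \Lie E_{\rmO}(\tilde G)$.
\item Show this is independent of the lift. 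Given two lifts $\tilde G_1,\tilde G_2$, the identity of $G_U$ lifts canonically to an isomorphism $E_{\rmO}(\tilde G_1)\cong E_{\rmO}(\tilde G_2)$. This is Messing's rigidity statement: homomorphisms of universal extensions over a divided power thickening are determined by their reduction, via the exponential on the kernel. The $\rmO$-variant requires that the $\rmO$-divided power exponential $\exp_\delta$ converges on the nilpotent ideal, which holds for $\rmO$-PD nilpotent thickenings.
\item Check that these canonical isomorphisms satisfy the cocycle condition, so the local constructions glue to a sheaf on $T$.
\end{enumerate}

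Next I verify the crystal property \eqref{eq:crisprop}. For a morphism $f\colon (U',T',\delta')\to(U,T,\delta)$, the pullback $f^*\tilde G$ is a lift of $G_{U'}$. The universal vector extension commutes with base change, by its functoriality in $S$ (\cite[Chapter IV, Proposition 1.15]{mes}), and $\Lie$ of the vector group part is locally free. Hence $\Lie E_{\rmO}(f^*\tilde G)=f^*\Lie E_{\rmO}(\tilde G)$, and the transition maps are isomorphisms compatible with the canonical identifications from step~3. Functoriality in $G$ and $S$ follows from the same uniqueness argument. On the trivial thickening $(S,S,0)$ the construction recovers $\BD_{\rmO}(G)_S$.

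The main obstacle is step~3 in the $\rmO$-linear setting. Messing proves the rigidity lemma for ordinary divided powers, but here one needs the version with $\rmO$-divided powers, where $\varpi$ plays the role of $p$. The $\rmO$-structure must also be retained: one should use the $\rmO$-universal extension, which is a quotient of the $\Zp$-one by the non-strict part. My first attempt would be to mimic Messing's proof: show that $\Hom$ between extensions of $\tilde G$ by vector groups vanishes on the nilpotent PD-ideal after applying $\exp_\delta$. I would follow \cite[Annexe B, Proposition B.5]{far}, which carries this out for $\rmO$-divided powers.
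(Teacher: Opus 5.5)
Your proposal takes the same route as the paper. The paper defines the value on $(U,T,\delta)$ as $\BD_{\rmO}(G')_T=\Lie E_{\rmO}(G')$ for a lift $G'$ of $\restr{G}{U}$, and refers to \cite[Chapter IV, \S 2]{mes} and \cite[Th\'eor\`eme B.6.1]{far} for independence of the lift and the crystal property, which are your steps 1--4. One small correction: the $\rmO$-divided powers are what make the rigidity in your step~3 work, whereas the local existence of lifts in step~1 is a separate deformation-theoretic input and does not use them.
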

\medskip
\begin{proof}
The main idea is to associate, to every $(U,T,\delta)\in \NCRIS_{\rmO}(S/\Sigma)$ and to any lift $G'$ to $T$ of $\restr{G}{U}$ the sheaf 
\begin{equation}\label{eq:defcryst}
\BD_{\rmO}(G)_{(U\incl T)}=\BD_{\rmO}(G')_T
\end{equation}
and to show that this defines a crystal. See \cite[Chapter IV, \S 2]{mes} for the case $\rmO=\Zp$ and \cite[Théorème B.6.1]{far} for the extension to $\varpi$-divisible $\rmO$-modules; compare with \cite[Definition 3.24]{acz} for the case of formal $\rmO$-modules through $\rmO$-displays. 
\end{proof}
\medskip
\begin{rema}\label{rema:bbm}
If $S=\Spec R$ with $p\cdot R=0$ and $R'\rightarrow R$ satisfies $R'=\varprojlim_n R'/p^n$ and $R'/p^n$ is a finite $R$-algebra, we would like to evaluate $\BD_{\rmO}(G)$ at $R'$. We will consider
$$
\BD_{\rmO}(G)_{(R'\rightarrow R)}\coloneqq \varprojlim_{n}\BD_{\rmO}(G)_{(R'/p^n\rightarrow R)},
$$
which is consistent with \cite{dej} (see also \cite[Remark 3.2.4]{scwe}).

Following \cite{bbm} (where $\rmO=\Zp$) the (contravariant) Dieudonné crystal can be defined as 
$$
\BD_{\rmO}^{*}(G)\coloneqq \CExt^1_{\CO_{S/\Sigma}}(G,\CO_{S/\Sigma}).
$$
This definition is not the classical one (for example Messing's original book \cite{mes} is used in \cite{razi} and \cite{scwe}) but it is more convenient for different reasons. First, it can be defined on the full crystalline site and not only the nilpotent one. Second, this definition makes sense for finite flat group schemes, and behaves well with respect to truncations for finite group schemes (which is also an advantage of working contravariantly). The functor $\BD_{\rmO}^*(G)$ can be related to $\BD_{\rmO}(G)$ by duality (\ie Cartier duality using a Lubin-Tate group). 
\end{rema}
\medskip

Let $(S\incl T)\in \NCRIS_{\rmO}(S/\Sigma)$. We define the following categories:
\medskip
\begin{itemize}
\itemb $\CD$ the category of pairs $(G,\CF)$ where $G$ is a $\varpi$-divisible $\rmO$-module over $S$ and $\CF\subset \BD_{\rmO}(G)_{(S\incl T)}$ is a direct factor of $\BD_{\rmO}(G)_{(S\incl T)}$ such that $\CF\otimes_{\CO_T}\CO_S=V_{\rmO}(G)$,
\itemb $\CC$ the category of $\varpi$-divisible $\rmO$-modules over $T$.
\end{itemize}
\medskip
Recall that if $G'$ is a $\varpi$-divisible $\rmO$-module over $T$, then by definition \eqref{eq:defcryst} 
$$
\BD_{\rmO}(G')_T= \BD_{\rmO}(G'\times_T S)_{(S\incl T)}.
$$
We get the following theorem which is called the \emph{Grothendieck--Messing} theorem (\cf \cite[Chapter V, Theorem 1.6]{mes} for the case $\rmO=\Zp$ and  \cite[Théorème B.7.1]{far} for its extension to $\varpi$-divisible $\rmO$-modules):
\medskip
\begin{theo}
The functor $\CC\rightarrow \CD$ defined on objects by 
$$
G'\mapsto (G'\times_TS, V_{\rmO}(G')\subset \BD_{\rmO}(G')_{(S\incl T)}),
$$
defines an equivalence of categories.
\end{theo}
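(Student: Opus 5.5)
The plan is to deduce the statement for $\varpi$-divisible $\rmO$-modules from the classical Grothendieck--Messing theorem for $p$-divisible groups, \cite[Chapter V, Theorem 1.6]{mes}. This reduction requires the divided powers on the ideal of $S\incl T$ to be compatible with those of $p$. Since the $\rmO$-divided power structure on $\Sigma$ comes from $\varpi$ rather than $p$, I would first check that every $\rmO$-PD thickening in $\NCRIS_{\rmO}(S/\Sigma)$ is a nilpotent PD-thickening in the classical sense. This should hold because $\gamma_n$ for the $p$-structure can be written as a unit multiple of the $\rmO$-divided powers on elements of the ideal. The underlying $p$-divisible-group statement then applies to $(S\incl T)$.

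Next I would construct the functor $\CC\to\CD$ and prove full faithfulness. Let $G_1', G_2'$ be lifts and let $f\colon G_1'\times_T S\to G_2'\times_T S$ be $\rmO$-linear. By the classical theorem, $f$ lifts to a (unique) homomorphism $f'$ if and only if $\BD(f)_{(S\incl T)}$ preserves the Hodge filtrations. The lift is automatically $\rmO$-linear, since for $a\in\rmO$ the maps $f'\circ\iota_1(a)$ and $\iota_2(a)\circ f'$ both lift $f\circ\iota(a)$ and therefore agree by uniqueness. Here one uses that homomorphisms over a nilpotent thickening are determined by their reduction, by rigidity (Drinfeld's lemma). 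One also has to compare $\BD_{\rmO}(G)$ with $\BD(G)$. I would use \cite[Annexe B]{far}: $\BD_{\rmO}(G)$ is the quotient of $\BD(G)$ on which $\rmO$ acts through the structure map, namely $\BD(G)\otimes_{\rmO\otimes\CO_T}\CO_T$. Under this identification, $V_{\rmO}(G)$ corresponds to the image of $V(G)$, and $\rmO$-linear filtration preservation on the full crystal matches filtration preservation on $\BD_{\rmO}$.

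Essential surjectivity is the delicate part. Given $(G,\CF)$ with $\CF\subset\BD_{\rmO}(G)_{(S\incl T)}$ lifting $V_{\rmO}(G)$, I would build an $\rmO$-stable lift of the Hodge filtration of the full $\BD(G)$. The kernel of $\BD(G)\to\BD_{\rmO}(G)$ lifts the corresponding part of $V(G)$, since strictness forces $\rmO$ to act on $\Lie G$ via the structure map, and the preimage of $\CF$ is then a direct factor lifting $V(G)$. The classical theorem gives a lift $G'$ as a $p$-divisible group. The $\rmO$-action lifts by full faithfulness, because each $\iota(a)$ preserves the filtration, and strictness of $G'$ holds because $\Lie G'$ is the quotient by a filtration on which $\rmO$ acts compatibly.

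The main obstacle is this comparison between $\BD(G)$ and $\BD_{\rmO}(G)$, namely that the universal vector extension in the $\rmO$-sense is the $\rmO$-strict quotient of the usual one, compatibly with crystals. For this I would rely on \cite[Théorème B.6.1, B.7.1]{far} rather than reprove it.
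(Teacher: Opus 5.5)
Your reduction fails at its first step. It is not true that every object of $\NCRIS_{\rmO}(S/\Sigma)$ is a classical PD-thickening. An $\rmO$-PD structure is modelled on $x\mapsto x^q/\varpi$, whereas classical divided powers require $x^p/p!$, i.e.\ division by $p$, which has $\varpi$-valuation $e$. So the classical $\gamma_n$ are in general not integral multiples of the $\rmO$-divided powers.

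Here is a concrete counterexample. Take $e\geqslant p$ and $n>p$, and equip $\Spec(\rmO/\varpi)\incl\Spec(\rmO/\varpi^n)$ with the $\rmO$-PD structure induced by $x\mapsto x^q/\varpi$ on $\varpi\rmO$. This is a nilpotent $\rmO$-PD thickening over $\Sigma$. Yet the ideal $\varpi\rmO/\varpi^n\rmO$ carries no divided powers at all: $p!\,\gamma_p(\varpi)$ would lie in $\varpi^{e+1}\rmO/\varpi^n\rmO$, while $\varpi^p$ is nonzero there (since $p<n$) of valuation $p<e+1$.

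On such a $T$ the classical crystal $\BD(G)$ cannot be evaluated. Hence neither Messing's theorem nor your comparison $\BD_{\rmO}(G)_T\cong\BD(G)_T\otimes_{\rmO\otimes\CO_T}\CO_T$ is available there. The rest of your argument is sound: the preimage of $\CF$ is a direct factor lifting $V(G)$, the $\rmO$-action lifts by rigidity, and $\Lie G'$ is strict. But it only proves the theorem for those $\rmO$-PD thickenings that also carry divided powers compatible with those on $p$.

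Allowing this larger class of thickenings is precisely the purpose of the $\rmO$-crystalline site. Handling it requires constructing $\BD_{\rmO}(G)$ directly as a crystal on $\NCRIS_{\rmO}(S/\Sigma)$, via the $\rmO$-universal vector extension and $\rmO$-divided powers, and then redoing Messing's deformation argument with $\rmO$-divided powers. This is what \cite[Th\'eor\`emes B.6.1 et B.7.1]{far} carries out. The paper simply cites it, together with \cite[Chapter V, Theorem 1.6]{mes} for $\rmO=\Zp$, rather than reducing to the classical case.

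Note also that invoking Th\'eor\`eme B.7.1 for your comparison step is circular, since that theorem is exactly the statement you are trying to prove.
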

\medskip
Finally, we need to compare the Dieudonné crystal with the classical Dieudonné module. The resulting corollary is the following (\cf \cite[Proposition B.8.2]{far} and Remark \ref{rema:bbm}):
\medskip
\begin{coro}\label{cor:diecarcomp}
Let $G$ be a $\varpi$-divisible $\rmO$-module over $k_E$ and let $D_{\rmO}(G)$ be it's Dieudonné module. We have a natural isomorphism
$$
D_{\rmO}(G)\cong \Gamma(\Spec(k_E)\rightarrow \Sigma,\BD_{\rmO}(G))
$$
and the Frobenius is identified with the crystalline Frobenius map on $\BD_{\rmO}(G)$.
\end{coro}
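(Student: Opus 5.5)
The plan is to reduce to the case $\rmO=\Zp$, which is classical, and then transport the statement through the same dévissage used in the proof of Proposition \ref{prop:Ecart}. For a $p$-divisible group $G$ over the perfect field $k_E$, the comparison between the covariant Dieudonné crystal evaluated on the PD-thickening $W(k_E)\rightarrow k_E$ and the classical Dieudonné module is Messing's/Berthelot--Breen--Messing's comparison \cite[Chapter IV]{mes}, \cite{bbm}. In the convention of Remark \ref{rema:bbm}, the crystal is evaluated on $W(k_E)=\varprojlim_n W_n(k_E)$. Under this identification, the crystalline Frobenius (induced by functoriality of the crystal along the absolute Frobenius of $k_E$ lifted to $\sigma$ on $W(k_E)$) corresponds to $F$ on $D(G)$. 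We may also invoke Proposition \ref{prop:pdivtoisoc} over $\bar k_E$ together with Galois descent if convenient.

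Next we pass from $\Zp$ to $\rmO$. Write $\rmO_0\subset\rmO$ for the maximal unramified subring, so $E/E_0$ is totally ramified and $E_0/\Qp$ is unramified of degree $f$. The $\rmO$-divided power structure on $\Sigma$ makes $W_{\rmO}(k_E)\rightarrow k_E$ an object of $\NCRIS_{\rmO}$ (after taking limits as in Remark \ref{rema:bbm}). By \cite[Théorème B.6.1]{far}, the $\rmO$-crystal $\BD_{\rmO}(G)$ is obtained from the $\Zp$-crystal $\BD(G)$ in two steps:
\begin{itemize}
\itemb the unramified step takes the $\psi_0$-isotypic part of $\BD(G)\otimes_{\Zp}\rmO_0$ for the $\rmO_0$-action, matching the graded piece $M_0$;
\itemb the totally ramified step takes the quotient of $\BD(G)\otimes_{\rmO_0}\rmO$ by the ideal defining the strict action, i.e.\ the kernel of $\rmO\otimes_{\rmO_0}\rmO\rightarrow\rmO$.
\end{itemize}
One checks that these operations match the constructions of $D_{\rmO}(G)$ from $D(G)$ used in the proof of Proposition \ref{prop:Ecart}. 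In the unramified case, $D_{\rmO_0}=D_0$ with $F'=V^{1-f}F$, equivalently $F^f$ on the isocrystal. In the totally ramified case, $W_{\rmO}(k_E)=W_{\rmO_0}(k_E)\otimes\rmO$ with $F'=\tfrac{\varpi'}{\varpi}F$ after the identification. The comparison isomorphism for $\Zp$ is $\rmO$-equivariant by functoriality, so it restricts to these isotypic pieces and quotients.

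The Frobenius compatibility follows. The crystalline Frobenius on the $\rmO$-crystal comes from the relative $q$-Frobenius of $k_E$, whose lift to $W_{\rmO}(k_E)$ is $\sigma$. The $p$-power crystalline Frobenius iterated $f$ times preserves the $\psi_0$-component and induces exactly $F'$ there. The main obstacle is the totally ramified step: the $\rmO$-divided powers differ from the usual ones, so one must check that evaluating the $\rmO$-crystal on $W_{\rmO}(k_E)$ really agrees with the strict quotient of the $\Zp$-crystal evaluated on $W_{\rmO_0}(k_E)\otimes\rmO$. This is not a PD-thickening in the usual sense when $e>p-1$. For this, I would rely directly on Fargues' construction \cite[Proposition B.8.2]{far}, where the $\rmO$-crystal is defined via the universal $\rmO$-vector extension. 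I would verify the identification first over $W_{\rmO}(k_E)/\varpi^n$ for $n$ small, where both divided power structures coincide, and then use the rigidity of crystals, namely the isomorphism \eqref{eq:crisprop}, to pass to the limit.
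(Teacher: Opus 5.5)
\textbf{Gap in the totally ramified step.} Your reduction to $\Zp$ does not reach the statement in general, and the patch you propose for the hard step does not work. The obstacle is the one you flag yourself. When $e>p-1$, the thickenings $\rmO/\varpi^n\rightarrow k_E$ with their canonical $\rmO$-divided powers are not PD thickenings in the usual sense (recall $W_{\rmO}(k_E)=\rmO$). So the $\Zp$-crystal $\BD(G)$ has no value there. Consequently, the description you attribute to \cite[Th\'eor\`eme B.6.1]{far} --- $\BD_{\rmO}(G)$ as an isotypic piece plus a strict quotient of $\BD(G)$ --- only makes sense on the part of $\NCRIS_{\rmO}$ coming from ordinary PD thickenings.

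\textbf{Why the proposed patch fails.} You propose to identify everything over $\rmO/\varpi^n$ for small $n$ and then pass to the limit using \eqref{eq:crisprop}. This runs the crystal property in the wrong direction. \eqref{eq:crisprop} says that the value on a smaller thickening is the base change of the value on a larger one. It gives no way to produce the value, or a map, on $\rmO/\varpi^N$ for $N\gg 0$ from data on $\rmO/\varpi^n$. For instance, $(\rmO/\varpi^N,F=1)$ and $(\rmO/\varpi^N,F=1+\varpi^n)$ agree modulo $\varpi^n$ but are not isomorphic. Nakayama would only help if you already had a map at every level $N$, and that is exactly what the $\Zp$-theory cannot supply. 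Building such a map needs an intrinsically $\rmO$-crystalline comparison (e.g.\ via the universal $\rmO$-vector extension of a lift of $G$ to $\rmO$), which is the content of \cite[Proposition B.8.2]{far} itself. The paper's proof is just that citation, together with the limit convention of Remark \ref{rema:bbm}. Once you invoke it, the $\Zp$ d\'evissage is superfluous; without it, your argument covers only $e\leqslant p-1$.

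\textbf{Frobenius normalization in the unramified step.} On the isocrystal $V^{-1}=p^{-1}F$, so $F'=V^{1-f}F=p^{1-f}F^f$, not $F^f$. Hence the $f$-fold iterate of the $p$-crystalline Frobenius on the $\psi_0$-piece is $p^{f-1}F'$, not $F'$. Dividing by $p^{f-1}$ is possible precisely because strictness makes $V\colon M_i\rightarrow M_{i+1}$ bijective for $i\neq -1$. Without that division you would get $F'V'=p^f$ instead of $\varpi$. This is fixable, but it must be built into your identification of the two Frobenii.
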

\medskip
\subsubsection{Classification of SFD over perfect rings}\label{subsubsec:odcarth}
We extend the equivalence of Proposition \ref{prop:Ecart} to special formal $\rmOD$-modules. For this, we first introduce special Cartier modules.
\medskip
\begin{defi}
Let $R\in\Nilp_{\brvO}$. A \emph{special Cartier $R$-module} is a pair $M=(M,\Pi)$ where:
\begin{itemize}
\itemb $M$ is a $\BZ/d$-graded reduced Cartier $R$-module, \ie a Cartier $R$-module endowed with a $\BZ/d$-grading
$$
M=\bigoplus_{i\in \BZ/d}M_i,
$$
such that $\deg F =-1$, $\deg V=1$ and $\deg [a]=0$ for $a\in R$,
\itemb $\Pi$ is a $\BE_{\rmO}(R)$-linear endomorphism $\Pi\colon M\rightarrow M$ such that $\deg \Pi=1$ and $\Pi^d=\varpi$,
\end{itemize}
such that for every $i\in \BZ/d$, $M_i/VM_{i-1}$ is free of rank $1$ over $R$. 

Moreover, we call $i\in\BZ/d$ a \emph{critical index} if the map
$$
\Pi_i\colon M_i/VM_{i-1}\rightarrow M_{i+1}/VM_i,
$$
induced by $\Pi$, is zero.
\end{defi}
\medskip
\begin{lemm}\label{lem:specarthei}
 Let $R$ be a perfect $k$-algebra and let $M$ be a special Cartier $R$-module. Then $\dim M=d$ and $\heig_{\rmO} M $ is a multiple of $d^2$.
\end{lemm}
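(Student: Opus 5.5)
The dimension statement should follow directly from the definitions. By definition $\dim M=\rank_R M/VM$. Because $V$ has degree $1$, the submodule $VM$ is graded with $(VM)_i=VM_{i-1}$, which gives $M/VM=\bigoplus_{i\in\BZ/d}M_i/VM_{i-1}$. Each summand is free of rank $1$ over $R$ by the special condition, so $\dim M=d$.

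For the height, the plan is to use the grading together with $\Pi$. We have $\varpi=\Pi^d$ and $\Pi$ has degree $1$, so $\varpi M_i=\Pi^dM_i\subset M_i$. Hence $M/\varpi M=\bigoplus_i M_i/\varpi M_i$. We also have a filtration
$$
\varpi M_i=\Pi^d M_{i-d}\subset \Pi^{d-1}M_{i-d+1}\subset\dots\subset \Pi M_{i-1}\subset M_i.
$$
Since $\Pi^d=\varpi$ and $V$ are injective ($M$ is $\varpi$-torsion free, being $V$-torsion free with $FV=\varpi$), $\Pi$ is injective. So the successive quotients are $\Pi^{j}(M_{i-j}/\Pi M_{i-j-1})$, which is isomorphic to $M_{i-j}/\Pi M_{i-j-1}$. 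Summing over $i$ and $j$ gives
$$
\rank_R M/\varpi M=d\cdot\sum_{i\in\BZ/d}\rank_R M_i/\Pi M_{i-1}=d\cdot\rank_R M/\Pi M.
$$
Here we use that these quotients are locally free $R$-modules of finite rank, which we get by realising them as quotients compatible with Proposition \ref{prop:Ecart}. It then remains to show that $M_i/\Pi M_{i-1}$ has rank $h$ independent of $i$. Once that holds, $\heig_{\rmO}M=d\cdot dh=d^2h$.

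For this independence, we use that $V$ commutes with $\Pi$ and has degree $1$. It therefore induces an injective $\sigma^{-1}$-linear map $M_{i-1}/\Pi M_{i-2}\to M_i/\Pi M_{i-1}$, whose cokernel is a quotient of $M_i/VM_{i-1}$, which has rank $1$. Likewise $F$ gives a map in the other direction. We will compare ranks: over a perfect field (the ranks are locally constant, so we can reduce to points of $\Spec R$, and perfectness is preserved), these are finite-dimensional vector spaces, since $M_i/\Pi M_{i-1}$ is killed by $\varpi$ and has finite length. Thus $\dim(M_i/\Pi M_{i-1})\geqslant\dim(M_{i-1}/\Pi M_{i-2})$ for all $i$, because $\sigma^{-1}$-semilinear injections preserve dimension. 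Going around the cycle $\BZ/d$ forces all these dimensions to be equal.

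The main obstacle is the rank and finiteness bookkeeping: we must check that $M_i/\Pi M_{i-1}$ is finite locally free (or at least has locally constant fibre dimensions) and that injectivity of the induced map by $V$ survives. For the latter, the map is injective if $Vx\in\Pi M_{i-1}$ implies $x\in \Pi M_{i-2}$, and we would deduce this from $V$ being injective on $M/\Pi M$. To prove that, we would pass to a perfect field and use the Dieudonné-module description of Proposition \ref{prop:pdivtoisoc}, where $M$ is a finite free $W_{\rmO}(k)$-module and $\Pi$, $V$ are injective with finite cokernels. There, lengths are additive, and the equality of dimensions follows from comparing lengths of $M_i/\varpi M_i$ across $i$ via $V$ and $\Pi$, both of which are isomorphisms after inverting $\varpi$.
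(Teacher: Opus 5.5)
\textbf{The gap: $V$ is not injective on $M/\Pi M$.} Your dimension count matches the paper's proof. So does your reduction of the height to the ranks $a_i\coloneqq\rank_R M_i/\Pi M_{i-1}$ via the $\Pi$-filtration of $M_i/\varpi M_i$. The proof breaks at the step meant to show that the $a_i$ do not depend on $i$. The map $\bar V\colon M_{i-1}/\Pi M_{i-2}\to M_i/\Pi M_{i-1}$ induced by $V$ is \emph{not} injective in general. Equivalently, $V$ is not injective on $M/\Pi M$, so no passage to Dieudonné modules can prove it.

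Already for the reference module, all indices of $\bM$ are critical, so Lemma \ref{lem:critspe} gives $\bV\bM=\Pi\bM$. Hence $V$ induces the zero map on $\bM/\Pi\bM$, which is $d$-dimensional over $k$.

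The same happens at every critical index. If $i-1$ is critical, then $\Pi M_{i-1}\subset VM_{i-1}$. So $V$ maps $V^{-1}(\Pi M_{i-1})$ bijectively onto $\Pi M_{i-1}$ and sends $\Pi M_{i-2}$ onto $\Pi VM_{i-2}$. Therefore $\ker\bar V\cong \Pi M_{i-1}/\Pi VM_{i-2}\cong M_{i-1}/VM_{i-2}$, which has rank one. Critical indices always exist over a perfect field (Lemma \ref{lem:havcrit}), so the inequalities $a_i\geqslant a_{i-1}$ can never be obtained this way.

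Your fallback does not help either. One has $\rank_R M_i/\varpi M_i=\sum_{j=0}^{d-1}a_{i-j}$, which is a sum over all of $\BZ/d$. Its independence of $i$ is therefore automatic and says nothing about the individual $a_i$.

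\textbf{How to fix it.} Do not pass to $M/\Pi M$; use only the injectivity of $V$ and $\Pi$ on $M$ itself. Compare the two chains $\Pi VM_{i-2}\subset VM_{i-1}\subset M_i$ and $\Pi VM_{i-2}\subset \Pi M_{i-1}\subset M_i$. Since $V$ and $\Pi$ commute and are injective:
\begin{itemize}
\item $VM_{i-1}/V\Pi M_{i-2}\cong M_{i-1}/\Pi M_{i-2}$, up to a Frobenius twist, which is harmless because $R$ is perfect;
\item $\Pi M_{i-1}/\Pi VM_{i-2}\cong M_{i-1}/VM_{i-2}$.
\end{itemize}
Together with the special condition in degrees $i$ and $i-1$, this gives $1+a_{i-1}=a_i+1$. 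This is exactly the paper's argument (computing the rank of $M_i/\Pi VM_{i-2}$ in two ways). In terms of your map, the true statement is not that $\bar V$ is injective but that its kernel and cokernel have the same length. Over a field, both are one-dimensional exactly when $i-1$ is critical, and zero otherwise.

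\textbf{A smaller point.} The $\varpi$-torsion-freeness of $M$ does not follow from the injectivity of $V$ together with $FV=\varpi$: from $\varpi x=0$ you only get $F(Vx)=0$. The paper instead attributes it to the perfectness of $R$.
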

\medskip
\begin{proof}
 The claim on the dimension is clear since 
 $$
 \dim M=\rank_R M/VM=\sum_{i\in \BZ/d}\rank_RM_i/VM_{i-1}=d.
 $$
First, since $V$ is injective and commutes with $\varpi$, we get that $V\colon M_i/\varpi M_{i} \rightarrow M_{i+1}/\varpi M_{i+1}$ is injective, thus $r\coloneqq \rank_R M_i/\varpi M_i$ is independent of $i$ and $\rank_R M/\varpi M=dr$. Moreover, because $R$ is perfect, $M$ has no $\varpi$-torsion: thus $\Pi$ is injective, and since
 $$
 \rank_RM_i/\varpi M_{i}=\sum_{j=0}^{d-1} \rank_R\Pi^jM_{i-j}/\Pi^{j+1} M_{i-j-1}=\sum_{j=0}^{d-1} \rank_RM_{i-j}/\Pi M_{i-j-1}.
 $$
Thus it remains to prove that $\rank_RM_{i}/\Pi M_{i-1}$ is independent of $i$. Computing the rank of $M_{i}/\Pi VM_{i-2}$ in two different ways, we get
 $$
 \rank_RM_{i}/V M_{i-1}+\rank_RVM_{i-1}/\Pi V M_{i-2}= \rank_RM_{i}/\Pi M_{i-1}+\rank_R \Pi M_{i-1}/\Pi V M_{i-2},
 $$
 and since 
 $$
 \begin{gathered}
 \rank_RM_{i}/V M_{i-1}= \rank_R \Pi M_{i-1}/\Pi V M_{i-2}=1,\\
 \rank_RVM_{i-1}/\Pi V M_{i-2}=\rank_RM_{i-1}/\Pi M_{i-2},
 \end{gathered}
 $$
 we get $\rank_RM_{i}/\Pi M_{i-1}=\rank_RM_{i-1}/\Pi  M_{i-2}$.
\end{proof}
\medskip

Let $R\in \Nilp_{\brvO}$, and let $X$ be a special formal $\rmOD$-module over $R$, we associate to $X$ a special Cartier $R$-module. Let $M=M_{\rmO}(X)$ be its associated reduced Cartier $R$-module. By functoriality, $M$ is an $\rmO_d\otimes_{\rmO}\rmW_{\rmO}(R)$-module and by \eqref{eq;airdi}:
$$
\rmO_d\otimes_{\rmO}\rmW_{\rmO}(R)\cong \rmW_{\rmO}(R_d)\cong \prod_{i\in \BZ/d}\rmW_{\rmO}(R_i),
$$
and we obtain a $\BZ/d$-grading
$$
M=\bigoplus_{i\in \BZ/d}M_i,\quad \text{where } M_i\coloneqq M\otimes_{\rmW_{\rmO}(R_d)}\rmW_{\rmO}(R_i).
$$
For this grading $\deg F=-1$, $\deg V=1$, and for $a\in R$, $\deg [a]=0$. Moreover, for $i\in \BZ/d$,
$$
M_i/VM_{i-1}\cong \Lie(X)_i,
$$
which is locally free of rank $1$ over $R$ by the special condition. By functoriality, $\Pi\in \rmOD$ defines and endomorphism $\Pi\colon M\rightarrow M$, such that $\Pi^d=\varpi$ and $\deg \Pi=1$. From the grading and the endomorphism $\Pi$ we can reconstruct the action of $\rmOD$ on $M$. We thus obtain the following proposition: 
\medskip
\begin{prop}\label{prop:classod}
Let $R\in \Nilp_{\brvO}$. The previous construction gives an equivalence of categories
$$
\left\{\text{Special formal $\rmOD$-modules over $R$} \right\}
\xrightarrow{M_{\rmO}}
\left \{\text{Special Cartier $R$-modules}\right \}.
$$
\end{prop}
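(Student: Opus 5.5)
The proof reduces Proposition \ref{prop:classod} to the equivalence of Proposition \ref{prop:Ecart}, applied to the underlying formal $\rmO$-module. The point is that the extra structure of an SFD (the $\rmOD$-action together with the special condition) corresponds exactly to the extra structure of a special Cartier module (the $\BZ/d$-grading, the endomorphism $\Pi$ and the rank one condition). We work over $R\in\Perf_k$, where Proposition \ref{prop:Ecart} is available; for general $R\in\Nilp_{\brvO}$ the same argument runs verbatim once one has the general Cartier theory of Zink and Drinfeld, and only the perfect case is used later. The first step is to observe that $M_{\rmO}$ is fully faithful and essentially surjective onto reduced Cartier modules. An $\rmOD$-action on a formal $\rmO$-module $X$ is then the same as an $\rmO$-algebra morphism $\rmOD\to\End(M_{\rmO}(X))$ landing in $\BE_{\rmO}(R)$-linear endomorphisms. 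The $\rmO$-linearity is automatic because the $\rmO$-action on $M$ comes from $W_{\rmO}(R)$.

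The second step translates the $\rmO_d$-action. An action of $\rmO_d$ on the $\BE_{\rmO}(R)$-module $M$ commuting with $F,V$ makes $M$ a module over $\rmO_d\otimes_{\rmO}W_{\rmO}(R)\cong\prod_{i\in\BZ/d}W_{\rmO}(R_i)$, hence a $\BZ/d$-grading. Here one checks the degrees: $F$ is $\sigma$-semilinear and $V$ is $\sigma^{-1}$-semilinear for the $W_{\rmO}(R)$-structure while commuting with $\rmO_d$, so with the convention $\psi_i=\psi_0\circ\sigma^i$ they shift the idempotents by $-1$ and $+1$ respectively. The $\rmO_d$-action is $\rmO$-linear and commutes with $F$ and $V$, hence fixes the factors $W_{\rmO}(R)$ and $R$, so $\deg[a]=0$. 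Conversely, a grading with these degree properties defines an $\rmO_d$-action by letting $a\in\rmO_d$ act on $M_i$ by $\psi_i(a)$, which is well defined since $R$ is a $\brvO$-algebra. Strictness of the $\rmO$-action holds on both sides by the definition of reduced Cartier modules over $\rmO$.

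The third step adjoins $\Pi$. An $\rmOD$-action is an $\rmO_d$-action together with $\Pi$ satisfying $\Pi a=\sigma(a)\Pi$ and $\Pi^d=\varpi$. On the Cartier side, the twisted commutation says exactly that $\Pi$ maps the $\psi_i$-eigencomponent into the $\psi_{i+1}$-component, that is $\deg\Pi=1$; the sign has to be matched against the chosen generator $\sigma$. The special condition uses the identity $\Lie(X)=M/VM$ from Proposition \ref{prop:Ecart}: compatibility with the gradings gives $\Lie(X)_i=M_i/VM_{i-1}$, so $\Lie(X)$ is invertible over $R_d$ exactly when every $M_i/VM_{i-1}$ is locally free of rank $1$. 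Note that the definition asks for free; I would either work Zariski locally on $R$ or read the definition as locally free.

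Full faithfulness follows because morphisms on both sides are just morphisms in the underlying categories commuting with the $\rmOD$-action, which is respected by $M_{\rmO}$. The main obstacle is the careful bookkeeping of the degree conventions in steps two and three, namely the directions in which $F$, $V$ and $\Pi$ shift the grading, rather than any conceptual difficulty.
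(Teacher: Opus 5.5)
Your proposal is correct and follows the paper's own route. The paper also reduces to Proposition \ref{prop:Ecart}, recovers the $\rmO_d$-action from the $\BZ/d$-grading as in the unramified step of that proof, and then recovers the $\rmOD$-action from $\Pi$ (citing Boutot--Carayol for $d=2$). Your handling of the special condition via $\Lie(X)_i=M_i/VM_{i-1}$, and your caveats about perfect $R$ and ``free'' versus ``locally free'', are consistent with how the paper uses the statement.

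On the degree bookkeeping you flagged, there is a sign slip. With $\psi_i=\psi_0\circ\sigma^i$, the fact that $F$ is $\sigma$-linear and commutes with $\rmO_d$ forces $F(M_i)\subset M_{i+1}$, and $\Pi a=\sigma(a)\Pi$ gives $\Pi(M_i)\subset M_{i-1}$. So the stated degrees $\deg F=-1$, $\deg V=\deg\Pi=1$ (and your converse ``let $a$ act on $M_i$ by $\psi_i(a)$'') need indexing by $\psi_0\circ\sigma^{-i}$ instead. This is a harmless global reindexing, invisible for $d=2$, and the paper's stated conventions have the same slip.
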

\medskip
\begin{proof}
Following the proof of Proposition \ref{prop:Ecart} in the unramified case, we see that the action of $\rmO_d$ can be recovered from the grading and the action of $\rmOD$ is then obtained from the action of $\Pi$. See \cite[Chapitre II, section 2, Théorème]{boca} for $d=2$, the proof adapts easily for arbitrary $d\geqslant 2$.
\end{proof}
For the rest of this paragraph, we suppose $R=\kappa$ is an algebraically closed field of characteristic $p$.
\medskip

\begin{lemm}\label{lem:critspe}
Let $M$ be a special Cartier $\kappa$-module of height $d^2$. Then $M$ admits a critical index and for every critical index $i\in \BZ/d$ we get $\Pi M_i= V M_i$.
\end{lemm}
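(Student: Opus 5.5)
The proof has two parts: produce a critical index, then show that at a critical index the inclusion $\Pi M_i\subset VM_i$ is an equality by comparing colengths inside $M_{i+1}$.

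\emph{Existence of a critical index.} I would argue as in Lemma \ref{lem:havcrit}. Since $\Pi$ commutes with $V$, it induces the maps $\Pi_i\colon M_i/VM_{i-1}\rightarrow M_{i+1}/VM_i$. Their composite around the cycle is induced by $\Pi^d=\varpi$. Because $\varpi=VF$ (indeed $FV=\varpi$ and $F$, $V$ commute with $\varpi$ in the Cartier ring over a perfect ring), we have $\varpi M\subset VM$. Hence this composite is zero on $M/VM$. Each $\Pi_i$ is a $\kappa$-linear map between one-dimensional $\kappa$-vector spaces, so it is either $0$ or an isomorphism. Therefore some $\Pi_i$ vanishes.

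\emph{Equality $\Pi M_i=VM_i$ at a critical index.} If $i$ is critical, then $\Pi_i=0$ means exactly $\Pi M_i\subset VM_i$, and both are submodules of $M_{i+1}$. Since $\kappa$ is perfect, $V$ is $\sigma^{-1}$-linear with $\sigma$ bijective, so $VM_i$ is a $\rmW_{\rmO}(\kappa)$-submodule; $\Pi M_i$ clearly is one. It therefore suffices to show that $M_{i+1}/VM_i$ and $M_{i+1}/\Pi M_i$ have the same finite length as $\rmW_{\rmO}(\kappa)$-modules. The first quotient is one-dimensional over $\kappa$ by the special condition.

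For the second quotient, I reuse the computation in the proof of Lemma \ref{lem:specarthei}. There, with $r=\rank M_j/\varpi M_j$ independent of $j$ and height $d^2=dr$, we get $r=d$. We also showed that $\rank M_j/\Pi M_{j-1}$ is independent of $j$, and these $d$ ranks sum to $r=d$. Note that $\Pi M_i\supset \Pi^dM_{i+1-d}=\varpi M_{i+1}$, so $M_{i+1}/\Pi M_i$ is a quotient of $M_{i+1}/\varpi M_{i+1}$ and hence a $\kappa$-vector space; by the above it has dimension $1$. Thus $\Pi M_i\subset VM_i\subset M_{i+1}$ with both inclusions into $M_{i+1}$ of colength $1$, which forces $\Pi M_i=VM_i$.

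The only delicate point I expect is justifying that these length comparisons are legitimate. For this, $M_{i+1}$ should be a finite free $\rmW_{\rmO}(\kappa)$-module, or at least $VM_i$ should have finite colength. This follows because $M$ is $\varpi$-torsion free (as used in Lemma \ref{lem:specarthei}), $\varpi$-adically complete (from $V$-completeness together with $\varpi=VF$), and $M_j/\varpi M_j$ is finite-dimensional. Everything else is the rank bookkeeping already done in Lemma \ref{lem:specarthei}.
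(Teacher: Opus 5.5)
Your proposal is correct and is essentially the paper's proof: you obtain a critical index by the argument of Lemma~\ref{lem:havcrit}, and at a critical index $i$ the rank bookkeeping of Lemma~\ref{lem:specarthei} (with height $d^2$) gives $\dim_\kappa M_{i+1}/\Pi M_i=1=\dim_\kappa M_{i+1}/VM_i$, so the inclusion $\Pi M_i\subset VM_i$ is an equality. Your extra details, such as $\varpi M_{i+1}\subset \Pi M_i$, only make explicit what the paper leaves implicit; the closing finite-length worry is unnecessary because both quotients are already one-dimensional, and $\varpi M\subset VM$ follows most directly from $\varpi$ acting by zero on the $\kappa$-module $M/VM$, rather than from your parenthetical about commutation.
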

\medskip
\begin{proof}
Note that the situation is the same as in Lemma \ref{lem:havcrit} and we get the existence of a critical index $i\in\BZ/d$ by the same proof, which means that $\Pi M_i\subset V M_i$. Using Lemma \ref{lem:specarthei} we see that $\dim_{\kappa} M_{i+1}/\Pi M_{i}=1$, which means that $V M_i$ and $\Pi M_i$ are two submodules of $M_{i+1}$ having the same colength. Since $\Pi M_i\subset V M_i$, we get $\Pi M_i = V M_i$.
\end{proof}
An essential construction of special Cartier modules is the following: let $M$ be a special Cartier $k$-module of height $d^2$ and let $i\in \BZ/d$ be a critical index. By Lemma \ref{lem:critspe} we know that $\Pi M_i = V M_i$ and since $V\colon M_i\incl M_{i+1}$ is injective we can define $\Phi=V^{-1}\Pi$ which is a $\sigma$-linear endomorphism of $M_i$. Hence $D_i\coloneqq(M_i\otimes_{\Zp}\Qp,\Phi)$ is a $\sigma$-isocrystal. Moreover, $M_i\subset M_i\otimes_{\Zp}\Qp$ is a $\rmW_{\rmO}(k)$-lattice of rank $d$ such that $\Phi M_i= M_i$, so $D_i$ is of slope $0$. Note that the association $M\mapsto D_i$ is functorial.

\medskip
\begin{coro}\label{cor:autosfd}
Let $\kappa$ be an algebraically closed field of characteristic $p$.
\begin{itemize}
\itemb There exists, up to isogeny, a unique special formal $\rmOD$-module of height $d^2$ over $\kappa$.
\itemb Let $X$ a special formal $\rmOD$-module of height $d^2$ over $\kappa$. Then $\End_{\rmOD}=M_d(E)$ and its group of $\rmOD$-linear quasi-isogenies is identified with $\GL_d(E)$.
\end{itemize}
\end{coro}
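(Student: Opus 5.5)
The plan is to reduce both statements to linear algebra on the slope-$0$ isocrystal $D_i$ attached to a critical index. Throughout, quasi-isogenies of formal $\rmO$-modules over $\kappa$ correspond to isomorphisms of $E$-isocrystals (Remark \ref{rem:pdivtoisoc}, Propositions \ref{prop:Ecart} and \ref{prop:pdivtoisoc}). Adding the grading and $\Pi$, as in Proposition \ref{prop:classod}, quasi-isogeny classes of SFD over $\kappa$ correspond to isomorphism classes of the rational objects $N=M\otimes_{\Zp}\Qp$. Such an $N$ is a $\BZ/d$-graded isocrystal with $\Pi$ of degree $1$, $\Pi^d=\varpi$, and $FV=\varpi$.

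\textbf{Reduction to one graded piece.} After inverting $p$, $\Pi$ is invertible, so $N$ is determined by $N_i$ together with the $\sigma^{-1}$-linear operator $\Pi^{-1}V\colon N_i\to N_i$. Indeed, one sets $N_{i+j}=\Pi^jN_i$ and transports $V$ and $F$ using the fact that $V$ commutes with $\Pi$. Equivalently, $N$ is determined by $\Phi=V^{-1}\Pi$ on $N_i$, the inverse of this operator. I would check that the functor $N\mapsto (N_i,\Phi)$ is an equivalence between these rational graded objects and $E$-isocrystals of dimension $d$ over $\kappa$ (using $\dim N_i = d$ from height $d^2$). A morphism is determined by its degree-$i$ component commuting with $\Phi$; conversely, a $\Phi$-equivariant map on $N_i$ extends uniquely by $\Pi$-equivariance.

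\textbf{Uniqueness.} For any SFD $X$ of height $d^2$ over $\kappa$, pick a critical index $i$ (Lemma \ref{lem:critspe}). By the construction after that lemma, $(N_i,\Phi)$ has slope $0$. By Dieudonné--Manin over algebraically closed $\kappa$, it is isomorphic to $(\brv E^d, \sigma)$. The index $i$ may differ between two SFDs $X$ and $X'$, but we may shift the grading: $\Pi^{j}$ identifies $N_i$ with $N_{i+j}$ compatibly with the operator. Hence all such $N$ are isomorphic, which gives uniqueness up to quasi-isogeny. The shifted grading must still be compatible with the $\rmO_d$-action; this amounts to conjugating the $\rmO_d$-action by the Galois element $\sigma^j$, so the index shift needs some bookkeeping here.

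\textbf{Existence.} I would write down an explicit special Cartier $\kappa$-module. Take $M_j=W_{\rmO}(\kappa)^d$ for every $j$, with $\Pi\colon M_j\to M_{j+1}$ the identity for $j\neq d-1$ and multiplication by $\varpi$ for $j=d-1$. Set $V=\Pi\circ\sigma^{-1}$. Then $F=\varpi V^{-1}$ is integral, $\Pi$ commutes with $V$ and $F$, $\Pi^d=\varpi$, and $V$ is topologically nilpotent since $V^d=\varpi\sigma^{-d}$. The special condition holds provided the $\rank M_j/VM_{j-1}$ are each $1$; with this choice they are $0$ or $d$, so I would instead take a standard "Drinfeld" module. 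Concretely, take $M$ to be the $\BE_{\rmO}$-module with $V$-basis $\gamma_0,\dots,\gamma_{d-1}$, $\gamma_j\in M_j$, and relations $\Pi\gamma_j=V\gamma_{j-1}$ and $F\gamma_j=V\gamma_{j-1}$ (indices suitably shifted), which realizes all indices as critical. The step that needs checking is that this presentation gives a reduced module of height $d^2$; this can be done by computing $M_j/\varpi M_j$.

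\textbf{Endomorphisms.} By the equivalence, $\End_{\rmOD}(X)\otimes\BQ=\End(N_i,\Phi)\cong\End(E^d\otimes\brv E,\sigma)=M_d(E)$, since the $\sigma$-invariants of $\brv E$ are $E$. Quasi-isogenies are the invertible elements, hence $\GL_d(E)$. The main obstacle is making the reduction to one graded piece rigorous at the rational level, including the compatibility of the $\rmO_d$-grading under index shifts in the uniqueness step. The existence construction is routine once written out.
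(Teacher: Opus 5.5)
You take the paper's route for uniqueness and for endomorphisms: pick a critical index, note that $(M_i\otimes_{\Zp}\Qp,V^{-1}\Pi)$ has slope $0$, apply Dieudonn\'e--Manin, and compute quasi-endomorphisms as $\End_\sigma$ of that isocrystal, which is $M_d(E)$ with units $\GL_d(E)$. Your rational equivalence $N\mapsto(N_i,V^{-1}\Pi)$ makes explicit the step ``$D_i$ unique $\Rightarrow$ $M$ unique up to isogeny'', which the paper leaves implicit.

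Your worry about regrading is unnecessary. Since $V$ and $\Pi$ commute and $\Pi$ is invertible after inverting $p$, $\Pi^j$ is an isomorphism of isocrystals from $(N_i,V^{-1}\Pi)$ to $(N_{i+j},V^{-1}\Pi)$. So you can fix the index $0$ once and for all for every $X$. The grading and the $\rmO_d$-action are never changed.

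The existence step, however, fails as written. Neither of your relations is homogeneous for the grading. Indeed $\Pi\gamma_j\in M_{j+1}$ while $V\gamma_{j-1}\in M_j$, and $F\gamma_j\in M_{j-1}$ while $V\gamma_{j-1}\in M_j$. Fixing the first relation to $\Pi\gamma_j=V\gamma_j$ (which is what ``all indices critical'' means) forces the second. Commutation gives $\Pi^d\gamma_j=V^d\gamma_j$, hence $V(F\gamma_j)=\varpi\gamma_j=\Pi^d\gamma_j=V^d\gamma_j$. By injectivity of $V$ this gives $F\gamma_j=V^{d-1}\gamma_j$.

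No index shift of your $F$-relation produces this when $d>2$. The only homogeneous variant, $F\gamma_j=V\gamma_{j-2}$, would give $V^d\gamma_j=V^2\gamma_{j-2}$, hence $V^{d-2}\gamma_j=\gamma_{j-2}$. This contradicts $\gamma_{j-2}$ being nonzero in $M/VM$. What you wrote is the $d=2$ case extrapolated incorrectly.

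With the correct relations $\Pi\gamma_j=V\gamma_j$ and $F\gamma_j=V^{d-1}\gamma_j$, each $M_j$ is free over $W_{\rmO}(\kappa)$ on $V^n\gamma_{j-n}$ for $0\leqslant n\leqslant d-1$. So the height is $d^2$ and $M_j/VM_{j-1}=\kappa\gamma_j$. This is the paper's reference module $\bM=\rmOD\otimes_{\rmO}W_{\rmO}(\kappa)$ with $\bV=\Pi\otimes\sigma^{-1}$: take $\gamma_j\in\boeta^j=\bM_j^{\bU=1}$ lifting a generator of $\bM_j/\bV\bM_{j-1}$. That object is where existence actually comes from; the paper's proof of the corollary itself argues only uniqueness.
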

\medskip
\begin{proof}
By the Dieudonné-Manin classification, there exists a unique unit isocrystal over $\kappa$ of dimension $d$. This gives the uniqueness of $D_i$ and thus the uniqueness of $M=(M,\Pi)$ up to isogeny which proves the first point by Proposition \ref{prop:classod}. The second point follows from the functoriality of the composition $X\mapsto M_{\rmO}(X)=M\mapsto D_i$ and the fact that $\Aut_{\sigma}(D_i)=\GL_d(E)$.
\end{proof}
\subsubsection{Grothendieck--Messing theory of SFD}\label{subsubsec:godsfd}
As in the case of Cartier modules, the Dieudonné crystal of an SFD is endowed with an action of $\rmOD$, which can be translated into a grading and the action of a degree~$1$ endomorphism $\Pi$.
. 

Let $\brv{\Sigma}=\Spec \brvO$ and $S$ be a scheme over $\brv{\Sigma}$. Let $X$ be a special formal $\rmOD$-module and let $\BD_{\rmO}(X)$ be its Dieudonné crystal, which is endowed with an action of $\rmOD$. Note that $S\otimes_{\rmO}\rmO_d=\sqcup_{i\in \BZ/d}S_i$. Thus, for $\CO_{S/\brv \Sigma}$, the structure sheaf of the crystalline site $\NCRIS_{\rmO}(S/\brv{\Sigma})$, we get 
$$
\CO_{S/\brv \Sigma}\otimes_{\rmO}\rmO_d=\bigoplus_{i\in \BZ/d}\CO_{S_i/\brv{\Sigma}}.
$$
This defines a $\BZ/d$-grading $\BD_{\rmO}(X)=\bigoplus_{i\in \BZ/d}\BD_{\rmO}(X)_i$ as before. Moreover, $\Pi\in \rmOD$ defines a map $\Pi\colon \BD_{\rmO}(X)\rightarrow \BD_{\rmO}(X)$ which is of degree $+1$ and satisfies $\Pi^d=\varpi$. As before, the grading together with $\Pi$ determines the action of $\rmOD$.

Since the map $h_X\colon \BD_{\rmO}(X)\rightarrow \Lie(X)$ induced by the Hodge filtration, is functorial in $X$, it preserves the grading and we obtain 
\begin{equation}\label{eq:mesisgrad}
h_X=\sum_{i\in \BZ/d} h_{X,i}\colon \bigoplus_{i\in \BZ/d}\BD_{\rmO}(X)_i\rightarrow \bigoplus_{i\in \BZ/d}\Lie(X)_i.
\end{equation}
For any $i\in \BZ/d$, $h_{X,i+1}\circ \Pi_{i} = \Pi_{i}\circ h_{X,i}$. 

Let us spell out the Grothendieck--Messing theorem in this case. We define

\begin{itemize}
\itemb $\CD_{\rmOD}$ the category of pairs $(X,\CF)$ where $X$ is a special formal $\rmOD$-module over $S$ and $\CF\subset \BD_{\rmO}(X)_{(S\incl T)}$ is an $\rmOD$-stable direct factor of $\BD_{\rmO}(X)_{S\incl T}$ such that $\CF\otimes_{\CO_T}\CO_S=V_{\rmO}(X)$,
\itemb $\CC_{\rmOD}$ the category of special formal $\rmOD$-modules over $T$.
\end{itemize}

\medskip
\begin{coro}\label{cor:gmod}
 The functor $\CC_{\rmOD}\rightarrow \CD_{\rmOD}$ defined on objects by 
$$
X'\mapsto (X'\times_TS, V_{\rmO}(X')\subset \BD_{\rmO}(X')_{(S\incl T)}),
$$
defines an equivalence of categories.
\end{coro}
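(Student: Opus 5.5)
The plan is to deduce Corollary~\ref{cor:gmod} from the Grothendieck--Messing theorem for $\varpi$-divisible $\rmO$-modules, the equivalence $\CC\to\CD$, by tracking the extra $\rmOD$-structure on both sides. Functoriality of the Dieudonné crystal and of the universal vector extension in $G$ will be used throughout.

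The first step is to add the $\rmOD$-action. Since $\CC\to\CD$ is an equivalence, it is bijective on morphisms, and so it induces a bijection between $\rmOD$-actions $\iota'\colon \rmOD\to\End_T(X')$ and pairs consisting of an action $\iota$ on $X=X'\times_TS$ together with the condition that $\CF$ is stable under the induced action on $\BD_{\rmO}(X)_{(S\incl T)}$. Indeed, an endomorphism of $X$ lifts to $X'$ exactly when its crystalline realization preserves $\CF$; this is the full faithfulness statement of the theorem applied to the object $(X,\CF)$. The lift is unique, so relations such as $\Pi^d=\varpi$ and $\Pi a=\sigma(a)\Pi$ persist. The lifted action is $\Zp$-linear and the restriction to $\rmO$ agrees with the given $\rmO$-module structure, because that structure is part of the data in $\CC$ and $\CD$. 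Thus we obtain an equivalence between $\Pi$-divisible $\rmOD$-modules over $T$ and pairs $(X,\CF)$ with $\CF$ an $\rmOD$-stable direct factor lifting $V_{\rmO}(X)$. Full faithfulness restricted to $\rmOD$-linear morphisms is immediate from the same argument.

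The second step is to check that the remaining conditions descend and lift. Formality of $X'$ is equivalent to formality of $X$, since $T\to S$ is a nilpotent thickening and formality, meaning connectedness of the $p$-divisible group, can be checked on points of the reduced scheme. For strictness and the special condition, I would use that $\Lie(X')=\BD_{\rmO}(X')_T/V_{\rmO}(X')=\BD_{\rmO}(X)_{(S\incl T)}/\CF$. Because $\CF$ is $\rmOD$-stable, this quotient carries the $\BZ/d$-grading of \eqref{eq:mesisgrad}, and its reduction to $S$ is $\Lie(X)$. The grading on the crystal comes from the $\CO_{S/\brv\Sigma}\otimes_{\rmO}\rmO_d$-structure, and this structure is the one through which $\rmO_d$ acts via $T\to\brv\Sigma$. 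Hence $\rmO$ acts on $\Lie(X')$ through the structure map, which is strictness. Each graded piece $\Lie(X')_i$ is a finitely presented $\CO_T$-module: it is a direct factor of a locally free module, since $\CF$ is a direct factor. Its reduction modulo the nilpotent ideal of $T$ is the invertible module $\Lie(X)_i$, so by Nakayama it is invertible. Hence $X'$ is an SFD exactly when $X$ is.

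The main obstacle is strictness: showing that the grading on $\BD_{\rmO}(X)_{(S\incl T)}$, which is defined via the crystalline structure sheaf, matches the grading of $\Lie(X')$ defined via $R_d=R\otimes_{\rmO}\rmO_d$ over $T$. I would handle this by evaluating on $T$ using the lift $X'$ as in \eqref{eq:defcryst}, where both gradings come from the same $\rmO_d\otimes_{\rmO}\CO_T$-module structure on $\Lie(E_{\rmO}(X'))$. Here I use the strictness of the universal vector extension for $\rmO$, which is built into the $\rmO$-version of the theory in \cite{far}.
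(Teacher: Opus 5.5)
Your proposal is correct and follows the argument the paper leaves implicit. The paper states the result as a direct corollary of the $\rmO$-linear Grothendieck--Messing theorem, and you do the same: transport the $\rmOD$-action through the full faithfulness of $\CC\to\CD$, then check that formality and the special condition pass between $X$ and $X'$. Strictness of the $\rmO$-action on $X'$, which you call the main obstacle, is not actually an obstacle: it is already built into the objects of $\CC$ in Fargues' $\rmO$-version, as you note yourself at the end, so that paragraph is redundant.
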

\medskip
We do not spell out the analogues of the previous propositions for special formal $\rmOD$-modules. Let us just note that from Proposition \ref{prop:classod} and Corollary \ref{cor:diecarcomp}, if $X$ is a special formal module $\rmOD$ module over $k$, we have an isomorphism of special Cartier modules over $k$: 
\begin{equation}\label{eq:cartvsmess}
M_{\rmO}(X)\cong D_{\rmO}(X)=\Gamma(\Spec(k)\rightarrow \Sigma,\BD_{\rmO}(X)).
\end{equation}

\subsection{Drinfeld's moduli space}

This subsection is devoted to the definition and properties of Drinfeld's moduli space. We also describe the main construction producing lattices from critical indices.
\subsubsection{Definition of the moduli problem}\label{subsubsec:defimoduli}
We define the Drinfeld moduli space. Let $\bX$ be the special formal $\rmOD$-module over $k$, the residue field of $\brvO$, whose special Cartier $k$-module is given by:
$$
\bM \coloneqq \rmOD \otimes_{\rmO}\rmW_{\rmO}(k),
$$
where the Verschiebung $\bV$ is defined by\footnote{Hence the Frobenius is defined by $\bF (1\otimes m)= \Pi^{d-1}\otimes \sigma(m)$.} $\bV (1\otimes m) = \Pi\otimes \sigma^{-1}(m)$ for all $m\in W_{\rmO}(k)$. 

We define the following functor on $\Nilp_{\brvO}$: 
\begin{equation}\label{eq:modu}
\brv{\CM}_{\Dr}\colon R\in \Nilp_{\brvO}\mapsto \left\{(X,\rho) \mid \begin{array}{l} \text{ $X$ a special formal $\rmOD$-module over $R$, } \\
					\rho \colon \bX \otimes_{ k} R/p \dashrightarrow X \otimes _R  R/p \\
					\text{ an $\rmOD$-linear quasi-isogeny}\end{array} \right \}.
\end{equation}
By Corollary \ref{cor:autosfd}, this definition is independent, up to isomorphism, of the choice of $\bX$ and the action of $\GL_d(E)$ by $\rmOD$-linear quasi-isogenies of $\bX$ induces an action on the functor $\brv{\CM}_{\Dr}$ defined, for $g\in \GL_d(E)$, by
$$
g\colon x=(X,\rho)\mapsto g\cdot x\coloneqq (X,\rho\circ g^{-1}).
$$
The functor \eqref{eq:modu} is an example of a \emph{Rapoport--Zink space}; in \cite{razi}, Rapoport and Zink show that a wide class of such moduli problems are representable by formal schemes that are locally formally of finite type over $\brvO$. Applying this theory, specifically \cite[3.25]{razi} and \cite[3.54]{razi}, to prove the representability of the moduli problem: 
\medskip
\begin{prop}\label{prop:razirep}
The functor (\ref{eq:modu}) is representable by a formal scheme, which we still write $\brv{\CM}_{\Dr}$, locally formally of finite type over $\brvO$.
\end{prop}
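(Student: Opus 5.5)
The plan is to deduce Proposition \ref{prop:razirep} from the general representability theorem of Rapoport--Zink \cite[Theorem 3.25]{razi}. We first forget the special condition and view $\brv{\CM}_{\Dr}$ inside a Rapoport--Zink moduli problem. Let $\CM'$ be the functor on $\Nilp_{\brvO}$ of pairs $(X,\rho)$, where $X$ is a $p$-divisible group with an $\rmOD$-action $\iota$ and $\rho\colon \bX\otimes_k R/p\dashrightarrow X\otimes_R R/p$ is an $\rmOD$-linear quasi-isogeny. Here $\bX$ is the fixed basepoint of \S\ref{subsubsec:defimoduli}.

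In the language of \cite{razi}, this is the EL-type datum $(B,V)=(\rmD,\rmD)$, with the order $\rmOD$ and the self-dual periodic lattice chain given by the single lattice $\rmOD$ (an $\rmOD$-module of rank one). The framing object is $\bX$ with its isocrystal $\bM\otimes\Qp$. The Kottwitz (determinant) condition fixes the characteristic polynomial of $\rmO_d$ acting on $\Lie X$. Concretely, it imposes that each graded piece $\Lie(X)_i$ has rank $1$.

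\cite[3.25]{razi} then says this functor is representable by a formal scheme locally formally of finite type over $\brvO$. The proof there goes in three steps.
\begin{itemize}
\itemb Representability of quasi-isogenies of $p$-divisible groups without extra structure, via \cite[2.16]{razi}.
\itemb The $\rmOD$-action is a closed condition.
\itemb The determinant condition is an open-and-closed condition.
\end{itemize}

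Next I identify $\brv{\CM}_{\Dr}$ with this representable functor. Strictness of the $\rmO$-action is part of the Kottwitz condition. It forces $\rmO$ to act on $\Lie X$ through $\brvO\to R$, so $\Lie X$ is an $R_d$-module decomposing as in \eqref{eq;lidi}. The determinant condition then says exactly that $\Lie(X)_i$ is locally free of rank one for every $i$, which is Definition \ref{def:sfd}.

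It remains to check formality of $X$. Formality is automatic: a quasi-isogeny to $\bX$ forces the étale part of $X\otimes R/p$ to vanish on each geometric point, since $\bX$ has no étale part (its $\bV$ is topologically nilpotent because $\Pi^d=\varpi$). Moreover, formality of a $p$-divisible group over $R\in\Nilp$ can be checked on geometric points. Hence $\brv{\CM}_{\Dr}$ is an open and closed subfunctor of the Rapoport--Zink space, and in fact equals it. Compatibility with the $\GL_d(E)$-action, through Corollary \ref{cor:autosfd}, is formal.

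The step I expect to be the main obstacle is the "locally formally of finite type" part. This is \cite[3.54]{razi} (or 2.16 there). It requires boundedness: on each open piece where the height of $\rho$ is fixed and $\varpi^n\rho$ and $\varpi^n\rho^{-1}$ are isogenies, the reduced locus must be a scheme of finite type. The way I would get this is to write the reduced locus as a union of projective schemes parametrising Dieudonné lattices between $\varpi^{n}\bM$ and $\varpi^{-n}\bM$. Here I would rely on the finiteness results of \cite{razi}. Since $\brvO$ is discretely valued and the data are of EL type, no extra hypotheses beyond those in \cite{razi} appear, and I would simply invoke those results.
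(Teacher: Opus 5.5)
Your proposal is correct and takes the same route as the paper, which likewise just invokes \cite[3.25]{razi} and \cite[3.54]{razi}. Your identification of $\brv{\CM}_{\Dr}$ with an EL-type Rapoport--Zink space (the Kottwitz condition amounting here to strictness plus the special condition, and formality being automatic from the framing by $\bX$) is a sound elaboration of what the paper leaves implicit. The remaining slips are harmless: the determinant condition is closed but not in general open, the loci where $\varpi^n\rho$ and $\varpi^n\rho^{-1}$ are isogenies are closed rather than open, and ``self-dual'' is PEL terminology that plays no role for an EL datum.
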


\medskip
\begin{rema}
\
\begin{itemize}
\itemb In Drinfeld's proof \cite{drin}, the representability of the moduli problem (\ref{eq:modu}) is a corollary of his theorem: it is obtained by showing directly that it is represented by the symmetric space.
\itemb It is also possible to use the result of Bartling--Hoof \cite{baho} to get the representability of the functor.

\end{itemize}
\end{rema}
\medskip

Since it is locally formally of finite type, we may define its rigid generic fiber\footnote{In the sense of Berthelot and Raynaud \cf \cite[5.5]{razi} or, equivalently, in the sense of Huber \cf \cite[1.1.12]{hub}}, $\brv{\rmM}_{\brv E}$ which is a rigid space over $\breve E$. 

For $h\in \BZ$, we let $\CM_{\Dr}^h\subset \brv{\CM}_{\Dr}$ be the formal subscheme consisting of those $(X,\rho)\in \brv{\CM}_{\Dr}$ such that the quasi-isogeny is of height $dh$, \ie $\heig_{\rmO}\rho=dh$. If $h=0$, we omit the superscript $0$, \ie $\CM_{\Dr}\coloneqq \CM_{\Dr}^0$. Note that for any $h\in \BN$ we have $\CM^h_{\Dr}\cong \CM_{\Dr}$ and we get a decomposition:
$$
\brv{\CM}_{\Dr}\cong \bigsqcup_{h\in \BZ} \CM^h_{\Dr}\cong \CM_{\Dr}\times \BZ.
$$
The space $\CM_{\Dr}$ is not stable under $\GL_d(E)$, but is stable under 
$$
\GL_d(E)_0\coloneqq \{g\in \GL_d(E) \mid v_{\varpi}(\det g)=0\}.
$$

\subsubsection{Construction of a $V$-basis}
Critical indices give rise to $\rmO$-lattices inside the Cartier module. The following lemma, proved in \cite[Lemma 3.65]{razi} and attributed to Drinfeld, is fundamental.
\medskip
\begin{lemm}\label{lem:inva}
Let $R\in \Nilp_{\brvO}$, $(X,\rho)\in \brv{\CM}_{\Dr}(R)$ and let $M\coloneqq M_{\rmO}(X)$ be the special Cartier $R$-module of $X$. Suppose that $X$ has a critical index $i\in \BZ/d$. Then, Zariski locally, $M_i$ admits a $\rmW_{\rmO}(R)$-basis $\{\gamma_j\}_{j\in \BZ/d}$ such that
$$
\forall j\in \BZ/d,\quad \Pi \gamma_j = V\gamma_j.
$$
In other words, $M_i^{\Pi=V}\coloneqq \bigoplus_{j\in \BZ/d}\rmO\gamma_j$ defines a locally constant Zariski-sheaf of $\rmO$-modules on $\Spec R$.
\end{lemm}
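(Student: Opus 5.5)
The plan is to show that $U\coloneqq V^{-1}\Pi$ is a well-defined $\sigma$-linear (or rather $\sigma^{-1}$-semilinear inverse, depending on conventions) bijection of $M_i$ that reduces to a "slope zero" Frobenius, and then to produce the fixed basis by successive approximation.

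\textbf{Step 1: $U$ is defined and bijective.} Since $i$ is critical, $\Pi M_i\subset VM_i$ inside $M_{i+1}$. I would first show equality $\Pi M_i=VM_i$. Both are submodules of $M_{i+1}$ containing $V\Pi M_{i-1}=\Pi VM_{i-1}$. The quotient $VM_i/V\Pi M_{i-1}\cong M_i/\Pi M_{i-1}$, and this is locally free of rank $d-1$: the height is $d^2$ because the quasi-isogeny $\rho$ has height divisible by $d$ and the height is locally constant, and then the argument of Lemma \ref{lem:specarthei} applies. The same holds for $\Pi M_i/\Pi VM_{i-1}\cong M_i/VM_{i-1}$ twisted up, after comparing ranks. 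Two locally free quotients of the same rank with one surjecting onto the other are equal; here one needs the inclusion of quotients to be a surjection of locally free modules of equal rank over $R$, hence an isomorphism. If $V$ fails to be injective modulo these submodules over non-reduced $R$, I would instead argue with the Cartier module over $R$ directly: $\Pi$ induces a map $M_i/VM_{i-1}\to M_{i+1}/VM_i$ that is zero, and $VM_i\supset \Pi M_i$ with both of colength $1$ (rank) in $M_{i+1}$ modulo $\varpi$. Then $U=V^{-1}\Pi\colon M_i\to M_i$ is a $\sigma$-semilinear bijection, using that $V$ is injective.

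\textbf{Step 2: fixed basis over a strictly henselian (or perfect) base, then lifting.} Modulo $V$ (i.e.\ on $\bar M_i\coloneqq M_i/VM_{i-1}$-type filtration quotients), $U$ induces a Frobenius-semilinear bijection of a locally free $R$-module, i.e.\ a unit-root $F$-crystal. By Lang's theorem / étale-local trivialization of $\varphi$-modules (a module $N$ with a $\sigma$-linear isomorphism $N\to N$ has $N^{U=1}$ an étale-locally constant sheaf of rank $\rank N$), we get fixed vectors étale locally, and then lift through the $V$-adic (equivalently $\tau$-adic) filtration by solving equations $Ux-x=y$, which are Artin--Schreier type and solvable étale locally; completeness of $M$ gives convergence. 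This yields an étale-local basis $\gamma_j$ with $\Pi\gamma_j=V\gamma_j$.

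\textbf{Step 3: from étale to Zariski.} This is the key use of the rigidification $\rho$ and the main obstacle. The sheaf $M_i^{U=1}$ is an étale local system of $\rmO$-modules of rank $d$; via $\rho$, it embeds into the constant sheaf $(\bM_i^{U=1})\otimes E\cong E^d$ (the isogeny commutes with $U$, and $\bX$ has all indices critical with $\bM_i^{U=1}$ constant by construction). A subsheaf of a constant sheaf $E^d$ that is étale locally a lattice is locally constant: the lattice is determined on each connected component, since lattices in $E^d$ form a discrete set and the relevant invariants (membership of given vectors) are open and closed conditions. Hence the local system is Zariski locally constant, and choosing an $\rmO$-basis of the constant lattice gives the required $\gamma_j$, which are a $W_{\rmO}(R)$-basis since $M_i^{U=1}\otimes_{\rmO}W_{\rmO}(R)\to M_i$ is an isomorphism (checked modulo $V$ by Step 2).
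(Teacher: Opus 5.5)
There is a genuine gap, located in Steps~1--2. Your overall architecture matches the paper's: define $U=V^{-1}\Pi$, show the $U$-invariants form an étale-locally constant sheaf of rank $d$, then use $\rho$ and the constant reference lattice to get Zariski-local constancy. Your Step~3 is essentially the paper's rigidification argument, which the paper runs at each finite level $n$ over a local ring before passing to the limit. Steps~1--2 are meant to supply the étale-local statement that the paper takes from \cite[Lemma 3.65]{razi}, and they do not work.

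Step~1 is false for non-perfect $R$. Since $U(\xi m)=\sigma(\xi)U(m)$, if $M_i=\bigoplus_j W_{\rmO}(R)\gamma_j$ with $U\gamma_j=\gamma_j$ (as the lemma asserts), then $U(M_i)=\bigoplus_j\sigma(W_{\rmO}(R))\gamma_j\neq M_i$ whenever $\sigma$ is not surjective on $W_{\rmO}(R)$, e.g.\ for $R=k[t]$. Equivalently, $\Pi M_i=V(U(M_i))\subsetneq VM_i$. Your rank count imports Lemma~\ref{lem:specarthei}, whose proof uses that $W_{\rmO}(R)$ is $\varpi$-torsion free with $W_{\rmO}(R)/\varpi=R$, i.e.\ that $R$ is perfect. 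Even in that case, $M_i/\Pi M_{i-1}$ has rank $1$, not $d-1$. The correct slope-zero property concerns the linearization of $U$ rather than bijectivity of $U$, and it has to be proved.

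More seriously, the reduction ``modulo $V$'' in Step~2 does not exist. For $x\in M_{i-1}$ one has $U(Vx)=\Pi x$, which lies in $VM_{i-1}$ only if $i-1$ is also critical.
\begin{itemize}
\item In general $U$ induces nothing on $M_i/VM_{i-1}=\Lie(X)_i$; think of a point where $i$ is the only critical index.
\item $U$ lowers the $V$-adic filtration by one step, so your successive approximation along it breaks down. The same problem affects the final ``checked modulo $V$'' in Step~3.
\item What $U$ does preserve is $V^dM_i$, because $U(V^dx)=V^dU(x)$. This is exactly why the paper considers $\eta^i_X[n](R')=\bigl((M_{R'})_i/V^{nd}(M_{R'})_i\bigr)^U$ and needs \cite[Lemma 3.65]{razi} to know that this functor is representable by an étale scheme. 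That étaleness is the missing idea.
\end{itemize}

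For perfect $R$, the only case used later in the paper, your approach can be repaired. There $M$ is $\varpi$-torsion free and projective over $W_{\rmO}(R)$. The equality $\Pi M_i=VM_i$ follows as in Lemma~\ref{lem:critspe}: the natural map $M_{i+1}/\Pi M_i\to M_{i+1}/VM_i$ is a surjection of invertible $R$-modules, checked on fibres since $R$ is reduced. Hence $U$ is bijective. You should then filter $\varpi$-adically rather than $V$-adically: $M_i/\varpi M_i$ is projective of rank $d$ over $R$ with a $\sigma$-linear bijection. Lang's theorem plus Artin--Schreier lifting modulo $\varpi^n$ then give the étale-local $U$-fixed basis.
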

\medskip
\begin{proof}
For convenience, we recall the construction of $M_i^{\Pi=V}$. Since $i\in \BZ/d$ is a critical index, $\Pi M_i \subset VM_i$, and since $V$ is injective, we can define the operator 
$$
U\coloneqq V^{-1}\Pi \colon M_i \rightarrow M_i.
$$
Fix an integer $n\geqslant 1$. For every $R'\in \Alg_R$, let $M_{R'}$ denote the special Cartier module of the base change $X_{R'}$. Rapoport and Zink show in \cite[Lemma 3.65]{razi} that the functor on $\Alg_R$ defined by taking invariants under $U$,
$$
\eta^i_X[n]\colon R'\longmapsto \left ( (M_{R'})_i/V^{nd}(M_{R'})_i\right)^U
$$
is representable by an étale scheme over $\Spec R$.

Since $\eta^i_X[n]$ is naturally endowed with an $\rmO$-module structure, it is étale-locally on $\Spec R$ isomorphic to $\underline{(\rmO/\varpi^n\rmO)}^d_R$ by the height condition. Suppose that $R$ is a local ring, and recall that we have also been given a quasi-isogeny $\bX_k\rightarrow X_k$. Since $\eta^i_X[n]$ is constant when restricted to $\Spec  k$, it is therefore constant over $\Spec R$, i.e.
$$
\eta^i_X[n]\cong \underline{(\rmO/\varpi^n\rmO)}^d_R.
$$
By passing to the limit over $n$, we get 
$$
M_i^{\Pi=V}\coloneqq \varprojlim_n \eta^i_X[n]\cong \ud{\rmO}^d_R
$$
This completes the proof of the lemma.
\end{proof}
\medskip
\begin{rema}
\medskip
Using this lemma, one proves directly that $\brv{\CM}_{\Dr}$ is a $p$-adic formal scheme (\cf \cite[Corollary 3.63]{razi}).
\medskip

\end{rema}
\medskip
\subsubsection{$V$-basis of the reference object}\label{subsubsec:refob}
Recall that we set $\bM\coloneqq \rmOD\otimes_{\rmO}W_{\rmO}(k)$ with $\bV\coloneqq \Pi\otimes \sigma^{-1}$, the special Cartier module of the reference object $\bX$. Notice that all the indices of $\bM$ are critical, and the operator $\bU\coloneqq \bV^{-1}\Pi$ on $\bM_i$ is given by
$$
\bU(x\otimes w) =\Pi x\Pi^{-1}\otimes \sigma(w).
$$
For $i\in\BZ/d$, let $\boeta^i\coloneqq \bM_i^{\bU=1}$ given by
$$
\boeta^i=\bigoplus_{j=0}^{d-1}(\Pi^j\otimes 1)\cdot\rmO\subset \bM_i.
$$

Now, write $e_0,\dots, e_{d-1}$ for the standard basis of $E^d$. Let $g_{\Pi}$ be the $d\times d$ matrix
\begin{equation}\label{eq:pimatrix}
g_{\Pi}\coloneqq
\begin{pmatrix}
0 &          &        &        & \varpi \\
1 & 0        &        &        &        \\
  & \ddots   & \ddots &        &        \\
  &          & 1      & 0      &        \\
  &          &        & 1      & 0
\end{pmatrix},
\end{equation}
and for $i\in \BZ$, define\footnote{This construction comes from \cite[3.73]{razi} where there seems to be a typo: for their map, $\Pi$ does not define an inclusion on lattices. This can be corrected by taking the opposite sign in the definition of $\varepsilon(i,j)$.}
\begin{equation}
\begin{array}{rccc}
\iota_{i}\colon &\boeta^i&\incl& E^d\\
&\Pi^j\otimes 1&\mapsto& g_{\Pi}^{-i}e_{j}.
\end{array}
\end{equation}
This map extends to an isomorphism $\iota_{i}\colon \boeta^i\otimes_{\rmO}E\xrightarrow{\sim}E^d$. 
\medskip
\begin{rema}\label{rem:glequiv}
Note that $\iota_i=\iota_0\circ \Pi^{-i}$ and in particular $\iota_{i+d}=\varpi^{-1}\iota_i$. Since the action of $g\in \GL_d(E)$ on $\boeta^i$ is given by $\Pi^i\circ g\circ \Pi^{-i}$ and that since $\iota_0$ is $\GL_d(E)$-equivariant, it follows that $\iota_i$ is $\GL_d(E)$-equivariant.
\end{rema}
\medskip

For $i\in \BZ/d$ let $\boeta_{i}\coloneqq\iota_{i}(\boeta^i)\subset E^d$ be the image of $\iota_{i}$. In particular, $\boeta_0= \rmO^d\subset E^d$. We get a commutative diagram
\begin{center}
\begin{tikzcd}
\boeta^{-1}\ar[r,"\Pi"]\ar[d,"\iota_{-1}"]&\boeta^0\ar[r,"\Pi"]\ar[d,"\iota_{0}"]&\boeta^1\ar[r,"\Pi"]\ar[d,"\iota_{1}"]&\cdots\ar[r,"\Pi"]&\boeta^{d-1}\ar[d,"\iota_{d-1}"]\ar[r,"\Pi"]&\boeta^0\ar[d,"\iota_{d}"]\\
\varpi\boeta_{d-1}\ar[r,hook]&\boeta_{0}\ar[r,hook]&\boeta_{1}\ar[r,hook]&\cdots\ar[r,hook]&\boeta_{d-1}\ar[r,hook]&\boeta_{d}
\end{tikzcd}
\end{center}
where the bottom arrows are the natural inclusions inside $E^d$. In particular, we get the following fundamental lemma:
\medskip
\begin{lemm}\label{lem:trivtoslop}
There is a natural equivariant bijection
$$
\left \{ 
\text{$\bV$-stable, $\brv{\rmO}\otimes_{\rmO}\rmOD$-submodules of $\bM$ },
\right \}
\cong {} 
\left \{ 
\text{$\rmO$-submodules of $\boeta_0= \rmO^d$}\\
 \right \}
$$
given by $N\mapsto N_0^{\bU=1}$. In particular, subisocrystals of $\bM\otimes_{\rmO}E$ are naturally in bijection with $E$-rational subspaces of $\brv{E}^d$. Moreover, any such subisocrystal has slope $\tfrac{d-1}{d}$.
\end{lemm}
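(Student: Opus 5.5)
Since every index of $\bM$ is critical and $\bU=\bV^{-1}\Pi$ acts on each graded piece, the plan is to reduce everything to the degree-$0$ piece and then apply étale descent for a slope-$0$ $\sigma$-semilinear operator. Let $N\subset\bM$ be a $\bV$-stable $\brvO\otimes_{\rmO}\rmOD$-submodule. The $\rmO_d$-action gives a grading $N=\bigoplus_i N_i$. Stability under $\bV$ and $\Pi$, together with the relation $\Pi=\bV\bU$ on $\bM$, should make each $N_i$ stable under $\bU$ and $\bU^{-1}$. The point is that $\bV$ and $\Pi$ both map $\bM_i$ isomorphically onto the same submodule $\bV\bM_i=\Pi\bM_i$ of $\bM_{i+1}$, so one has to check $\bV N_i=\Pi N_i$. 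I would first try to extract this from the explicit formula $\bU(x\otimes w)=\Pi x\Pi^{-1}\otimes\sigma(w)$. If that fails, I would fall back on a length or colength argument as in Lemma \ref{lem:critspe}: both submodules sit in $N_{i+1}$ and have the same index there. Granting this, $N$ is determined by $N_0$, since $N_i=\Pi^iN_0$, and $N_0$ is a $\bU$-stable $\brvO$-submodule of $\bM_0$.

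The key step is slope-$0$ descent. The inclusion $\boeta^0\otimes_{\rmO}\brvO\xrightarrow{\sim}\bM_0$ intertwines $1\otimes\sigma$ with $\bU$. Because $\brvO^{\sigma=1}=\rmO$ and $k$ is algebraically closed, Lang's theorem gives $H^1(\sigma,\GL_n(\brvO))=0$. From this I expect the functors $L\mapsto L\otimes_{\rmO}\brvO$ and $N_0\mapsto N_0^{\bU=1}$ to be mutually inverse bijections between $\rmO$-submodules of $\boeta^0$ and $\bU$-stable $\brvO$-submodules of $\bM_0$. I would prove this first for finite length objects, using Lang's theorem on $\GL_n(\brvO/\varpi^m)$ to show that $\bU$-stable submodules of $\bM_0/\varpi^m$ are generated by their invariants, and then pass to the limit. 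Composing with $\iota_0$, which identifies $\boeta^0$ with $\boeta_0=\rmO^d$, gives the bijection. Equivariance follows from Remark \ref{rem:glequiv}, since $\iota_0$ is $\GL_d(E)$-equivariant. This descent step is where I expect the main care to be needed, mainly in handling non-finitely-generated or non-saturated submodules and in checking that the inverse map is well defined.

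Inverting $p$ gives the isocrystal statement. A subisocrystal $N'\subset\bM\otimes_{\rmO}E$ is $F$- and $\rmOD$-stable, hence $\bV$-stable. So $N'_0$ is a $\bU$-stable $\brv E$-subspace, and these subspaces correspond to $E$-rational subspaces $L\otimes_E\brv E$ of $\brv E^d$.

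For the slope, take $W=L\otimes_E\brv E$ with $\dim_E L=r$. The subisocrystal is then $\bigoplus_i\Pi^iW$, of $\brv E$-dimension $dr$. On it $\bV=\Pi\bU^{-1}$, and $\bV^d=\Pi^d\bU^{-d}$ is $\varpi$ times a $\sigma^{-d}$-linear operator of slope $0$. Hence $\bV$ has slope $1/d$ and $\bF=\varpi\bV^{-1}$ has slope $1-\tfrac1d=\tfrac{d-1}{d}$. This computation can be done directly on the lattice $\bigoplus_i\Pi^i(L\cap\boeta_0)\otimes\brvO$, as in the proof of Corollary \ref{cor:autosfd}.
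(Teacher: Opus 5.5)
\paragraph{Verdict.} Your proposal has a genuine gap. It is the reduction to a $\bU$-stable $N_0$, and it cannot be filled, because the integral bijection as literally stated is false.

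\paragraph{Why the reduction fails.} Neither of your routes yields $\bV N_i=\Pi N_i$. The colength argument of Lemma \ref{lem:critspe} works only because criticality supplies the inclusion $\Pi M_i\subset VM_i$. Two submodules of the same colength with no inclusion between them need not coincide.

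Here is a counterexample. Write $\bM_i=\boeta^i\otimes_{\rmO}\brvO$ with $\bU=1\otimes\sigma$, so that $\bV=\Pi\bU^{-1}$ and $\bF=\Pi^{d-1}\bU$. Take $d=2$. Choose $v\in\bM_0$ whose reduction spans a line of $\bM_0/\varpi\bM_0=\boeta^0\otimes_{\rmO}k$ that is not $k_E$-rational. Put $A=\brvO v+\varpi\bM_0$ and $N=A\oplus\Pi\bM_0$.
\begin{itemize}
\item Using $\Pi^2=\varpi$, one checks $\Pi N_0,\bV N_0,\bF N_0\subset\Pi\bM_0=N_1$ and $\Pi N_1=\bV N_1=\bF N_1=\varpi\bM_0\subset A$. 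So $N$ is a $\bV$-stable, even $\bF$-stable, $\brvO\otimes_{\rmO}\rmOD$-submodule.
\item But $\bU A\neq A$, so $\bV N_0=\Pi\bU^{-1}A\neq\Pi A$.
\item Moreover $N_0^{\bU=1}=A\cap\boeta^0=\varpi\boeta^0$, which is also the image of $\varpi\bM$. So the map $N\mapsto N_0^{\bU=1}$ is not injective.
\end{itemize}
In fact $N$ is the Dieudonn\'e module of a $k$-point of Drinfeld's moduli space at which only index $1$ is critical. So no argument could force $\bU$-stability.

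\paragraph{The second claim also fails.} Your assertion $N_i=\Pi^iN_0$ is false even for $\bU$-stable $N$. Indeed $\Pi\bM_0\subsetneq\bM_1$, and $\bM$ and $\bigoplus_{i=0}^{d-1}\Pi^i\bM_0$ are distinct with the same image $\boeta^0$.

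\paragraph{What your argument does prove.} Your Lang-descent step is fine; it is not where the difficulty lies. It proves the corrected integral statement: $\bU$-stable $\brvO$-submodules of $\bM_0$ correspond to $\rmO$-submodules of $\boeta^0\cong\boeta_0$. Equivalently, these are the $\bU$-stable $N$ with $N_i=\Pi^iN_0$ for $0\leqslant i<d$.

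\paragraph{The rational part and the slope are correct.} These are what the paper actually uses later (Lemma \ref{lem:wadrin}, Proposition \ref{prop:cimadr}). Your argument proves them once the integral reduction is replaced by a rational one.
\begin{itemize}
\item For a $\rmD$-stable subisocrystal $N'$, one has $\bU=\bV^{-1}\Pi=\varpi^{-1}\bF\Pi$. So $\bU N'\subset N'$ follows directly from $\bF$- and $\Pi$-stability.
\item $\Pi$ is invertible after inverting $\varpi$, so $N'_i=\Pi^iN'_0$ is automatic.
\item Descent then gives $N'_0=L\otimes_E\brv E$.
\item $\bF^d=\varpi^{d-1}\bU^d$ gives slope $\tfrac{d-1}{d}$, matching your computation.
\end{itemize}
The paper offers no argument beyond reading the lemma off the diagram relating the $\boeta^i$ to the $\boeta_i$. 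Your descent argument is a reasonable way to make the rational part rigorous. State explicitly that $\rmD$-stability is part of the hypothesis on subisocrystals, since the statement leaves it implicit.
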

\medskip
\subsubsection{Local models} 

In this paragraph, we recall some basic facts from \cite{razi} on the theory of local models, which allow us to prove the following fact:
\medskip
\begin{prop}\label{prop:isnormflat}
The formal scheme $\brv{\CM}_{\Dr}$ is normal and flat over $\brvO$.
\end{prop}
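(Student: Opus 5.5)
We use the Rapoport--Zink local model diagram \cite[Chapter 3]{razi}. For a $p$-adic formal scheme locally formally of finite type over $\brvO$, both normality and flatness are étale-local properties. So it suffices to produce an étale-local isomorphism between $\brv{\CM}_{\Dr}$ and the $p$-adic completion of a scheme $\rmM^{\loc}$ of finite type over $\brvO$, and then to check that $\rmM^{\loc}$ is normal and flat.

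\emph{Step 1: the local model.} Let $\Lambda_0\subset \Lambda_1\subset\dots\subset\Lambda_{d-1}\subset \varpi^{-1}\Lambda_0$ be the standard periodic lattice chain in $E^d$ from \S\ref{subsubsec:refob}, with $\Lambda_i=\boeta_i$. We define $\rmM^{\loc}$ over $\brvO$ as the functor of $\rmOD$-stable quotients of $\bM\otimes_{W_{\rmO}(k)}R$ that are locally free of rank one in each degree. Concretely, it parametrizes rank-one quotients $\Lambda_{i,R}\twoheadrightarrow L_i$, $i\in\BZ/d$, compatible with the transition maps $\Lambda_i\to\Lambda_{i+1}$ (equivalently, with the maps induced by $\Pi$). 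This is the ``special'' Lie algebra condition of Definition~\ref{def:sfd} translated through Corollary~\ref{cor:gmod}.

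\emph{Step 2: the local model diagram.} Consider the $\CP$-torsor $\wt{\CM}\to\brv{\CM}_{\Dr}$ of trivializations of the Dieudonné crystal $\BD_{\rmO}(X)$, evaluated on $p$-adic thickenings, by the graded $\Pi$-module $\bM$. Here $\CP$ is the smooth group scheme of automorphisms of the lattice chain. There is a second map $\wt{\CM}\to\wh{\rmM}^{\loc}$ sending $(X,\rho,\text{triv})$ to the Hodge filtration $\ker h_X$ of \eqref{eq:mesisgrad}. The Grothendieck--Messing theorem (Corollary~\ref{cor:gmod}) implies that this second map is formally smooth of the same relative dimension as the torsor, by \cite[3.30--3.35]{razi}. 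Hence the two spaces are étale-locally isomorphic after smooth covers, and normality and flatness descend along smooth morphisms.

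\emph{Step 3: geometry of $\rmM^{\loc}$.} This is the main obstacle. Locally, $\rmM^{\loc}$ is identified with Deligne's model: each $L_i$ is given by a linear form in which the composition of the $\Pi_i$ is multiplication by $\varpi$. At a point in the special fiber with critical set $I$, we expect the local equation to be $\prod_{i\in I} x_i=\varpi$, with the remaining coordinates free. This is the standard semistable chart of $\BH^d_{\rmO}$ (Section~\ref{sec:drinsym}), or equivalently the Iwahori local model for $\GL_d$ with minuscule cocharacter $(1,0,\dots,0)$. Such a ring $\brvO[x_0,\dots,x_{d-1},y]/(\prod_{i\in I} x_i-\varpi)$ is flat, since $\varpi$ is a nonzerodivisor. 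It is regular, since the special fiber is a reduced normal-crossings divisor, and therefore it is normal.

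To carry out the explicit computation, we will cover $\rmM^{\loc}$ by affine charts indexed by which maps $\Lambda_{i}/\varpi\Lambda_i\to L_i$ are nonzero on chosen basis vectors, using the special condition exactly as in Lemma~\ref{lem:critspe}. Alternatively, we can cite Görtz's flatness theorem for $\GL_d$ local models, which gives normality and reduced special fiber directly.
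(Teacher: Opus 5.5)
Your Steps 1 and 2 are the paper's reduction. The local model diagram $\brv{\CM}_{\Dr}\xleftarrow{\pi_1}\brv{\CN}_{\Dr}\xrightarrow{\pi_2}\wh{\rmM}^{\loc}_{\brvO}$ has $\pi_1$ a $\CP$-torsor and $\pi_2$ formally smooth by Corollary~\ref{cor:gmod}, and $\rmM^{\loc}_{\brvO}$ is rewritten through the lattice chain $\boeta_\bullet$ as in Lemma~\ref{lem:explocmod}.

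You diverge in Step~3. The paper argues globally: it identifies $\rmM^{\loc}_{\brvO}$ with the iterated blow-up of $\BP^{d-1}_{\brvO}$ along the strict transforms of $R_i=V(\varpi,T_i,\dots,T_{d-1})$. This gives regularity (regular centers in a regular scheme), irreducibility and generic fiber $\BP^{d-1}_{\brv E}$ in one stroke, hence $\varpi$-torsion-freeness and flatness. You argue locally, with charts $\prod_{i\in I}x_i=\varpi$. Your route proves more: it gives strict semistability of $\rmM^{\loc}_{\brvO}$, which transfers to $\brv{\CM}_{\Dr}$ through the local model diagram and is only mentioned in Remark~\ref{rem:eqlocmod}. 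It also avoids blow-up theory. But as written, the key local form is only announced (``we expect''), so Step~3 is a plan rather than a proof.

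To make it a proof, fix the following.

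\begin{enumerate}
\item The charts of Section~\ref{sec:drinsym} do not suffice. Lemma~\ref{lem:semist} gives the semistable form only on Deligne's opens, and these miss the points of $\rmM^{\loc}_{\brvO}$ that violate Drinfeld's non-rationality condition. For instance, when $d=2$, $\rmM^{\loc}_{\brvO}$ is the blow-up of $\BP^1_{\brvO}$ at one $k_E$-rational point of the special fiber. Deligne's open then misses the $k_E$-rational points other than the double point on both components. So the computation must be done on all the affine charts $\{T^{(i)}_{j_i}\neq 0\}$ of the equations in Remark~\ref{rem:eqlocmod}, as your fallback says, or replaced by G\"ortz's theorem.
\item ``The special fiber is a reduced normal crossings divisor'' does not imply regularity: $\brvO[x,y]/(xy-\varpi^2)$ is a counterexample. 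What makes $\prod_{i\in I}x_i=\varpi$ regular is that $\varpi\notin\fkm^2$ in the ambient regular local ring, so the equation is not in $\fkm^2$.
\item G\"ortz's theorem gives flatness and a reduced special fiber, not normality directly. You need the standard lemma that a flat $\brvO$-scheme of finite type with reduced special fiber and normal generic fiber is normal. Sketch: suppose $b$ is integral over $A$ and $n\geqslant 1$ is minimal with $c=\varpi^n b\in A$. The integral equation of $c/\varpi$ gives $c^m\in\varpi A$, so $c\in\varpi A$ by reducedness, contradicting minimality.
\item The reason some $x_i$ vanishes at every special point is just $x_{d-1}\cdots x_0=\varpi$, as in Lemma~\ref{lem:havcrit}. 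Lemma~\ref{lem:critspe}, which concerns $V$ on Cartier modules, plays no role on $\rmM^{\loc}_{\brvO}$.
\end{enumerate}
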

\medskip
Rapoport and Zink \cite[3.26--3.29]{razi} construct a \emph{local model diagram}, which, in the Drinfeld case, takes the following form:
\begin{center}
\begin{tikzcd}
 & \brv{\CN}_{\Dr}\ar[dl,swap,"\pi_1"]\ar[dr,"\pi_2"]& \\
 \brv{\CM}_{\Dr} & & \wh{\rmM}^{\loc}_{\brvO}
\end{tikzcd}
\end{center}

\begin{itemize}
\itemb The formal scheme $\brv{\CN}_{\Dr}$ represents the functor
$$
R\in \Nilp_{\brvO}\mapsto \left\{(X,\rho,\gamma) \mid (X,\rho)\in \brv{\CM}_{\Dr}(R),\ \gamma \colon \BD_{\rmO}(X)_R\xrightarrow{\sim}\rmOD\otimes_{\rmO}R\right \},
$$
where $\BD_{\rmO}(X)_R$ is the evaluation of the Dieudonné crystal at $\Spec R$ and $\gamma$ is an $\rmOD$-linear isomorphism. The map $\pi_1\colon \brv{\CN}_{\Dr}\rightarrow \brv{\CM}_{\Dr}$ is the natural map defined by $\pi_1\colon (X,\rho,\gamma)\mapsto (X,\rho)$ which is a $\CP$-torsor, where $\CP$ is the smooth formal group
$$
R\in \Nilp_{\brvO}\mapsto \Aut(\rmOD\otimes_{\rmO}R).
$$
This implies that $\brv{\CN}_{\Dr}$ is represented by a scheme of finite type over $\brv{\CM}_{\Dr}$ and that $\pi_1$ is formally smooth.
\itemb The formal scheme $\wh{\rmM}^{\loc}_{\brvO}$ is the $p$-adic completion of $\rmM^{\loc}_{\brvO}$, the scheme over $\brvO$ representing the functor 
$$
\rmM^{\loc}_{\brvO}\colon R\in \Alg_{\brvO}\mapsto \{(L,f)\mid f\colon \rmOD\otimes_{\rmO} R\rightarrow L\},
$$
where,
\medskip
\begin{itemize}
\item $L$ is an $\rmOD\otimes_{\rmO}R$-module which is invertible as an $\rmO_d\otimes_{\rmO}R$-module\footnote{Recall that this means $L=\bigoplus_{i\in \BZ/d}L_i$, where the $t_i$'s are invertible $R$-modules, endowed with a degree one operator $\Pi$ such that $\Pi^d=\varpi$.}.
\item $f\colon \rmOD\otimes_{\rmO} R\rightarrow L$ is an $\rmOD\otimes_{\rmO} R$-linear surjection.
\end{itemize}
\medskip
The scheme $\rmM^{\loc}_{\brvO}$ is easily realized as a closed subscheme of some Grassmannian, which proves it is representable by a proper scheme (\cf Remark \ref{rem:eqlocmod}). The map $\pi_2$ is defined for $R\in \Nilp_{\brvO}$ by
$$
(X,\rho,\gamma)\in \brv{\CN}_{\Dr}(R)\longmapsto (h_R\circ\gamma^{-1},\Lie(X))\in \wh{\rmM}^{\loc}_{\brvO}(R)
$$
where the corresponding map is the composition $\rmOD\otimes_{\rmO}R\xrightarrow{\gamma^{-1}}\BD_{\rmO}(X)_R\xrightarrow{h_R}\Lie(X)$ and is therefore surjective. Moreover, by Grothendieck--Messing (\cf Corollary \ref{cor:gmod}), $\pi_2$ is formally smooth.
\end{itemize}
It follows that Proposition \ref{prop:isnormflat} reduces to proving that  $\rmM^{\loc}_{\brvO}$ is normal and flat over $\brvO$. We are going to show that $\rmM^{\loc}_{\brvO}$ is regular and irreducible by expressing it as an iterated blow-up. From the construction in \S \ref{subsubsec:refob}, we get the following description of $\rmM^{\loc}_{\brvO}$:
\medskip
\begin{lemm}\label{lem:explocmod}
The scheme $\rmM^{\loc}_{\brvO}$ represents the functor on $\Alg_{\brvO}$ associating to each $R\in \Alg_{\brvO}$ diagrams of the form 
\begin{equation}\label{diag:drin}
\begin{tikzcd}
  \boeta_{1-d} \arrow[r,hook]\arrow[d,"\varphi_{1}"] & \boeta_{2-d} \arrow[r,hook]\arrow[d,"\varphi_{2}"] & \cdots \arrow[r,hook] & \boeta_{0} \arrow[r,"\varpi"]\arrow[d,"\varphi_{d}"] & \boeta_{1-d} \arrow[d] \\
  L_{1} \arrow[r,"x_{1}"] & L_{2} \arrow[r,"x_{2}"] & \cdots \arrow[r,"x_{-1}"] & L_{0} \arrow[r,"x_{0}"] & L_{1},
\end{tikzcd}
\end{equation}
such that for any $i\in \BZ/d$:
\begin{itemize}
    \item[$\bullet$] $L_i$ is an invertible $R$-module,
    \item[$\bullet$]  $x_i\colon L_i\rightarrow L_{i+1}$ is a morphism of $R$-modules,
    \item[$\bullet$] $\varphi_i\colon \boeta_i\rightarrow L_i$ is a morphisms of ${\brvO}$-modules such that 
    $$
    \varphi_i\otimes_{\brvO}R\colon \boeta_i\otimes_{\brvO} R\rightarrow L_i,
    $$
    is surjective.
\end{itemize}
\end{lemm}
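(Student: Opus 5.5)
The plan is to translate the module-theoretic description of $\rmM^{\loc}_{\brvO}$ into the chain of lattices $\boeta_\bullet$ using the grading and the explicit form of $\rmOD$ from \S\ref{subsubsec:refob}. Fix $R\in\Alg_{\brvO}$ and a point $(L,f)\in \rmM^{\loc}_{\brvO}(R)$. First, $\rmOD\otimes_{\rmO}R$ is a module over $\rmO_d\otimes_{\rmO}R\cong\prod_{i\in\BZ/d}R_i$, and we decompose it accordingly. I would identify this module with $\bM\otimes_{\rmW_{\rmO}(k)}R$ (or directly with $\rmOD\otimes_{\rmO}\brvO\otimes_{\brvO}R$) and read off its graded pieces. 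The $i$-th graded piece $(\rmOD\otimes_{\rmO}\brvO)_i$ is a free $\brvO$-module with basis given by the images of $\Pi^j\otimes1$, $j\in\llbracket 0,d-1\rrbracket$. Via $\iota_i$ (Remark \ref{rem:glequiv} and the commutative diagram preceding Lemma \ref{lem:trivtoslop}), this basis is sent to a basis of $\boeta_i\otimes_{\rmO}\brvO$. Under this identification, multiplication by $\Pi$, which has degree $+1$, becomes the natural inclusion $\boeta_i\incl\boeta_{i+1}$ for $i$ in the range $1-d,\dots,-1$. The wrap-around step $\boeta_0\to\boeta_{1-d}$ then becomes multiplication by $\varpi$, since $\iota_{i+d}=\varpi^{-1}\iota_i$.

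Indexing the degrees by $1-d,\dots,0$ (equivalently by $1,\dots,d$ modulo $d$), we obtain that $\rmOD\otimes_{\rmO}R$ is the graded module $\bigoplus_i\boeta_i\otimes_{\rmO}R$. Its $\Pi$-action is given by the top row of \eqref{diag:drin}, where we write $\otimes_{\brvO}$ after extending scalars.

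Next we decompose $L=\bigoplus L_i$ with each $L_i$ invertible, and set $x_i=\Pi|_{L_i}$. We write $f=\sum\varphi_i$; this is possible because $f$ is $\rmO_d$-linear and therefore graded. Then $\Pi$-linearity of $f$ is exactly the commutativity of the squares in \eqref{diag:drin}. The relation $\Pi^d=\varpi$ on $L$ is automatic: the top row composes to $\varpi$ and the $\varphi_i$ are jointly surjective. Surjectivity of $f$ is equivalent to surjectivity of each graded component $\varphi_i\otimes R$.

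Conversely, given a diagram as in \eqref{diag:drin}, the sum $\bigoplus L_i$ with $\Pi\coloneqq\sum x_i$ is a $\rmOD\otimes R$-module that is invertible over $\rmO_d\otimes R$. The map $\sum\varphi_i$ is a $\rmOD\otimes R$-linear surjection onto it. We still have to check that $\Pi^d=\varpi$ on $L$; this follows by precomposing with the surjection $\varphi$ and using commutativity. Both constructions are functorial in $R$ and are mutually inverse, so they define an isomorphism of functors.

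The main obstacle is the bookkeeping in the first step: matching the $\BZ/d$-grading coming from the embeddings $\psi_i$, together with the $\sigma$-twist in $\Pi a=\sigma(a)\Pi$, with the indexing of the lattices $\boeta_i$ via $g_\Pi$. One also has to make sure that $\Pi$ really corresponds to the inclusions and not to their reverse; this is the sign issue noted in the footnote about \cite[3.73]{razi}. I would settle this by computing $\Pi\cdot(\Pi^j\otimes1)=\Pi^{j+1}\otimes1$ together with $\iota_{i+1}\circ\Pi=\iota_i$ on basis vectors.
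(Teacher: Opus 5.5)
Your proposal is correct and is essentially the paper's argument. The paper gives no separate proof: it derives the lemma directly from the construction in \S\ref{subsubsec:refob}, where the graded pieces of $\rmOD\otimes_{\rmO}\brvO$ are identified with $\boeta_i\otimes_{\rmO}\brvO$ via $\iota_i$, so that $\Pi$ becomes the inclusions $\boeta_i\incl\boeta_{i+1}$ (and $\varpi$ at the wrap-around), after which $(L,f)$ is split along the grading — exactly what you spell out, with the degree convention for $\Pi$ relative to $\psi_i=\psi_0\circ\sigma^i$ being, as you say, only bookkeeping.
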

\medskip

\medskip
\begin{coro}
The scheme $\rmM^{\loc}_{\brvO}$ is regular, irreducible, and $\rmM^{\loc}_{\brv E}\cong \BP_{\brv E}^{d-1}$. In particular, it is normal and flat over $\brvO$.
\end{coro}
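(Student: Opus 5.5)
The proof will rely entirely on the explicit description of $\rmM^{\loc}_{\brvO}$ in Lemma \ref{lem:explocmod}. On the generic fiber $\varpi$ is invertible, so every inclusion $\boeta_{i}\incl\boeta_{i+1}$ becomes an isomorphism after $\otimes E$. The surjections $\varphi_i$ are then compatible, and each $x_i$ must be an isomorphism; to see this, note that $x_{-1}\circ\cdots\circ x_0$ is identified with multiplication by $\varpi$, which is a unit. The whole diagram \eqref{diag:drin} is therefore determined by the single surjection $\varphi_d\colon \boeta_0\otimes R=R^d\to L_0$, that is, by a rank one quotient of $R^d$. This gives $\rmM^{\loc}_{\brv E}\cong\BP^{d-1}_{\brv E}$. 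Flatness over $\brvO$ will follow once we know $\rmM^{\loc}_{\brvO}$ is integral with nonempty generic fiber. Normality follows from regularity. It therefore suffices to prove regularity and irreducibility, with the generic fiber dense.

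Regularity and irreducibility will come from realizing $\rmM^{\loc}_{\brvO}$ as an iterated blow-up of $\BP^{d-1}_{\brvO}$ along the linear subspaces of the special fiber attached to the lattice chain, which is Deligne's construction. Consider the forgetful map $\rmM^{\loc}_{\brvO}\to\BP^{d-1}_{\brvO}=\BP(\boeta_0)$, $(L_\bullet,\varphi_\bullet)\mapsto(\varphi_d\colon \boeta_0\otimes R\twoheadrightarrow L_0)$. It is projective, since $\rmM^{\loc}_{\brvO}$ is a closed subscheme of a product of projective spaces. It is an isomorphism over the locus where every composite $\boeta_j\otimes R\to\boeta_0\otimes R\to L_0$ is still surjective, i.e.\ away from the closed subschemes $Z_j=\BP(\boeta_0/\boeta_j)\subset\BP^{d-1}_k$ for $j=1-d,\dots,-1$. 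These form a chain of linear subspaces of the special fiber. I will then show by induction that adding the data $(L_j,\varphi_j,x_j)$ one step at a time amounts to blowing up the strict transform of the next linear subspace. Concretely, I will prove that the graph of $\boeta_j\otimes R\to L$ is the closure of the generic rational map $\BP(\boeta_0)\dashrightarrow\BP(\boeta_j)$, and that this closure is the blow-up along $Z_j$.

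For the local computations, the cleaner route is to work directly on $\rmM^{\loc}_{\brvO}$ with the open cover indexed by choices of $d$-tuples of basis vectors. This cover is given by requiring that, for each $i$, a fixed basis vector $e^{(i)}$ of $\boeta_i$ maps to a generator of $L_i$. On each such chart I will trivialize $L_i$ by $\varphi_i(e^{(i)})$. The $x_i$ then become scalars $t_i\in R$ with $\prod t_i=\varpi$, and the remaining coordinates of the $\varphi_i$ are determined by compatibility from free parameters. I expect each chart to be isomorphic to
$$\Spec \brvO[t_1,\dots,t_r,u_{r+1},\dots,u_{d-1}]/(t_1\cdots t_r-\varpi),$$
possibly with some of the $u$'s inverted. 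This ring is regular, a semistable model, and integral, hence flat. Irreducibility of the whole scheme then follows because every chart is integral and meets the generic fiber $\BP^{d-1}_{\brv E}$, which is irreducible and dense in each chart.

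The main obstacle is this chart computation. One must bookkeep which coordinates of $\varphi_i$ are free and check that the compatibility relations $x_{i}\circ\varphi_i=\varphi_{i+1}\circ(\text{inclusion})$ cut out exactly the semistable equation $t_1\cdots t_r=\varpi$. One must also check that the charts cover $\rmM^{\loc}_{\brvO}$: surjectivity of each $\varphi_i\otimes R$ guarantees that some basis vector of $\boeta_i$ generates $L_i$ locally. I would first carry out the computation carefully for $d=2$, where the result is the blow-up of $\BP^1_{\brvO}$ at the $k$-points, and then index the general case by the set of critical $i$ where $t_i$ vanishes.
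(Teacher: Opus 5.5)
Your proposal is correct, but it follows a genuinely different route from the paper. The paper asserts that $\rmM^{\loc}_{\brvO}$ is the iterated blow-up of $\BP^{d-1}_{\brvO}$ along the strict transforms of the linear subspaces $V(\varpi,T_i,\dots,T_{d-1})$ of the special fiber (your $Z_j$). It then reads everything off from general facts about blow-ups:
\begin{itemize}
\item irreducibility is preserved;
\item the generic fiber is unchanged, because the centres lie over $k$;
\item flatness follows from integrality plus dominance over $\Spec\brvO$;
\item regularity follows from blowing up regular centres \cite[Proposition 19.4.4]{ega44}.
\end{itemize}
You instead compute the generic fiber directly from Lemma \ref{lem:explocmod}, and that argument is fine. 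You then get regularity, flatness and irreducibility from an explicit affine cover. Your irreducibility argument is also correct: each chart is integral and meets the irreducible generic fiber, so all charts share its generic point. The paper's route is shorter and fits the Deligne--Lusztig picture of Remark \ref{rema:dl}. However, its key identification with an iterated blow-up is stated without proof, and checking it is essentially the same local computation. Your route is self-contained and gives semi-stability for free, which the paper only mentions at the end of Remark \ref{rem:eqlocmod}. In your version the blow-up paragraph should stay motivational only: identifying $\rmM^{\loc}_{\brvO}$ with the closure of a graph presupposes flatness, so it is a consequence of the charts, not an input.

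On the computation you defer, index $L_i$ by the lattice $\boeta_i$ it receives, with $x_i\colon L_i\to L_{i+1}$. Let $y_j$ be the image in $L_{j+1}$ of a generator of $\boeta_{j+1}/\boeta_j$. Compatibility then forces each vector of the basis of $\boeta_i$ adapted to the chain to map to $x_{i-1}\cdots x_{j+1}y_j$ for a unique $j$. The only closed conditions left are $(x_0\cdots x_{d-1}-\varpi)y_j=0$, besides the open surjectivity conditions.
\begin{itemize}
\item \textbf{Standard chart.} Where all $y_j$ generate, you get exactly $\Spec\brvO[x_0,\dots,x_{d-1}]/(x_0\cdots x_{d-1}-\varpi)$ (compare Lemma \ref{lem:semist}). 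The translates of this chart under the Iwahori subgroup stabilizing the chain already cover $\rmM^{\loc}_{\brvO}$, since one can adjust each generator of $\boeta_{j+1}/\boeta_j$ by an element of $\boeta_j$ independently, column by column. This spares you the $d^d$ charts.
\item \textbf{If you keep your cover.} Non-standard charts impose relations $x_ix_{i+1}\cdots x_{i'-1}=1$ among the $x$'s forced to be units. These do cut out a torus: after cutting the cycle at an index whose $x$ is not forced invertible, they are interval relations, hence totally unimodular. Some charts also lie entirely in the generic fiber.
\item \textbf{Correct shape of a chart.} Each chart is an open subscheme of $\Spec\brvO[t_1,\dots,t_r,u_{r+1},\dots,u_d]/(t_1\cdots t_r-\varpi)$, not just a localization at some $u$'s. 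The $u$'s must run up to $u_d$, not $u_{d-1}$, to get relative dimension $d-1$.
\item \textbf{The case $d=2$.} The local model is the blow-up of $\BP^1_{\brvO}$ at the single $k$-point $Z_{-1}$, not at all $k$- or $k_E$-rational points.
\end{itemize}
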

\medskip
\begin{proof}
Using Lemma \ref{lem:explocmod}, we can write $\rmM^{\loc}_{\brvO}$ as an iterated blow-up of $\BP^{d-1}_{\brvO}= \BP(\boeta_{d-1}\otimes_{\rmO}\brvO)$. Let $T_0,\dots, T_{d-1}$ be the homogeneous coordinates on $\BP^{d-1}_{\brvO}$ defined by the basis $e_0,\dots, e_{d-1}\in \boeta_0$ and for $i\in \llbracket 0,d-1\rrbracket$ let $R_i\subset \BP^{d-1}_{\brvO}$ be the closed subscheme defined by the homogeneous ideal $\langle \varpi, T_i ,\dots, T_{d-1}\rangle$. Then 
$$
\rmM^{\loc}_{\brvO}\cong \Bl_{\wt{R}_{d-1}}(\dots(\Bl_{R_0}\BP^{d-1}_{\brvO})\dots ),
$$
where $\wt R_i$ is the strict transform of $R_i$ for $i\in\llbracket 0,d-1\rrbracket$. Hence, by \cite[0BFM]{stacks}, $\rmM^{\loc}_{\brvO}$ is irreducible. By \cite[02OS]{stacks} we have $\rmM^{\loc}_{\brv E}= \rmM^{\loc}_{\brvO}\otimes_{\brvO}\brv E \cong \BP_{\brv E}^{d-1}$. Hence $\rmM^{\loc}_{\brvO}$ is dominant over $\Spec \brvO$, thus its structure sheaf is $\varpi$-torsion-free: $\brvO$ is a discrete valuation ring so  $\rmM^{\loc}_{\brvO}$ is flat over $\brvO$. Since each $R_i$ is defined by a regular sequence, $\wt{R_i}\rightarrow R_i$ is smooth and since $\BP^{d-1}_{\brvO}$ is regular, $\rmM^{\loc}_{\brvO}$ is regular by \cite[Proposition 19.4.4]{ega44}. Because a regular scheme is normal, this proves the corollary. 
\end{proof}

\medskip
\begin{rema}\label{rem:eqlocmod}
Using the presentation of $\rmM^{\loc}_{\brvO}$ as a blow-up, we can give equations for it using \cite[Chapitre 2 Lemme 1.6.1]{gen}\footnote{For this  Lemma, the reference to \cite{ful} seems incorrect; it can be deduced from \cite[Remark A.6.1]{ful}.}. Precisely, there is a closed embedding $\rmM^{\loc}_{\brvO}\incl \BP_{\brvO}^{d-1}\times \dots \times \BP_{\brvO}^{d-1}$ given by the $\varphi_i$'s of Lemma \ref{lem:explocmod} which identifies $\rmM^{\loc}_{\brvO}$ with the closed subscheme defined by the equations , where we denote by $T_0^{(i)},\dots, T_{d-1}^{(i)}$ the homogeneous coordinates on the $i$-th copy of $\BP^{d-1}_{\brvO}$:
$$
\begin{gathered}
T_{j}^{(i)}T_k^{(i+1)}=T_{j}^{(i+1)}T_{k}^{(i)},\quad 0<k<j<d\\
T_{0}^{(i+1)}T_{j}^{(i)}=\varpi T_{j}^{(i+1)}T_0^{(i)},\quad 0<j<d.
\end{gathered}
$$
Note that since we transformed $\rmOD$ into a lattice chain, the equations are slightly different from the ones in\footnote{Note that there are small typos in the equations given in \cite[3.76]{razi}, in particular it seems that \og $\pi$ \fg is on the wrong side.} \cite[3.76]{razi} where the embedding $\rmM^{\loc}_{\brvO}\incl \BP_{\brvO}^{d-1}\times \dots \times \BP_{\brvO}^{d-1}$ is given by the $f_i$'s. The equations are obtained from the previous ones by shifting the indices:
$$
\begin{gathered}
T_{j-1}^{(i)}T_k^{(i+1)}=T_{j}^{(i+1)}T_{k-1}^{(i)},\quad 0<k<j<d,\\
T_{j}^{(i+1)}T_{d-1}^{(i)}=\varpi T_{j-1}^{(i)}T_0^{(i+1)},\quad 0<j<d.
\end{gathered}
$$
These equations can also be found by considering the universal object $(L^{\univ},f^{\univ})$, where $L^{\univ}$ is the restriction of the product of the ample line bundles on $\BP_{\brvO}\coloneqq \BP_{\brvO}^{d-1}\times \dots \times \BP_{\brvO}^{d-1}$ and the map defined by $\Sym f^{\univ} : \Sym_{\brvO}(\rmOD\otimes_{\rmO}\brvO)\rightarrow H^0(\BP_{\brvO},\Sym L^{\univ})$. This map sends $\Pi^j$ along the $i$-th factor to $T_j^{(i)}$ and since $f^{\univ}$ is $\rmOD$-linear the kernel of $\Sym f^{\univ}$ is generated by:
$$
\begin{gathered}
\Pi^j\otimes \Pi^{k-1}-\Pi^k\otimes \Pi^{j-1},\, 0<k<j<d\\
\Pi^j \otimes \Pi^{d-1}-\varpi \otimes \Pi^{j-1},\, 0<j<d.
\end{gathered}
$$
which gives exactly the equations above.

Finally, note that these equations are described by $2\times2$-minors, only $(d-1)^2$ of them suffice (taking $k=j+1$ in the first family and $j=1$ in the second), which shows that $\rmM^{\loc}_{\brvO}$ is a global complete intersection (hence Cohen--Macaulay). With a bit more effort, we can even show that $\rmM_{\brvO}^{\loc}$ is a \emph{semi-stable} scheme over $\brvO$, making it a semi-stable model of the projective space.
\end{rema}
\medskip

\newpage

\section{Drinfeld symmetric space}\label{sec:drinsym}
In this section, we recall the construction of the Deligne formal model $\BH_{\rmO}^d$ of Drinfeld's symmetric space $\BH_E^d$.
\subsection{The Bruhat--Tits building for $\GL_d(E)$}
In this subsection we recall some facts on the Bruhat--Tits building of\footnote{Technically, it is not the building of $\GL_d(E)$, but disjoint copies of the Bruhat--Tits building of $\PGL_d(E)$ on which $\GL_d(E)$ acts (\cf remark \ref{rem:buildpgl}).} $\GL_d(E)$.
\subsubsection{The Bruhat--Tits building of $\GL_d(E)$}\label{subsubsec:btbuild}
The purpose of this paragraph is to recall the construction of the \emph{Bruhat--Tits building} for $\GL_{d}(E)$ as a simplicial complex $\brv{\BrTi}_{\! d}$. The set of$0$-simplices, denoted $\brv{\BrTi}_{\! d}[0]$, is the set of $\rmO$-lattice $\eta\subset E^d$, that is $\rmO$-submodules of $E^d$ of maximal rank ($=d$). For $\eta,\eta'\subset E^d$ two $\rmO$-lattices, we define their \emph{relative (logarithmic) index} by: 
$$
\log[\eta : \eta']\coloneqq\leng_{\rmO}\left ( \frac{\eta}{\eta \cap \eta'}\right )-\leng_{\rmO}\left ( \frac{\eta'}{\eta \cap \eta'}\right ).
$$
We will simply call $\log[\eta:\rmO^d]$ the \emph{index} of $\eta$. In particular, in what follows, the notation $\eta_i$ will indicate that $\eta_i$ has index $i$. As an example, let us note that if $(n_0,\dots , n_{d-1})\in \BZ^d$ and $\eta= \bigoplus_{i=0}^{d-1}\varpi^{n_i}\rmO e_i\subset E^d$, where $\{e_0,\dots, e_{d-1}\}\subset E^d$ is the standard basis, then 
\begin{equation}\label{eq:calcindex}
\log[\eta:\rmO^d]=-\sum_{i=0}^{d-1}n_i.
\end{equation}
In particular, multiplying a lattice by $\varpi$, drops the index by $d$. We define 
\begin{equation}\label{eq:brtidec}
\brv{\BrTi}_{\!d}\coloneqq \bigsqcup_{h\in \BZ} \BrTi^h_{\! d}\cong \BrTi_{\!d}^0\times \BZ,
\end{equation}
where:
\begin{itemize}
\itemb for an integer $r$, with $0 \leqslant r <d$, an $r$-simplex $\Delta\in\BrTi^h_{\! d}[r]$ is a set of cardinality $r+1$ consisting of $\rmO$-lattices in $E^d$, $\Delta = \{\eta_{i_0}, \dots, \eta_{i_r}\}$, with indices $-h \leqslant i_0 < \dots < i_r < d-h$, such that:
\begin{equation}\label{eq:condbrtg}
\varpi  \eta_{i_r} \subsetneq \eta_{i_0} \subsetneq \dots \subsetneq \eta_{i_r}, \quad \log[\eta_i : \rmO^d]=i,\ \forall i=i_0,\dots, i_r,
\end{equation}
\itemb the face maps are induced by inclusions of subsets.
\end{itemize}
Notice that the second condition in \eqref{eq:condbrtg} implies in particular that 
$$
\log[\eta_{i_k} : \eta_{i_{k-1}}]=i_k-i_{k-1}.
$$ 

The group $\GL_{d}(E)$ acts naturally on $\brv{\BrTi}_{\! d}$ in the following way: endow $E^d=\Sym^1E^d$ with the standard action of $\GL_d(E)$, then $g\in \GL_d(E)$ acts on $\brv{\BrTi}_{\! d}$ as
$$
\eta \subset E^d\mapsto g\cdot \eta \subset E^d.
$$
This defines an action of $\GL_d(E)$ on $\brv{\BrTi}_{\! d}$ since it preserves incidence relations. Notice that, by \eqref{eq:calcindex}, the action of $g\in \GL_d(E)$ on the $\BZ$-component of \eqref{eq:brtidec} is translation by $v_E(\det g)$.

Notice that, by \eqref{eq:calcindex}, $g\in \GL_d(E)$ acts through $+v_{E}(\det(g))$ on $\BZ$ in the second component of \eqref{eq:brtidec}. We recall that for any integer $r\in \llbracket 0,d-1\rrbracket$, $\GL_d(E)$ acts transitively on $\brv{\BrTi}_{\! d}[r]$ (\cf \cite[Lemma 1.1 b)]{mus}).

\medskip
\begin{rema}\label{rem:buildpgl}
 Let $E^{\times}\subset \GL_d(E)$ be the center consisting of scalar matrices and notice that the quotient $\BrTi_{\! d}\coloneqq \brv{\BrTi}_{\! d}/E^{\times}$ is the Bruhat--Tits building of $\PGL_d(E)\coloneqq \GL_d(E)/E^{\times}$. It is built on homothety classes of lattices (\ie $\bar \eta \coloneqq E^{\times}\cdot \eta$ for $\eta\subset E^d$ a lattice) and is naturally isomorphic to $\BrTi_{\! d}^h$, for any $h\in \BZ$.
 
 Indeed, fix an integer $h\in\BZ$, let $r\in \llbracket 0,d-1\rrbracket$, and let $\Delta = \{\bar{\eta}_{i_0}, \dots, \bar{\eta}_{i_r}\}\in \BrTi_{\! d}[r]$. Then for each index $i=i_0,\dots,i_r$ there exists a unique representative $\eta_{i}\in \bar{\eta}_i$ such that \eqref{eq:condbrtg} holds. This gives a section $\BrTi_{\! d}\rightarrow \BrTi_{\! d}^{h}\subset \brv{\BrTi}_{\! d}$ of the quotient.
 
We emphasize that  $\BrTi_{\! d}$ is the Bruhat--Tits building of $\PGL_d(E)$ and that $\brv \BrTi_{\! d}$ is \emph{not} the Bruhat--Tits building of $\GL_d(E)$. In fact, to get the Bruhat--Tits building of $\GL_d(E)$, we would also have to consider (non-degenerate) $d$-simplices.
\end{rema}
\medskip

We can realize $\brv{\BrTi}_{\! d}$ as a poset, giving it the structure of a topological space. This space should not be confused with its topological realization $\lvert \brv{\BrTi}_{\! d} \rvert$ which will be discussed in \S\ref{subsubsec:georeal}.

\subsubsection{$\rmO[\Pi]$-modules and lattices}
Let $R\in \Alg_{\rmO}$, we define
$$
R[\Pi]\coloneqq R[X]/\langle X^d-\varpi\rangle,
$$
with the $\BZ/d$-grading defined by $\deg b=0$ for $b\in R$ and $\deg X=1$. If $M$ is a $\BZ/d$-graded $R[\Pi]$-module we write for $i\in \BZ/d$, $\Pi_i\colon M_i\rightarrow M_{i+1}$ the restriction of $\Pi$ to $M_i$. A basic example of an $\rmO[\Pi]$-module is $\boeta^{\bullet}\coloneqq \oplus_{i\in\BZ/d}\boeta^i$, defined in \S\ref{subsubsec:refob}. In that paragraph we described how to associate to $\boeta^{\bullet}$ a $d-1$-simplex $\{\boeta_0,\dots, \boeta_{d-1}\}\in \brv{\BrTi}_{\! d}[d-1]$; this construction motivates what follows.

\medskip
\begin{defi}\label{def:lattod}
For $h\in \BZ$ let $\sL_d^h$ be the functor on $\Nilp_{\rmO}$ defined by 
$$
R\in \Nilp_{\rmO}\mapsto \{(\eta^{\bullet},r^{\bullet})\}
$$
where
\begin{itemize}
\itemb $\eta^{\bullet}$ is a $\BZ/d$-graded Zariski-constructible sheaf of flat $\rmO[\Pi]$-modules on $\Spec R$,
\itemb $r^{\bullet}\colon \ud{\boeta}^{\bullet}\otimes_{\rmO}E\xrightarrow{\sim} \eta^{\bullet}\otimes_{\rmO}E$ is a graded $E[\Pi]$-linear isomorphism, where the left-hand side is the constant sheaf associated to $\boeta^{\bullet}$,
\end{itemize}
such that for $i\in \BZ/d$ and $S\subset \Spec R$ such that $\restr{\eta^i}{S}$ is constant (\ie isomorphic to $\ud{\rmO}^d$), $r^i$ induces an isomorphism
$$
\bigwedge^d\restr{\eta^i}{S}\cong \varpi^{-h}\bigwedge^d\ud{\boeta}^i.
$$
\end{defi}
\medskip
We set $\brv{\sL}_d\coloneqq \bigsqcup_{h\in \BZ}\sL^h_d$ which is naturally endowed with an action of $\GL_d(E)$: $g\in \GL_d(E)$ acts by $g\colon (\eta^{\bullet},r^{\bullet})\mapsto (\eta^{\bullet},r^{\bullet}\circ g^{-1})$. The action of $\GL_d(E)$ on $\boeta^{\bullet}\otimes_{\rmO}E$ is described below.
\medskip
\begin{rema}\label{rem:zerograd}
In the definition, since the action of $\Pi$ is invertible on $\boeta^{\bullet}\otimes_{\rmO}E$ and $\boeta^0\otimes_{\rmO}E\cong E^d$ through the basis given by $\{\Pi^j\}_{j=0,\dots, d-1}$, the data of $r^{\bullet}$ is equivalent to the data of $r^0\colon \ud{E}^d\xrightarrow{\sim} \eta^0\otimes_{\rmO}E$. Explicitly, for $i\in \BZ/d$, $r^i$ is the only isomorphism making the following diagram of isomorphisms commute
\begin{center}
\begin{tikzcd}
\ud{E}^d\ar[r,"r^0"]\ar[d,"g_{\Pi}^i"]& \eta^0\otimes_{\rmO}E\ar[d,"\Pi^i"]\\
\ud{E}^d\ar[r,"r^i"]&\eta^i\otimes_{\rmO}E
\end{tikzcd}
\end{center}
meaning $r^i= \Pi^i\circ r^0\circ g_{\Pi}^{-i}$. Hence $g$ acts on $r^{\bullet}$ via its action by $\Pi^{-i}\circ g^{-1}\circ \Pi^{i}$ on $\boeta^0\otimes_{\rmO}E\cong E^d$ (compare with Remark \ref{rem:glequiv}).
\end{rema}
\medskip
Note that, by the second condition in Definition \ref{def:lattod}, $g\in \GL_d(E)$ sends $\sL_d^h$ to $\sL_d^{h'}$ where $h'=h+v_E(\det g)$.

This functor will appear in the definition of Drinfeld's model of the symmetric space. The purpose of this paragraph is to prove the following proposition:
\medskip
\begin{prop}\label{prop:repldbt}
The functor $\brv \sL_d$ is represented by $\brv{\BrTi}_{\! d}$ as a topological space, in the sense that for $R\in\Nilp_{\rmO}$ we have a functorial bijection
$$
\brv\sL_d(R)\cong \Map(\Spec R,  \brv{\BrTi}_{\! d} )
$$
where $\Map$ denotes the set of continuous maps between topological spaces. Moreover, this bijection is $\GL_d(E)$-equivariant and matches $\sL^h_d$ with $\BrTi_{\! d}^h$.

\end{prop}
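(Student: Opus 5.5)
The plan is to first treat the case of a point, then globalize using constructibility and the poset topology on $\brv{\BrTi}_{\! d}$.

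\emph{Pointwise step.} Let $R=\kappa$ be a field, or more generally let $S\subset\Spec R$ be a connected piece on which $\eta^{\bullet}$ is constant. A pair $(\eta^\bullet,r^\bullet)$ is then a graded flat, hence torsion-free, $\rmO[\Pi]$-module $\eta^\bullet$ together with $r^\bullet$. By Remark~\ref{rem:zerograd}, $r^\bullet$ is determined by $r^0$. Pulling back by $r^i$ and composing with $\iota_i$ (equivalently, transporting everything to $E^d$ via $r^0$ and the diagram of \S\ref{subsubsec:refob}), each $\eta^i$ becomes an $\rmO$-lattice $\Lambda_i\subset E^d$. The map $\Pi\colon\eta^i\to\eta^{i+1}$ becomes the inclusion $\Lambda_i\subset\Lambda_{i+1}$, and $\Pi^d=\varpi$ gives $\Lambda_{i+d}=\varpi^{-1}\Lambda_i$.

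The determinant condition should translate into the index condition. Since $\bigwedge^d\boeta^i$ corresponds to index $i$ (indeed $\log[\boeta_i:\rmO^d]=i$ from the diagram), the condition $\bigwedge^d\eta^i=\varpi^{-h}\bigwedge^d\boeta^i$ gives $\log[\Lambda_i:\rmO^d]=i+dh$. I still need to match this normalization with the convention in \eqref{eq:condbrtg}, where indices lie in $[-h,d-h)$; this is a bookkeeping check. Since consecutive indices differ by $1$, all the inclusions $\Lambda_i\subset\Lambda_{i+1}$ are of colength $1$ and strict, so $\{\Lambda_i\}$ is a full periodic chain. Conversely, a lattice chain $\Lambda_i\subset\Lambda_{i+1}$ with these indices and $\Lambda_{i+d}=\varpi^{-1}\Lambda_i$ clearly gives back $(\eta^\bullet,r^\bullet)$.

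This only produces complete flags, i.e.\ $(d-1)$-simplices. The key point is therefore to allow $\Pi$ to fail to be injective on the lattice side. Over a field, I would replace $\eta^\bullet$ by its image under the identification: the distinct lattices among the $\Lambda_i$ form a simplex. To make this work I will reread the definition as allowing $\Lambda_i=\Lambda_{i+1}$, which requires the index normalization to be on the \emph{set} of distinct lattices. Concretely, I will define the point of $\brv{\BrTi}_{\! d}$ attached to a fibre as the simplex $\{\Lambda_i\}$ (the set of distinct lattices), and check that it satisfies \eqref{eq:condbrtg}. The inverse map sends a simplex to the chain obtained by repeating lattices, with $\eta^i$ taken as the lattice whose index is the largest one $\leqslant i$ (a step function). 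I expect pinning down this indexing convention, so that the $\bigwedge^d$ condition corresponds exactly to the index labels in \eqref{eq:condbrtg}, to be the main fiddly point.

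\emph{Globalization.} A Zariski-constructible sheaf gives a finite stratification of $\Spec R$ with constant restrictions, hence a map of sets $\Spec R\to\brv{\BrTi}_{\! d}[0\text{-}\dots]$ sending a point to its simplex. Continuity for the poset topology, where the open sets are up-closed or down-closed, should follow from the cospecialization maps $\eta_{\bar y}\to\eta_{\bar x}$ for $x\rightsquigarrow y$. These are injective and compatible with $r$, so the simplex at a specialization is a face (or coface) of the simplex at the generization; this is exactly the semicontinuity that the poset topology encodes. Conversely, given a continuous $f\colon\Spec R\to\brv{\BrTi}_{\! d}$, its image is quasi-compact, so only finitely many simplices occur. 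The sheaf $\eta^i$ is then defined on the open preimages of the stars, $f^{-1}(\mathrm{Star}(\Delta))$, by the lattice of the face, and glued along face inclusions. Functoriality in $R$ is clear. $\GL_d(E)$-equivariance follows from Remark~\ref{rem:zerograd} and Remark~\ref{rem:glequiv}: precomposing $r^0$ by $g^{-1}$ moves the lattices by $g$. The component matching $\sL^h_d\leftrightarrow\BrTi^h_{\! d}$ is the index computation above.

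I expect the main obstacle to be the globalization: matching the specialization direction with the poset topology, and checking that the glued sheaf is indeed constructible and flat.
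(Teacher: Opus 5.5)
Your overall route is the paper's: transport each $\eta^i$ into $E^d$ via $\iota_i\circ(r^i)^{-1}$, read off a simplex over a point, and globalize along a finite stratification on which $\eta^{\bullet}$ is constant. You are also right that the determinant condition of Definition \ref{def:lattod}, imposed at every $i$, forces each $\Pi_i$ to have colength one and so only yields $(d-1)$-simplices. The paper's proof glosses over this.

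The gap is that the point you defer as ``bookkeeping'' is the actual content, and the one concrete choice you make is the wrong one. The normalization must tie positions to indices exactly at those $i$ where $\Pi_i\colon\eta^i\to\eta^{i+1}$ is \emph{not} an isomorphism. At such $i$, the lattice $\iota_i((r^i)^{-1}\eta^i)$ has index $i$ (for $h=0$). This is how the paper reads off $\Delta_{\eta^{\bullet}}$, and it is what condition (3) of Definition \ref{def:drin} imposes on the vanishing locus $S_i$ of $\Pi_i$. Hence the inverse map must place at position $j$ the lattice of $\Delta$ with the \emph{smallest} index $\geqslant j$, read periodically with the appropriate power of $\varpi$; this is the paper's $j_+$.

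With your step function (largest index $\leqslant j$), $\Pi_j$ fails to be an isomorphism at $j=i_k-1$. The lattice placed there is $\eta_{i_{k-1}}$, of index $i_{k-1}\neq j$ in general, so $\eta^{\bullet}_{\Delta}$ violates the normalization. Likewise, ``the set of distinct lattices'' is not by itself a readout map. A lattice's index in $E^d$ is intrinsic; what the determinant condition records is \emph{where} in the chain each lattice sits.

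This choice is not cosmetic: it decides the direction of specialization, which you leave hedged (``face (or coface)'', ``up-closed or down-closed''). Your cospecialization argument is the right mechanism. Write $\Lambda^{(x)}_j$ for your $\Lambda_j$ at a point $x$. Since $\eta$ is flat and $r$ trivializes $\eta\otimes_{\rmO}E$, for $x\rightsquigarrow y$ one gets $\Lambda^{(y)}_j\subseteq\Lambda^{(x)}_j$ for all $j$.

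Now use the correct normalization. If $c$ is critical for $x$, then $\Lambda^{(y)}_c$ is the lattice at the first critical position of $y$ that is $\geqslant c$, so it has index $\geqslant c$. It also lies in $\Lambda^{(x)}_c$, which has index $c$. So the two are equal and $c$ is critical for $y$, i.e.\ $\Delta_x\subseteq\Delta_y$. Consequences:
\begin{itemize}
\item Simplices grow under specialization, so the minimal open neighbourhood of $\Delta$ is its set of faces.
\item This matches the open immersions $F_{\Delta'}\to F_{\Delta}$ for $\Delta'\subseteq\Delta$ used in the proof of Lemma \ref{lem:semist}. By contrast, the preimage of the star of a vertex $\eta$ is the closed irreducible component $\BH^{\eta}_k$ of the special fibre.
\item From a continuous $f$, one defines $\eta^j$ as the subsheaf of $\ud{\boeta^j\otimes_{\rmO}E}$ of sections $s$ with $s_x\in\eta^j_{f(x)}$. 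Its stalks are correct because faces carry larger lattices.
\end{itemize}

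Your ``open preimages of the stars'' is the opposite topology. It is exactly what your mirrored step function forces, since there a simplex carries larger lattices than its faces. That gives a self-consistent but different functor, incompatible with Definition \ref{def:drin}, where the loci $S_i$ are closed.

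Two slips. First, $\Pi$ never fails to be injective (as $\Pi^d=\varpi$); the degenerate case is $\Pi_i$ being an isomorphism. Second, $\varpi^{-h}$ shifts indices by $h$, not $dh$.
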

\medskip
We insist that we do not consider the topological (Hausdorff) realization of $\brv{\BrTi}_{d}$ but see it as a poset.
\begin{proof}
We may assume that for each $h\in \BZ$, $\BrTi_{\! d}^h$ represents $\sL^h_d$, hence we can suppose $h=0$. Note that the actions of $\GL_d(E)$ on connected components coincide.

We first prove the proposition over a point by constructing two inverse maps $\Delta\mapsto \eta^{\bullet}_{\Delta}$ and $\eta^{\bullet}\mapsto \Delta_{\eta^{\bullet}}$. For $j\in \BZ/d$, let $\langle j \rangle\in \llbracket 0, d-1\rrbracket$ be its unique lift in that interval.

\begin{itemize}
\itemb Let $\Delta=\{\eta_{i_0},\dots, \eta_{i_r}\} \in \brv{\BrTi}_{\! d}^0[r]$ be an $r$-simplex, let $C_{\Delta}\coloneqq \{i_0,\dots, i_r\}\subset \BZ$ be its set of indices and $\bar C_{\Delta}\subset \BZ/d$ its set of indices mod $d$. For $j\in \BZ/d$, let $l\in \BN$ be the least integer such that $j+l\in \bar C_{\Delta}$ and let
$$
j_{+}\coloneqq \langle j+l\rangle\in C_{\Delta},
$$
the closest index right of $j$. Let  
$$
\eta_{\Delta}^j\coloneqq \iota_{\langle j \rangle }^{-1}(\eta_{j_+})\subset \boeta^j\otimes_{\rmO}E.
$$
The inclusion $\eta_{\Delta}^j\subset \boeta^j\otimes_{\rmO}E$ induces an isomorphism
$$
r^j_{\Delta}\colon \eta_{\Delta}^j\otimes_{\rmO}E\cong \boeta^j\otimes_{\rmO}E
$$
and the inclusion $\eta_{\Delta}^j\subset \eta_{\Delta}^{j+1}$ induces $\Pi_j\colon \eta_{\Delta}^j\rightarrow \eta_{\Delta}^{j+1}$; hence we set:
$$
\eta^{\bullet}_{\Delta}\coloneqq \bigoplus_{j\in \BZ/d} \eta_{\Delta}^j,\quad r^{\bullet}_{\Delta}\coloneqq \bigoplus_{j\in \BZ/d}r^j_{\Delta},
$$
where $\Pi$ acts on $\eta^{j}_{\Delta}$ by $\Pi_j$; this defines a $\rmO[\Pi]$-module. The determinant condition in \ref{def:lattod} is immediate.
\itemb Let $\eta^{\bullet}$ be a $\rmO[\Pi]$-module and $r^{\bullet}\colon \boeta^{\bullet}\otimes_{\rmO}E\xrightarrow{\sim}\eta^{\bullet}\otimes_{\rmO}E$ be a $E[\Pi]$-module isomorphism. Let $C\coloneqq \{\langle j \rangle \mid \Pi_j\colon \eta^j\rightarrow \eta^{j+1}\text{ is not an isomorphism}\}$ and order $C=\{i_0,\dots, i_r\}$ increasingly. For $i\in C$ set
$$
\eta_i\coloneqq (\iota_i\circ(r^i)^{-1})(\eta^i)\subset E^d.
$$
By construction, we obtain inclusions $\eta_{i_k}\subsetneq \eta_{i_{k+1}}$ defining a simplex 
$$
\Delta_{\eta}\coloneqq \{\eta_{i_0},\dots, \eta_{i_r}\}\in \BrTi_{\! d}^0[r].
$$
\end{itemize}
Let $X\coloneqq \Spec R$. Given a continuous map $f\colon \Spec\, R \rightarrow \brv{\BrTi}_{\! d}$ of topological spaces we get a stratification of $X$ by setting for $\Delta\in \brv{\BrTi}_{\! d}$, $X_{\Delta}\coloneqq f^{-1}(\Delta)$ giving: 
$$
X=\bigcup_{\Delta\in \brv{\BrTi}_{\! d}} X_{\Delta}.
$$
This defines a finite stratification since $X$ is compact, \ie all but finitely many $X_{\Delta}$ are empty. We get an element of $\sL_d(R)$ defined by $(\eta^{\bullet}_{\Delta},r^{\bullet}_{\Delta})$ on $X_{\Delta}$.

Now, given $(\eta,r)$ on $X$ we define a map $X\rightarrow \brv{\BrTi}_{\! d}$. Let us fix a stratification 
$$
X=\bigcup_{i\in I} Z_i
$$
such that the restriction of $\eta$ to $Z_i$ is constant. Set $\Delta_i\coloneqq \Delta(\restr{\eta}{Z_i})$. The map $f\colon X \rightarrow \brv{\BrTi}_{\! d}$ is then defined by sending $Z_i$ to $\Delta_i$.
\end{proof}

\subsubsection{Geometric realization}\label{subsubsec:georeal}

Since $\brv{\BrTi}_{\! d}$ is a simplicial complex we can construct its \emph{geometric realization} $\lvert \brv{\BrTi}_{\! d} \rvert$. For any integer $n\in \BN$, let 
$$
\Delta^{\! n}\coloneqq \{y=(y_0,\dots, y_{n})\in [0,1]^{n+1}\mid \sum_{j=0}^ny_j=1\},
$$ 
be the standard simplex of dimension $n$ and let 
$$
\lvert \brv \BrTi_{\! d} \rvert\coloneqq \left ( \bigcup_{0\leqslant n < d}\brv \BrTi_{\! d}[n]\times \Delta^n\right ) /\sim
$$
where we quotient by the usual simplicial relations. This realization does not make the metric structure on $\lvert \BrTi_{\! d}\rvert$ transparent and we introduce \emph{apartments} to clarify this.

If $\CB=\{e_0,\dots, e_{d-1}\}\subset E^d$ is a basis then we can associate a maximal split torus $T\subset \GL_d(E)$ of matrices that are diagonal in the basis $\CB$. Moreover, any maximal split torus in $\GL_d(E)$ is of this form. Let $\ud{n}=(n_1,\dots, n_d)\in \BZ^d$ and define the lattice
\begin{equation}\label{eq:torlatdef}
\eta_{\CB}(\ud{n})\coloneqq \bigoplus_{i=0}^{d-1} \varpi^{n_i}\rmO e_i.
\end{equation}
We associate an apartment to $\CB$, only depending on the torus $T$, which is a simplicial subset $\brv \CA_T\subset \brv \BrTi_{\! d}$ defined for $r\in\llbracket 0, d-1 \rrbracket$ by
$$
\brv \CA_T[r]\coloneqq \{\Delta\in \brv \BrTi_{\! d} \mid \Delta\subset \{\eta_{\CB}(\ud{n});\ \ud{n}\in \BZ^d\}\}.
$$
In other words, it is the subcomplex of lattices which are \og diagonal \fg for $T$. Since the action of $\GL_d(E)$ is transitive on the simplices of fixed dimension of $\brv\BrTi_{\! d}$, we deduce:
$$
\brv\BrTi_{\! d}=\bigcup_{T\subset \GL_d(E)}\brv\CA_T.
$$
\medskip
\begin{rema}
Strictly speaking, $\brv \CA_T$ is not an apartment but a disjoint union of apartments. Indeed, we get $\brv\CA_T=\bigsqcup_{h\in \BZ}\CA_T^h$ where $\CA_T^h\subset \BrTi_{\! d}^h$ which is in bijection with $\CA_T$, its image through the projection map $\brv \BrTi_{\! d}\rightarrow \BrTi_{\! d}$ of Remark \ref{rem:buildpgl}. In other words $\CA_T$ is the apartment associated to $T/E^{\times}\subset \PGL_d(E)$ inside the Bruhat--Tits building for $\PGL_d(E)$.
\end{rema}
\medskip

\medskip
\begin{lemm}\label{lem:descap}
Let $T\subset \GL_d(E)$ be a maximal split torus. The inclusion $\brv{\CA}_T\subset \brv{\BrTi}_{\! d}$ induces a homeomorphism
$$
\lvert \brv \CA_T\rvert\cong \BR^{d-1}\times \BZ 
$$
\end{lemm}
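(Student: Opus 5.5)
The plan is to identify the simplicial complex $\brv\CA_T$ explicitly with a standard triangulation of $\BR^d$, and then split off the $\BZ$ factor via the index. First, fix the basis $\CB=\{e_0,\dots,e_{d-1}\}$ adapted to $T$. The vertices of $\brv\CA_T$ are the lattices $\eta_\CB(\ud n)$, $\ud n\in\BZ^d$, and $\ud n\mapsto\eta_\CB(\ud n)$ is a bijection $\BZ^d\cong\brv\CA_T[0]$, since a diagonal lattice determines its exponents. By \eqref{eq:calcindex} the index is $-\sum n_i$, so $\brv\CA_T[0]\cap\BrTi^h_{\!d}$ corresponds to $\{\ud n:\ \sum n_i\in h+d\BZ\}$ after we fix the normalisation of \eqref{eq:brtidec}. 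It is cleaner to work componentwise and prove $\lvert\CA^h_T\rvert\cong\BR^{d-1}$ for each $h$. By Remark \ref{rem:buildpgl}, $\CA^h_T\cong\CA_T$, whose vertices are $\BZ^d/\BZ(1,\dots,1)$.

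Next, describe the simplices. Two diagonal lattices $\eta_\CB(\ud n)\subsetneq\eta_\CB(\ud m)$ with $\varpi\eta_\CB(\ud m)\subsetneq\eta_\CB(\ud n)$ correspond to $\ud m\leqslant\ud n\leqslant\ud m+(1,\dots,1)$ coordinatewise. Hence an $r$-simplex of $\CA_T$ is a chain $\ud n^{(0)}\geqslant\dots\geqslant\ud n^{(r)}$ in $\BZ^d/\BZ(1,\dots,1)$, with consecutive differences $0/1$-vectors and total difference at most $(1,\dots,1)$. Reversing signs, this is exactly the Freudenthal (Kuhn) triangulation of $\BR^d/\BR(1,\dots,1)\cong\BR^{d-1}$: the simplices are the convex hulls of $\ud x, \ud x+\chi_{S_1},\dots,\ud x+\chi_{S_r}$ for nested subsets $S_1\subsetneq\dots\subsetneq S_r\subsetneq\llbracket0,d-1\rrbracket$, which is the affine Coxeter (alcove) decomposition of type $\tilde A_{d-1}$.

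Then define the map $\lvert\CA_T\rvert\to\BR^d/\BR(1,\dots,1)$ by sending a point $(\Delta,y)$ with $\Delta=\{\eta_\CB(\ud n^{(k)})\}$ to $\sum_k y_k\,\ud n^{(k)}$ (or its negative). This map is compatible with the simplicial relations, so it is well defined and continuous. It is affine and injective on each closed simplex, because the vertices of a Freudenthal simplex are affinely independent modulo $(1,\dots,1)$.

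The main obstacle is showing that this map is a bijection, i.e. that the Freudenthal simplices cover $\BR^d/\BR(1,\dots,1)$ and meet only along common faces. I would prove this directly. Given $\ud x$, normalise so that $\min$ of the fractional parts is handled by taking $\lfloor\ud x\rfloor$; then order the coordinates by decreasing fractional part. The sets $S_k$ of the $k$ largest fractional parts give the unique smallest simplex containing $\ud x$, with barycentric coordinates the successive gaps between the sorted fractional parts. This uniqueness gives bijectivity.

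Finally, we need the inverse to be continuous. The map is a continuous bijection that restricts to a homeomorphism on each closed simplex, and the triangulation is locally finite: only finitely many simplices meet any bounded set. Since both spaces carry the weak (colimit) topology with respect to this locally finite cover by closed simplices, the map is a homeomorphism. Taking the disjoint union over $h\in\BZ$ then gives $\lvert\brv\CA_T\rvert\cong\BR^{d-1}\times\BZ$.
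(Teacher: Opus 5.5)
Your argument is essentially the paper's: both identify $\CA_T$ with the $\wt A_{d-1}$ (Freudenthal--Kuhn) triangulation of $\BR^d/\BR(1,\dots,1)$ by matching vertices $\ud n\mapsto \bar\eta_{\CB}(\ud n)$ and simplices, and then take the disjoint union over $h$. The paper only matches maximal simplices via permutations $\sigma\in\fkS_d$ and simply asserts the tiling and the topology, whereas you actually prove these via fractional parts and local finiteness. One slip: with the paper's normalisation, $\CA_T^h[0]$ corresponds to the window $h-d<\sum_i n_i\leqslant h$, not to the residue class $\sum_i n_i\in h+d\BZ$ (vertices of a simplex have pairwise distinct indices mod $d$, so a residue class contains no edges); this is harmless, since you immediately pass to $\CA_T^h\cong\CA_T$.
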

\medskip
\medskip
\begin{proof}

Let $\brv{V}_{\BR}=\bigsqcup_{h\in \BZ}V^{h}_{\BR}\subset \BR^d$ where for $h\in \BZ$, $V^{h}_{\BR}$ is the affine hyperplane defined by
$$
V^{h}_{\BR}\coloneqq \{(x_0+h,\dots, x_{d-1}+h)\in \BR^d\mid \sum_{i=0}^{d-1}x_i=0\},
$$
and $\brv{V}_{\BZ}\coloneqq \BZ^d\subset \brv{V}_{\BR}$. We also define $V_{\BZ}\coloneqq \brv{V}_{\BZ}/\BZ$ and $V_{\BR}\coloneqq\brv{V}_{\BR}/\BZ$ which weidentify respectively  with $V^0_{\BZ}$ and $V^0_{\BR}$ through the projection.

We first recall how to define the simplicial structure on $\brv{V}_{\BR}\cong \BR^{d-1}\times \BZ$. Let $i,j\in \BN$ such that $0\leqslant i\neq j\leqslant d-1$ and $n\in \BN$, we define the hyperplane section
$$
H^{i,j}_n\coloneqq  \{(x_0,\dots, x_{d-1})\in \BR^d \mid x_i-x_j=n\}\cap \brv{V}_{\BR}\subset \brv{V}_{\BR}
$$
This hyperplane arrangement defines a simplicial structure on $\brv{V}_{\BR}$ hence on $V_{\BR}$ by projection. We can describe explicitly a simplicial set $A$ such that $\lvert A \rvert = V_{\BR}$. Namely, the $0$-simplices of $A$ are defined by 
$$
A[0]\coloneqq V_{\BZ},
$$
and the $(d-1)$-simplices $A[d-1]$ are defined by the projection of the set
$$
\{\ud{n}^0,\dots, \ud{n}^{d-1}\in \brv{V}_{\BZ}\mid \exists \sigma\in \fkS_d,\, \ud{n}^{j}=\ud{n}^{j-1}+(\delta_{l,\sigma(j)})_{l},\ \forall j=1,\dots, d-1\}
$$
where $\fkS_d=\Aut\{0,\dots, d-1\}$ is the symmetric group. Note that for $r\in \BN$ such that $0\leqslant r \leqslant d-1$ an element of $A[r]$ is defined by a subset of an element of $A[d-1]$ with $r+1$ elements; hence the face maps are the natural inclusions. This defines the simplicial set $A$ and we now explain why $\lvert A \rvert\cong V_{\BR}$. For $\sigma \in \fkS_d$, let
$$
\wt{\Delta}_{\sigma}\coloneqq \{(x_0,\dots, x_{d-1})\in \BR^d \mid 0\leqslant x_{\sigma(0)}\leqslant \dots \leqslant x_{\sigma(d-1)}\leqslant 1\}
$$
and let $\Delta_{\sigma}\subset V_{\BR}$ be its image through the projection. Every maximal simplex in $V_{\BR}$ is of the form $\Delta_{\sigma}+ \ud{n}$ for $\sigma\in \fkS_d$ and $\ud{n}\in V_{\BZ}$. Moreover, $\Delta_{\sigma}$ is the standard $(d-1)$-dimensional simplex\footnote{The map $\Delta^{d-1}\xrightarrow{\sim} \Delta_1$ is given by $(y_i)\mapsto \bigl ( x_i=\sum_{j=0}^iy_j\bigr )$ with inverse $(x_i)\mapsto (y_i=x_i-x_{i-1})$. Notice how this makes the face maps look like those in $\BrTi_{\! d}$.}. This shows that $\lvert A \rvert = V_{\BR}\cong \BR^{d-1}$.

It remains to show that $A\cong \CA_T$ as simplicial sets. Let's fix a basis $\CB=\{e_0,\dots, e_{d-1}\}\subset E^d$ such that $T$ is diagonal in $\CB$, using the lattices from \eqref{eq:torlatdef}, we get a map 
$$
\begin{array}{rcl}
V_{\BZ} & \rightarrow & \lvert  \CA_T\rvert\\
 \ud{n}\ & \mapsto & \bar{\eta}_{\CB}(\ud{n}),
\end{array}
$$
which gives a bijection $V_{\BZ} \cong \CA_T[0]$. We need to show that this bijection matches $(d-1)$-simplices of $A$, which will be enough since the face maps then coincide. Let $\ud{n}^0,\dots, \ud{n}^{d-1}\in V_{\BZ}$, then it is clear that if $\{\ud{n}^j\}_j\in A[d-1]$, we get $\{\bar\eta_{\CB}(\ud{n}^j)\}_{j}\in \BrTi_{\! d}[d-1]$. Now if $\{\bar\eta_{\CB}(\ud{n}^j)\}_{j}\in \BrTi_{\! d}[d-1]$ we get in particular that 
$$
\varpi \eta_{\CB}(\ud{n}^{d-1})\eta_{\CB}(\ud n^0)\subsetneq \eta_{\CB}(\ud n^1)\subsetneq\cdots\eta_{\CB}(\ud{n}^{d-1}).
$$
Therefore,$\bar\eta_{\CB}(\ud{n}^j)/\eta_{\CB}(\ud{n}^{j+1})$ is of length $1$, hence, of the form $k_E e_{\sigma(j)}$. Since all $\sigma(j)$ are different, $\sigma\in \fkS_d$ and $\ud{n}^{j}=\ud{n}^{j-1}+(\delta_{l,\sigma(j)})_{l}$. This proves that $\{\ud{n}^j\}_j\in A[d-1]$ and finishes the proof.

\end{proof}

\medskip
\begin{rema}
Note that in the proof of the lemma, we show that $\lvert \CA_T\rvert \cong \Hom(X_{*}(\bar T),\BR)$ where $\bar T\subset \PGL_d(E)$ is the image of $T$ and $X_{*}(\bar T)$ is the root lattice. Remark that in the proof, we explain how the simplicial structure is defined by the affine root hyperplanes of the  \emph{Coxeter complex of type $\wt A_{d-1}$} on $\Hom(X_{*}(\bar T),\BR)\cong \BR^{d-1}$. In particular, the projections of $\Delta_{\sigma}$ are the affine Weyl chambers.
\end{rema}
\medskip
We mention the following lemma 
\medskip
\begin{lemm}\label{lem:sameap}
Let $\Delta, \Delta'\in \brv{\BrTi}_d$, then there exists a torus $T\subset \GL_d(E)$ such that $\Delta,\Delta'\in \brv{\CA}_T$. Moreover, for $x, x'\in \lvert \BrTi_d\rvert$, there exists a torus $T\subset \GL_d(E)$ such that $x, x'\in \lvert \CA_T\rvert$.
\end{lemm}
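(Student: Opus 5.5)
The plan is to reduce to a statement about two lattices, prove it with the elementary divisor theorem, and then pass from simplices to points of the realization. Each simplex $\Delta\in\brv{\BrTi}_d$ is a finite chain of lattices. So it suffices to show that for any two finite chains of lattices $\eta_{0}\subsetneq\dots\subsetneq\eta_{r}$ and $\eta'_{0}\subsetneq\dots\subsetneq\eta'_{s}$ (each with $\varpi\eta_r\subset\eta_0$, and similarly for the primed chain) there is a basis $\CB$ of $E^d$ in which all the lattices are of the form $\eta_{\CB}(\ud n)$. The torus $T$ diagonal in $\CB$ then satisfies $\Delta,\Delta'\in\brv{\CA}_T$.

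The first step is the case of two lattices $\eta,\eta'$. By the elementary divisor theorem over the PID $\rmO$, there is an $\rmO$-basis $\CB=\{e_0,\dots,e_{d-1}\}$ of $\eta$ and integers $n_i$ with $\eta'=\bigoplus\varpi^{n_i}\rmO e_i$. This is exactly the statement that both lattices are diagonal for $T$.

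The next step upgrades this to a chain $\eta_0\subsetneq\dots\subsetneq\eta_r$ together with one extra lattice $\eta'$. Since $\varpi\eta_r\subset\eta_0$, the chain corresponds to a flag $0\subset\bar\eta_0\subset\dots\subset\bar\eta_r=\eta_r/\varpi\eta_r$ of $k_E$-subspaces, writing $\bar\eta_j$ for the image of $\eta_j$. The extra lattice gives a second filtration of $\eta_r/\varpi\eta_r$, namely the images of $(\eta'\cap\varpi^{m}\eta_r)+\varpi\eta_r$ for $m\in\BZ$. The idea is to choose a basis of $\eta_r$ adapted simultaneously to both. Two flags in a finite-dimensional vector space always admit a common adapted basis (Bruhat decomposition), and the elementary divisor basis can be chosen compatibly with this. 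I would organize this as an induction on $r$: assume $\eta_0,\dots,\eta_{r-1},\eta'$ are diagonal in a basis $\CB$, and then modify $\CB$ by an element of the stabilizer to also diagonalize $\eta_r$.

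A cleaner alternative is to invoke the standard building axiom. Both $\Delta$ and $\Delta'$ are contained in chambers $\Delta_1,\Delta'_1$, and two chambers lie in a common apartment. The chamber statement is Bruhat decomposition $\GL_d(E)=\bigsqcup I w I$ with $I$ the Iwahori subgroup. Concretely: take $\Delta_1$ to be the standard chamber $\{\boeta_0,\dots,\boeta_{d-1}\}$ (using transitivity of $\GL_d(E)$ on chambers) and $\Delta'_1=g\Delta_1$. Write $g=i w i'$ with $i,i'\in I$ and $w$ a monomial matrix. Then $\Delta_1=i\Delta_1$ and $\Delta'_1=iw\Delta_1$ are both in the apartment of $iTi^{-1}$. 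I would follow this route, citing Iwahori--Bruhat decomposition, since it avoids the fiddly induction.

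For points $x,x'\in\lvert\BrTi_d\rvert$, pick the minimal simplices $\Delta,\Delta'$ whose realizations contain $x,x'$. Lift them to $\brv{\BrTi}_d$ via the section of Remark~\ref{rem:buildpgl}, and apply the first part. Since $\lvert\CA_T\rvert$ contains the realization of every simplex of $\CA_T$, both points lie in $\lvert\CA_T\rvert$.

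The main obstacle is the Iwahori--Bruhat decomposition, or equivalently the simultaneous adaptation of a flag and an elementary-divisor basis. If a self-contained argument is required, I would prove it by row and column reduction. Move the entry of minimal valuation to a pivot position, respecting the Iwahori condition (upper-triangular modulo $\varpi$) when choosing which entries to clear, and then induct on $d$.
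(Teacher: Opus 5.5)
Your proof is correct, but it differs from the paper's, which gives no argument of its own. The paper cites Mustafin's Lemma~1.2 for the case at hand and Abramenko--Brown for the general building-theoretic fact.

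You instead make the conceptual argument explicit for $\GL_d(E)$:
\begin{enumerate}
\item enlarge $\Delta,\Delta'$ to chambers $\Delta_1,\Delta_1'$;
\item use transitivity on chambers (which does hold, and is all you need) to take $\Delta_1$ to be the standard chamber $\{\boeta_0,\dots,\boeta_{d-1}\}$, whose stabilizer is the Iwahori subgroup $I$;
\item write $g=iwi'$, so that $\Delta_1=i\Delta_1$ and $\Delta_1'=iw\Delta_1$ both lie in $i\,\brv{\CA}_T=\brv{\CA}_{iTi^{-1}}$;
\item pass to points through the closed simplices containing them.
\end{enumerate}

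This is the BN-pair mechanism behind the general statement. It buys a short proof whose only input is the decomposition $\GL_d(E)=\bigsqcup_w IwI$, with $w$ running over monomial matrices (the extended affine Weyl group). The price is that this decomposition carries all the content. A self-contained version would require actually doing the elimination you sketch, with pivots chosen so that the row and column operations used have coefficients in $\rmO$ on one side of the diagonal and in $\varpi\rmO$ on the other.

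The lattice-theoretic route you began with and then dropped is closer in spirit to Mustafin's. As you set it up, though, the induction only adds one lattice $\eta'$ to a chain, so it would still have to be upgraded to handle two full chains. That makes the Bruhat route the more economical choice.
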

\medskip
\begin{proof}
See \cite[lemma 1.2]{mus} in this case. This phenomenon is a general property of buildings and we refer the reader to \cite[Corollary 12.17]{abbr} for a more conceptual argument.
\end{proof}
Since,
\begin{equation}\label{eq:aptdec}
\lvert \brv\BrTi_{\! d}\rvert =\bigcup_{T\subset \GL_d(E)}\lvert \brv\CA_T\rvert\cong \bigcup_{T\subset \GL_d(E)}\BR^{d-1}\times \BZ,
\end{equation}
we can endow $\lvert \brv{\BrTi}_d\rvert $ with a metric $\rmd_{\BrTi}$ given by the metric induced by the sup metric on $\BR^{d}$. More precisely, if $x,x'\in \brv{\BrTi}_{\! d}$, let $T\subset \GL_d(E)$ be a maximal split torus such that $x,x'\in \lvert \brv{\CA}_T\rvert \cong V_{\BR}\subset \BR^d$, then:
\begin{equation}
\rmd_{\BrTi}(x,x')\coloneqq \sup_{i=0,\dots, d-1 }\lvert x_i-x_i'\rvert
\end{equation}
This defines a $\GL_d(E)$-invariant metric on $\lvert \brv{\BrTi}_{\! d}\rvert$ (\cf \cite[\S 1]{mus}).

\subsubsection{Space of norms}
In this paragraph we recall that the geometric realization of the Bruhat--Tits building is the space of norms on $E^d$.

Following \cite{boer}, we recall that this metric space is isometric to $\brv \CN_d$ the \emph{space of norms on $E^d$}. More precisely, for $h\in \BZ$, let
$$
\medskip
\begin{gathered}
\CN_d^h\coloneqq \{\lVert \cdot \rVert \text{ norm on $E^d$ }\mid \lVert \rmO^{\times,d} \rVert=q^{-h}\}
\end{gathered}
\medskip
$$
and define $\brv{\CN}_d\coloneqq \bigsqcup_{h\in \BZ} \CN_d^h$. $\brv{\CN}_d$ is naturally endowed with an action of $\GL_d(E)$ induced by its standard action on $E^d$. Moreover, we set $\CN_d\coloneqq \brv{\CN}_d/E^{\times}$, where $E^{\times}\incl \GL_d(E)$ is the center, which parametrizes equivalence classes of norms on $E^d$.

The space $\brv\CN_d$ is endowed with the (rescaled) \emph{Goldman-Iwahori} metric: for two norms $\lVert \cdot \rVert, \lVert \cdot \rVert'\in \CN_d$, we define 
$$
\rmd_{\CN}(\lVert \cdot \rVert,\lVert \cdot \rVert')\coloneqq \frac{1}{\log q}\sup_{v\in E^d\setminus\{0\}} \lvert \log \lVert v \rVert-\log \lVert v \rVert'\rvert.
$$
We recall the following, which is essentially a consequence of the Arzelà--Ascoli theorem (\cf \cite[Proposition 1.8]{boer}) :
\begin{prop}
The metric space $(\brv{\CN}_d, \rmd_{\CN})$ is complete and any closed bounded subset is compact.
\end{prop}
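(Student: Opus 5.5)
We reduce to a single connected component and a single equivalence class. The metric is $\GL_d(E)$-invariant, and $\brv{\CN}_d=\bigsqcup_h \CN_d^h$ is a disjoint union of pieces that are at infinite distance from one another (or at least at distance $\geqslant$ some positive constant). So it suffices to prove completeness and the Heine--Borel property on each $\CN_d^h$. Moreover, multiplying by scalars identifies $\CN_d^h$ with $\CN_d^0$, so we may fix $h=0$.

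\textbf{Completeness.} We work through uniform convergence on the unit sphere. Fix the sup norm $\lvert\cdot\rvert_0$ on $E^d$ attached to the standard basis, and let $S\coloneqq\{v\in E^d\mid \lvert v\rvert_0=1\}$. The set $S$ is compact, being a closed subset of $\rmO^d$. Every norm $\lVert\cdot\rVert$ on $E^d$ is equivalent to $\lvert\cdot\rvert_0$, so $\log\lVert\cdot\rVert$ restricted to $S$ is a bounded function. By homogeneity, $\lVert \lambda v\rVert=\lvert\lambda\rvert_E\lVert v\rVert$, and every nonzero vector is a scalar multiple of a vector in $S$. Hence
$$
\rmd_{\CN}(\lVert\cdot\rVert,\lVert\cdot\rVert')=\tfrac{1}{\log q}\sup_{v\in S}\lvert\log\lVert v\rVert-\log\lVert v\rVert'\rvert,
$$
so $\rmd_{\CN}$ is the uniform distance between the functions $\log\lVert\cdot\rVert$ on $S$. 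Now take a Cauchy sequence of norms. The functions $\log\lVert\cdot\rVert_n$ then converge uniformly on $S$, and we extend the limit homogeneously to $E^d$. The norm axioms are closed conditions under pointwise limits: homogeneity, the ultrametric inequality, and positivity, the last because the limit is bounded below on $S$ by uniform convergence. Hence the limit is again a norm, and the sequence converges to it in $\rmd_{\CN}$.

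\textbf{Closed bounded sets are compact.} We apply Arzelà--Ascoli to the family of functions $\log\lVert\cdot\rVert$ on the compact set $S$. Let $B$ be bounded, say contained in a ball of radius $r$ around $\lvert\cdot\rvert_0$. Then every $\lVert\cdot\rVert\in B$ satisfies $q^{-r}\lvert v\rvert_0\leqslant\lVert v\rVert\leqslant q^{r}\lvert v\rvert_0$, so the family $\{\log \lVert\cdot\rVert\}$ is uniformly bounded on $S$. For equicontinuity, take $v,w\in S$ with $\lvert v-w\rvert_0<q^{-2r}$. Then $\lVert v-w\rVert< q^{-r}\leqslant\lVert v\rVert$, and the ultrametric inequality (a strictly smaller perturbation does not change the norm) gives $\lVert w\rVert=\lVert v\rVert$. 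The family is therefore uniformly locally constant, which is in particular equicontinuous. Arzelà--Ascoli now shows that every sequence in $B$ has a uniformly convergent subsequence. By the completeness argument above, its limit is a norm, and it lies in $B$ because $B$ is closed. Hence $B$ is sequentially compact, which for a metric space is compactness.

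The main point to get right is equicontinuity, and the ultrametric observation above settles it. A secondary subtlety is that the limit may fail the normalisation $\lVert\rmO^{\times,d}\rVert=q^{-h}$ only if that condition is not closed. It is closed, since it is a pointwise condition preserved under uniform limits.
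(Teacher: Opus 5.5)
Your argument is correct and is essentially the one the paper has in mind: the paper simply cites Boucksom--Eriksson \cite[Proposition 1.8]{boer} and describes the statement as a consequence of the Arzel\`a--Ascoli theorem, which is exactly your rewriting of $\rmd_{\CN}$ as the uniform distance between the functions $\log\lVert\cdot\rVert$ on the compact unit sphere, combined with your ultrametric equicontinuity estimate. One small correction: the pieces $\CN_d^h$ are at positive but \emph{finite} mutual distance (rescaling a norm by $q$ moves it by exactly $1$), but nothing in your proof depends on this, since both of your arguments run directly in the space of all norms.
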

\medskip
\begin{prop}\label{prop:normsp}
The metric space $(\lvert \brv{\BrTi}_{\! d} \rvert, \rmd_{\BrTi}) $ is isometric to $(\brv{\CN}_d,\rmd_{\CN})$.
\end{prop}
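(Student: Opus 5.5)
We construct an explicit map $\Phi\colon \lvert \brv{\BrTi}_{\! d}\rvert\rightarrow \brv{\CN}_d$ apartment by apartment, check that it is well defined and isometric on each apartment, and then use Lemma \ref{lem:sameap} to get a global isometry. Surjectivity comes from the fact that every norm is diagonalizable in some basis.

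\textbf{Construction.} Fix a maximal split torus $T$ with basis $\CB=\{e_0,\dots,e_{d-1}\}$. By Lemma \ref{lem:descap}, $\lvert \brv\CA_T\rvert\cong \brv V_{\BR}\subset \BR^d$. To a point $x=(x_0,\dots,x_{d-1})$ we attach the diagonal norm
$$
\lVert \textstyle\sum_i a_ie_i\rVert_{x}\coloneqq \max_i q^{-x_i}\lvert a_i\rvert_E .
$$
For $x=\ud n\in \brv V_{\BZ}$, the unit ball of $\lVert\cdot\rVert_{\ud n}$ is exactly $\eta_{\CB}(\ud n)$. Two sign conventions must be fixed so that the vertices match lattices and the component $\BZ$ matches the normalization $\lVert\rmO^{\times,d}\rVert=q^{-h}$ (up to the sign conventions of \eqref{eq:calcindex}). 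On a simplex $\Delta_\sigma+\ud n$, the barycentric coordinates are the consecutive differences $x_{\sigma(j)}-x_{\sigma(j-1)}$. Hence the norm attached to a point of a simplex $\{\eta_{i_0},\dots,\eta_{i_r}\}$ with weights $y$ has the intrinsic description: $-\log_q\lVert v\rVert$ is the piecewise-linear interpolation of the functions $v\mapsto \max\{n: v\in\varpi^n\eta_{i_k}\}$. This description depends only on the simplex and the weights, not on $T$. Therefore the maps defined on different apartments agree on overlaps, and they glue to a $\GL_d(E)$-equivariant map $\Phi$ on $\lvert\brv\BrTi_{\! d}\rvert=\bigcup_T\lvert\brv\CA_T\rvert$ (see \eqref{eq:aptdec}).

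\textbf{Isometry.} Take two points $x,x'$. By Lemma \ref{lem:sameap} we may place both in a common apartment $\lvert\brv\CA_T\rvert$. For diagonal norms in the same basis,
$$
\frac1{\log q}\sup_v\bigl\lvert\log\lVert v\rVert_x-\log\lVert v\rVert_{x'}\bigr\rvert=\max_i\lvert x_i-x'_i\rvert .
$$
The inequality $\geqslant$ follows by testing $v=e_i$. The inequality $\leqslant$ holds because each term $q^{-x_i}\lvert a_i\rvert$ changes by a factor of at most $q^{\max_i\lvert x_i-x'_i\rvert}$, so the maximum over $i$ changes by at most that factor too. This gives $\rmd_{\CN}(\Phi x,\Phi x')=\rmd_{\BrTi}(x,x')$. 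In particular $\Phi$ is injective, and $\rmd_{\BrTi}$ is well defined (independent of the chosen apartment).

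\textbf{Surjectivity, the main obstacle.} We must show that every norm on $E^d$ is diagonalizable: it admits an orthogonal basis $\CB$ with $\lVert\sum a_ie_i\rVert=\max \lVert a_ie_i\rVert$. The plan is to follow Goldman--Iwahori (\cf \cite{boer}), arguing by induction on $d$. Pick a hyperplane $H$ with an orthogonal basis. Since $E$ is locally compact, the distance from a vector $w\notin H$ to $H$ is attained at some $h\in H$, and then $w-h$ is orthogonal to $H$. This uses that the norm is equivalent to the sup norm, so the relevant sets are compact. Once the norm is written as $\max_i q^{-x_i}\lvert a_i\rvert$, it lies in the image of $\lvert\brv\CA_T\rvert$ for the torus $T$ of $\CB$. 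Finally, the component $h$ on each side matches by the normalization, which completes the proof.
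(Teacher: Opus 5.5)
Your proposal is correct and follows essentially the same route as the paper. Both identify each apartment $\lvert \brv\CA_T\rvert$ with the norms diagonal in the corresponding basis, reduce distances to a common apartment via Lemma \ref{lem:sameap}, and use diagonalizability of norms (Lemma \ref{lem:diagnorm}) to see that both sides are covered by apartments. You prove directly two ingredients the paper only cites: the sup-metric computation (from \cite{boer}) and diagonalizability, which you obtain following Goldman--Iwahori.

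Your map goes from the building to norms, rather than through the balls $\eta_s$ as in the paper. Your interpolation formula for $-\log_q\lVert v\rVert$ also supplies the compatibility on overlapping apartments, which the paper leaves implicit. One small correction: with the weights $q^{-x_i}$, the unit ball of $\lVert\cdot\rVert_{\ud n}$ is $\eta_{\CB}(-\ud n)$, not $\eta_{\CB}(\ud n)$; this is the sign issue you already flagged.
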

\medskip
The main point to prove this theorem is that any norm on $E^d$ is \emph{diagonalisable}: 
\medskip
\begin{lemm}\label{lem:diagnorm}
Let $\lVert \cdot \rVert$ be a norm on $E^d$. There exists a basis $\{e_0,\dots, e_{d-1}\}\subset E^d$ and a unique set $s_0,\dots, s_{d-1}\in [0,1)$ real numbers, such that 
$$
\forall x\in E^d,\quad \lVert x \rVert =\sup_{i=0,\dots, d-1} q^{s_i}\lvert x_i\rvert_E,
$$
where $x=\sum_{i=0}^{d-1}x_ie_i$.
\end{lemm}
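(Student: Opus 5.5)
The plan is to prove the classical fact that any norm on a finite-dimensional vector space over a local field admits an orthogonal basis, then normalize the basis vectors. Here a norm means an ultrametric norm compatible with $\lvert \cdot \rvert_E$: $\lVert ax\rVert=\lvert a\rvert_E\lVert x\rVert$ and $\lVert x+y\rVert\leqslant\max(\lVert x\rVert,\lVert y\rVert)$.

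We begin with the orthogonal basis. Call a basis $\{f_0,\dots,f_{d-1}\}$ \emph{orthogonal} if $\lVert \sum x_if_i\rVert=\max_i \lvert x_i\rvert_E\lVert f_i\rVert$. We construct one by induction on $d$.
\begin{itemize}
\itemb For $d=1$ the claim is trivial.
\itemb In general, all norms on $E^d$ are equivalent, since $E$ is locally compact and complete. Hence the unit ball $B=\{\lVert x\rVert\leqslant 1\}$ is an $\rmO$-lattice, and the sets $\{\lVert x\rVert\leqslant r\}$ are lattices as well.
\itemb Pick a hyperplane $H$ and, by induction, an orthogonal basis $f_1,\dots,f_{d-1}$ of $H$ for the restricted norm.
\item[$\bullet$] Choose $v\notin H$. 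The function $h\mapsto \lVert v-h\rVert$ on $H$ is continuous and bounded below by a positive constant, because $H$ is closed and the distance to it is positive.
\itemb Using local compactness, together with the fact that the values of $\lVert\cdot\rVert$ on the compact annulus $\{1\leqslant\lVert x\rVert\leqslant q\}$ are attained, the infimum of $\lVert v-h\rVert$ over $h\in H$ is attained at some $h_0$. More concretely, restrict to $h$ with $\lVert h\rVert\leqslant\lVert v\rVert$, which is a compact set, since for larger $h$ we have $\lVert v-h\rVert=\lVert h\rVert>\lVert v\rVert$.
\itemb Set $f_0=v-h_0$. Minimality gives $\lVert a f_0+h\rVert=\max(\lvert a\rvert_E\lVert f_0\rVert,\lVert h\rVert)$ for all $a\in E$ and $h\in H$ by the standard ultrametric argument: if the norm were strictly smaller than the max, the two terms would have equal norm, and dividing by $a$ would give a better approximant. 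This is the main step, and the only place where compactness is used.
\end{itemize}

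Next we normalize. For each $i$ write $\lVert f_i\rVert=q^{t_i}$ with $t_i\in\BR$. Set $n_i=\lfloor t_i\rfloor$ and $s_i=t_i-n_i\in[0,1)$, and replace $f_i$ by $e_i=\varpi^{n_i}f_i$, so that $\lVert e_i\rVert=q^{t_i-n_i}=q^{s_i}$. Orthogonality then gives
$$
\lVert x\rVert=\sup_i q^{s_i}\lvert x_i\rvert_E \quad\text{for } x=\sum_i x_ie_i.
$$

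Finally we prove uniqueness of the $s_i$, understood as a multiset. The key observation is that the set $\{s_0,\dots,s_{d-1}\}$ counted with multiplicity is intrinsic to the norm. For each $s\in[0,1)$, consider the successive quotients
$$
\{\lVert x\rVert\leqslant q^{s}\}\Big/\{\lVert x\rVert< q^{s}\}.
$$
In the diagonal form, this quotient is a $k_E$-vector space whose dimension equals $\#\{i: s_i=s\}$, because $\lvert x_i\rvert_E$ takes only values in $q^{\BZ}$. So the multiset of the $s_i$ is determined by the norm alone, independently of the chosen basis. I expect the main care to be needed in the attainment of the infimum in the inductive step; everything else is bookkeeping.
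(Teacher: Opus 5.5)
Your proof is correct. The paper gives no argument of its own and defers to \cite[2.4.2--2.4.3]{bgr} for the existence of a diagonalizing (orthogonal) basis. Your inductive best-approximation argument is the standard proof of that fact, with local compactness of $E$ playing the role of spherical completeness. Concretely, you attain $\min_{h\in H}\lVert v-h\rVert$ on the compact set $\{h\in H:\lVert h\rVert\leqslant\lVert v\rVert\}$, and that suffices here.

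What your write-up adds beyond the citation is twofold:
\begin{itemize}
\item the rescaling $e_i=\varpi^{\lfloor t_i\rfloor}f_i$, which puts the exponents in $[0,1)$;
\item the intrinsic description of the multiset $\{s_i\}$ via $\dim_{k_E}\{\lVert x\rVert\leqslant q^{s}\}/\{\lVert x\rVert<q^{s}\}=\#\{i: s_i=s\}$, which uses $s-s_i\in(-1,1)$.
\end{itemize}
This multiset reading is the right interpretation of ``unique'', since neither the basis nor the ordering of the $s_i$ is unique.

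One minor remark: the aside about values attained on the annulus $\{1\leqslant\lVert x\rVert\leqslant q\}$ is superfluous. The restriction to $\lVert h\rVert\leqslant\lVert v\rVert$ that follows it is what actually does the work.
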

\medskip
\begin{proof}
See \cite[2.4.2-2.4.3]{bgr}.
\end{proof}
We now shortly recall the proof of Proposition \ref{prop:normsp} ,which relies on an analogue of Lemma \ref{lem:sameap} for norms:
\begin{proof}
By passing to the quotient by the center, it is enough to show that $\lvert \BrTi_{\! d} \rvert$ and $\CN_d$ are isometric with the induced metrics. We will construct a bijection and then prove it is an isometry.

First let $\lVert \cdot \rVert$ be a norm on $E^d$. For any $s\in [0,1)$ define
$$
\eta_s\coloneqq \{x\in E^d \mid \lVert x \rVert\leqslant q^{s}\}.
$$
We then have that for $s'< s$, $\eta_{s'}\subset \eta_{s}$ and the set $\{\eta_s\}_{s\in [0,1)}$ is finite. The association $\lVert \cdot \rVert \mapsto \{\eta_s\}_{s\in [0,1)}$ defines a map $\CN_d\rightarrow \BrTi_{\! d}$.

Let $e_0,\dots, e_{d-1}$ be a basis of $E^d$, and le $\bar T\subset \PGL_d(E)$ be the associated torus. We can define $\CN_{\bar T}\subset \CN_d$ the associated apartment, consisting of norms which are diagonal in a basis associated to $\bar T$. By \cite[Proposition 1.17]{boer} we have an isometry $\BR^{d-1}\cong \CN_{\bar T}$ where $\BR^{d-1}$ is endowed with the sup-norm. By Lemma \ref{lem:descap}, we see that the restriction of the map above to $\CA_{\bar T}$  extends to a map on $\lvert \CA_{\bar T}\rvert$ given by the composition
$$
\CN_{\bar T}\cong \BR^{d-1}\cong \rvert \CA_{\bar T}\lvert.
$$
This map is an isometry and since $\CN_d=\bigcup_{\bar T\subset \PGL_d(E)}\CN_{\bar T}$ by Lemma \ref{lem:diagnorm} and $\lvert \BrTi_{\! d}\rvert =\bigcup_{\bar T\subset \PGL_d(E)}\lvert \brv\CA_{\bar T}\rvert$ by \eqref{eq:aptdec} which allows us to conclude the proof.
\end{proof}
\subsection{Drinfeld's and Deligne's formal models}

In this section, we define Drinfeld's formal model of the $p$-adic symmetric space. We relate the definition given in \cite[3.68-3.71]{razi} with Drinfeld's definition in \cite{drin} and use the same notations.

\subsubsection{Definition of Drinfeld's model}

We now define the moduli problem of $\brv{\BH}_{\rmO}^{d}$ following Drinfeld's definition in \cite{drin}.
\medskip
\begin{defi}\label{def:drin}
Let $h\in \BZ$, we define $\BH_{\rmO}^{d,h}$ as the functor on $\Nilp_{\rmO}$ defined by 
$$
\BH_{\rmO}^{d,h}\colon R\in \Nilp_{\rmO}\longmapsto \{(\eta, L, u, r)\},
$$
where:
\begin{itemize}
\itemb $\eta$ is a $\BZ/d$-graded constructible Zariski sheaf of flat $\rmO[\Pi]$-modules on $\Spec R$,
\itemb $L$ is a $\BZ/d$-graded  $R[\Pi]$-module whose graded components are invertible $R$-modules,
\itemb $u\colon \eta \rightarrow L$ is a graded $\rmO[\Pi]$-linear morphism such that the induced morphism
$$
u_R\coloneqq u\otimes R \colon \eta\otimes_{\rmO} R \rightarrow L
$$
is surjective,
\itemb $r\colon \ud{E}^d\xrightarrow{\sim} \eta^0\otimes_{\rmO}E$ is an $E$-linear isomorphism,
\end{itemize}
such that, for $i\in \BZ/d$ we write $S_{i}\coloneqq V(\Pi_i\colon L^i\mapsto L^{i+1})\subset S\coloneqq \Spec R$ for the vanishing locus of $\Pi_i$:
\begin{enumerate}
\item $\restr{\eta^i}{S_i}$ is isomorphic to the constant sheaf  $\ud{\rmO}^d$,
\item $\forall s\in S$, $u_s\colon\eta/\Pi\eta\rightarrow (L/\Pi L)\otimes k(s)$ is injective, where $k(s)$ is the residue field at $s$,
\item $\bigwedge^d\restr{\eta^i}{S_i}=\varpi^{-h-i}\bigwedge^d\restr{\Pi^ir(\ud{\rmO}^d)}{S_i}$.
\end{enumerate}
Finally, we set $\brv{\BH}_{\rmO}^{d}\coloneqq\bigsqcup_{h\in \BZ} \BH_{\rmO}^{d,h}$.
\end{defi}
The action of $\GL_d(E)$ on $\brv{\BH}_{\rmO}^{d}$ is defined for $g\in \GL_d(E)$ by
$$
g\colon (\eta, L, u, r)\mapsto (\eta, L, u, r\circ g^{-1}).
$$
Note that for any $h\in \BZ$, $\BH_{\rmO}^{d,h}\cong \BH_{\rmO}^{d,0}$ which is isomorphic to $\BH_{\rmO}^d\coloneqq \brv{\BH}_{\rmO}/E^{\times}$.

\medskip
The map $(\eta, L, u, r)\mapsto (\eta, r)$ defines a map of functors $\brv{\BH}_{\rmO}^d\rightarrow \brv{\sL}_d$, which by Proposition \ref{prop:repldbt} allows us, for any simplex $\Delta\in\brv{\BrTi}_{\! d}$ to define $\BH^{\Delta}_{\rmO}\subset \brv{\BH}_{\rmO}^d$.
\medskip
\begin{prop}
The functor $\brv{\BH}_{\rmO}^d$ is representable by a $p$-adic formal scheme over $\rmO$ and 
$$
\brv{\BH}_{\rmO}^d=\varinjlim_{\Delta\in \brv{\BrTi}_{\! d}}\BH^{\Delta}_{\rmO},
$$
in the category of formal schemes over $\rmO$.
\end{prop}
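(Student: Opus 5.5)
The plan is to construct each $\BH^{\Delta}_{\rmO}$ explicitly as a $p$-adic formal scheme and then glue along the poset $\brv{\BrTi}_{\! d}$. Fix $\Delta=\{\eta_{i_0},\dots,\eta_{i_r}\}\in\brv{\BrTi}_{\! d}[r]$. By Proposition \ref{prop:repldbt}, the locus $\BH^{\Delta}_{\rmO}$ is defined by the condition that the map $\Spec R\to\brv{\BrTi}_{\! d}$ attached to $(\eta,r)$ lands in the open star of $\Delta$, that is, in the set of faces $\Delta'\supseteq\Delta$. The poset topology makes these stars open, so $\BH^{\Delta}_{\rmO}\subset\brv{\BH}_{\rmO}^d$ is an open subfunctor. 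I would first treat the maximal simplices $\Delta\in\brv{\BrTi}_{\! d}[d-1]$; by transitivity of $\GL_d(E)$ on $(d-1)$-simplices it suffices to take $\Delta=\{\boeta_0,\dots,\boeta_{d-1}\}$ up to the $h$-shift.

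On such a standard chart, condition (1) and the definition of the star force $\eta$ to be the constant sheaf $\boeta^{\bullet}$, possibly with some $\Pi_i$ becoming isomorphisms on strata. A point of $\BH^{\Delta}_{\rmO}$ is then a diagram of the shape \eqref{diag:drin}: invertible $L_i$, maps $x_i$ and surjections $\varphi_i\colon\boeta_i\otimes R\to L_i$, together with the open condition (2), namely injectivity of $\eta/\Pi\eta\to (L/\Pi L)\otimes k(s)$. So $\BH^{\Delta}_{\rmO}$ is the $p$-adic completion of the open subscheme of the local model $\rmM^{\loc}$ of Lemma \ref{lem:explocmod} (over $\rmO$) cut out by condition (2). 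Concretely, in the coordinates of Remark \ref{rem:eqlocmod}, this means that the coordinates indexed by vectors of $\boeta_i\setminus\boeta_{i-1}$ do not vanish mod $\varpi$ on the relevant fibres. Deligne's model would appear here as the open complement in the blow-up of all $k_E$-rational linear subvarieties in the special fiber. This gives representability of each $\BH^{\Delta}_{\rmO}$ by an affine $p$-adic formal scheme. For a non-maximal $\Delta$, $\BH^{\Delta}_{\rmO}$ is the open intersection of the $\BH^{\Delta'}_{\rmO}$ for maximal $\Delta'\supseteq\Delta$, or more directly the analogous local-model description with $\eta$ attached to $\Delta$.

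For the colimit, the inclusions $\Delta\subset\Delta'$ give open immersions $\BH^{\Delta'}_{\rmO}\incl\BH^{\Delta}_{\rmO}$ by the star description, so the system is contravariant in inclusion, as intended by the notation $\varinjlim$. We also have $\BH^{\Delta}\cap\BH^{\Delta''}=\BH^{\Delta\cup\Delta''}$ when $\Delta\cup\Delta''$ is a simplex, and the intersection is empty otherwise. Since every $R$-point factors through a map $\Spec R\to\brv{\BrTi}_{\! d}$ with quasi-compact source, it meets only finitely many strata. Zariski-locally it lands in a single star, so the $\BH^{\Delta}_{\rmO}$ form an open cover of the functor. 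Gluing these charts then gives representability and the colimit formula as a union of formal schemes along open immersions. Being $p$-adic is inherited from the charts.

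The main obstacle will be the precise identification of $\BH^{\Delta}_{\rmO}$ with an open subscheme of the local model. This requires care on strata where some $\Pi_i$ on $\eta$ is an isomorphism: there the sheaf $\eta$ is not literally $\boeta^{\bullet}$, and one must show that conditions (1)–(3) force it to be determined by $\Delta$. It also requires checking that condition (2) is open. I would handle both by working pointwise, using Proposition \ref{prop:repldbt} over a point, and then invoking constructibility.
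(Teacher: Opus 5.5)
Your plan has the same architecture as the paper's: reduce by $\GL_d(E)$-transitivity to the standard chamber, realize that chart as an open piece of $\wh{\rmM}^{\loc}_{\rmO}$ via Lemma \ref{lem:explocmod}, then glue. However, the definition of $\BH^{\Delta}_{\rmO}$ you start from is oriented the wrong way, and your gluing paragraph is built on it.

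At a point $s$, the simplex attached to $\eta_s$ has index set $\{j\mid \Pi_j=0 \text{ on } L\otimes k(s)\}$. If $\Pi_j(s)\neq 0$, condition (2) forces $\eta^{j+1}=\Pi\eta^{j}$ at $s$. If $\Pi_j(s)=0$, surjectivity of $u_R$ together with $\Pi$-linearity forces $\eta^{j+1}\neq\Pi\eta^{j}$. This simplex can only grow under specialization, since the lattices are locally constant on the $S_j$ by condition (1). So for the classifying map of Proposition \ref{prop:repldbt} to be continuous, the open subsets of the poset must be the face-closed ones.

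The set $\{\Delta'\supseteq\Delta\}$ is therefore \emph{closed}. Its preimage is cut out by the vanishing of the $\Pi_j$ on $L$ indexed by $C_\Delta$. For a vertex $\eta$, its underlying set is the irreducible component $\BH^{\eta}_k$ of Proposition \ref{prop;symlatsub}; the open piece $\BH^{\{\eta\}}_k$ is its Deligne--Lusztig part. Taking your definition literally, three things go wrong:
\begin{itemize}
\item For a chamber $\Delta$, your $\BH^{\Delta}_{\rmO}$ is the locus where all $x_i$ vanish pointwise, i.e.\ the formal completion at the point where the $d$ components meet. It is not the open chart you go on to describe.
\item By your own intersection formula, two distinct chambers would give disjoint subfunctors.
\item The claim that Zariski-locally the image lands in a single star fails near any point where two components meet. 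Every neighbourhood of such a point contains points mapping to two distinct vertices, and no set $\{\Delta'\supseteq\Delta\}$ contains both.
\end{itemize}

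The repair is to take the preimage of the set of faces $\{\Delta'\subseteq\Delta\}$, which is open. With this definition:
\begin{itemize}
\item For $\Delta\subset\Delta'$ you get an open immersion $\BH^{\Delta}_{\rmO}\incl\BH^{\Delta'}_{\rmO}$, obtained by inverting the $x_i$ for the indices in $C_{\Delta'}\setminus C_{\Delta}$.
\item Intersections are $\BH^{\Delta}_{\rmO}\cap\BH^{\Delta''}_{\rmO}=\BH^{\Delta\cap\Delta''}_{\rmO}$.
\item Every point $s$ has an open neighbourhood given by the faces of its own simplex, so the chamber charts cover.
\item The system is covariant in inclusion, which is how $\varinjlim$ is meant in the paper.
\end{itemize}
This is in fact the picture your chart description, your sentence on non-maximal $\Delta$, and your last paragraph implicitly use: on a chamber chart, points with fewer vanishing $x_i$ map to faces of $\Delta$, so $\eta$ is not constant there.

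The paper handles that last point by replacing $\BH^{\Delta}_{\rmO}$ with the functor $F_\Delta$ of diagrams over the \emph{fixed} chain of lattices of $\Delta$. The constructible sheaf $\eta$ is recovered from such a diagram by recording which $x_i$ vanish. It then shows $F_\Delta$ is open in $\wh{\rmM}^{\loc}_{\rmO}$ and in $\Spf \rmO\langle x_0,\dots,x_{d-1}\rangle/\langle x_0\cdots x_{d-1}-\varpi\rangle$ (Lemma \ref{lem:semist}). With the orientation corrected, your argument becomes that one.
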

\medskip
To prove this proposition we will prove that the $\BH^{\Delta}_{\rmO}$ are representable by expressing them in a different way, the rest of the proposition is then immediate.

\subsubsection{Deligne's formal model}

Let $r\in \llbracket 0, d-1\rrbracket$ and $\Delta\in \brv{\BrTi_{\! d}}[r]$. Recall that we write $C_{\Delta} = \{i_0, \dots, i_r\}$ the set of \emph{indices} of $\Delta$.

\medskip
\begin{defi}
Let $F_{\Delta}$ be the functor on $\Nilp_{\rmO}$ defined for $R\in \Nilp_{\rmO}$ by 
$$
F_{\Delta}\coloneqq \{(\eta_{i_j}\xrightarrow{\varphi_{i_j}}L_{i_j})_j\}
$$
where the set consists of the commutative diagrams:
\begin{equation}\label{diag:drin}
\begin{tikzcd}
  \eta_{i_0} \arrow[r,hook]\arrow[d,"\varphi_{i_0}"] & \eta_{i_1} \arrow[r,hook]\arrow[d,"\varphi_{i_1}"] & \cdots \arrow[r,hook] & \eta_{i_r} \arrow[r,"\varpi"]\arrow[d,"\varphi_{i_r}"] & \eta_{i_0} \arrow[d] \\
  L_{i_0} \arrow[r,"x_{i_0}"] & L_{i_1} \arrow[r,"x_{i_1}"] & \cdots \arrow[r,"x_{i_{r-1}}"] & L_{i_r} \arrow[r,"x_{i_r}"] & L_{i_0},
\end{tikzcd}
\end{equation}
such that for $j\in \llbracket 0,r\rrbracket$, with the convention that $i_{r+1}=i_0$:
\begin{itemize}
    \itemb is an $L_{i_j}$ invertible $R$-modules,
    \itemb $x_{i_j}\colon L_{i_j}\rightarrow L_{i_{j+1}}$ is an $R$-linear morphism,
    \itemb $\varphi_{i_j}$ is a $\rmO$-linear morphism,
    \itemb The induced map
    $$
    \bar \varphi_{i_{j+1}}\colon \eta_{i_{j+1}}/\eta_{i_j}\rightarrow L_{i_{j+1}}/x_{i_j}L_{i_j},
    $$
    is pointwise injective, meaning that $\bar \varphi_{i_j}\otimes k(s)$ is injective for every $s\in \Spec R$, where $k(s)$ is the residue field at $s$.
\end{itemize}
\end{defi}
\medskip
If $\Delta\subset \Delta'$ we get an open immersion $F_{\Delta}\rightarrow F_{\Delta'}$ obtained by inverting the $x_i$'s such that $i\in C_{\Delta'}\setminus C_{\Delta}$. We can define $\varinjlim_{\Delta\in \brv{\BrTi}_{\! d}}F_{\Delta}$ and fixing the lattice in Definition \ref{def:drin}, the following is now immediate:
\medskip
\begin{prop}
We have an isomorphism of functors $F_{\Delta}\cong \BH^{\Delta}_{\rmO}$, in particular 
$$
\brv{\BH}_{\rmO}\cong \varinjlim_{\Delta\in \brv{\BrTi}_{\! d}}F_{\Delta},
$$
and $\BH^{\Delta}_{\rmO}$ is representable by a $p$-adic formal scheme.
\end{prop}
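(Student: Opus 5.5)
We prove the isomorphism $F_{\Delta}\cong \BH^{\Delta}_{\rmO}$ by writing down mutually inverse natural transformations over any $R\in\Nilp_{\rmO}$. The representability then follows from the explicit shape of $F_{\Delta}$.

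First we make $\BH^{\Delta}_{\rmO}$ explicit. By Proposition \ref{prop:repldbt}, a point of $\BH^{\Delta}_{\rmO}(R)$ is a quadruple $(\eta,L,u,r)$ whose image in $\brv{\sL}_d(R)$ is the constant map to $\Delta$. Thus $(\eta^{\bullet},r^{\bullet})$ is the constant $\rmO[\Pi]$-module $\eta^{\bullet}_{\Delta}$ built in that proof. In particular, for $j\in\BZ/d$ we have $\eta^j_{\Delta}=\iota_{\langle j\rangle}^{-1}(\eta_{j_+})$, and $\Pi_j$ is an isomorphism unless $\langle j\rangle\in C_{\Delta}$. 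The remaining data are then:
\begin{itemize}
\itemb the graded line bundles $L^j$;
\itemb the graded $\rmO[\Pi]$-linear map $u\colon \eta_{\Delta}^{\bullet}\to L$;
\itemb conditions (1)--(3).
\end{itemize}
Conditions (1) and (3) are automatic once $\eta$ is constant equal to $\eta_{\Delta}$, by the determinant computation in Proposition \ref{prop:repldbt}.

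The map $\BH^{\Delta}_{\rmO}\to F_{\Delta}$ is obtained by keeping only the indices in $C_{\Delta}$. For $i\in C_{\Delta}$, set $L_i\coloneqq L^{\bar i}$ and $\varphi_i\coloneqq u^{\bar i}\circ\iota_i^{-1}\colon \eta_i\to L_i$. For consecutive indices $i_k<i_{k+1}$, let $x_{i_k}\coloneqq \Pi_{i_{k+1}-1}\circ\dots\circ\Pi_{i_k}$, the composite of the $\Pi$'s on $L$. The $\rmO[\Pi]$-linearity of $u$, together with the commutative diagram of \S\ref{subsubsec:refob} identifying $\Pi$ with the inclusions $\boeta_i\subset\boeta_{i+1}$, gives commutativity of \eqref{diag:drin}. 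The wrap-around square carries the factor $\varpi$ because $\iota_{i+d}=\varpi^{-1}\iota_i$.

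The inverse $F_{\Delta}\to\BH^{\Delta}_{\rmO}$ reconstructs the missing graded pieces. For $j\notin \bar C_{\Delta}$, the map $\Pi_j$ must be an isomorphism on $\eta_{\Delta}$. Requiring the same of $L$, we set $L^j\coloneqq L_{j_+}$, take $\Pi_{j}$ to be the identity on these intermediate pieces, and let the last $\Pi$ before a critical index be the given $x$. The map $u^j$ is then $\varphi_{j_+}\circ\iota_{\langle j\rangle}$.

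We must check two things about these constructions.

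\emph{The $L$ side is forced.} In a point of $\BH^{\Delta}_{\rmO}$, $\Pi_j$ on $L$ is an isomorphism for $j\notin\bar C_\Delta$. Indeed, $u$ is surjective after $\otimes R$ and $\Pi_j$ is an isomorphism on $\eta$, so $\Pi_j\colon L^j\to L^{j+1}$ is a surjection of invertible modules, hence an isomorphism. So the two constructions are inverse up to canonical isomorphism.

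\emph{The conditions correspond.} We need condition (2) together with surjectivity of $u_R$ to correspond to the pointwise injectivity of $\bar\varphi$. Write $\eta/\Pi\eta=\bigoplus_{i\in C_\Delta}\eta_{i}/\eta_{i^-}$, the sum being over critical indices, with $\eta_{i^-}$ the preceding lattice. Similarly, $L/\Pi L=\bigoplus_{i\in C_{\Delta}} L_i/x L_{i^-}$, since the non-critical pieces contribute zero. So condition (2) is exactly pointwise injectivity of the maps $\bar\varphi$. Surjectivity of $u_R$ reduces, by Nakayama and graded-ness, to surjectivity modulo $\Pi$ at each point. This should follow from condition (2) by a dimension count: both sides have total dimension $\sum_i\dim k(s)\otimes L_i/xL_{i^-}$, which a length computation shows is at most $d$, compared with $\sum\log[\eta_i:\eta_{i^-}]=d$. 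We expect this dimension bookkeeping, and making sure the surjectivity hypothesis in $F_\Delta$ is really implied, to be the main obstacle. If necessary, we would instead argue by working locally at a point and reducing to $\varphi_{i}\otimes k(s)$ being nonzero on the top quotient.

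Finally, representability. Choosing trivializations of the $L_i$ identifies $F_{\Delta}$ with a closed subscheme of a product of projective spaces $\BP(\eta_i)$, namely the $\varphi_i$ together with the compatibility equations. Pointwise injectivity is an open condition. We then $p$-adically complete, exactly as for the local model in Lemma \ref{lem:explocmod} and Remark \ref{rem:eqlocmod}. This shows that $F_\Delta\cong\BH^{\Delta}_{\rmO}$ is a $p$-adic formal scheme. The colimit statement follows because the open immersions $F_\Delta\to F_{\Delta'}$ correspond under the isomorphism to the inclusions $\BH^\Delta_\rmO\subset\BH^{\Delta'}_\rmO$.
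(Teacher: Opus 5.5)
\textbf{There is a genuine gap: you construct the bijection on the wrong subfunctor.}

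Your two natural transformations are essentially what the paper calls ``fixing the lattice''. But you take $\BH^{\Delta}_{\rmO}(R)$ to be the quadruples whose classifying map $\Spec R\to \brv{\BrTi}_{\! d}$ is \emph{constant} equal to $\Delta$, so that $\eta=\eta_\Delta$ everywhere. For a non-vertex $\Delta$ this stratum is only locally closed, and it is incompatible with the rest of the statement:
\begin{itemize}
\item for $\Delta\subset\Delta'$ the maps $F_\Delta\to F_{\Delta'}$ are open immersions, obtained by inverting some $x_i$'s;
\item $\brv{\BH}_{\rmO}$ is their colimit;
\item Lemma \ref{lem:semist} realizes $F_\Delta$ as an open piece of the semistable chart \eqref{eq:semist}.
\end{itemize}
So $\BH^{\Delta}_{\rmO}$ must be the preimage of the open set $\{\Delta'\subseteq\Delta\}$ of faces of $\Delta$. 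This agrees with the stratum only when $\Delta$ is a vertex. On this open set $\eta$ is genuinely non-constant: at a point $s$ where $x_{i_j}$ is a unit, the lattice $\eta_{i_{j+1}}$ drops out, and the stalk is $\eta_{\Delta_s}$ for a proper face $\Delta_s$.

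Your inverse $F_\Delta\to\BH^{\Delta}_{\rmO}$ always outputs the constant sheaf $\eta_\Delta$. At such an $s$, the summand $\eta_{i_{j+1}}/\eta_{i_j}\neq 0$ of $\eta/\Pi\eta$ is sent to $(L_{i_{j+1}}/x_{i_j}L_{i_j})\otimes k(s)=0$. Hence condition (2) fails, and you do not land in $\brv{\BH}_{\rmO}$.

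The same computation exposes an inconsistency in your argument. The reading of ``pointwise injective'' you use in your matching step (injectivity into $(L_i/xL_{i^-})\otimes k(s)$) forces every $x_{i_j}$ to vanish at every point. That is a closed condition, which contradicts your later claim that it is open. Moreover, under your reading distinct $\BH^{\Delta}_{\rmO}$ are disjoint, so the inclusions $\BH^{\Delta}_{\rmO}\subset\BH^{\Delta'}_{\rmO}$ in your last sentence do not exist.

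\textbf{The missing idea is to build the constructible sheaf $\eta$ from the $x$'s.}

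Given a point of $F_\Delta$, stratify $\Spec R$ by which $x_{i_j}$ vanish. At $s$, let $\Delta_s\subseteq\Delta$ consist of those $\eta_{i_{j+1}}$ with $x_{i_j}(s)=0$. This face is nonempty because $\prod_j x_{i_j}=\varpi$ is nilpotent. Put $\eta=\eta_{\Delta_s}$ on that stratum. On graded pieces that become larger, define $u$ using the inverses of the $x$'s that are units there.

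The pointwise condition defining $F_\Delta$ must then be read as follows: for every $v\in\eta_{i_{j+1}}\setminus\eta_{i_j}$, the element $\varphi_{i_{j+1}}(v)$ generates $L_{i_{j+1}}$ at every point. This is an open condition, since only the finitely many classes of $v$ modulo $\varpi\eta_{i_{j+1}}$ matter. It is the kind of nonvanishing condition that cuts out $F_\Delta$ as an open in the proof of Lemma \ref{lem:semist}. With it, conditions (1)--(3) can be checked stratum by stratum.

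Conversely, on the open $\BH^{\Delta}_{\rmO}$ every stalk of $\eta^{\bar i}$ contains $\iota_i^{-1}(\eta_i)$ for $i\in C_\Delta$, so restricting $u$ produces the $\varphi_i$.

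With this reading, the surjectivity you flag as the main obstacle is immediate and needs no dimension count. Each $\varphi_i\otimes k(s)$ is a nonzero map to a line, so Nakayama gives surjectivity of $\varphi_i\otimes R$. Once that is known, your embedding into $\prod_i\BP(\eta_i)$ followed by $p$-adic completion is fine. It is essentially the paper's route, which reduces to the standard maximal simplex and uses the open immersion $F_\Delta\incl\wh{\rmM}^{\loc}_{\rmO}$ coming from Lemma \ref{lem:explocmod}.
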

\medskip
The limit $\varinjlim_{\Delta\in \brv{\BrTi}_{\! d}}F_{\Delta}$ is equipped with an action of $\GL_{d}(E)$: for $g \in \GL_{d}(E)$, $g$ acts on a diagram of the form (\ref{diag:drin}) by $\eta_i \mapsto g \eta_i$ and $\varphi_i \mapsto \varphi_i \circ g^{-1}$ for all $i \in C_{\Delta}$. This makes the isomorphism of the proposition $\GL_d(E)$-equivariant. To finish the proof of the proposition, it remains to prove that $F_{\Delta}$ is representable by a $p$-adic formal scheme, which is given by the following lemma:
\medskip
\begin{lemm}\label{lem:semist}
Let $h\in \BZ$ be such that $\Delta\in \BrTi_{\! d}^{-h}$. The functor $F_{\Delta}$ is representable by a $p$-adic formal scheme over ${\rmO}$ which is an open formal subscheme of
\begin{equation}\label{eq:semist}
\Spf \frac{{\rmO}\langle x_{h}, \dots, x_{h+d-1} \rangle}{\langle x_h \cdots x_{h+d-1} - \varpi \rangle},
\end{equation}
where, for $i_k \in C_{\Delta}$, $x_{i_k}$ corresponds to the morphism $L_i \to L_{i_{k+1}}$.
\end{lemm}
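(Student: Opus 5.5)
The plan is to first reduce to the case of a maximal simplex and then rigidify the data by choosing bases. If $\Delta\subset\Delta'$, the map $F_{\Delta}\to F_{\Delta'}$ is an open immersion: it is the locus where the $x_i$ for $i\in C_{\Delta'}\setminus C_{\Delta}$ are invertible. So it suffices to treat $\Delta=\{\eta_h,\dots,\eta_{h+d-1}\}\in\BrTi_{\! d}[d-1]$, with consecutive indices, and then realise $F_{\Delta}$ for a general $\Delta$ as an open subscheme of $F_{\Delta'}$ for any maximal $\Delta'\supset\Delta$. Such a $\Delta'$ exists, for instance by choosing an apartment as in Lemma \ref{lem:descap}.

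In the maximal case we choose an adapted basis. By Lemma \ref{lem:descap} and the transitivity of $\GL_d(E)$ on chambers, there is a basis $e_0,\dots,e_{d-1}$ of $E^d$ with $\eta_{h+k}=\eta_h+\rmO\varpi^{-1}e_{k-1}\cdots$. More precisely, we want each quotient $\eta_{h+k+1}/\eta_{h+k}$ to be one-dimensional over $k_E$, spanned by the image of a single basis vector $f_k$, and
\[
\eta_{h+k}=\bigoplus_{j\le k}\rmO f_j\oplus\bigoplus_{j>k}\varpi\rmO f_j .
\]
After Zariski-localising on $\Spec R$ we trivialise all the $L_i\cong R$. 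The data $(\varphi_i,x_i)$ then become:
\begin{itemize}
\item a single map $\varphi_{h}\colon\eta_h\to R$, which by commutativity determines all the other $\varphi_i$ after inverting $\varpi$;
\item scalars $x_i\in R$.
\end{itemize}
Going once around the diagram forces $x_h\cdots x_{h+d-1}=\varpi$, since the composite of the top row is multiplication by $\varpi$ and the $\varphi_i$ are jointly nonzero mod each point.

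The key step, and the main obstacle, is to show that the pointwise injectivity conditions make the $x_i$ determine everything up to a unit. We can then identify $F_{\Delta}$, locally, with an open subscheme of $\Spf\rmO\langle x_h,\dots,x_{h+d-1}\rangle/(x_h\cdots x_{h+d-1}-\varpi)$. Concretely:
\begin{itemize}
\item The injectivity of $\bar\varphi_{i_{k+1}}\colon\eta_{h+k+1}/\eta_{h+k}\to L_{h+k+1}/x_{h+k}L_{h+k}$ says that $\varphi_{h+k+1}(f_{k})$ generates $L_{h+k+1}$ modulo $x_{h+k}$, at every point.
\item Define $y_k\coloneqq\varphi_{h+k+1}(f_k)$, a section of $L_{h+k+1}$. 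Using the commutativity relations, each $\varphi_i$ is determined by $(y_k)_k$ and the $x$'s.
\item The condition above says that $(y_k,x_{h+k})$ generate the unit ideal.
\end{itemize}
I expect the cleanest route is to show that the $L$'s can be normalised via $y$, and that the remaining free coordinates are exactly the $x$'s, with the unit conditions cutting out an open subset.

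Finally, we glue. The normalisation depends on the choices of trivialisation only through units, so the local descriptions patch to a global open immersion of $F_{\Delta}$ into \eqref{eq:semist}. Here $x_{i_k}$ corresponds to $L_{i_k}\to L_{i_{k+1}}$. Since the target is $p$-adic and the $L$'s are rank one, $p$-adic formal representability follows. For non-maximal $\Delta$, the coordinates not in $C_\Delta$ are merged: the $x$ attached to $i_k$ is the product of the intermediate $x$'s from $\Delta'$. This gives the indexing stated in the lemma.
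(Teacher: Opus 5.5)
Your outline has the same shape as the paper's proof. You reduce to a maximal simplex, put it in standard position, normalise each $L_i$ by the image of the generator of the one-dimensional quotient, and observe that the commutativity relations make every remaining coordinate a product of consecutive $x$'s. The paper does this inside $\wh{\rmM}^{\loc}_{\rmO}$ using the equations of Remark~\ref{rem:eqlocmod}, obtaining $T^i_j=x_{i-1}\cdots x_j$.

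\textbf{The gap.} The step you call the main obstacle is left as an expectation, and the condition you extract from pointwise injectivity is not the one needed. You read it as ``$y_k$ generates $L_{h+k+1}$ modulo $x_{h+k}$'', i.e.\ $(y_k,x_{h+k})=(1)$. That is a surjectivity statement, vacuous wherever $x_{h+k}$ is invertible. It does not give what the normalisation requires, namely that $y_k$ is a nowhere vanishing section of $L_{h+k+1}$.

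With your condition the claimed embedding is actually false. Take $R\neq 0$, all $L_i=R$, all $\varphi_i=0$ and all $x_i=1$. Every square commutes and every pair $(y_k,x_{h+k})$ generates the unit ideal. But $x_h\cdots x_{h+d-1}=1\neq\varpi$, since $\varpi$ is nilpotent in $R$, so this is not a point of \eqref{eq:semist}. This also contradicts your earlier claim that the $\varphi_i$ are jointly nonzero at every point, which is exactly what you used to derive $\prod_i x_i=\varpi$.

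\textbf{The missing observation.} The paper states it as ``$F_\Delta$ is determined by the condition that $\varphi_i(e_i)$ vanishes nowhere''. Since $\eta_{h+k+1}/\eta_{h+k}$ is a nonzero $k_E$-line, injectivity of $\bar\varphi$ into $(L_{h+k+1}/x_{h+k}L_{h+k})\otimes k(s)$ forces the image of its generator to be nonzero in this quotient of $L_{h+k+1}\otimes k(s)$. Hence $y_k(s)\neq 0$ for every $s$. With that in hand:
\begin{itemize}
\item $y_k$ trivialises $L_{h+k+1}$ canonically, so there is nothing to glue;
\item the relations express the value of each $\varphi_{h+k}$ on each basis vector of $\eta_{h+k}$ as a product of consecutive $x$'s;
\item going once around the loop gives $x_h\cdots x_{h+d-1}=\varpi$, because the $y$'s are units;
\item the point is then determined by the $x$'s, which is the paper's conclusion.
\end{itemize}

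\textbf{A smaller point.} Your first bullet is wrong as written. On a ring where $p$ is nilpotent, inverting $\varpi$ kills everything, and $\varphi_h$ only determines $\varphi_{h+k}$ on $\eta_h$. The values $y_k$ on the new basis vectors are genuinely extra data, as you use later.
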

\medskip
\begin{proof}
We can suppose $h=0$. It is enough to prove the claim for $\Delta$ a maximal simplex, since if $\Delta'\subset \Delta$ then $F_{\Delta'}\rightarrow F_{\Delta}$ is an open embedding. Moreover, we can suppose $\Delta=\{\boeta_{0},\dots, \boeta_{d-1} \}$ since $\GL_d(E)$ acts transitively on the $(d-1)$-simplices. By Lemma \ref{lem:explocmod} we get that $F_{\Delta}\incl \wh{\rmM}^{\loc}_{\rmO}$ is an open immersion hence $F_{\Delta}$ is representable by a $p$-adic formal subscheme. 

Moreover, we see that $F_{\Delta}$ is determined by the condition that $\varphi_i(e_i)$ vanishes nowhere. Hence by Remark \ref{rem:eqlocmod} we deduce that $F_{\Delta}$ is the closed subscheme of the $p$-adic completion of $(\BA^{d-1}_{\brvO})^d$ hence we get a morphism
\begin{equation}\label{eq:maptoss}
F_{\Delta}\rightarrow \Spf \frac{{\rmO}\langle x_0, \dots, x_{d-1} \rangle}{\langle x_0 \cdots x_{d-1} - \varpi \rangle}
\end{equation}
Moreover, if we write $T_0^{i-1},\dots, T_{d-1}^{i-1}$ the coordinates on the $i$-th copy of $\BP^{d-1}_{\rmO}$ given by $\varphi_{i-1}$, where both indices are considered in $\BZ/d$. Since $T^{i}_i$ corresponds to $\varphi_i(e_i)$, we get the equations for $F_{\Delta}$:
$$
\begin{gathered}
T_j^{i+1}=x_i T_j^{i},\, j\neq i,i+1\\
\varpi T^{i+1}_i=x_i T_{i}^{i}=x_i.
\end{gathered}
$$
Hence $T_j^{i}=x_{i-1}\dots x_{j}$ and the map \eqref{eq:maptoss} is an open immersion.
\end{proof}

\subsubsection{Generic fiber}
Let $\brv{\BH}_E^d$ be the rigid generic fiber of $\brv{\BH}_{\rmO}^d$; by the usual decomposition, we get
$$
\brv{\BH}_E^d=\BH_E^d\times \BZ
$$
where $\BH_{E}^d$ is the rigid generic fiber of $\BH_{\rmO}^d$. This is a rigid space over $E$, which we now describe more explicitly; it is Drinfeld's $p$-adic symmetric space.
\medskip
\begin{prop}\label{prop:symgenfib}
We have a $\GL_d(E)$-equivariant isomorphism of rigid spaces over $E$
$$
\BH_E^d\cong \BP_E^{d-1} \setminus \bigcup_{H \in \sH_E} H,
$$
where $\BP_E^{d-1}$ is the $d-1$-dimensional projective space viewed as a rigid spaces over $E$, $\sH_E$ is the set of hyperplanes of $\BP_E^{d-1}$ defined over $E$, and $\GL_d(E)$ acts through the inverse action on $\BP_E^{d-1}=\BP(E^d)$.
\end{prop}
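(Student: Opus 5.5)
We construct a map from the generic fiber of $\BH_{\rmO}^d$ to $\BP_E^{d-1}$, check that it is an open immersion on each piece $\BH^{\Delta}_{\rmO}$, and identify its image with the complement of the $E$-rational hyperplanes. By the decomposition $\brv{\BH}_{\rmO}^d=\varinjlim_{\Delta}\BH^{\Delta}_{\rmO}$ it is enough to work with the component $h=0$. On the generic fiber $\varpi$ is invertible, so every $\Pi_i\colon L^i\to L^{i+1}$ is an isomorphism, because the composite of the $\Pi_i$ is $\varpi$. The whole datum $(\eta,L,u,r)$ is therefore determined by
$$
u^0\circ r\colon E^d\to L^0\otimes_{\rmO}E,
$$
which is a rank one quotient of $E^d$. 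This gives a morphism $\BH_E^{d,0}\to\BP(E^d)$. It is $\GL_d(E)$-equivariant for the inverse action, since $g$ acts by $r\mapsto r\circ g^{-1}$.

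Next, for a maximal simplex $\Delta$, which we may take to be $\{\boeta_0,\dots,\boeta_{d-1}\}$ by transitivity, Lemma \ref{lem:semist} identifies $\BH^{\Delta}_{\rmO}$ with an open subscheme of $\Spf \rmO\langle x_0,\dots,x_{d-1}\rangle/(x_0\cdots x_{d-1}-\varpi)$. Its proof gives the explicit coordinates $T^i_j=x_{i-1}\cdots x_j$. In these coordinates the generic fiber maps to the affinoid
$$
\{[T_0:\dots:T_{d-1}] : |\varpi|\leq |T_{j+1}/T_{j}|\leq 1\},
$$
suitably ordered, and the pointwise injectivity condition cuts out exactly
$$
\{x : |\ell(x)|\geq |\varpi|^{?}\max_j|T_j(x)| \text{ for all } \ell\in \boeta_i \text{ primitive}\},
$$
with the appropriate exponents depending on $i$. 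I expect this cutting-out to be the main computational point. It shows that the map is an isomorphism from each $(\BH^{\Delta}_{\rmO})^{\rig}$ onto an affinoid $U_\Delta\subset\BP^{d-1}_E$. Because the gluing along faces is given by inverting the $x_i$, these isomorphisms are compatible, and the map is injective and an open immersion on the union.

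It remains to identify the image with $\BP^{d-1}_E\setminus\bigcup_{H}H$, which I would do through norms. A point $x\in\BP^{d-1}(K)$ is a line $E^d\to K$, and it avoids all rational hyperplanes exactly when this map is injective. In that case $v\mapsto |x(v)|$ is a norm on $E^d$. Lemma \ref{lem:diagnorm} and Proposition \ref{prop:normsp} then place this norm in some simplex of $|\BrTi_d|$. Taking the unit-ball lattices $\eta_s$ gives a simplex $\Delta$, and one checks that the point lies in $U_\Delta$: the surjectivity condition holds because $\eta_s$ is the unit ball, and injectivity modulo $\Pi$ follows from the jumps of the norm.

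Conversely, suppose a point of $U_\Delta$ lies on an $E$-rational hyperplane, so that $x(\ell)=0$ for some $\ell$ primitive in a lattice of $\Delta$. Then $\ell$ lies in the kernel of the reduction of some $\varphi_i$, which contradicts pointwise injectivity. The hardest step is this exact matching of the affinoids $U_\Delta$ with the sets defined by norm inequalities. I would check it first on the standard simplex in coordinates, then transport it by $\GL_d(E)$, and use Lemma \ref{lem:sameap} to reduce questions about two simplices to a single apartment.
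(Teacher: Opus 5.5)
Your first paragraph (the map $x\mapsto u^0\circ r$ and its equivariance) and your last paragraph (a rational vector killed by $x$, taken in the first lattice of the periodic chain that contains it, gives a nonzero class killed by some $\bar\varphi$, contradicting condition 2)) are exactly the paper's proof, which stops there. Your middle part, meant to show that the map is an isomorphism onto $\BP^{d-1}_E\setminus\bigcup_H H$, has a genuine gap at injectivity.

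That the $\Pi_i$ become isomorphisms after inverting $\varpi$ only recovers the rational data $L^i\otimes E$, $u^i\otimes E$ from $u^0\circ r$. It says nothing about the integral data: which simplex the point specializes to, the lattices $\eta^i$, and the line bundles $L^i$. That integral data is precisely what injectivity is about. Likewise, compatibility of the pieces along faces only makes the glued map well defined. Injectivity on the union requires $U_\Delta\cap U_{\Delta'}=U_{\Delta\cap\Delta'}$ for arbitrary, possibly distant, simplices, and your coordinate description, with exponents left as ``?'', does not give this. That description is also off. The generic fiber of $\Spf\rmO\langle x_0,\dots,x_{d-1}\rangle/(x_0\cdots x_{d-1}-\varpi)$ is $\{|x_j|\leqslant 1,\ \prod_j x_j=\varpi\}$. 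So besides $|\varpi|\leqslant|x_j|\leqslant 1$, the product of any $d-1$ of the $x_j$ has absolute value at least $|\varpi|$, a constraint your region omits when $d\geqslant 3$.

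The missing idea is the uniqueness statement the paper records right after the proposition (Lemma \ref{lem:compatlatnorm}), and it needs no inequalities. Take a $K$-point, $K$ any valued field, with periodic chain $\cdots\subsetneq\Lambda_0\subsetneq\cdots\subsetneq\Lambda_r\subsetneq\varpi^{-1}\Lambda_0\subsetneq\cdots$ and image $x\colon E^d\to K$. Set $c_j\coloneqq\max_{\Lambda_j}|x|$.
\begin{itemize}
\item[$\bullet$] Surjectivity of $u\otimes\rmO_K$ forces $L_j=x(\Lambda_j)\rmO_K$.
\item[$\bullet$] Injectivity of $\Lambda_{j+1}/\Lambda_j\to L_{j+1}/(x(\Lambda_j)\rmO_K+\fkm_KL_{j+1})$ forces $c_j<c_{j+1}$ and $|x(v)|=c_{j+1}$ for all $v\in\Lambda_{j+1}\setminus\Lambda_j$.
\end{itemize}
Hence $\Lambda_j=\{v\mid |x(v)|\leqslant c_j\}$. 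The chain is therefore the simplex $\Delta_x$ of unit balls of $\lVert\cdot\rVert_x$, and the whole datum is determined by $x$. This gives injectivity on $K$-points and identifies $(\BH^{\Delta}_{\rmO})^{\rig}$ with $\{x\mid\Delta_x\subseteq\Delta\}$. From that, $U_\Delta\cap U_{\Delta'}=U_{\Delta\cap\Delta'}$ follows immediately, with no apartment argument.

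The open immersion on each piece needs no coordinates either. After translating $\Delta$ to the standard simplex, $F_\Delta$ is open in $\wh{\rmM}^{\loc}_{\rmO}$ (proof of Lemma \ref{lem:semist}). The rigid generic fiber of $\wh{\rmM}^{\loc}_{\rmO}$ is $\BP^{d-1}_E$, since $\rmM^{\loc}$ is proper with generic fiber $\BP^{d-1}$.

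Finally, to conclude as rigid spaces rather than on points, you must check that $\{U_\Delta\}$ is an admissible covering. Each affinoid $\{x\mid |\ell(x)|\geqslant|\varpi|^n\max_j|x_j|\text{ for all primitive }\ell\in\rmO^d\}$ of the usual exhaustion has its norms within distance $n$ of $[\rmO^d]$. Hence it meets only finitely many $U_\Delta$, which gives admissibility.
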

\medskip
\begin{proof}

Let $A$ be an affinoid $E$-algebra and let $A^+\subset A$ be the ring of definition defined by the power bounded elements. Let $x\in \BH_E^d(A)=\BH_{\rmO}^d(A^+)$, using definition \ref{def:drin} $x=(\eta,L,u,r)$ and set
$$
g(x)\colon A^d\xrightarrow{r^0\otimes \id }\eta^0\otimes_{\rmO}A\xrightarrow{u^0}L^0\otimes_{A^+}A
$$
which defines a point $g(x)\in \BP_E^{d-1}(A)$ such that $g(x)\notin \bigcup_{H \in \sH_E} H(A)$ by Condition 2) of Definition \ref{def:drin}.
\end{proof}
Using this proposition, we can define a map 
$$
r\colon \lvert \brv{\BH}_E^d\rvert \rightarrow \lvert \brv{\BrTi}_{\! d}\rvert.
$$
For $K/E$ a valued finite field extension and $x\in \BH^d_E(K)$, we need define a norm $r(x)\coloneqq \lVert \cdot \rVert_x$ defined in the following way: trough Proposition \ref{prop:normsp}, the point $x$ defines an injective map $x\colon E^d\rightarrow K$, and $\lVert \cdot \rVert_x$ is obtained by restricting the field norm $\lvert \cdot \rvert_K$ to $E^d$. We get the following lemma, which is obtained through the proof of Proposition \ref{prop:normsp}:
\medskip
\begin{lemm}\label{lem:compatlatnorm}

Let $e$ be the ramification index of $K/E$. Let $e_0,\dots, e_{d-1}\in E^d$ be a basis of $E^d$ and $s_1,\dots, s_d\in [0,1)$ be parameters of $\lVert \cdot \rVert_x$  given by Lemma \ref{lem:diagnorm} ordered so that $s_0\geqslant \dots \geqslant s_{d-1}$. Then the simplex $\{\eta_{i_0},\dots, \eta_{i_r}\}$ associated with $\lVert \cdot \rVert_x$ by Proposition \ref{prop:normsp} is the unique chain of lattices such that $x$ induces a diagram
\begin{center}
\begin{tikzcd}
  \eta_{i_0} \arrow[r,hook]\arrow[d,"x"] & \eta_{i_1} \arrow[r,hook]\arrow[d,"x"] & \cdots \arrow[r,hook] & \eta_{i_r} \arrow[r,"\varpi"]\arrow[d,"x"] & \eta_{i_0} \arrow[d] \\
  \fkm_K^{es_{i_0}} \arrow[r,hook] & \fkm_K^{es_{i_1}}\arrow[r,hook] & \cdots \arrow[r,hook] & \fkm_K^{es_{i_r}}\arrow[r,"\varpi"] &\fkm_K^{es_{i_0}},
\end{tikzcd}
\end{center}
and such that the induced map $x\otimes \rmO_K\colon \eta_{i_j}\otimes_{\rmO}\rmO_K\rightarrow \fkm_K^{es_{i_j}}$ is surjective.
\end{lemm}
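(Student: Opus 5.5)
We work inside one apartment adapted to the norm and reduce everything to a computation with valuations of coordinates. By Lemma \ref{lem:diagnorm} applied to $\lVert\cdot\rVert_x$, we choose a basis $e_0,\dots,e_{d-1}$ of $E^d$ and parameters $s_i\in[0,1)$ with $\lVert\sum x_ie_i\rVert_x=\sup_i q^{s_i}\lvert x_i\rvert_E$, ordered so that $s_0\geqslant\dots\geqslant s_{d-1}$. The first step is to make explicit, via the proof of Proposition \ref{prop:normsp}, the chain of lattices attached to the norm. These are the unit-ball-type lattices $\eta_s=\{v\mid \lVert v\rVert_x\leqslant q^{s}\}$ (up to the sign convention relating $q^{s}$ to $\fkm_K^{es}$, which we fix so that the bottom row of the diagram is as stated). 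In the basis $e_i$, each $\eta_s$ is diagonal: $\eta_s=\bigoplus_i\varpi^{n_i(s)}\rmO e_i$, where $n_i(s)$ is determined by comparing $s_i$ with $s$. As $s$ ranges over $[0,1)$, only finitely many distinct lattices occur, and they jump exactly at the values $s_{i_0},\dots,s_{i_r}$ (the distinct values among the $s_i$). This gives the chain $\varpi\eta_{i_r}\subsetneq\eta_{i_0}\subsetneq\dots\subsetneq\eta_{i_r}$, and the indices are read off by \eqref{eq:calcindex}.

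Next we check the diagram. Since $\lVert v\rVert_x=\lvert x(v)\rvert_K$ and $\lvert\cdot\rvert_K$ takes values in $q^{\frac1e\BZ}$, the condition $\lVert v\rVert_x\leqslant q^{s}$ is equivalent to $x(v)\in\fkm_K^{n}$ for the appropriate integer $n$ (this is where $es_{i_j}$ appears, after identifying normalizations). So $x(\eta_{i_j})\subset\fkm_K^{es_{i_j}}$, and the squares commute because the horizontal maps are inclusions and multiplication by $\varpi$ on both rows.

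For surjectivity of $\eta_{i_j}\otimes_{\rmO}\rmO_K\to\fkm_K^{es_{i_j}}$, we use the diagonal description. The image is the $\rmO_K$-ideal generated by the $x(\varpi^{n_i}e_i)$, which is the ideal of the element of minimal valuation. By construction of the $n_i$, some basis vector $\varpi^{n_i}e_i$ attains the sup norm exactly $q^{s_{i_j}}$, namely one with $s_i=s_{i_j}$. Hence it generates $\fkm_K^{es_{i_j}}$.

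Finally we prove uniqueness. Suppose $\eta'$ is a lattice with $x(\eta')\subset\fkm_K^{es}$ and the induced map surjective. The containment says $\eta'\subset\eta_s$. If the inclusion were strict, we would compare the two lattices in a common apartment (Lemma \ref{lem:sameap}) and look at the diagonal description; the surjectivity onto $\fkm_K^{es}$ would then force a vector of norm exactly $q^{s}$ in $\eta'$. I expect this step to be the main obstacle. Maximality of $\eta_s$ among lattices mapping into $\fkm_K^{es}$ is clear, but surjectivity alone only pins down the norm level, not the lattice. So uniqueness must really use the whole chain: it must use both the index normalization $\log[\eta_i:\rmO^d]=i$ and the fact that the chain is stable under the simplex conditions \eqref{eq:condbrtg}. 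If the direct argument fails, I would fall back on reading uniqueness as ``the unique chain given by the unit balls of the norm'', which is the definition in Proposition \ref{prop:normsp}.
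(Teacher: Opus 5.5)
Your existence argument is sound, and it is all the paper offers: its only justification is that the lemma ``is obtained through the proof of Proposition \ref{prop:normsp}''. You diagonalize by Lemma \ref{lem:diagnorm}, identify the vertices with the balls $\eta_s$, note $es_i\in\BZ$ since $q^{s_i}=\lvert x(e_i)\rvert_K$, get containment from $\lVert v\rVert_x=\lvert x(v)\rvert_K$, and get surjectivity from a basis vector with $s_i=s_{i_j}$. The sign mismatch you sidestep is an inconsistency in the statement itself, not a choice of convention: with the normalization of Lemma \ref{lem:diagnorm}, the balls land in $\fkm_K^{-es_{i_j}}$.

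The genuine gap is uniqueness. You give no argument, and reading uniqueness as ``the chain of unit balls'' is circular. Neither surjectivity nor the inclusion pattern $\varpi\eta_{i_r}\subsetneq\eta_{i_0}\subsetneq\dots\subsetneq\eta_{i_r}$ can do the job. For $d=2$, take $K$ with residue field larger than $k_E$, and set $x(e_0)=1$, $x(e_1)=u\in\rmO_K^{\times}$ with $\bar u\notin k_E$. Then $\lVert ae_0+be_1\rVert_x=\max(\lvert a\rvert_E,\lvert b\rvert_E)$, so the simplex is the single vertex $\rmO e_0\oplus\rmO e_1$ with target $\rmO_K$. Yet $\rmO e_0\oplus\varpi\rmO e_1$ also maps into $\rmO_K$ and onto it, and the inclusion conditions are vacuous for one vertex.

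What pins the chain down is exactly the index normalization $\log[\eta_{i_j}:\rmO^d]=i_j$ in \eqref{eq:condbrtg}. You mention it but do not exploit it, and with it the proof is one line from the maximality you already observed. Let $\eta'$ have index $i_j$ and suppose $x(\eta')$ lies in the same power of $\fkm_K$. Then $\lVert v\rVert_x\leqslant q^{s_{i_j}}$ for all $v\in\eta'$, so $\eta'\subset\eta_{i_j}$. By additivity of the relative index, $\leng_{\rmO}(\eta_{i_j}/\eta')=\log[\eta_{i_j}:\rmO^d]-\log[\eta':\rmO^d]=0$, which forces $\eta'=\eta_{i_j}$. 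Neither the apartment comparison nor surjectivity is needed for this step.
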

\medskip

\medskip
\begin{coro}\label{cor:matchlatnorm}
The following natural diagram is commutative: 
\begin{center}
\begin{tikzcd}
\lvert \brv{\BH}_E^d\rvert \ar[r,"r"]\ar[dr]& \lvert \brv{\BrTi}_{\! d}\rvert\ar[d]\\
 & \brv{\BrTi}_{\! d}
\end{tikzcd}
\end{center}
\end{coro}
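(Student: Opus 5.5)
We need to show that for a classical point $x\in\lvert\brv\BH_E^d\rvert$, defined over a finite extension $K/E$, the $0$-simplex (or simplex) obtained from the formal model agrees with the image of $r(x)=\lVert\cdot\rVert_x$ under $\lvert\brv\BrTi_{\! d}\rvert\rightarrow\brv\BrTi_{\! d}$. The natural map $\lvert\brv\BrTi_{\! d}\rvert\rightarrow\brv\BrTi_{\! d}$ sends a point in the interior of a simplex $\Delta$ to $\Delta$; via Proposition \ref{prop:normsp}, a norm goes to the chain of unit-type balls $\{\eta_s\}_{s\in[0,1)}$. The diagonal map $\lvert\brv\BH_E^d\rvert\rightarrow\brv\BrTi_{\! d}$ is defined by specialization: $x$ extends, by properness-type valuative arguments, to an $\rmO_K$-point $(\eta,L,u,r)$ of $\brv\BH_{\rmO}^d$. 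Then the map $\brv\BH_{\rmO}^d\rightarrow\brv\sL_d$ together with Proposition \ref{prop:repldbt} gives a simplex $\Delta_x$; since $\Spec\rmO_K/\varpi$ is a point, it is a single simplex.

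The plan is to compare the two chains of lattices directly. First I will unwind $\Delta_x$. Put $L^i=\Lie$-type invertible $\rmO_K$-modules $L^i\subset L^i\otimes K\cong K$. The critical indices $C$ are those $i$ for which $\Pi_i\colon L^i\rightarrow L^{i+1}$ is not an isomorphism, equivalently those for which $\eta^i\rightarrow\eta^{i+1}$ is not an isomorphism, by the surjectivity of $u_R$ and condition 2) of Definition \ref{def:drin}. The lattices $\eta_i=(\iota_i\circ(r^i)^{-1})(\eta^i)$ for $i\in C$ then sit in a diagram exactly of the shape of Lemma \ref{lem:compatlatnorm}. The vertical maps are $u^i$, composed with the identification of each $L^i$ with a fractional ideal $\fkm_K^{a_i}$ inside $K$ via $x$; this uses the proof of Proposition \ref{prop:symgenfib}, where $x=u^0\circ r^0$. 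The key points are the following. Surjectivity of $u\otimes\rmO_K$ identifies $L^i$ with the $\rmO_K$-span of $x(\eta_i)$. Condition 2), which makes the graded pieces of $\eta/\Pi\eta$ inject after reduction, identifies $\eta_i$ with the full preimage $\{v\in E^d : x(v)\in\fkm_K^{a_i}\}$ rather than a smaller lattice.

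Second, by Lemma \ref{lem:compatlatnorm} the simplex associated to $\lVert\cdot\rVert_x$ is characterized as the unique chain of lattices admitting such a diagram with surjective maps $\eta_{i_j}\otimes\rmO_K\rightarrow\fkm_K^{es_{i_j}}$. Uniqueness then forces the two simplices to coincide, provided the exponents $a_i$ match the $es_{i}$. Matching them amounts to normalizing: condition 3) of Definition \ref{def:drin} fixes the index, and hence the $\BZ$-component $h$. The jumps of $a_i$ occur exactly at the critical indices, as do the jumps of the balls $\eta_s$.

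The main obstacle is the precise check that $\eta_i$ equals the full ball $\{v : \lVert v\rVert_x\le q^{s}\}$, and not merely a sublattice of it. I would handle it with the pointwise injectivity condition, inducting along the chain $\eta_{i_0}\subset\dots\subset\eta_{i_r}\subset\varpi^{-1}\eta_{i_0}$. If some $v\notin\eta_i$ had $x(v)\in\fkm_K^{a_i}$, then its image in a successive quotient $\eta_{i_{j+1}}/\eta_{i_j}$ would map to zero in $L_{i_{j+1}}/x_{i_j}L_{i_j}\otimes k$, contradicting injectivity. Indices that are not critical are handled by the isomorphisms $\Pi_i$.
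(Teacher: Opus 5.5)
Your proposal is correct and follows the paper's route. The paper deduces the corollary directly from the uniqueness characterization in Lemma~\ref{lem:compatlatnorm}, applied to the diagram coming from the $\rmO_K$-point of the formal model. Your use of condition 2) of Definition~\ref{def:drin} to show that the formal-model lattices are the full balls of $\lVert\cdot\rVert_x$ is exactly the input that makes this uniqueness valid; the paper leaves it implicit.

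One small correction: the extension of $x$ to an $\rmO_K$-point does not come from a valuative or properness argument (the model is not proper). It is the identification $\BH_E^d(K)=\BH_{\rmO}^d(\rmO_K)$ of classical points of the generic fibre, as used in the proof of Proposition~\ref{prop:symgenfib}.
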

\medskip
\begin{rema}
Using the theory of Berkovich the map $r$ can be made into a genuine retraction of the Berkovich space $\brv{\BH}_E^d$ on the building $\lvert \brv{\BrTi}_{\! d}\rvert$ which gives the \emph{skeleton} of the space $\BH_E^d$ (\cf \cite{ber}).
\end{rema}
\medskip
\FloatBarrier

\definecolor{mygreen}{RGB}{0,160,0}

\begin{figure}
\centering
\begin{tikzpicture}[scale=1, thick]%adapt scale if needed
%
%south-west part of figure (circles)
%
\node[] (O) at (0,0){};
\draw[blue] (O) circle (2);
\node[] (O1) at (1,0){};
\draw[red, dashed, dash pattern=on 4pt \space off 4pt,thick] (O) + (0:1) circle (0.8);
\draw[blue, dashed, dash pattern=on 4pt \space off 4pt,dash phase=4pt,thick] (O) + (0:1) circle (0.8);
\draw[blue] (O) + (120:1) circle (0.8);
\draw[blue] (O) + (240:1) circle (0.8);
%\draw[red] (O1) circle (0.6);
\draw[red, dashed, dash pattern=on 4pt \space off 4pt,thick] (O1) circle (0.6);
\draw[mygreen, dashed, dash pattern=on 4pt \space off 4pt,dash phase=4pt,thick] (O1) circle (0.6);
\draw[mygreen] (O1) + (0:0.35) circle (0.18);
\draw[mygreen] (O1) + (120:0.35) circle (0.18);
\draw[mygreen] (O1) + (240:0.35) circle (0.18);
%
% south-east part of figure (BT tree)
%
\node[] (OL) at (5,0){};
\node[] (OR) at (9.5,0){};
\node[] (OL1) at (6,0){};
\draw (OL1) node[above left,blue]{$\eta_0$};
\fill[blue] (OL1) circle (0.07); 
\node[] (OL2) at (8.5,0){};
\draw (OL2) node[above right,mygreen]{$\eta_1$};
\fill[mygreen] (OL2) circle (0.07); 
\draw[] (OL) -- (OL1); %horizontal line 
\draw[red] (OL1) -- (OL2); %horizontal line 
\draw[] (OL2) -- (OR); %horizontal line 
\draw[] (OL1)+(90:1.2) -- (OL1);
\draw[] (OL1)+(270:1.2) -- (OL1);
\draw[] (OL2)+(90:1.2) -- (OL2);
\draw[] (OL2)+(270:1.2) -- (OL2);
\draw[red] (OL1) -- (OL2);
%
%north-east part of figure (intersecting ellipses)
%
\node[] (EL1) at (6,6){};
\draw[name path=ellipse1,blue] (EL1) ellipse (1cm and 1cm);
\node[] (EL2) at (7,6){};
\draw[mygreen] (EL2) arc (180:0:1 and 0.4);
\draw[mygreen] (EL2) arc (180:360:1 and 0.4);
\fill[red] (EL1)+(1,0) circle (1.5pt);
\foreach \angle in {90,180,270} {
  \draw[blue,fill=white] ({6+cos(\angle)},{6+sin(\angle)}) circle (1.5pt);
  }
\foreach \angle in {360,90,270} {
  \draw[mygreen,fill=white] ({8+cos(\angle)},{6+0.4*sin(\angle)}) circle (1.5pt);
  }
\draw[thick,->] (6.75,3.8) -- (6.75,2.5);
%
%north-west part of the picture (the string monster)
%
\begin{scope}[shift={(-2,5)}]
\def\a{0.6}
\def\b{\a/5}
\def\c{\a/2}
%
%ellipses first
%
\draw[dashed,blue] (0,0) arc (180:0:{\c} and \b);
\draw[solid,blue] (0,0) arc (180:360:{\c} and \b);
\draw[solid,blue] (0,3*\a) arc (180:0:{\c} and \b);
\draw[solid,blue] (0,3*\a) arc (180:360:{\c} and \b);
\draw[solid,blue] (-\a,\a) arc (270:90:{\b} and {\c});
\draw[solid,blue] (-\a,\a) arc (-90:90:{\b} and {\c});
\draw[dashed,blue] (2*\a,\a) arc (270:90:{\b} and {\c});
\draw[solid,blue] (2*\a,\a) arc (-90:90:{\b} and {\c});
\draw[dashed,red] (3*\a,\a) arc (270:90:{\b} and {\c});
\draw[solid,red] (3*\a,\a) arc (-90:90:{\b} and {\c});
\draw[dashed,mygreen] (4*\a,0) arc (180:0:{\c} and \b);
\draw[mygreen] (4*\a,0) arc (180:360:{\c} and \b);
\draw[mygreen] (4*\a,3*\a) arc (180:0:{\c} and \b);
\draw[mygreen] (4*\a,3*\a) arc (180:360:{\c} and \b);
\draw[dashed,mygreen] (6*\a,\a) arc (270:90:{\b} and {\c});
\draw[mygreen] (6*\a,\a) arc (-90:90:{\b} and {\c});
%
%connecting lines
%
\draw[solid,blue] plot[smooth,tension=1] coordinates {(0,0) (-\a/4,3*\a/4) (-\a,\a)};
\draw[solid,blue] plot[smooth,tension=1] coordinates {(-\a,2*\a) (-\a/4,9*\a/4) (0,3*\a)};
\draw[solid,blue] plot[smooth,tension=1] coordinates {(\a,3*\a) (5*\a/4,9*\a/4) (2*\a,2*\a)};
\draw[solid,blue] plot[smooth,tension=1] coordinates {(\a,0) (5*\a/4,3*\a/4) (2*\a,\a)};
\draw[thick,red] plot[smooth,tension=1] coordinates {(2*\a,2*\a) (3*\a,2*\a)};
\draw[thick,red] plot[smooth,tension=1] coordinates {(2*\a,\a) (3*\a,\a)};
\draw[solid,mygreen] plot[smooth,tension=1] coordinates {(4*\a,0) (15*\a/4,3*\a/4) (3*\a,\a)};
\draw[solid,mygreen] plot[smooth,tension=1] coordinates {(5*\a,0) (21*\a/4,3*\a/4) (6*\a,\a)};
\draw[solid,mygreen] plot[smooth,tension=1] coordinates {(3*\a,2*\a) (15*\a/4,9*\a/4) (4*\a,3*\a)};
\draw[solid,mygreen] plot[smooth,tension=1] coordinates {(5*\a,3*\a) (21*\a/4,9*\a/4) (6*\a,2*\a)};
%
%BT tree inside monster
%
\draw[dotted] (-2*\a,3*\a/2) -- (7*\a,3*\a/2);
\draw[dotted] (\a/2,-\a) -- (\a/2,4*\a);
\draw[dotted] (9*\a/2,-\a) -- (9*\a/2,4*\a);
\end{scope}

\node[] (txt1) at (-0.5,7.8){\Large{$\BH^{\eta_1\subset\eta_0}_{\BZ_3}\subset\BH^2_{\BZ_3}$}};
\node[] (txt2) at (7.75,7.5){\Large{$\BH^{\eta_1\subset\eta_0}_{\BZ_3}\subset\BH^2_{\BF_3}$}};
\node[] (txt3) at (-1.5,-2.3){\Large{$\BH^{\eta_1\subset\eta_0}_{\BQ_3}\subset \BH^2_{\BQ_3}$}};
\node[] (txt4) at (7.75,-2){\Large{$\{3\eta_0\subset\eta_1\subset\eta_0\}\subset \BrTi_{\!2}$}};
\node[mygreen] (txt7) at (8.4,4.5){{$\BP^1_{\BF_3}\setminus\BP^1(\BF_3)$}};
\node[blue] (txt5) at (6,4.5){{$\BP^1_{\BF_3}\setminus\BP^1(\BF_3)$}};
\node[blue] (txt9) at (-2.2,4.5){{\textcolor{blue}{$\BH_{\BZ_3}^{\eta_0}$}}};
\node[mygreen] (txt10) at (1.5,4.5){{$\BH_{\BZ_3}^{\eta_1}$}};

\draw[thick,->] (3,0) -- (4.3,0) node[above,midway] {$r$};
\draw[thick,->] (0,3.8) -- (0,2.5) node[left,midway]{$\otimes_{\BZ_3}\BQ_3$};
\draw[thick,->] (3,6) -- (4.3,6) node[above,midway] {mod 3};
\draw[thick,->] (2.5,4.5) to [bend left=15] (5,2) node[] {};
\end{tikzpicture}
\caption{The formal scheme $\BH_{\BZ_3}^{\Delta}$ (upper left) for $d=2$ and $E=\BQ_3$ of the Drinfeld symmetric space lying above a maximal simplex $\Delta=\{\eta_0,\eta_1\}\subset \BrTi_{\! 2}$ (lower right), its special fiber $\BH_{\BF_3}^{\Delta}$ (upper right) and its rigid generic fiber $\BH_{\BQ_3}^{\Delta}$ (lower left).}
\end{figure}

\newpage

\section{The generic fibers}\label{sec:genfib}
The purpose of this section is to prove the following 
\medskip
\begin{theo}\label{thm:genef}
There exists a period morphism $\pi \colon \rmM_{\brv E}\rightarrow \BP^{d-1}_{\brv E}$ which is an open immersion with image $\BH_{\brv E}^d$ giving a $\GL_d(E)$-equivariant isomorphism
$$
\brv \pi \colon \brv{\rmM}_{\brv E}\xrightarrow{\sim} \brv{\BH}_{\brv E}^d
$$
\end{theo}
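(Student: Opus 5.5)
I follow the strategy sketched in the introduction, working on the height-zero component $\rmM_{\brv E}$ and then spreading out over $\BZ$.

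\textbf{Step 1: the period map and étaleness.} First I define $\pi$. Let $(X,\rho)$ be a point of $\rmM_{\brv E}$ with values in some $(A,A^+)$. The quasi-isogeny $\rho$ and the crystal property of $\BD_{\rmO}(X)$ (evaluated on the pd-thickening of $A^+/p$, as in Remark \ref{rema:bbm}) give an isomorphism $\BD_{\rmO}(X)[1/p]\cong \bM\otimes_{W_{\rmO}(k)}A$. Taking degree $0$ and using $\boeta^0\otimes_{\rmO}E\cong E^d$ through $\iota_0$, the graded Hodge map $h_{X,0}$ of \eqref{eq:mesisgrad} gives a surjection $A^d\rightarrow \Lie(X)_0[1/p]$ onto an invertible module, that is, a point of $\BP^{d-1}(A)$.

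Over $\brv E$ every $\Pi_i$ is invertible. Hence the components $h_{X,i}$ are all determined by $h_{X,0}$, so the whole Hodge filtration is determined by this one point. Corollary \ref{cor:gmod} (Grothendieck--Messing) then shows that $\pi$ is étale, since deformations of $X$ correspond to deformations of the $\rmOD$-stable filtration. Equivariance comes from Remark \ref{rem:glequiv}, where the action of $g$ on $\boeta^0$ is by $g$ itself, and from the fact that the action on $\rho$ is by precomposition with $g^{-1}$. This matches the inverse action on $\BP^{d-1}$ in Proposition \ref{prop:symgenfib}.

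\textbf{Step 2: reduction to $C$-points and injectivity.} By Fargues' lemma (Lemma \ref{lem:farlem}), an étale map is an open immersion as soon as it is injective on $C$-points for every algebraically closed complete $C/\brv E$. Since $\BH^d_{\brv E}$ is open, it then suffices to prove that $\pi(C)$ is a bijection onto $\BH^d_{\brv E}(C)$.

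Over $\rmO_C$ I use the Scholze--Weinstein classification (Theorem \ref{thm:farclas}, Theorem \ref{thm:scwe}). A point $(X,\rho)$ gives an $\rmOD$-linear minuscule modification
\[
0\rightarrow V_{\varpi}(X)\otimes_E\CO_{\CX}\rightarrow \CE_{\Dr}\rightarrow \iota_{\infty,*}\Lie(X)\otimes C\rightarrow 0
\]
of degree $d$. Beauville--Laszlo (Proposition \ref{prop:bpair}, Lemma \ref{lem:odmodif}) identifies such modifications with $\BP^{d-1}(C)$, and under this identification the point is exactly $\pi(X,\rho)$.

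Conversely, Proposition \ref{prop:modeqpdiv} says that a trivial such modification determines a height-zero isogeny class of points. That class is a single point: $T_p(X)$ is free of rank one over $\rmOD$, and any $\rmOD$-stable lattice in $V_\varpi(X)\cong \rmD$ is $\Pi^n\rmOD$, so the height-zero condition pins it down. This gives injectivity, and identifies the image of $\pi(C)$ with the set of $x\in\BP^{d-1}(C)$ whose modification is trivial, i.e. semistable of slope $0$.

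\textbf{Step 3: the image equals $\BH^d$; this is the main obstacle.} I must show that $x$ lies on no $E$-rational hyperplane exactly when the associated modification $\CF\subset\CE_{\Dr}$ is trivial.

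If $x$ lies on a rational hyperplane, Lemma \ref{lem:trivtoslop} turns that hyperplane into a sub-isocrystal of $\bM[1/p]$ of slope $\tfrac{d-1}{d}$. Intersecting the corresponding subbundle of $\CE_{\Dr}$ with $\CF$, and counting the degree drop along $x$, produces a subbundle of $\CF$ of positive degree, so $\CF$ is not trivial.

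The converse is the hard direction, and I would follow the Chen--Fargues--Shen trick. Suppose $\CF$ is not trivial. I want a direct summand $\CF'\subset\CF$ that is also the intersection of $\CF$ with an $\rmOD$-stable direct summand of $\CE_{\Dr}$. By Lemma \ref{lem:trivtoslop} such a summand of $\CE_{\Dr}$ comes from a rational subspace, and this should force $x$ into a rational hyperplane.

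To produce $\CF'$, I would first use the two-tower principle \eqref{eq:intwotow} to translate the problem into degree-$1$ modifications $\CO^d\incl\CF$ of rank-$d$ bundles. These I classify via the classification of vector bundles (Theorem \ref{thm:clasvb}): $\CF\cong\CO^{a}\oplus\CO(1/(d-a))$. When $a>0$, the summand $\CO^a$ is the trivial part spanned by rational sections and gives a rational subspace. Transporting this back through \eqref{eq:intwotow} yields the required $\CF'$ (Proposition \ref{prop:compodmod}).

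Checking that this dictionary is $\rmOD$-compatible and lands exactly on rational hyperplanes is where I expect the real work. Finally, taking the disjoint union over heights and using that $g$ shifts height by $v_E(\det g)$ gives the $\GL_d(E)$-equivariant isomorphism $\brv\pi$ of Theorem \ref{thm:genef}.
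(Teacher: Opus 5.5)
Your proposal is correct and follows the paper's proof essentially step by step: the Grothendieck--Messing period map and its \'etaleness, Fargues' lemma, Scholze--Weinstein for injectivity on $C$-points, and the Chen--Fargues--Shen argument through the two-tower principle and the classification of bundles for the image. The one variation is that you read off rationality on the Lubin--Tate side, whereas Proposition \ref{prop:cimadr} does it via a $\rmD$-stable sub-isocrystal. To make that step precise, set $\mathcal{G}=\CHom_{\rmD}(\CE_x,\CO(\tfrac{1}{d}))$ and use its Harder--Narasimhan quotient $\mathcal{G}\twoheadrightarrow\CO^a$ instead of a summand ``spanned by rational sections'': its composite with $\CO^d\incl\mathcal{G}$ is a surjective matrix over $H^0(\CX,\CO)=E$, since its cokernel is both a trivial bundle and torsion, so by contravariance $\CHom(\CO^a,\CO(\tfrac{1}{d}))$ is a $\rmD$-stable summand of $\CE_{\Dr}$ coming from a nonzero rational subspace (Lemma \ref{lem:trivtoslop}) and contained in $\CE_x$, hence its fiber at $\infty$ lies in the Hodge filtration and $x$ lies on a rational hyperplane.
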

\medskip
The construction of $\brv \pi$ out of $\pi$ is easy, but we need to be a little bit careful for it to be compatible with Weil descent (\cf \S\ref{subsec:weildes}). Recall that we have a decomposition indexed by the height of the quasi-isogeny $\brv{\rmM}= \sqcup_{i\in \BZ}\rmM_i$ and a decomposition $\brv{\BH}_{\brv E}^d=\sqcup_{i\in \BZ}\BH^d_{\brv E,i}$ indexed by the height of the lattice. Then $\pi_0\coloneqq \pi$ and $\pi_i\coloneqq \brv{\varpi}^i\circ \pi_0\circ \varpi^{-i}$ for $i\in \BZ$ which allows us to define $\brv \pi = \sqcup_{i\in \BZ}\pi_i$.
\subsection{Vector bundles on the Fargues--Fontaine curve} In this section we recall some facts about the Fargues--Fontaine curve and the classification of vector bundles over it.
\subsubsection{The Fargues--Fontaine curve}
We begin by recalling some facts from \cite[Chapitre 6]{fafo}. Let $C$ be a perfectoid field containing $E$ and let $\rmO_C\subset C$ be its ring of integers. Let $A_{\inf}\coloneqq \rmW_{\rmO}(\rmO_C^{\flat})$ where $\rmO_C^{\flat}\coloneqq \varprojlim_{x\mapsto x^p}\rmO_C$. The ring $A_{\inf}$ is endowed with a Frobenius map $\varphi$ defined by the Witt vector Frobenius of $W_{\rmO}$ and with a surjective map $\theta \colon A_{\inf}\rightarrow \rmO_C$ called \emph{Fontaine's map}. We choose $\varpi^{\flat}\in \rmO_C^{\flat}$ defined by a compatible system of $p$-th power roots of $\varpi$ and consider its Teichmüller lift $[\varpi^{\flat}]\in A_{\inf}$, we then get $\ker\theta=\xi A_{\inf}$ where $\xi \coloneqq(\varpi-[\varpi^{\flat}])$. 

Let $\Bdrp$ be the \emph{de Rham period ring}, which is defined as the $\ker \theta$-completion of $A_{\inf}\bigl [\tfrac 1 p \bigr]$. Fontaine's map defines a map $\theta \colon \Bdrp \rightarrow C$. Let $\varepsilon\in \rmO_C^{\flat}$ be a compatible system of $p$-th roots of unity, then the series $t\coloneqq\log[\varepsilon]$ converges and defines an element in $\Bdrp$ which generates $\ker \theta$. We also write $\Bdr\coloneqq \Bdrp\bigl [ \tfrac 1 t \bigr]$.

Let $\Bcrisp$ be the \emph{crystalline period ring}, which is defined as the $p$-adic completion of the divided power envelope of $\ker\theta\subset A_{\inf}\bigl [\tfrac 1 p \bigr]$.  The crystalline period ring is endowed with a Frobenius map induced by the Witt vector Frobenius, which we still denote $\varphi$. We have a natural inclusion $\Bcrisp\subset \Bdrp$ and one can check that $t\in \Bcrisp$ so we write $\Bcris\coloneqq\Bcrisp\bigl [ \tfrac 1 t \bigr]$.

Now define the following graded $\Qp$-algebra
$$
P\coloneqq \bigoplus_{i\geqslant 0} (\Bcrisp)^{\varphi=\varpi^i}.
$$
The \emph{Fargues--Fontaine curve} is then defined as $\CX\coloneqq \Proj P$. Fargues and Fontaine show (\cf \cite[Théorème 6.5.2]{fafo}) that $\CX$ is a separated, connected, regular, scheme of dimension $1$ that is complete: there is a degree function $\deg \colon \Div(\CX)\rightarrow \BZ$ inducing an isomorphism 
\begin{equation}\label{eq:pic}
\Pic(\CX)\xrightarrow{\sim} \BZ.
\end{equation}
The main point of the proof is the fact that $(\Bcrisp)^{\varphi=1}$ is a principal ring. We will simply write $\CO\coloneqq \CO_{\CX}$ for the sheaf of regular functions on $\CX$. In particular, for $k\in \BZ$ we write $\CO(k)$ the associated line bundle on $\CX$ given by \eqref{eq:pic}.

The element $t\in \Bcrisp$, together with Fontaine's map, defines a $C$-point $\iota_{\infty}\colon \Spec(C)\incl \CX$ called the \emph{point at infinity} and simply written $\infty\in \CX$. There is a natural isomorphism between the completed local ring of $\CO$ at $\infty$ with the de Rham period ring, \ie $\wh{\CO}_{\infty}\cong \Bdrp$. We write $U=\CX\setminus \{\infty\}$ and we get $\Gamma(U,\CO)=(\Bcrisp)^{\varphi=1}$.

Let $n\geqslant 0$ be an integer. We define an étale cover $\pi_n\colon \CX_n\rightarrow \CX$ where $\CX_n$ is the Fargues--Fontaine curve associated to $E_n$, the (unique) unramified extension of degree $n$ of $E$. In other words $\pi_n\colon \CX_n\coloneqq \Proj P_n\rightarrow \CX$ where
$$
P_n\coloneqq \bigoplus_{i\geqslant 0} (\Bcrisp)^{\varphi^n=\varpi^i},
$$
and the map $\CX_n\rightarrow \CX_0=\CX$ is defined by the natural inclusion $P_0\incl P_n$. We simply write $\CO_n\coloneqq \CO_{{\CX}_n}$, hence $\CO_0=\CO$.
\subsubsection{Vector bundles on the Fargues--Fontaine curve}
To every $\sigma$-isocrystal $(D,\phi)$ on $ k$ we can associate a vector bundle $\CE(D)$ on $\CX$ by the graded $P$-module
$$
E(D)\coloneqq \bigoplus_{i\geqslant 0} (D\otimes_{\brv E}\Bcrisp)^{\phi\otimes\varphi=\varpi^i}.
$$
Hence, the rank of $\CE(D)$ is the dimension of $D$.

In particular, we know that to any $\lambda\in \BQ$ one can associate a simple $\sigma$-isocrystal $D_{\lambda}=(D_{\lambda},\phi_{\lambda})$ of slope $\lambda$ and we simply write $\CO(\lambda)\coloneqq \CE(D_{\lambda})$. Recall that if $\lambda = d/h$ where $d\in \BZ$ and $h>0$ are coprime, then $D_{\lambda}$ is a $\brv E$-vector space of dimension $h$ and $\phi_{\lambda}=C_{\lambda}\sigma$ where $\sigma$ is the Frobenius automorphism and $C_{\lambda}$ is the companion matrix associated to the polynomial $X^{h}-\varpi^{d}$, namely: 
\[
C_{\lambda} =
\begin{pmatrix}
0 & 0 & \cdots & 0 & \varpi^d \\
1 & 0 & \cdots & 0 & 0 \\
0 & 1 & \ddots & \vdots & \vdots \\
\vdots & \ddots & \ddots & 0 & 0 \\
0 & \cdots & 0 & 1 & 0
\end{pmatrix}.
\]
Hence $\CO(\lambda)$ is associated to the $P$-graded module
$$
E_{\lambda}\coloneqq \bigoplus_{i\geqslant 0}(\Bcrisp)^{\varphi^h=\varpi^{i+d}}.
$$
For $\lambda \in \BQ$ we define $m(\lambda)\coloneqq \inf \{k\in \BN\mid k\lambda\in \BZ\}$, and for $n\geqslant 0$ we have 
$$
\pi^{*}_n\CO(\lambda)=\CO_n(n \lambda)^{\oplus\tfrac{m(\lambda)}{m(n\lambda)}}
$$
Moreover, since for $k_1,k_2\in \BZ$ we have $\CO(k_1)\otimes_{\CO}\CO(k_2)=\CO(k_1+k_2)$, for $\lambda_1,\lambda_2\in \BQ$ we get
$$
\CO(\lambda_1)\otimes_{\CO}\CO(\lambda_2)=\CO(\lambda_1+\lambda_2)^{\tfrac{m(\lambda_1)m(\lambda_2)}{m(\lambda_1+\lambda_2)}}.
$$
Fargues and Fontaine compute the following:
\begin{prop}\label{prop:expsec}
Let $\lambda\in \BQ$ and write $\lambda=d/h$ where $d\in \BZ$ and $h>0$ are prime with each other. Then
\begin{itemize}
\itemb If $\lambda =0$, then $H^1({\CX},{\CO})=0$ and  $H^0({\CX},{\CO})=E$
 \itemb If $\lambda > 0$, then $H^1({\CX},{\CO(\lambda)})=0$ and 
 $$
 H^0(\CX,\CO(\lambda))=(\Bcrisp)^{\varphi^h=\varpi^d}.
 $$
 \itemb If $\lambda <0$, then $H^0({\CX},{\CO(\lambda)})=0$ and 
 $$
 H^1({\CX},{\CO(\lambda)})=\Bdrp/(t^{-d}\Bdrp+E_h).
 $$
\end{itemize}
\end{prop}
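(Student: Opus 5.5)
We first treat the case of integer slopes and then deduce the general case by pushing forward along the étale cover $\pi_h\colon \CX_h\rightarrow \CX$.

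\emph{Integer slopes.} For $\lambda=k\in\BZ$, the line bundle $\CO(k)$ corresponds to the graded module $P[k]$. Its cohomology can be computed from a Čech-type exact sequence built from the two charts $U=\CX\setminus\{\infty\}$ and the formal neighbourhood $\Spec\Bdrp$ of $\infty$. Here we use $\Gamma(U,\CO)=(\Bcrisp)^{\varphi=1}$ and $\wh{\CO}_{\infty}\cong\Bdrp$. Twisting by $\CO(k)$ identifies $\Gamma(U,\CO(k))$ with $(\Bcris)^{\varphi=\varpi^k}$ and the completed stalk at $\infty$ with $t^{-k}\Bdrp$. Beauville--Laszlo gluing then yields the fundamental exact sequence
$$
0\rightarrow H^0(\CX,\CO(k))\rightarrow (\Bcris)^{\varphi=\varpi^k}\oplus t^{-k}\Bdrp\rightarrow \Bdr\rightarrow H^1(\CX,\CO(k))\rightarrow 0.
$$
The cohomology of $\CO(k)$ is therefore governed by the following facts, proved in \cite[Chapitre 6]{fafo}:
\begin{itemize}
\itemb the ``fundamental exact sequence'' $0\rightarrow E\rightarrow (\Bcrisp)^{\varphi=1}\rightarrow\Bdr/\Bdrp\rightarrow 0$, together with its twists $0\rightarrow E t^k\rightarrow(\Bcrisp)^{\varphi=\varpi^k}\rightarrow \Bdrp/t^k\Bdrp\rightarrow 0$ for $k\geqslant 0$;
\itemb the vanishing $(\Bcrisp)^{\varphi=\varpi^k}=0$ for $k<0$.
\end{itemize}
Using these, for $k=0$ we read off $H^0=E$ and $H^1=0$. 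For $k>0$, $H^0$ is the graded piece $P_k=(\Bcrisp)^{\varphi=\varpi^k}$, and surjectivity onto $\Bdrp/t^k$ gives $H^1=0$. For $k<0$, $H^0=0$ and $H^1=\Bdrp/(t^{-k}\Bdrp+E)$.

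\emph{General slopes.} For $\lambda=d/h$, we use that $\CO(\lambda)\cong\pi_{h,*}\CO_h(d)$. Both sides are rank $h$ bundles built from the isocrystal $D_\lambda$, and the companion-matrix description identifies $E(D_\lambda)$ with the $\varphi^h$-eigenspaces, which is the formula for $E_\lambda$ already given. Since $\pi_h$ is finite étale, hence affine, we get $H^i(\CX,\CO(\lambda))=H^i(\CX_h,\CO_h(d))$. Applying the integer-slope case on $\CX_h$ (with $E$ replaced by $E_h$ and $\varphi$ by $\varphi^h$, keeping the same uniformizer $\varpi$) then gives the three formulas of the statement. In particular we obtain $E_h$ in the $H^1$ for $\lambda<0$, and $(\Bcrisp)^{\varphi^h=\varpi^d}$ in the $H^0$ for $\lambda>0$.

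The main obstacle is the analytic input on $\Bcrisp$: the surjectivity $(\Bcrisp)^{\varphi=\varpi^k}\twoheadrightarrow\Bdrp/t^k$ and the vanishing of negative-slope Frobenius eigenspaces. These require Fargues--Fontaine's study of Newton polygons of elements of $\Bcrisp$ (or of $B^+$). We would not reprove these facts but cite \cite[Chapitre 6]{fafo}. Similarly, we would cite rather than reprove the Čech/Beauville--Laszlo computation of cohomology on the curve.
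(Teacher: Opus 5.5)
Your overall route is the standard Fargues--Fontaine argument: integer slopes via the Beauville--Laszlo/Mayer--Vietoris sequence and the fundamental exact sequence, then general slopes via $H^i(\CX,\pi_{h,*}\CO_h(d))=H^i(\CX_h,\CO_h(d))$ for the affine morphism $\pi_h$. That is all the paper relies on, since it gives no proof and simply attributes the computation to \cite{fafo}. The reduction to $\CX_h$ is fine.

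The integer-slope step, however, contains a concrete error: you apply the twist twice. There are two consistent conventions.
\begin{itemize}
\item[$\bullet$] With the natural embeddings into $\Bdr$, one has $\Gamma(U,\CO(k))=(\Bcris)^{\varphi=\varpi^k}=t^k\rmB_e$, where $\Bcris=\Bcrisp[\tfrac 1 t]$, and the completed stalk at $\infty$ is $\Bdrp$.
\item[$\bullet$] If you trivialize $\CO(k)$ on $U$ by dividing by $t^k$, you get $\Gamma(U,\CO(k))\cong\rmB_e=(\Bcris)^{\varphi=1}$, and the stalk becomes $t^{-k}\Bdrp$.
\end{itemize}
Your mixed sequence, with middle term $(\Bcris)^{\varphi=\varpi^k}\oplus t^{-k}\Bdrp\to\Bdr$, would literally give $H^0(\CX,\CO(k))\cong(\Bcrisp)^{\varphi=\varpi^{2k}}$ for $k>0$. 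It would also give $H^1(\CX,\CO(k))\cong\Bdrp/(t^{-2k}\Bdrp+E)$ for $k<0$. The conclusions you state are the right ones, but they do not follow from the sequence as written.

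Relatedly, $(\Bcrisp)^{\varphi=1}=E$. So $\Gamma(U,\CO)$ and the middle term of your fundamental exact sequence must be $(\Bcris)^{\varphi=1}$, not $(\Bcrisp)^{\varphi=1}$. The paper's notation contains the same slip.

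Once this is fixed, you need fewer inputs than you list. The two facts $\rmB_e\cap\Bdrp=E$ and $\rmB_e+\Bdrp=\Bdr$ already give the whole case $k\leqslant 0$. They also give $H^1(\CX,\CO(k))=0$ for $k>0$, since $t^k\rmB_e+\Bdrp\supseteq t^k(\rmB_e+\Bdrp)=\Bdr$. The only further analytic input is $(\Bcris)^{\varphi=\varpi^k}\cap\Bdrp=(\Bcrisp)^{\varphi=\varpi^k}$ for $k>0$, i.e.\ $H^0(\CX,\CO(k))=P_k$. The surjectivity onto $\Bdrp/t^k\Bdrp$ is then a consequence rather than an input, and the vanishing of $(\Bcrisp)^{\varphi=\varpi^k}$ for $k<0$ is not needed separately.
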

\begin{exem}\label{exa:dactond}
The isocrystal $\bM$ defines a vector bundle on $\CX$ and we have
$$
\CE_{\Dr}\coloneqq \CE(\bM(1))=\CO\left ( \tfrac 1 d \right )^{\oplus d}.
$$
Hence $H^0(\CX,\CE_{\Dr})=\bigoplus_{i\in \BZ/d}\Bcris^{\varphi^d=\varpi}$ and the action of $\Pi$ is given by $\varphi$. Note that the graded decomposition is not the grading induced by the action of $\rmO_d$, but rather a grading transverse to it.
\end{exem}
\subsubsection{Banach--Colmez spaces}
The cohomology groups of Proposition \ref{prop:expsec} can be interpreted as $C$-points of \emph{Banach--Colmez} spaces. The category $\BaCo$ of Banach--Colmez spaces as initally been constructed by Colmez \cite{col1}, \cite{col2}. We follow Le Bras' presentation \cite[Proposition]{leb}.

Let $\Perf_C$ be the category of perfectoid algebras over $C$ endowed with the proétale topology.  We write $\Vect_E$ (\resp $\Vect_C$) for the category of finite-dimensional $E$-vector spaces (\resp $C$-vector spaces). To $E$ we associate the constant sheaf $\ud{E}$, and to $C$ the additive sheaf $\BG_a$ defined for $S\in \Perf_C$ by $\BG_a(S)=S$. Then (\cf \cite[Definition 1.1]{leb}):
\medskip
\begin{defi}
The category $\BaCo$ of \emph{Banach--Colmez} spaces is the smallest abelian subcategory of proétale sheaves of $E$-vector space son $\Perf_C$ which is stable under extensions and contains $\ud{E}$ and $\BG_a$.
\end{defi}
\medskip

Let us mention that the category $\BaCo$ can also be interpreted in terms of sheaves on the Fargues--Fontaine curve (\cf \cite[Théorème 7.1]{leb}). There are fully faithful embeddings
$$
\Vect_E\incl \BaCo,\quad \Vect_C\incl \BaCo.
$$
Moreover, $\BaCo$ is endowed with a faithful exact functor to $\Qp$-Banach spaces given by $X\mapsto X(C)$. We say that a Banach space \emph{lifts to a Banach--Colmez space} if it is in the essential image of this functor. The main result of Colmez is that these spaces admit a \emph{Dimension} for which they satisfy a \emph{rank theorem} (\cf \cite[Théorème 1.4]{col1}):
\medskip
\begin{theo}\label{thm:bcspace}
The category $\BaCo$ is equipped with an $E$-\emph{Dimension} function\footnote{Classically, the first argument of this Dimension was called the \emph{principal dimension} and the second one the \emph{residual dimension}. Now, the terms of $C$-\emph{dimension} and $E$-dimension, or, in the analogy with $p$-divisible groups, of dimension and height are preferred.} on objects:
$$
\Dim_E\colon \BaCo\rightarrow \BN\times \BZ,
$$
which is additive and such that $\Dim_E\ud{E}=(0,1)$ and $\Dim_E\BG_a= (1,0)$. Moreover, the subcategory $\Vect_E$ (\resp $\Vect_C$) identifies with the objects $X\in \BaCo$ such that the first component (\resp the second component) component of $\Dim_EX$ is $0$.
\end{theo}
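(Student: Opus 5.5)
The plan is to deduce the theorem from Le Bras' description of $\BaCo$ in terms of the Fargues--Fontaine curve \cite[Théorème 7.1]{leb}, rather than from Colmez's original analytic arguments. Le Bras identifies $\BaCo$ with the heart $\CC$ of a t-structure on $D^b(\mathrm{Coh}_{\CX})$. An object of $\CC$ is a two-term complex $\CF$ such that:
\begin{itemize}
\itemb $\mathcal H^0(\CF)$ has only nonnegative Harder--Narasimhan slopes, torsion sheaves allowed;
\itemb $\mathcal H^{1}(\CF)$ is a vector bundle with only negative slopes.
\end{itemize}
The equivalence is $\CF\mapsto \bigl(S\mapsto H^0(\CX_S,\CF)\bigr)$. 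I would take this equivalence as the starting point.

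The Dimension is then defined as
$$
\Dim_E(\CF)\coloneqq\bigl(\deg\CF,\ \rank \CF\bigr)=\bigl(\deg\mathcal H^0-\deg\mathcal H^1,\ \rank\mathcal H^0-\rank\mathcal H^1\bigr),
$$
where $\deg$ and $\rank$ are the usual invariants extended to $K_0(\mathrm{Coh}_{\CX})$, with torsion sheaves of rank $0$ and positive degree. Additivity is then automatic. A short exact sequence in the heart $\CC$ is a distinguished triangle in $D^b(\mathrm{Coh}_{\CX})$, so it gives a relation in $K_0$, and $\deg$ and $\rank$ factor through $K_0$ by \eqref{eq:pic}.

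The first component lies in $\BN$ because:
\begin{itemize}
\itemb $\mathcal H^0$ has nonnegative slopes, so nonnegative degree;
\itemb $\mathcal H^1$ has negative slopes, so $-\deg\mathcal H^1\geqslant 0$.
\end{itemize}

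For the normalisations, $\ud{E}$ corresponds to $\CO$, which has degree $0$ and rank $1$ by Proposition \ref{prop:expsec}. The sheaf $\BG_a$ corresponds to the skyscraper $\iota_{\infty,*}C$, which has rank $0$ and degree $1$. Proposition \ref{prop:expsec} also recovers the expected Dimensions $(d,h)$ of $H^0(\CO(\lambda))$ and $(-d,-h)$ of $H^1(\CO(\lambda))$, which serves as a consistency check.

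For the characterisation of $\Vect_E$, suppose the first component vanishes. Then $\deg\mathcal H^0=0$ and $\deg\mathcal H^1=0$. A negative-slope bundle of degree $0$ is zero, so $\mathcal H^1=0$. Then $\mathcal H^0$ has slopes $\geqslant 0$ and degree $0$, so it has no torsion and is semistable of slope $0$. By the classification (Theorem \ref{thm:clasvb}), such a sheaf is $\CO^n$, whose associated Banach--Colmez space is $\ud{E}^n$, and $H^0(\CX,\CO)=E$ shows this functor is fully faithful onto $\Vect_E$.

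The $\Vect_C$ statement is the step I expect to be the real obstacle, and read literally it appears to fail. The sum $H^0(\CO(1))\oplus H^1(\CO(-1))$ has Dimension $(1,1)+(1,-1)=(2,0)$ but is not a $C$-vector space, since the first summand has nonzero $E$-dimension. So this half needs a hypothesis: either restrict to objects whose complex is a torsion sheaf, or ask that both the $\mathcal H^0$- and $\mathcal H^1$-ranks vanish. Under that reading the proof is immediate: vanishing ranks force $\mathcal H^1=0$ and $\mathcal H^0$ torsion. A torsion sheaf supported at $\infty$ then gives a finite-dimensional $C$-vector space, once one shows that unipotent extensions of $\BG_a$ by itself split in the category of $E$-vector space sheaves. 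That last point is where Colmez's analysis (\cite[Théorème 1.4]{col1}) is genuinely needed, and I would cite it there.
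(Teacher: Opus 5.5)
Your route is different from the paper's. The paper gives no argument and simply imports the statement from Colmez \cite[Théorème 1.4]{col1}; you instead read everything off Le Bras' heart with $\Dim_E=(\deg,\rank)$.

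Taken as arguments, your treatments of additivity, of the range $\BN\times\BZ$, of the normalisations and of $\Vect_E$ are fine. Two small points:
\begin{itemize}
\item The negative-slope bundle must sit in cohomological degree $-1$, i.e. the object is $\CE[1]$, so that $H^0(\CX,\CE[1])=H^1(\CX,\CE)$.
\item Your sign $(-d,-h)$ for $H^1(\CX,\CO(\lambda))$ is the correct one, even though Example \ref{exa:bccomp} prints $(-d,h)$.
\end{itemize}

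The real cost of your route is logical. You use Theorem \ref{thm:clasvb}, and Le Bras' equivalence also rests on the classification. In the paper, however, Theorem \ref{thm:clasvb} is proved via \cite[8.4.2]{fafo} from exactly the $\Vect_E$ part of the present theorem, so inside the paper your argument is circular. To repair this, take the classification from Fargues--Scholze \cite[Theorem II 2.14]{fasc}, and check that what you use from \cite{leb} does not itself rely on Colmez's Dimension.

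You are right that the $\Vect_C$ clause is false as stated, and your counterexample is valid. A direct summand of $\BG_a^n$ is the image of an idempotent of $\End(\BG_a^n)=M_n(C)$, so it has second component $0$; $H^0(\CX,\CO(1))$ does not. A simpler example: $\ud{E}\oplus\BG_a/\ud{E}$ has Dimension $(1,0)$ and is decomposable, while $\BG_a$ is indecomposable since $\End(\BG_a)=C$.

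Your repair, however, is still false, and the splitting you intend to cite Colmez for does not hold. Vanishing of both ranks only tells you that the complex is a torsion sheaf $\mathcal{T}$. It does not make the object a $C$-vector space, for two reasons.
\begin{itemize}
\item $\mathcal{T}$ can be supported at a closed point $x\neq\infty$. Take $t_x\in H^0(\CX,\CO(1))$ vanishing at $x$. The cokernel of $t_x\colon\ud{E}\to H^0(\CX,\CO(1))$ lies in $\BaCo$, has Dimension $(1,0)$, and corresponds to $\iota_{x,*}C_x$. Hence $\Hom(\BG_a,\,\cdot\,)=\Hom(\iota_{\infty,*}C,\iota_{x,*}C_x)=0$, so it is not $\BG_a$.
\item Even at $\infty$ the extension need not split. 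Since $\Ext^1$ in a heart is $\Hom_{D^b}(A,B[1])$, you get $\Ext^1_{\BaCo}(\BG_a,\BG_a)=\Ext^1_{\CO_{\CX}}(\iota_{\infty,*}C,\iota_{\infty,*}C)\cong C\neq 0$. This class is realised by $\iota_{\infty,*}(\Bdrp/t^2)$, a non-split extension of $\BG_a$ by $\BG_a$ of Dimension $(2,0)$.
\end{itemize}

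The correct description is that $X\in\Vect_C$ if and only if its complex is $\iota_{\infty,*}V$ with $tV=0$, and no condition on $\Dim_E$ detects this. So your last paragraph proves nothing. The only true part of that clause is that objects of $\Vect_C$ have second component $0$.
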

\medskip
\medskip
\begin{exem}\label{exa:bccomp}
For $\lambda \in \BQ$ such that $\lambda=d/h$ where $d\in \BZ$ and $h>0$ are coprime, then 
\begin{itemize}
\itemb if $\lambda>0$, $H^0(\CX,\CO(\lambda))$ lifts to a Banach--Colmez space of $E$-Dimension $(d,h)$,
\itemb if $\lambda<0$,  $H^1(\CX,\CO(\lambda))$ lifts to a Banach--Colmez space of $E$-Dimension $(-d,h)$.
\end{itemize}
\end{exem}

\subsubsection{Classification of vector bundles}
Let $\CE$ be a vector bundle on $\CX$, we define the \emph{rank} $\rank \CE$ as the rank of the vector bundle and the \emph{degree} by
$$
\deg \CE\coloneqq\deg \det\CE.
$$
The \emph{Harder--Narasimhan slope} is the function on vector bundles defined by 
$$
\mu\coloneqq \tfrac{\deg}{\rank}.
$$
The main point of this function is that we get a \emph{generalized Harder--Narasimhan theory} as defined in \cite[5.5]{fafo}. Recall the following definition:
\medskip
\begin{defi}
 A vector bundle $\CE$ on $\CX$ is called \emph{semi-stable} if for every nonzero subbundle $\CE'\subset \CE$ we have $\mu(\CE')\leqslant \mu(\CE)$ and \emph{stable} if, moreover, $\mu(\CE')=\mu(\CE)\implies \CE'=\CE$. 
\end{defi}
\medskip

\begin{exem}
We have $\mu(\CO(\lambda))=\lambda$. Let us quickly explain this, we write $\lambda=d/h$ as before. Since $\CX_h\rightarrow \CX$ is an étale map of degree $h$ we have\footnote{This is actually not an argument but a trustworthy proof requires to define $\deg$ more in detail.} $\deg \pi^*_h\CO(\lambda)=h\deg\CO_h(\lambda)$. But we stated that $\pi^*_h\CO(\lambda)=\CO_h(d)^h$ thus $\deg \pi^*_h\CO(\lambda)=dh$. Hence $\deg\CO(\lambda)=d$ and we already explained that $\rank \CO(\lambda)=h$.
\end{exem}
\medskip
Recall the classification theorem for vector bundles on $\CX$.
\medskip
\begin{theo}\label{thm:clasvb}
The map $D\mapsto \CE(D)$ induces a bijection between isomorphism classes of $\sigma$-isocrystals over $k$ and isomorphism classes of vector bundles on $\CX$. Moreover, any semistable vector bundle is of the form $\CO(\lambda)^{\oplus m}$ where $\lambda \in \BQ$ and $m\geqslant 0$ is an integer.

\end{theo}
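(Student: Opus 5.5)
The plan follows the strategy of Fargues--Fontaine as streamlined by Fargues--Scholze \cite[Theorem II 2.14]{fasc}. The point is to avoid any input from Drinfeld's theorem or the two--tower isomorphism, and to rely only on the cohomology computations of Proposition \ref{prop:expsec} and the Banach--Colmez Dimension of Theorem \ref{thm:bcspace}.

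\emph{Injectivity.} By Dieudonné--Manin, every $\sigma$-isocrystal over $k$ is a direct sum $\bigoplus D_{\lambda_j}$, so $\CE(D)=\bigoplus\CO(\lambda_j)$. Each $\CO(\lambda)$ is stable of slope $\lambda$: pulling back along $\pi_h$ gives $\CO_h(d)^{h}$, and a destabilizing subbundle of $\CO(\lambda)$ would pull back to one of $\CO_h(d)^{h}$, which has no subbundle of slope $>d$ since $H^0(\CO_h(d'))\to H^0(\CO_h(d))$ vanishes for $d'>d$. Hence the Harder--Narasimhan polygon of $\CE(D)$ is the Newton polygon of $D$. Since the Newton polygon determines $D$ up to isomorphism, $D\mapsto\CE(D)$ is injective on isomorphism classes.

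\emph{Reduction of surjectivity to the semistable case.} Let $\CE$ be a vector bundle with Harder--Narasimhan filtration $0\subset\CE_1\subset\dots\subset\CE_r=\CE$, whose graded pieces are semistable of strictly decreasing slopes. Suppose each graded piece is of the form $\CO(\lambda)^{m}$. Then every extension $0\to\CO(\lambda)^{m}\to\CF\to\CO(\mu)^{m'}\to0$ with $\lambda>\mu$ splits: its class lies in
\[
H^1\bigl(\CO(\mu)^{\vee}\otimes\CO(\lambda)\bigr),
\]
a sum of copies of $H^1(\CO(\lambda-\mu))$, which vanishes by Proposition \ref{prop:expsec}. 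An induction along the filtration then shows that $\CE\cong\bigoplus_j\CO(\lambda_j)^{m_j}=\CE(D)$ for a suitable $D$. So everything reduces to the second assertion: a semistable bundle of slope $\lambda=d/h$ is $\CO(\lambda)^m$.

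\emph{The semistable case.} Pull back along $\pi_h$ and twist by $\CO_h(-d)$. Semistability is preserved by $\pi_h^*$: the Harder--Narasimhan filtration is unique, hence Galois-stable, hence descends. This reduces to showing that a semistable bundle $\CE$ of degree $0$ on $\CX$ (for an arbitrary $E$) is trivial. Once that is known, $\pi_h^*\CE\cong\CO_h(d)^{n}$ carries a Galois descent datum. Such a descent datum on $\CO_h(d)^n$ is the same as a $\sigma$-semilinear structure on a $\brv E$-space, that is an isocrystal, which must be isoclinic of slope $\lambda$. This gives $\CE\cong\CO(\lambda)^m$.

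For the degree $0$ statement, argue by induction on the rank $n$. It suffices to produce a nonzero section $\CO\to\CE$. Its saturation is a line subbundle $\CO(a)$ with $a\geqslant0$, so $a=0$ by semistability. The quotient is then again semistable of degree $0$, hence trivial by induction, and the extension splits because $H^1(\CO)=0$.

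\emph{Producing the section.} Choose a line subbundle $\CL=\CO(a)\subset\CE$ of maximal degree, so $a\leqslant0$. Suppose $a<0$ and argue toward a contradiction. Perform a minuscule modification at $\infty$ to pass to a bundle $\CE'$ containing an extension of the shape $0\to\CO(-1)\to\CF\to\CO(1/m)\to0$. The key lemma is that such an $\CF$ has $H^0(\CF)\neq0$. To prove it, look at the connecting map $H^0(\CO(1/m))\to H^1(\CO(-1))$. By Example \ref{exa:bccomp} and Theorem \ref{thm:bcspace}, this is a morphism of Banach--Colmez spaces from one of Dimension $(1,m)$ to one of Dimension $(1,-1)$. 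Additivity of $\Dim$ forces the kernel to have positive second component, so the kernel is nonzero, and its elements lift to nonzero sections of $\CF$. Such a section yields a line subbundle of $\CE$ of degree larger than $a$, contradicting maximality.

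The main obstacle is arranging this modification step correctly: one has to choose the elementary modification so that the subquotient really has the form $\CO(-1)\to\cdot\to\CO(1/m)$ while controlling degrees. I would follow Fargues--Scholze here, inducting on rank with $\CE/\CL$ already classified by the induction hypothesis.
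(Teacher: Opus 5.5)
\textbf{Comparison with the paper.} Your route differs from the paper's. The paper does not carry out the reductions itself. It quotes \cite[Proposition 5.6.26]{fafo}, which reduces the theorem to the claim that every extension $0\to\CO_h(-\tfrac1n)\to\CE\to\CO_h(1)\to0$ on $\CX_h$ has a nonzero global section. It then proves that claim by a Dimension count: an injection $H^0(\CO(1))\hookrightarrow H^1(\CO(-\tfrac1n))=C/E_n$, from Dimension $(1,1)$ to $(1,-n)$, would have a cokernel of Dimension $(0,-n-1)$.

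Your first steps are a correct and self-contained unpacking of what the paper outsources: HN polygon equals Newton polygon for injectivity, splitting of the HN filtration via $H^1(\CO(\lambda-\mu))=0$, reduction to slope $0$ by $\pi_h^*$ and descent, and induction on rank once a nonzero $\CO\to\CE$ exists. Your key lemma, with $\CO(-1)$ as sub and $\CO(\tfrac1m)$ as quotient, is the same Dimension argument applied to the dual extension. The gain is transparency: Banach--Colmez theory enters exactly once. (Your value $(1,-1)$ for $H^1(\CO(-1))=C/E$ is correct. Example \ref{exa:bccomp} as printed has a sign slip in the second component; the paper's own proof uses $(1,-n)$.)

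\textbf{The gap.} It is the step you flag yourself. The ``minuscule modification at $\infty$'' is never specified. It is also not explained how a section of $\CF\subset\CE'$ yields a line subbundle of $\CE$, rather than of $\CE'$, of degree $>a$.

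No modification is needed; a twist suffices.
\begin{itemize}
\item Suppose the maximal degree is $a\leqslant-1$. Maximality gives $\Hom(\CO(a+1),\CE)=0$, since the saturation of a nonzero map would be a line subbundle of degree $\geqslant a+1$. Equivalently, $H^0(\CE(-a-1))=0$.
\item Twist $0\to\CL\to\CE\to\CE/\CL\to0$ by $\CO(-a-1)$. This turns $\CL$ into $\CO(-1)$ and gives an injection $H^0\bigl((\CE/\CL)(-a-1)\bigr)\hookrightarrow H^1(\CO(-1))=C/E$.
\item By induction, $\CE/\CL\cong\bigoplus_i\CO(\lambda_i)$. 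Since $\deg(\CE/\CL)=-a>0$, some $\lambda_i>0$. So $(\CE/\CL)(-a-1)$ has a summand $\CO(\mu)$ with $\mu=\lambda_i-a-1=d'/h'>0$.
\item An injection from Dimension $(d',h')$ into $(1,-1)$ would leave a cokernel of Dimension $(1-d',-1-h')$. This is impossible both for $d'\geqslant2$ and for $d'=1$.
\end{itemize}
Hence $a=0$, and $\CL\cong\CO$ is the section you want. You never need the quotient to have the form $\CO(\tfrac1m)$. The only input is that $H^0(\CO(\mu))\to C/E$ is never injective for $\mu>0$.

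\textbf{Two smaller points.}
\begin{itemize}
\item In the descent step, the datum lives on the $E_h$-vector space $V=\Hom(\CO_h(d),\pi_h^*\CE)$, not on a $\brv E$-space. It is a $\sigma$-semilinear $\Phi$ with $\Phi^h=\varpi^{-d}$, the inverse of the scalar by which $\varphi^h$ acts on $\CO_h(d)$. So $V$ is a module over the cyclic division algebra of invariant $\pm d/h$, and the freeness of such modules gives $\CE\cong\CO(\lambda)^m$.
\item Semistability of $\CO_h(d)^h$ must be checked on subbundles $\CF$ of every rank $r$, not only on line subbundles. For instance, use $\det\CF\hookrightarrow\wedge^r\bigl(\CO_h(d)^h\bigr)\cong\CO_h(rd)^{N}$.
\end{itemize}
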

\medskip
Since the usual proof (\cf \cite[Chapitre 8]{fafo}) of the classification theorem uses Drinfeld's representability theorem we want to recall the second proof given in \cite[8.4.2]{fafo} using Banach--Colmez spaces instead.
\begin{proof}
By \cite[Proposition 5.6.26]{fafo} we only need to check that for any integer $h\geqslant 0$ and any vector bundle $\CE$ on $\CX_h$ that fits into an exact sequence of the form
\begin{equation}\label{eq:extvb}
0\rightarrow \CO_h\left ( -\tfrac 1 n \right ) \rightarrow \CE \rightarrow \CO_h(1)\rightarrow 0,
\end{equation}
where $n\geqslant 1$ is an integer, we have $H^0(\CX_h,\CE)\neq 0$.  We can suppose $h=0$ since we work with any $E$. Applying $H^0$ to \eqref{eq:extvb}, we need to show that the connection map
$$
f\colon H^0(\CX,\CO(1))\rightarrow H^1(\CX,\CO\left ( -\tfrac 1 n \right ) )
$$
is not injective. 

But by proposition \ref{prop:expsec} we get 
$$
H^0(\CX,\CO(1))=(\Bcrisp)^{\varphi=\varpi},\quad H^1(\CX,\CO\left ( -\tfrac 1 n \right ) )=C/E_n.
$$
We now explain why there is no injective map $(\Bcrisp)^{\varphi=\varpi}\rightarrow C/E_n$ by lifting this morphism to the category of Banach--Colmez spaces. The left-hand side lifts to a Banach--Colmez space of $E$-Dimension $(1,1)$ and the right-hand side to a Banach--Colmez space of Dimension $(1,-n)$. Suppose given an injective map $f\colon (\Bcrisp)^{\varphi=\varpi}\rightarrow C/E_n$ and write $V=\coker(f)$ which by Theorem \ref{thm:bcspace} is a Banach--Colmez space of $E$-Dimension $(0,-n-1)$. So $V$ is an $E$-vector space of strictly negative dimension which is impossible.
\end{proof}
An important corollary of the classification theorem is the following:
\medskip
\begin{coro}
Semistable vector bundles on $\CX$ of slope $0$ correspond to finite-dimensional $E$-vector spaces by the association $V\mapsto V\otimes_E\CO$.
\end{coro}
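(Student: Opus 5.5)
The plan is to deduce the corollary directly from Theorem \ref{thm:clasvb}, together with the cohomology computation of Proposition \ref{prop:expsec} for full faithfulness.

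First, essential surjectivity. Let $\CE$ be semistable of slope $0$. By the second part of Theorem \ref{thm:clasvb}, $\CE\cong \CO(\lambda)^{\oplus m}$ for some $\lambda\in\BQ$ and $m\geqslant 0$. Since $\mu(\CO(\lambda)^{\oplus m})=\lambda$, we get $\lambda=0$, hence $\CE\cong \CO^{\oplus m}=E^m\otimes_E\CO$. Conversely, $V\otimes_E\CO\cong\CO^{\oplus\dim V}$ is semistable of slope $0$. To see this, let $\CE'\subset \CO^{m}$ be a nonzero subbundle of rank $r$. Its determinant $\det\CE'\cong\CO(\deg\CE')$ embeds into $\bigwedge^r\CO^m\cong\CO^{\binom mr}$. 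By Proposition \ref{prop:expsec}, a line bundle $\CO(k)$ with $k>0$ has no nonzero map to $\CO$, since $\Hom(\CO(k),\CO)=H^0(\CX,\CO(-k))=0$. Hence $\deg\CE'\leqslant 0$.

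Second, full faithfulness, which is where the actual content lies. We need the natural map $\Hom_E(V,W)\rightarrow\Hom_{\CO}(V\otimes_E\CO,W\otimes_E\CO)$ to be bijective. After choosing bases, this reduces to showing $E\rightarrow H^0(\CX,\CO)$ is an isomorphism, which is exactly the first item of Proposition \ref{prop:expsec}. A quasi-inverse functor is then given by $\CE\mapsto H^0(\CX,\CE)$. This follows because $H^0(\CX,V\otimes_E\CO)=V\otimes_E H^0(\CX,\CO)=V$.

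The only potentially delicate point is the semistability of $\CO^m$, which needs the vanishing $H^0(\CX,\CO(-k))=0$ for $k>0$. This is the $\lambda<0$ case of Proposition \ref{prop:expsec}, so no real obstacle remains.
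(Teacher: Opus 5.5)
Your proof is correct and is essentially the deduction the paper intends: the paper states this as an immediate corollary of Theorem \ref{thm:clasvb}, where slope $0$ forces $\lambda=0$, with $H^0(\CX,\CO)=E$ from Proposition \ref{prop:expsec} giving full faithfulness. You additionally spell out the semistability of $\CO^{\oplus m}$ via determinants and the vanishing of $H^0(\CX,\CO(-k))$ for $k>0$, which the paper leaves implicit.
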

\medskip
We now simply call a \emph{trivial vector bundle} a semistable vector bundle of slope $0$.
\subsubsection{(Minuscule) modifications of vector bundles (at $\infty$)}
We will only consider modifications of vector bundles at infinity. Recall that $U=\CX\setminus \{\infty\}$.
\medskip
\begin{defi}
 Let $\CE$ and $\CE'$ be vector bundles $\CX$. We say that a morphism $f\colon \CE'\rightarrow \CE$ is a \emph{modification} if it is injective and induces an isomorphism
 $$
 \restr{\CE}{U}\cong \restr{\CE'}{U}.
 $$
We will simply say that \emph{$\CE'$ is a modifications of $\CE$}. Finally, if the cokernel of $f$ is killed by $t$, we say that \emph{$\CE'$ is a minuscule modification of $\CE$} and we say that it is a modification of degree $r \in \BN$ if this cokernel is of the form $\iota_{\infty,*}C^{r}$. 
\end{defi}
\medskip
We recall that if $\CE'$ is a minuscule modification of degree $r$ of $\CE$ then (\cf \cite[5.5.2.1]{fafo})
$$
\rank \CE=\rank \CE',\quad \deg \CE = \deg \CE'+r.
$$
\medskip
\begin{rema}
Usually, the \emph{type} of a modification is given by a cocharacter $\mu$. If $\CE$ is a vector bundle of rank $n$ on $\CX$ then a modification of degree $r\in \BN$ corresponds to the minuscule cocharacter
$$
\mu=(\underbrace{1,\dots, 1}_{r}, \underbrace{0,\dots, 0}_{n-r})
$$
of $\GL_n(E)$. 
\end{rema}
\medskip
Notice that if $\CE'$ is a modification of $\CE$ the cokernel is indeed a $\Bdrp$-torsion module.
We recall the classical result of Beauville-Laszlo applied to the curve. We define \emph{Berger pairs} as pairs $(D,M^+)$ where:
\begin{itemize}
\itemb $D$ is a projective $\rmB_e=(\Bcrisp)^{\varphi=1}$-module of finite rank,
\itemb $M^+$ is a $\Bdrp$-submodule of $D\otimes_{\rmB_e}\Bdr$ of maximal rank.
\end{itemize}
These pairs naturally form a category. To a vector bundle $\CE$ on $\CX$ we associate such data by $(H^0(U,\CE),\widehat{\CE}_{\infty})$ where $\widehat{\CE}_{\infty}$ is the completion of the stalk of $\CE$ at $\infty$. The Beauville-Laszlo theorem then states:
\begin{prop}\label{prop:bpair}
The association $\CE \mapsto (H^0(U,\CE),\widehat{\CE}_{\infty})$ defines an equivalence of categories
$$
\left\{\text{Vector bundles on $\CX$} \right\}
\cong {} 
\left \{\text{Berger pairs $(D,M^+)$}\right \}.
$$
\end{prop}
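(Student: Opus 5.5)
The plan is to reduce the statement to the classical Beauville--Laszlo gluing along the formal neighbourhood of $\infty$. Set $U=\CX\setminus\{\infty\}$, which is affine with ring $\rmB_e=(\Bcrisp)^{\varphi=1}$. The completed local ring at $\infty$ is $\wh{\CO}_{\infty}\cong\Bdrp$, with uniformizer $t$ and fraction field $\Bdr$. The one nonformal input is that $\rmB_e$ is a principal ideal domain, which is the main point of the proof of \cite[Théorème 6.5.2]{fafo} recalled above. Given this, a vector bundle on $\CX$ is determined by three pieces of data: its restriction to $U$, its completed stalk at $\infty$, and the identification of the two over the punctured formal disc $\Spec\Bdr$.

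\textbf{Functor.} I would first check that $\CE\mapsto(H^0(U,\CE),\wh{\CE}_\infty)$ lands in Berger pairs.
\begin{itemize}
\itemb $H^0(U,\CE)$ is finite projective over $\rmB_e$, since $U$ is affine and $\CE$ is locally free. It is in fact free, because $\rmB_e$ is a PID.
\itemb The natural map $H^0(U,\CE)\otimes_{\rmB_e}\Bdr\to\wh{\CE}_\infty[1/t]$ is an isomorphism. Indeed $\rmB_e\to\Bdr$ factors through the localization $\CO_{\CX,\infty}[1/t]$, and $\CE$ is free near $\infty$.
\itemb Hence $\wh{\CE}_\infty$ is a $\Bdrp$-lattice of full rank in $H^0(U,\CE)\otimes\Bdr$.
\end{itemize}
Functoriality is clear.

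\textbf{Quasi-inverse.} Given a Berger pair $(D,M^+)$, I would choose a basis of $D$ and set $\CE_D\coloneqq D\otimes_{\rmB_e}\CO_U$ on $U$. I then define $\CE$ as the sheaf whose sections over an open $V\ni\infty$ are the sections $s$ of $\CE_D$ on $V\setminus\{\infty\}$ whose image in $D\otimes\Bdr$ lies in $M^+$. To see this is locally free, choose a $\Bdrp$-basis of $M^+$. It differs from the basis of $D$ by a matrix in $\GL_n(\Bdr)$.

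I would then use the Beauville--Laszlo factorization
$$
\GL_n(\Bdr)=\GL_n(\CO_{\CX,\infty}[1/t])\cdot\GL_n(\Bdrp).
$$
It follows from density of $\CO_{\CX,\infty}[1/t]$ in $\Bdr$ together with openness of $\GL_n(\Bdrp)$ for the $t$-adic topology. Alternatively, one can invoke the Beauville--Laszlo theorem for the pair consisting of the local ring at the closed point $\infty$ and its completion, with $t$ a nonzerodivisor, and then Zariski glue with $\CE_D$ on $U$.

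By this factorization, the basis of $M^+$ can be replaced by a basis defined over the local ring $\CO_{\CX,\infty}$. This shows $\CE$ is free on a neighbourhood of $\infty$, with $H^0(U,\CE)=D$ and $\wh{\CE}_\infty=M^+$.

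\textbf{Equivalence.} It remains to check full faithfulness. Write $s=\infty$ and $\CE=\CE'\oplus\CE''$ locally. A morphism of vector bundles is a morphism on $U$ that extends over $\infty$, and it extends if and only if it maps $\wh{\CE}_\infty$ into $\wh{\CE'}_\infty$. The reason is that $\CO_{\CX,\infty}=\rmB_e\cap\Bdrp$ inside $\Bdr$: an element of the function field is regular at $\infty$ exactly when its image in $\Bdr$ lies in $\Bdrp$. Combined with the essential surjectivity above, this gives the equivalence.

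\textbf{Main obstacle.} The only delicate point is the factorization, equivalently the Beauville--Laszlo gluing, in a situation where $\CO_{\CX,\infty}$ is a DVR that is not excellent in any evident sense. I would try first to cite the Beauville--Laszlo theorem directly, as it requires only that $t$ be a nonzerodivisor, and to use the curve structure only to pass from the local statement to global sheaves. Everything else is bookkeeping.
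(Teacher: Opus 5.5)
Your proposal is correct and takes the same approach as the paper. The paper gives no argument beyond invoking the classical Beauville--Laszlo gluing at the closed point $\infty$ (with $\wh{\CO}_{\infty}\cong\Bdrp$), and your factorization $\GL_n(\Bdr)=\GL_n(K)\cdot\GL_n(\Bdrp)$ for $K$ the function field of $\CX$, together with $\CO_{\CX,\infty}=K\cap\Bdrp$, is exactly what that gluing amounts to here. Your worry about excellence is unnecessary, since your density-plus-openness argument works for any discrete valuation ring and its completion; note only that the ring of functions on $U$ is $\rmB_e=\Bcris^{\varphi=1}=(\Bcrisp[\tfrac 1 t])^{\varphi=1}$ rather than $(\Bcrisp)^{\varphi=1}$ (a slip copied from the text), and that \emph{maximal rank} must be read as ``$M^+$ is a finite free $\Bdrp$-lattice spanning $D\otimes_{\rmB_e}\Bdr$''.
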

To a Berger pair $(D,M^+)$ we can thus associate a vector bundle written $\CE(D,M^+)$. This allows us to identify modifications of $\CE$ with lattices in $\widehat{\CE}_{\infty}$ \ie Proposition \ref{prop:bpair} gives an equivalence of categories:
$$
\left \{ \begin{array}c
\text{Modifications of $\CE$}
\end{array} \right \}
\cong {} 
\left \{ \begin{array}c
\text{$\Bdrp$-submodules $M^+\subset \widehat{\CE}_{\infty}$}\\
\text{such that $M^+\bigl [ \tfrac 1 t \bigr ]=\widehat{\CE}_{\infty}\bigl [ \tfrac 1 t \bigr ]$ }
\end{array} \right \}
$$
This allows to describe minuscule modifications as lattices $M^+\subset \widehat{\CE}_{\infty}\bigl [ \tfrac 1 t \bigr ]$ such that $t\widehat{\CE}_{\infty}\subset M^+\subset\widehat{\CE}_{\infty}$, sometimes called \emph{minuscule lattices}.
\medskip
\begin{prop}\label{prop:minmod}
Let $\CE$ be a vector bundle on $\CX$. Set $n=\rank \CE$ and let $r$ be an integer such that $0< r<n$. There is a natural bijection
$$
\left \{ \begin{array}c
\text{Isomorphism classes of}\\
\text{minuscule modifications of $\CE$}\\
\text{of degree $r$.}
\end{array} \right \}
\cong {} 
\Gr_r(\widehat{\CE}_{\infty}/t\widehat{\CE}_{\infty}),
$$
where $\Gr_r(\widehat{\CE}_{\infty}/t\widehat{\CE}_{\infty})\cong \Gr_{r,n}(C)$ is the Grassmannian of codimension $r$ subspaces in $\widehat{\CE}_{\infty}/t\widehat{\CE}_{\infty}\cong C^n$.
\end{prop}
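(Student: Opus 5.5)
The plan is to use the Beauville--Laszlo description of Proposition \ref{prop:bpair}, restricted to modifications of the fixed bundle $\CE$. A modification $\CE'\incl\CE$ is an isomorphism over $U$, so $H^0(U,\CE')=H^0(U,\CE)$ is fixed. The pair attached to $\CE'$ is therefore $(H^0(U,\CE),\widehat{\CE'}_{\infty})$, where $\widehat{\CE'}_{\infty}\subset\widehat{\CE}_{\infty}$ is a $\Bdrp$-submodule with the same generic fibre. Conversely, any such lattice $M^+$ produces a vector bundle $\CE(H^0(U,\CE),M^+)$. Full faithfulness of Proposition \ref{prop:bpair} gives a morphism $\CE(\ldots,M^+)\rightarrow\CE$ induced by $(\id,M^+\incl\widehat{\CE}_{\infty})$; it is injective and an isomorphism on $U$. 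Two modifications are isomorphic over $\CE$ exactly when their lattices coincide, since a morphism over $\CE$ is the identity on $H^0(U,\CE)$ and hence on the lattices. So isomorphism classes of modifications of $\CE$ correspond bijectively to lattices $M^+\subset\widehat{\CE}_{\infty}$ with $M^+[\tfrac1t]=\widehat{\CE}_{\infty}[\tfrac1t]$.

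Next I impose the minuscule condition. The cokernel of $\CE'\incl\CE$ is supported at $\infty$ and equals $\iota_{\infty,*}(\widehat{\CE}_{\infty}/M^+)$. It is killed by $t$ if and only if $t\widehat{\CE}_{\infty}\subset M^+\subset\widehat{\CE}_{\infty}$. Such $M^+$ correspond bijectively to $C$-subspaces
$$
W\coloneqq M^+/t\widehat{\CE}_{\infty}\subset \widehat{\CE}_{\infty}/t\widehat{\CE}_{\infty},
$$
because $\Bdrp/t\Bdrp=C$ through $\theta$. Every $C$-subspace $W$ of the quotient occurs: its preimage in $\widehat{\CE}_{\infty}$ is a $\Bdrp$-submodule containing $t\widehat{\CE}_{\infty}$, hence of maximal rank. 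It is also finite free, since $\Bdrp$ is a discrete valuation ring. The cokernel is then $\iota_{\infty,*}(\widehat{\CE}_{\infty}/t\widehat{\CE}_{\infty})/W\cong\iota_{\infty,*}C^{n-\dim W}$. Thus the modification has degree $r$ exactly when $W$ has codimension $r$, which gives the bijection with $\Gr_r(\widehat{\CE}_{\infty}/t\widehat{\CE}_{\infty})$. Finally $\widehat{\CE}_{\infty}$ is free of rank $n$ over $\Bdrp$, so $\widehat{\CE}_{\infty}/t\widehat{\CE}_{\infty}\cong C^n$ and the Grassmannian is $\Gr_{r,n}(C)$.

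Naturality should be immediate from the functoriality of Beauville--Laszlo gluing.

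The main point to check carefully is the first step: making precise that ``isomorphism class of modifications'' means isomorphism over $\CE$. Once that convention is fixed, faithfulness pins the lattice down uniquely. The only other input is the local identification of the cokernel with $\widehat{\CE}_{\infty}/M^+$, which holds because the cokernel is a skyscraper at $\infty$ and completion at $\infty$ is exact.
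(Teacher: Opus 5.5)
Your proposal is correct and follows the same route as the paper, which identifies modifications of $\CE$ with $\Bdrp$-lattices $M^+\subset\widehat{\CE}_{\infty}$ via the Beauville--Laszlo equivalence of Proposition \ref{prop:bpair}, characterizes the minuscule ones by $t\widehat{\CE}_{\infty}\subset M^+\subset\widehat{\CE}_{\infty}$, and then reduces modulo $t$. Your write-up simply makes explicit the details the paper leaves implicit: isomorphism classes taken over $\CE$, the identification of the cokernel with $\widehat{\CE}_{\infty}/M^+$, and the count matching degree $r$ with codimension $r$.
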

\medskip
\begin{rema}
If $D$ is a $\sigma$-isocrystal over $k$ and $\CE=\CE(D)$ then the proposition \ref{prop:minmod} gives a natural bijection between minuscule modifications of degree $r$ of $\CE$ and $C$-points of $\Gr_r(D_{\dR})$ the Grassmannian variety of codimension $r$ subspaces of the $E$-vector space $D_{\dR}\coloneqq (D\otimes_{\breve{E}}C)^{\sG_E}$ where $\sG_E\coloneqq \Gal(\bar E/E)$.
\end{rema}

\subsection{Isotrivial deformations and minuscule modifications}
We now explain how trivial (\ie semistable of slope $0$) minuscule modifications are related to isotrivial deformations of $\varpi$-divisible $\rmO$-modules. The classical references, usually consider $p$-divisible groups, and we slightly adapt them to include the $\rmO$-action.  We first define what this means:
\medskip
\begin{defi}
Let $H$ be a $\varpi$-divisible $\rmO$-module over $k$ (\cf Definition \ref{def:pidiv}). An \emph{isotrivial deformation} of $H$ over $\rmO_C$ is a pair $(G,\rho)$ where $G$ is a $\varpi$-divisible $\rmO$-module over $\rmO_C$ and $\rho$ is a $\rmO$-linear quasi-isogeny 
$$
\rho \colon H\otimes_k\rmO_C/p\dashrightarrow G\otimes_{\rmO_C}\rmO_C/p.
$$
Moreover, given a second isotrivial deformation  $(G',\rho')$ of $H$ over $\rmO_C$, we say that $(G,\rho)$ and $(G',\rho')$ are \emph{isogenous} if $\rho'\circ\rho^{-1}$ lifts to a quasi-isogeny $G\dashrightarrow G'$.
\end{defi}
\medskip
To a $\varpi$-divisible $\rmO$-module $H$ over $k$, we can associated (\cf Remark \ref{rem:pdivtoisoc}) an isocrystal $D$ characterizing the isogeny class of $H$ to which, using theorem \ref{thm:clasvb}, we can then associate a vector bundle $\CE(H)\coloneqq \CE(D(1))$ on $\CX$ which also characterizes the quasi-isogeny class of $H$. In this subsection, we will explain the following theorem due to Scholze Weinstein \cite{scwe} (\cf also \cite[Théorème 1.11]{rio}):
\medskip
\begin{theo}\label{thm:scwe}
Let $H$ be a $\varpi$-divisible $\rmO$-module over $k$. We have a functorial bijection
$$
\left \{ \begin{array}c
\text{ Isogeny classes of isotrivial }\\
 \text{deformations $(G,\rho)$ of $H$ over $\rmO_C$.}
 \end{array} \right \}
\cong {} 
\left \{ \begin{array}c
\text{Isomorphism classes of trivial minuscule}\\
\text{$\rmO$-linear modifications of $\CE(H)$}
\end{array} \right \}
$$
\end{theo}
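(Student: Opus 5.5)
The plan is to deduce the statement from the Scholze--Weinstein classification of $p$-divisible groups over $\rmO_C$ in terms of pairs $(T,W)$, recalled in the introduction. We then translate that datum into a modification at $\infty$ via Beauville--Laszlo (Proposition \ref{prop:bpair}) and the description of minuscule lattices (Proposition \ref{prop:minmod}). The steps are as follows.

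First, given an isotrivial deformation $(G,\rho)$, we form the rational Tate module $V=T_p(G)\otimes_{\Zp}\Qp$. This is a finite-dimensional $E$-vector space, because of the $\rmO$-action. We also use the Hodge--Tate/crystalline comparison, which comes from evaluating the Dieudonné crystal of $G$ on $A_{\cris}\to \rmO_C/p$ (Remark \ref{rema:bbm}). Through $\rho$, this comparison gives a $\varphi$-equivariant injection
$$V\otimes_E \Bcrisp \incl D(H)\otimes_{\brv E}\Bcrisp(1)$$
whose cokernel is killed by $t$. Taking $\varphi$-graded pieces as in the construction of $\CE(D)$ yields an injection $V\otimes_E\CO\incl \CE(H)$. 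It is an isomorphism on $U$, since $t$ is invertible there, and its cokernel at $\infty$ is $\iota_{\infty,*}(\Lie(G)\otimes C)$. This is exactly the sequence \eqref{eq:intmodif}, and it is a trivial minuscule modification. Its $\rmO$-linearity follows from functoriality of the comparison isomorphism, with the strictness condition on the $\rmO$-action ensuring that we may work with $W_{\rmO}$ and the curve attached to $E$ rather than to $\Qp$. Replacing $(G,\rho)$ by an isogenous pair does not change $V$ or the lattice, so the map is well defined on isogeny classes.

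Conversely, take a trivial minuscule $\rmO$-linear modification $V\otimes\CO\incl\CE(H)$. By Proposition \ref{prop:minmod}, the minuscule lattice gives a subspace $W\subset V\otimes_E C(-1)$, namely the kernel of the map to the fiber at $\infty$ after twisting. We choose any $\rmO$-lattice $T\subset V$. The pair $(T,W)$ then defines, by the classification \eqref{eq:intclass}, a $p$-divisible group $G$ over $\rmO_C$ carrying an $\rmO$-action, and strictness follows because $W$ is $E\otimes C$-stable in the correct way. Changing $T$ changes $G$ only by an isogeny. It remains to produce $\rho$. The special fiber $G\otimes\rmO_C/p$ is quasi-isogenous to a constant group, since $\rmO_C/p$ is semiperfect and the isocrystal of $G$ is read off from the Fargues--Fontaine bundle $\CE(D(G))$ at $\infty$ by Scholze--Weinstein. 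The given identification with $\CE(H)$ together with Theorem \ref{thm:clasvb} then furnishes a quasi-isogeny $\rho$. Here we use that quasi-isogenies over $\rmO_C/p$ correspond to isomorphisms of the associated isocrystals over $A_{\cris}$, i.e.\ Scholze--Weinstein's description of quasi-isogeny classes over $\rmO_C/p$ via $\varphi$-modules over $\Bcrisp$.

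The main obstacle is matching these two constructions: checking that the comparison map from the Dieudonné crystal to $\CE(H)$ recovers exactly the Hodge--Tate lattice, so that the two maps are mutually inverse. This compatibility is \cite[Theorem 5.2.1 and Proposition 5.1.6]{scwe}; I would cite it for $p$-divisible groups and then check that everything is $\rmO$-equivariant and that the $\rmO$-strictness on $\Lie$ matches $\rmO$-linearity of the modification. That check is routine by functoriality, using the decomposition of Proposition \ref{prop:Ecart} for the ramified and unramified parts.
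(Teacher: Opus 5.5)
Your proposal is correct and follows essentially the same route as the paper, which also only assembles Scholze--Weinstein's results. The forward map is the universal cover/Hodge--Tate sequence $0\to V_{\varpi}(G)\otimes_E\CO\to\CE(H)\to\iota_{\infty,*}\Lie(G)\otimes_{\rmO_C}C\to 0$, which is your comparison map read on $\varphi$-graded pieces; the inverse comes from the $(T,W)$-classification together with Scholze--Weinstein's Dieudonné theory over $\rmO_C/p$ to produce $\rho$, with $\rmO$-linearity added by functoriality.

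One small precision: to obtain a $\rho$ inducing the \emph{given} modification, you do not really need Theorem \ref{thm:clasvb}, which only gives a bijection on isomorphism classes. What you use is the Beauville--Laszlo identification of the bundle attached to $G\otimes_{\rmO_C}\rmO_C/p$ with $\CE(H)$: both contain $V\otimes_E\CO$ with the same lattice at $\infty$, determined by $W$. Combine this with the Fargues--Fontaine equivalence between $\varphi$-modules over $\Bcrisp$ and vector bundles, and Scholze--Weinstein's full faithfulness over $\rmO_C/p$ then gives $\rho$.
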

\medskip

\subsubsection{Scholze--Weinstein's Dieudonné theory for $\varpi$-divisible $\rmO$-modules}
Let $G$ be a $\varpi$-divisible $\rmO$-module over $\rmO_C$  To $G$ we associate its \emph{universal cover} and its \emph{Tate module}
$$
\wt G\coloneqq \varprojlim_{[\varpi]}G,\quad T_{\varpi}(G)\coloneqq \varprojlim_nG[\varpi^n](\rmO_C).
$$
We also set $V_{\varpi}(G)\coloneqq T_{\varpi}(G)\otimes_{\rmO}E$ its \emph{rational Tate module} which is an $E$-vector space. By \cite[Proposition 3.1.3]{scwe}, $\wt G$ defines a crystal on $\NCRIS_{\rmO}((\rmO_C/p)/\rmO)$ and only depends on $G_0\coloneqq G\otimes_{\rmO_C}\rmO_C/p$, the base change of $G$ to $\rmO_C/p$. Using the same argument as in the proof of Proposition \ref{prop:Ecart} to adapt the Dieudonné theory of \cite[Theorem A]{scwe} we get 
$$
\bM_{ \rmO}(G)^{\varphi=\varpi}=\Hom(E/\rmO,G_0)\bigl [ \tfrac{1}{p}\bigr ] = \wt G(\rmO_C/p)=\wt{G}(\rmO_C)
$$
where $\bM_{ \rmO}(G)\coloneqq \BD_{\rmO}(G)_{(A_{\cris}\rightarrow \rmO_C/p)} [ \tfrac{1}{p}\bigr ]$ is a projective $\rmB_e$-module. In other terms, let $H$ be a $\varpi$-divisible $\rmO$-module over $k$ and suppose that $(G,\rho)$ is an isotrivial deformation of $H$ over $\rmO_C$ then
$$
\wt{G}(\rmO_C)=(D_{\rmO}(H)\otimes_{\brvO} \Bcrisp)^{\varphi=\varpi}=\Gamma(\CX,\CE(H)).
$$
We recall the following lemma:
\medskip
\begin{lemm}
Let $H$ be a $\varpi$-divisible $\rmO$-module over $k$ and let $(G,\rho)$ and $(G',\rho')$ be isotrivial deformations of $H$ over $\rmO_C$ then $\rho'\circ \rho^{-1}$ induces an isomorphism $\wt{G}\cong \wt{G}'$.
\end{lemm}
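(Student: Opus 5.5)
The plan is to use the rigidity of quasi-isogenies along nilpotent thickenings, which is the standard argument of Scholze--Weinstein for universal covers. Put $\psi\coloneqq\rho'\circ\rho^{-1}$. It is an $\rmO$-linear quasi-isogeny
$$
G_0\coloneqq G\otimes_{\rmO_C}\rmO_C/p \dashrightarrow G'_0\coloneqq G'\otimes_{\rmO_C}\rmO_C/p .
$$

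First, I show that $\psi$ induces an isomorphism $\wt{G}_0\cong \wt{G}'_0$. Choose $n$ with $\varpi^n\psi$ a genuine isogeny $f\colon G_0\to G'_0$. Then there is an isogeny $g\colon G'_0\to G_0$ with $g\circ f=\varpi^m$ and $f\circ g=\varpi^m$. On universal covers $\wt{G}_0=\varprojlim_{[\varpi]}G_0$, multiplication by $\varpi$ is an isomorphism. Hence $\wt f$ and $\wt g$ are mutually inverse up to $\varpi^{m}$, which is invertible. So $\wt\psi\coloneqq \varpi^{-n}\wt f$ is a well-defined isomorphism $\wt{G}_0\xrightarrow{\sim}\wt{G}'_0$. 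It is $\rmO$-linear because $\rho$ and $\rho'$ are.

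Second, I lift from $\rmO_C/p$ to $\rmO_C$. For this I invoke \cite[Proposition 3.1.3]{scwe}, recalled just above: for a $\varpi$-divisible $\rmO$-module $G$ over $\rmO_C$, the reduction map $\wt G\to \wt{G}_0$ is an isomorphism on all $p$-adically complete $\rmO_C$-algebras, or more precisely on the nilpotent crystalline site. Concretely, $\wt G(R)\to \wt G(R/I)$ is bijective for any nilpotent ideal $I$, hence also for $R=\lim R/p^k$. The mechanism is the following. Let $N$ be such that $I^{N}=0$ and $p^{N}$ kills the relevant truncations. A lift of $x\in\wt G(R/I)$ is obtained by lifting each component of $(x_k)_k\in\varprojlim_{[\varpi]}G(R/I)$ arbitrarily and then applying $[\varpi^{N'}]$. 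Two lifts differ by an element of $\ker(G(R)\to G(R/I))$, which is killed by a power of $\varpi$ because $G$ is formal-locally a $\varpi$-divisible formal Lie group. Applying this to $G$ and to $G'$ gives identifications $\wt G\cong \wt{G}_0$ and $\wt{G}'\cong\wt{G}'_0$ as functors on $p$-adically complete $\rmO_C$-algebras. Composing with $\wt\psi$ yields the desired isomorphism $\wt G\cong\wt G'$.

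Finally, I check functoriality and compatibility. The isomorphism is independent of the choice of $n$, and it respects composition of quasi-isogenies. It is therefore canonical. In particular it induces on $\rmO_C$-points the identification $\wt G(\rmO_C)=\wt G'(\rmO_C)=\Gamma(\CX,\CE(H))$ used above, since both sides are computed through $\rho$ and $\rho'$ from $D_{\rmO}(H)$. The main obstacle is the second step, namely the crystalline rigidity of $\wt G$ along $\rmO_C\to\rmO_C/p$. This ideal is not nilpotent, only $p$-adically nilpotent, so one must pass to the limit over $\rmO_C/p^k$ and verify that the kernel-killing exponent is uniform. I expect to handle this by citing \cite[Proposition 3.1.3]{scwe}, adapted verbatim to the $\rmO$-linear setting. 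This adaptation is harmless, since $[\varpi]$ and $[p]$ differ by a unit times a power of $[\varpi]$.
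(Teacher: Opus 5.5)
Your argument is correct and is the one the paper intends. The paper only recalls this lemma, right after citing \cite[Proposition 3.1.3]{scwe} for two facts that match your two steps: $\wt G$ is a crystal depending only on $G_0$, and a quasi-isogeny of $G_0$ induces an isomorphism of universal covers.

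Your closing worry needs a correction. No uniform kernel-killing exponent is needed, and none exists: $\ker\bigl(G(\rmO_C)\to G(\rmO_C/p)\bigr)$ is not torsion in general, e.g. it is $1+p\rmO_C$ for $\mu_{p^\infty}$. The limit step is immediate anyway, since $\wt G(\rmO_C)=\varprojlim_k \wt G(\rmO_C/p^k)$ and each map $\wt G(\rmO_C/p^k)\to\wt G(\rmO_C/p)$ is already bijective by the nilpotent case.
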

\medskip
The following remark stresses the translation between $p$-divisible groups and $\varpi$-divisible $\rmO$-modules:
\medskip
\begin{rema}
 Let $G$ be a $\varpi$-divisible $\rmO$-module over $\rmO_C$. The definitions of $\wt G_0$ and $T_{\varpi}(G)$ we give here are the same as those for $p$-divisible groups in \cite{scwe}. Indeed, it is clear that 
$$
 \wt{G}=\varprojlim_{[p]}G,\quad T_{\varpi}(G)=\varprojlim_n G[p^n](\rmO_C).
$$
Moreover, let 
$$
(D_{\Zp}(H)\otimes_{W_{\Zp}(k)} \Bcrisp)^{\varphi_p=p}\cong (D_{\rmO}(H)\otimes_{\brvO} \Bcrisp)^{\varphi_E=\varpi},
$$
where $\varphi_p$ on the left is given by the Witt vector Frobenius of $W_{\BZ_p}$. The strategy for proving this is the same as for Proposition \ref{prop:Ecart}. If $O\rightarrow O'$ is totally ramified, then it is obvious. If $O\rightarrow O'$ is unramified, then we use the map
$$
D_{\rmO}(G)_0\otimes_{\brvO'}\Bcrisp =D_{\rmO'}(G)\otimes_{\brvO'}\Bcrisp \rightarrow D_{\rmO}(G)\otimes_{\brvO}\Bcrisp=\bigoplus_{i\in \BZ/f}D_{\rmO}(G)_i\otimes_{\brvO'}\Bcrisp,
$$
given by $x\otimes y\mapsto (V^{j}x,\varphi^{-j}(y))_{j\in \BZ/f}$.

\end{rema}
\medskip

Let $n\in \BN$ and $G_n\coloneqq G \otimes_{\rmO_C}\rmO_C/p^n$ be the base change to $\rmO_C/p^n$. Following Scholze--Weinstein, we define a map (\cf \cite[Lemma 3.2.1]{scwe}) $s_G\colon \wt G_n\rightarrow E_{\rmO}(G_n)$ given on points by the following: to $x\in \wt G_n$ viewed as a sequence $x_m\in G_n$ we associate a lift $y_m\in E_{\rmO}(G_n)$ and then set 
$$
s_G(x)=\lim_m \varpi^my_m.
$$
This map defines a map of crystals which combined with the logarithm gives a map $\wt G_0\rightarrow \bM_{ \rmO}(G_0)$. Recall that the logarithm (\cf \cite[\S B.5.2]{far}) defines a map $G\rightarrow \Lie(G)\otimes_{\rmO_C}C$. Moreover, Messing's theory gives a map
$$
h_{G}\colon\bM_{ \rmO}(G_0)\rightarrow \Lie(G)\otimes_{\rmO_C}C.
$$
These maps are compatible in the following way, expressing a compatibility between the Hodge filtration and the universal cover:
\medskip
\begin{prop}\label{prop:compmodfil}
The following diagram commutes:
\begin{center}
\begin{tikzcd}
\wt G \ar[r]\ar[d]&\bM_{ \rmO}(G)\ar[d,"h_G"]\\
G\ar[r, "\log_G"] & \Lie(G)\otimes_{\rmO_C}C
\end{tikzcd}
\end{center}
\end{prop}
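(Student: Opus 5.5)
The plan is to reduce to a statement about the universal vector extension over $\rmO_C/p^n$ and then pass to the limit. Both compositions in the square are natural in $G$ and compatible with the $\rmO$-action, so it is enough to compare them on points of $\wt G(\rmO_C)=\varprojlim_n \wt G(\rmO_C/p^n)$. Here I use that $\wt G$ only depends on $G_0$ (\cite[Proposition 3.1.3]{scwe}). The top arrow is built from the Scholze--Weinstein map $s_G\colon \wt G_n\to E_{\rmO}(G_n)$, followed by the logarithm of the universal vector extension $E_{\rmO}(G_n)$. That logarithm lands in $\Lie(E_{\rmO}(G_n))\otimes\Qp=\BD_{\rmO}(G)_{\rmO_C/p^n}[\tfrac1p]$, after passing to the inverse limit as in Remark \ref{rema:bbm} and evaluating at $\rmO_C$ via the crystal property \eqref{eq:crisprop}. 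The first step is to spell this description out precisely.

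The second step is the key point: functoriality of the logarithm. The surjection $E_{\rmO}(G)\to G$ from \eqref{eq:vectext} is a morphism of fppf sheaves of $\rmO$-modules, so the logarithms of $E_{\rmO}(G)$ and $G$ are intertwined by the induced map on Lie algebras. By definition that map is $h_G$. So once we know that $\wt G\to E_{\rmO}(G)\to G$ equals the natural projection $\wt G\to G$ (onto the zeroth coordinate), the square commutes. The commutation then follows from the equality $\log_G\circ\mathrm{pr}_0 = h_G\circ\log_{E_{\rmO}(G)}\circ s_G$.

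To check that $s_G$ lifts the projection, recall $s_G(x)=\lim_m \varpi^m y_m$, where $y_m$ lifts $x_m$ and $\varpi^m x_m=x_0$. The image of $\varpi^m y_m$ in $G$ is $\varpi^m x_m=x_0$ for every $m$. The limit exists because two lifts differ by an element of $V_{\rmO}(G_n)$, a vector group, and $\varpi^m$ kills such differences modulo $p^n$ for $m$ large. So the image of $s_G(x)$ in $G$ is $x_0$.

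The main obstacle I expect is the bookkeeping between integral levels and the rational limit. The logarithm of $E_{\rmO}(G_n)$ is only defined after inverting $p$ and passing to $\varprojlim_n$. On the other side, $\bM_{\rmO}(G)$ is the evaluation on $A_{\cris}$, which must be mapped to $\rmO_C$ via $\theta$. I would handle this with the crystal isomorphism for the PD-thickenings $A_{\cris}/p^n\to\rmO_C/p\leftarrow \rmO_C/p^n$, using that $\ker(\rmO_C/p^n\to\rmO_C/p)$ carries nilpotent divided powers. I would then check that the logarithms are compatible with this identification, using \cite[\S B.5.2]{far} for the $\rmO$-linear logarithm.
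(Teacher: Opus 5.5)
This is correct, and it is the standard argument. The paper gives no proof here (it recalls the statement from Scholze--Weinstein), and the proof there runs just as yours does: $s_G$ lifts the projection $\wt G\to G$, logarithms are functorial along $E_{\rmO}(G)\to G$ whose Lie map is $h_G$, and the top arrow at $A_{\cris}$ matches evaluation at $\rmO_C$ along the PD-morphism $\theta$ because it is a map of crystals. The one thing to tidy is that $\Lie(E_{\rmO}(G_n))\otimes\Qp$ is zero as written, so the logarithm must be taken on $E_{\rmO}(G)(\rmO_C)$ over the flat base $\rmO_C$ itself, as your last paragraph already acknowledges.
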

\medskip
\medskip
\begin{rema}
Note that the logarithm in this section depends on the chosen $\rmO$-PD structure (\cf \cite[Remarque B.5.12]{far})
\end{rema}
\medskip
\subsubsection{From $\varpi$-divisible $\rmO$-modules over $\rmO_C$ to minuscule modifications}
 The three objects $V_{\varpi}(G)$, $\wt G_0$ and $\Lie\, G$ are related in the following way (\cf \cite[Proposition 5.1.6]{scwe}):

\medskip
\begin{prop}\label{eq:exasqpdiv}
The following sequence is exact:
$$
0\rightarrow V_{\varpi}(G)\otimes_{E}\CO\rightarrow \wt G(\rmO_C)\rightarrow \Lie\, G\otimes_{\rmO_C}C\rightarrow 0.
$$
\end{prop}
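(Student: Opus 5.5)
The plan is to reduce to the case of $p$-divisible groups treated by Scholze--Weinstein \cite[Proposition 5.1.6]{scwe}, and then to check that the $\rmO$-structure is respected. In the statement, $V_{\varpi}(G)\otimes_E\CO$ is to be read as $V_{\varpi}(G)\otimes_E E$, i.e. via global sections $H^0(\CX,\CO)=E$.

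\textbf{Reduction to the $p$-divisible case.} Forget the $\rmO$-action and regard $G$ as a $p$-divisible group over $\rmO_C$. By the remark following the Dieudonné theory discussion, we have
\[
\wt G=\varprojlim_{[p]}G \quad\text{and}\quad T_{\varpi}(G)=\varprojlim_n G[p^n](\rmO_C),
\]
and these agree with the Scholze--Weinstein objects. Moreover $V_{\varpi}(G)=V_p(G)$ as $\Qp$-vector spaces, and $\Lie G$ is unchanged. Thus the underlying sequence of $\Qp$-vector spaces is the Scholze--Weinstein sequence
\[
0\to V_p(G)\to \wt G(\rmO_C)\to \Lie G\otimes_{\rmO_C} C\to 0,
\]
in which the second map is $\log_G$ precomposed with the projection $\wt G\to G$. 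Exactness can then be verified on these underlying groups, with the $\rmO$-linearity checked separately.

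\textbf{Exactness.} The projection $\wt G(\rmO_C)\to G(\rmO_C)$ is surjective because $G(\rmO_C)$ is $p$-divisible, $\rmO_C$ being integrally closed in the algebraically closed field $C$. Its kernel is
\[
\varprojlim_{[p]} G[p^\infty](\rmO_C).
\]
A compatible $p$-power sequence $(x_m)$ with $x_0$ torsion is exactly an element of $V_p(G)$, so this kernel is $V_p(G)$. Next, $\log_G\colon G(\rmO_C)\to \Lie G\otimes C$ is surjective, and its kernel is the torsion $G[p^\infty](\rmO_C)$. This is the standard fact from Fontaine's theory of $p$-divisible groups over $\rmO_C$, using that $\log$ is a local isomorphism near $0$ and that $\Lie G\otimes C$ is uniquely divisible. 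Composing the two maps, an element of $\wt G(\rmO_C)$ maps to $0$ if and only if $x_0$ is torsion, and this condition characterizes $V_p(G)$ inside $\wt G(\rmO_C)$. Hence the sequence is exact. I expect this step to be the only real content, and it is exactly the content of \cite[Proposition 5.1.6]{scwe}.

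\textbf{$\rmO$-linearity.} All maps in the sequence are functorial in $G$, so they commute with $\iota(a)$ for every $a\in\rmO$. On $\Lie G\otimes C$, the $\rmO$-action through $\iota$ agrees with the scalar action by strictness. The logarithm attached to the $\rmO$-PD structure differs from the $p$-adic one only by normalization, so the choice does not affect exactness. Finally, Proposition \ref{prop:compmodfil} identifies the middle map with $h_G$ on $\bM_{\rmO}(G)$. This gives the form of the sequence used later as the modification \eqref{eq:intmodif}.
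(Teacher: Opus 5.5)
Your proposal is correct and follows the paper's route. The paper gives no independent argument: it invokes \cite[Proposition 5.1.6]{scwe}, relying on its earlier remark that $\wt G$ and $T_{\varpi}(G)$ coincide with the objects attached to the underlying $p$-divisible group. That is exactly your reduction, and your sketch via the logarithm sequence and the inverse limit along $[p]$ is the content of that citation; reading $V_{\varpi}(G)\otimes_E\CO$ through its global sections $V_{\varpi}(G)$ is also right, with the sheaf-level version being Corollary \ref{cor:sw}.
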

\medskip

\medskip
\begin{coro}\label{cor:sw}
Let $G$ be a $\varpi$-divisible $\rmO$-module over $\rmO_C$ . The exact sequence \ref{eq:exasqpdiv} extends to an exact sequence of quasi-coherent sheaves on $\CX$
$$
0\rightarrow V_G\otimes_{E}\CO\rightarrow \CE(G)\rightarrow \iota_{\infty *}\Lie(G)\rightarrow 0
$$
which defines a trivial minuscule modification of $\CE(G)$.
\end{coro}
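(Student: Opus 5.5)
We use the Beauville--Laszlo description of vector bundles (Proposition \ref{prop:bpair}), together with the identification $\Gamma(\CX,\CE(G))=\wt G(\rmO_C)=\bM_{\rmO}(G)^{\varphi=\varpi}$ recalled above, where $\CE(G)\coloneqq\CE(H)$ for $H=G\otimes_{\rmO_C}k$. By the lemma on isotrivial deformations, this depends only on $G_0$. The map $V_{\varpi}(G)\otimes_E\CO\to\CE(G)$ is defined by adjunction: since $H^0(\CX,\CO)=E$ (Proposition \ref{prop:expsec}), a morphism $V_{\varpi}(G)\otimes_E\CO\to\CE(G)$ is the same as an $E$-linear map $V_{\varpi}(G)\to\Gamma(\CX,\CE(G))=\wt G(\rmO_C)$. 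We take the natural inclusion $V_{\varpi}(G)\incl\wt G(\rmO_C)$, which sends a compatible system of $\varpi$-power torsion points (rationalized) to an element of the universal cover. The map $\CE(G)\to\iota_{\infty*}\Lie(G)\otimes C$ is obtained by evaluating at $\infty$: the fiber $\CE(G)\otimes k(\infty)=\bM_{\rmO}(G)\otimes_{\rmB_e,\theta}C$, and we compose with the Hodge map $h_G$. Proposition \ref{prop:compmodfil} says that on global sections this composite is $\log_G$ applied to the image of $\wt G\to G$. Hence the induced sequence on global sections is exactly the sequence of Proposition \ref{eq:exasqpdiv}; in particular the composite $V_{\varpi}(G)\otimes\CO\to\iota_{\infty*}\Lie(G)\otimes C$ vanishes.

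Next we check exactness as sheaves. Surjectivity of $\CE(G)\to\iota_{\infty*}(\Lie G\otimes C)$ follows from surjectivity of $h_G$ (Messing). Let $\CF$ be the kernel; it is a vector bundle, being a subsheaf of a vector bundle on a regular one-dimensional scheme. It is a minuscule modification of $\CE(G)$ because the cokernel is killed by $t$, and its degree is $\dim\Lie G$. The map $V_{\varpi}(G)\otimes\CO\to\CE(G)$ factors through $\CF$ and is injective, since it is injective on global sections and $\CO$-linear maps out of trivial bundles are determined there (a nonzero kernel would be a subbundle of $V\otimes\CO$ with sections). It remains to show that $\alpha\colon V_{\varpi}(G)\otimes\CO\to\CF$ is an isomorphism. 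Both sides have the same rank: $\rank\CE(G)=\heig_{\rmO}H=\dim_E V_{\varpi}(G)$. By the degree formula $\deg\CF=\deg\CE(G)-\dim\Lie G$, and $\deg\CE(H)=\deg\CE(D(1))=\dim G$ by the Dieudonné--Manin slope computation for $D(1)$. So $\deg\CF=0$, and $\alpha$ is an injection between bundles of equal rank and degree, hence its cokernel is torsion of degree $0$, so zero.

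The main obstacle is the degree bookkeeping, namely that $\deg\CE(D(1))=\dim G$ with the paper's normalizations of the twist and of $\varphi=\varpi$, together with the identification of the fiber at $\infty$ with $\bM_{\rmO}(G)\otimes_\theta C$ compatibly with $h_G$. I would first check this on the examples $\mu_{p^\infty}$-type (the Lubin--Tate $\rmO$-module, giving $\CO(1)$ with $\Lie$ of rank $1$) and $E/\rmO$ (giving $\CO$ with $\Lie=0$), and then reduce the general case to these via the slope decomposition and additivity of degree. Finally, the resulting modification is trivial by construction, since $\CF\cong V_{\varpi}(G)\otimes_E\CO$.
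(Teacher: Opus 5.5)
The paper gives no argument of its own for this corollary. It is presented as the sheaf-level form of Proposition~\ref{eq:exasqpdiv}, imported from Scholze--Weinstein. Your reconstruction is sensible: you build the two maps, identify the global-sections sequence with Proposition~\ref{eq:exasqpdiv} via Proposition~\ref{prop:compmodfil}, and conclude by counting rank and degree. The gap is the injectivity of $\alpha\colon V_{\varpi}(G)\otimes_E\CO\to\CF$, and the reason you give for it is false. A map out of a trivial bundle is indeed determined by its effect on global sections. But injectivity on global sections does not imply injectivity, because a nonzero saturated subbundle of $V\otimes_E\CO$ can have negative degree and no sections. For a concrete counterexample, take $E$-linearly independent $x_1,x_2\in H^0(\CX,\CO(1))=(\Bcrisp)^{\varphi=\varpi}$. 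Their zeros are distinct closed points, so $(x_1,x_2)\colon\CO^{2}\to\CO(1)$ is surjective with kernel $\CO(-1)$, which has no sections, even though $E^2\to H^0(\CX,\CO(1))$ is injective. Your last step (an injection between bundles of equal rank and degree is an isomorphism) needs this injectivity, so the argument does not close as written.

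You can repair it with what you already have. Left exactness of $H^0$ and your identification of the global-sections sequence give $H^0(\CX,\CF)=\ker\bigl(\wt G(\rmO_C)\to\Lie G\otimes_{\rmO_C}C\bigr)=V_{\varpi}(G)$, which is finite-dimensional over $E$. Suppose the largest Harder--Narasimhan slope $\mu_1$ of $\CF$ were positive. By Theorem~\ref{thm:clasvb} the first HN piece would be $\CO(\mu_1)^{\oplus m}$, whose space of global sections is infinite-dimensional over $E$ (cf.\ Proposition~\ref{prop:expsec} and Example~\ref{exa:bccomp}). Hence all slopes of $\CF$ are $\leqslant 0$. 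Since $\deg\CF=0$, they all vanish, so $\CF$ is semistable of slope $0$ and $\CF\cong H^0(\CX,\CF)\otimes_E\CO$. Your $\alpha$ is exactly the evaluation map $H^0(\CX,\CF)\otimes_E\CO\to\CF$, so it is an isomorphism, and injectivity becomes a consequence rather than an input.

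Two smaller corrections:
\begin{itemize}
\item The fiber of $\CE(G)$ at $\infty$ is $\bM_{\rmO}(G)\otimes_{\Bcrisp,\theta}C$; $\theta$ is not defined on $\rmB_e$.
\item The equality $\deg\CE(G)=\dim G$ should not be reduced to the slope-$0$ and slope-$1$ examples, which miss intermediate slopes. It holds in general because the slopes $\lambda_i$ of $F$ on the covariant Dieudonn\'e module $M$ of $H$ sum to $\leng(M/FM)=\heig_{\rmO}H-\dim H$. In the normalization where $H^0(\CX,\CE(G))=\bM_{\rmO}(G)^{\varphi=\varpi}$, the bundle $\CE(G)$ has slopes $1-\lambda_i$, so its degree is $\dim H$.
\end{itemize}
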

\medskip

\subsubsection{From minuscule modifications to $\varpi$-divisible $\rmO$-modules}
This characterization depends on the classification of $\varpi$-divisible $\rmO$-modules by Fargues and Scholze--Weinstein(\cf \cite[Theorem 5.2.1]{scwe}):
\medskip
\begin{theo}\label{thm:farclas}
There is an equivalence of categories 
$$
\left \{ \begin{array}c
\text{$\varpi$-divisible $\rmO$-modules over $\rmO_C$}
\end{array} \right \}
\cong {} 
\left \{ \begin{array}c
\text{Pairs $(T,W)$, where $T$ is a free $\rmO$-module of finite rank,}\\
\text{$W\incl T\otimes_{\rmO}C(-1)$ defining a $C$-vector subspace}
\end{array} \right \}
$$
\end{theo}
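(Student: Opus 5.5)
Since Theorem \ref{thm:farclas} is the $\rmO$-linear version of the Fargues/Scholze--Weinstein classification, the plan is to reduce it to the case $\rmO=\Zp$, i.e.\ \cite[Theorem 5.2.1]{scwe} for $p$-divisible groups over $\rmO_C$. That case says $G\mapsto (T_p(G),\Lie(G)\otimes_{\rmO_C}C\incl T_p(G)\otimes_{\Zp}C(-1))$ is an equivalence onto pairs $(T,W)$ with $T$ a finite free $\Zp$-module and $W$ a $C$-subspace. The functor in our statement is $G\mapsto (T_{\varpi}(G),W_G)$, where $W_G$ is the image of $\Lie(G)\otimes_{\rmO_C}C$ under the dual of the Hodge--Tate map. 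Since $T_{\varpi}(G)=T_p(G)$ as abelian groups, the only content is to track the $\rmO$-action and the strictness condition on both sides.

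First, given a $\varpi$-divisible $\rmO$-module $G$, the action $\iota$ makes $T=T_{\varpi}(G)$ a finitely generated torsion-free, hence free, $\rmO$-module. Functoriality of the $\Zp$-classification makes $\Lie(G)\otimes C\incl T\otimes_{\Zp}C(-1)$ an $\rmO$-equivariant inclusion, with $\rmO$ acting on $\Lie(G)$ through $d\iota$. Next we decompose
\[
T\otimes_{\Zp}C=\bigoplus_{\tau\colon E\incl C}T\otimes_{\rmO,\tau}C .
\]
Strictness says that $\rmO$ acts on $\Lie(G)$ through the fixed embedding $\tau_0\colon E\incl \brv E\subset C$. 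Hence $W_G$ lies in the $\tau_0$-isotypic summand $T\otimes_{\rmO}C(-1)$. Conversely, a $\Zp$-pair $(T,W)$ in which $T$ carries an $\rmO$-action and $W$ is an $\rmO$-stable subspace contained in the $\tau_0$-component corresponds to a $p$-divisible group with $\rmO$-action. Its induced action on $\Lie$ is scalar through $\tau_0$, so it is strict. This gives the equivalence on objects.

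On morphisms, full faithfulness of the $\Zp$-classification gives
\[
\Hom(G,G')=\{f\colon T\rightarrow T' \text{ $\Zp$-linear}\mid f_C(W)\subset W'\}.
\]
Taking $\rmO$-equivariant elements on both sides yields full faithfulness in the $\rmO$-linear setting. Essential surjectivity also follows from the $\Zp$ case: a pair $(T,W)$ with $W\subset T\otimes_{\rmO}C(-1)\subset T\otimes_{\Zp}C(-1)$ comes from some $p$-divisible group $G$. The $\rmO$-action on $T$ preserves $W$, so by full faithfulness it lifts to $\iota\colon\rmO\rightarrow\End(G)$. The action is strict because $W$ sits in the $\tau_0$-component, and it is nonzero since $\rmO\rightarrow\End(T)$ is injective for $T\neq 0$.

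The main obstacle is the compatibility of the Hodge--Tate embedding with the isotypic decomposition. We need to know that $\Lie(G)\otimes_{\rmO_C}C(1)\rightarrow T\otimes_{\Zp}C$ is $\rmO$-equivariant for the action $d\iota$ on the source, and this is what identifies the image with the $\tau_0$-part. I would check this via the dual description of the Hodge--Tate map, $T_p(G^\vee)\rightarrow \omega_{G}$, which is natural in endomorphisms of $G$. The twist by $C(-1)$ only changes the Galois action and not the $\rmO$-structure, so it does not interfere.
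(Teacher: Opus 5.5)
Your reduction is correct and is essentially the paper's approach: the paper gives no argument beyond invoking \cite[Theorem 5.2.1]{scwe} for $p$-divisible groups, and it leaves the passage to strict $\rmO$-actions implicit. Your argument supplies exactly that omitted bookkeeping: the isotypic decomposition of $T\otimes_{\Zp}C$, the naturality of the Hodge--Tate map forcing $W$ into the $\tau_0$-component, and the use of full faithfulness to transport the $\rmO$-action, with strictness checked on the torsion-free $\Lie(G)\subset \Lie(G)\otimes_{\rmO_C}C$.
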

\medskip
Let $G$ be a $\varpi$-divisible $\rmO$-module over $\rmO_C$, then the associated pair $(T,W)$ is $(T_{\varpi}(G),\Lie\, G\otimes_{\rmO_C} C)$ and the map is given by the dual of the \emph{Hodge-Tate map}, which we now recall. The Hodge-Tate map of $G^D$:
$$
\alpha_{G^{D}}^{\vee}\colon W\rightarrow T\otimes_{\rmO}C(-1),
$$
is the dual of the inverse limit of $\alpha_{G^D[p^n]}\colon G^D[p^n]\rightarrow \omega_{G[p^n]}$ defined sending $f\colon G[p^n]\rightarrow \BG_m$ to $\alpha_{G^D[\varpi^n]}(f)\coloneqq f^*\tfrac{dx}{x}$. 

Using this theorem, Scholze--Weinstein prove the following (\cf \cite[Theorem 6.2.1]{scwe},  \cite[Proposition 5.1.6]{scwe}):
\medskip
\begin{prop}\label{prop:modeqpdiv}
Let $H$ be a $\varpi$-divisible $\rmO$-module over $k$. Given a minuscule modification of $\CE(H)$,
$$
0\rightarrow \CF \rightarrow \CE(H) \rightarrow \iota_{\infty *}W\rightarrow 0
$$
Then $\CF$ is semi-stable of slope $0$ if and only if there exists $(G,\rho)$, an isotrivial deformation of $H$ over $\rmO_C$ such that $\CF=V_{\varpi}(G)\otimes_{E}\CO$, $W=\Lie\, G\otimes_{\rmO_C}C$ and the modification is given by $\rho$. 
\end{prop}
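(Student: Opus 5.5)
We need to prove Proposition \ref{prop:modeqpdiv}. The two implications are of very different difficulty, so we treat them separately and with different tools.

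\emph{The direction ``$\Leftarrow$''.} Let $(G,\rho)$ be an isotrivial deformation of $H$. By the lemma above, $\rho$ identifies $\wt G(\rmO_C)$ with $\wt H$ evaluated on the lift, that is with $(D_{\rmO}(H)\otimes_{\brvO}\Bcrisp)^{\varphi=\varpi}=\Gamma(\CX,\CE(H))$. Corollary \ref{cor:sw} then gives the exact sequence
$$0\rightarrow V_{\varpi}(G)\otimes_E\CO\rightarrow \CE(H)\rightarrow \iota_{\infty*}\Lie(G)\otimes_{\rmO_C}C\rightarrow 0.$$
Its kernel is visibly trivial, so $\CF$ is semi-stable of slope $0$. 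This direction is formal.

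\emph{The direction ``$\Rightarrow$''.} Suppose $\CF$ is semi-stable of slope $0$. The corollary of Theorem \ref{thm:clasvb} gives $\CF\cong V\otimes_E\CO$ with $V=H^0(\CX,\CF)$, an $E$-vector space of dimension $n=\rank\CE(H)$. The plan has four steps.

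\begin{enumerate}
\item Take global sections of the modification. Since $H^1(\CX,\CO)=0$ by Proposition \ref{prop:expsec}, this gives an exact sequence $0\rightarrow V\rightarrow H^0(\CE(H))\rightarrow W\rightarrow 0$.
\item Choose any $\rmO$-lattice $T\subset V$. Evaluating the modification at $\infty$ gives a map $W\rightarrow V\otimes_E C(-1)$, obtained from the fibre of the modification $\CF\hookrightarrow\CE(H)$ at $\infty$ after twisting by $t^{-1}$. Theorem \ref{thm:farclas} applied to the pair $(T,W)$ then produces a $\varpi$-divisible $\rmO$-module $G$ over $\rmO_C$ with $T_\varpi(G)=T$ and $\Lie G\otimes C=W$. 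Changing $T$ changes $G$ only by a quasi-isogeny, which is why one obtains isogeny classes.
\item Apply Corollary \ref{cor:sw} to $G$. This yields a modification $V\otimes\CO\hookrightarrow\CE(G)$ with the same lattice at $\infty$. By Proposition \ref{prop:bpair}, both $\CE(G)$ and $\CE(H)$ are the vector bundle attached to the Berger pair $(V\otimes\rmB_e,\,t^{-1}\cdot(\text{minuscule lattice}))$, so $\CE(G)\cong\CE(H)$ compatibly with the inclusions of $V\otimes\CO$.
\item Translate this isomorphism of vector bundles into the quasi-isogeny $\rho$. Restricting to the open set $U$, one obtains an isomorphism $\bM_{\rmO}(G)\cong D_{\rmO}(H)\otimes\rmB_e$ compatible with Frobenius. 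One must then upgrade it to a quasi-isogeny $H\otimes_k\rmO_C/p\dashrightarrow G\otimes\rmO_C/p$.
\end{enumerate}

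I expect step 4 to be the main obstacle. The plan there is to use that $\rmO_C/p$ is semiperfect, and that by Scholze--Weinstein's Dieudonné theory over $\rmO_C/p$ (Theorem A of \cite{scwe}, adapted to $\rmO$-actions as in the proof of Proposition \ref{prop:Ecart}), quasi-isogenies over $\rmO_C/p$ correspond to isomorphisms of rational Dieudonné modules over $\Bcrisp$. Working with $\Bcrisp$ rather than $\rmB_e$, one recovers the $\varphi$-module from $\CE$ via $H^0(\CX,\CE\otimes\CO(i))$ for all $i\geqslant0$. Equivalently, $\wt G\cong\wt H$ as sheaves on $\Perf$ over $\rmO_C/p$, and Scholze--Weinstein show that such an isomorphism comes from a quasi-isogeny.

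Finally, check the compatibilities: by Proposition \ref{prop:compmodfil}, the modification built from $(G,\rho)$ is the given one, and isogenous pairs give isomorphic modifications.
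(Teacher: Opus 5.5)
Your proposal is correct and follows the argument the paper relies on when it cites Scholze--Weinstein \cite{scwe}. The easy direction comes from Corollary \ref{cor:sw}. For the converse, you build $G$ from a lattice $T\subset V$ and the Hodge--Tate datum via Theorem \ref{thm:farclas}, identify $\CE(G)$ with $\CE(H)$ through Proposition \ref{prop:bpair}, and extract $\rho$ from Dieudonné theory over $\rmO_C/p$.

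You use two inputs without stating them, and both are supplied by that reference. First, the lattice at $\infty$ that Corollary \ref{cor:sw} attaches to $G$ is the one cut out by its Hodge--Tate embedding $W\incl T\otimes_{\rmO}C(-1)$. Second, $\varphi$-modules over $\Bcrisp$ embed fully faithfully into vector bundles on $\CX$. This second point is what actually drives step 4: it is the isomorphism of bundles, not an isomorphism of $\rmB_e$-modules (which carry no Frobenius), that yields the isomorphism of rational Dieudonné modules.
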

\medskip

We still need to treat isogenies of isotrivial deformations, which is given by the following:
\medskip
\begin{coro}
Let $(G,\rho)$ and $(G',\rho')$ be two isotrivial deformations over $\rmO_C$ of a $\varpi$-divisible $\rmO$-module $H$ over $k$. Then the quasi-isogeny $\rho'\circ\rho^{-1}\colon G_0\dashrightarrow G_0'$ lifts to a quasi-isogeny $G\dashrightarrow G'$ if and only if the associated modifications are isomorphic.

\end{coro}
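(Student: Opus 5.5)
We reduce the lifting question for $\rho'\circ\rho^{-1}$ to the classification of Theorem \ref{thm:farclas}, using the modifications of Corollary \ref{cor:sw} as a bridge. Write $G_0=G\otimes_{\rmO_C}\rmO_C/p$ and similarly for $G'$, and set $\alpha\coloneqq\rho'\circ\rho^{-1}\colon G_0\dashrightarrow G_0'$. By the lemma preceding Proposition \ref{prop:compmodfil}, $\alpha$ induces an isomorphism $\wt G\cong \wt G'$. Through the identifications $\wt G(\rmO_C)=\Gamma(\CX,\CE(H))=\wt G'(\rmO_C)$, this isomorphism is the identity of $\CE(H)$, since both sides are identified with $(D_{\rmO}(H)\otimes_{\brvO}\Bcrisp)^{\varphi=\varpi}$ via $\rho$, respectively $\rho'$.

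For the direction ($\Rightarrow$), suppose $\alpha$ lifts to a quasi-isogeny $f\colon G\dashrightarrow G'$. By functoriality of the Tate module, of $\Lie$ and of the exact sequence in Proposition \ref{eq:exasqpdiv}, $f$ induces an isomorphism $V_{\varpi}(G)\cong V_{\varpi}(G')$ and a compatible isomorphism $\Lie G\otimes C\cong \Lie G'\otimes C$. On $\wt G\cong \wt G'$ it induces the map coming from $\alpha$, because $\wt G$ depends only on $G_0$ (\cite[Proposition 3.1.3]{scwe}). The two exact sequences of Corollary \ref{cor:sw} are therefore isomorphic over the identity of $\CE(H)$. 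Hence the modifications are isomorphic, and in fact they are the same subsheaf of $\CE(H)$.

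For the direction ($\Leftarrow$), suppose the two modifications are isomorphic. This means that, inside $\CE(H)$, the subsheaves $V_{\varpi}(G)\otimes\CO$ and $V_{\varpi}(G')\otimes\CO$ coincide (equivalently, the same minuscule lattice $M^+$ via Proposition \ref{prop:bpair}) and the quotients $W=\Lie G\otimes C$ and $W'$ agree. Taking $H^0$ and using $H^0(\CX,\CO)=E$ gives an $E$-linear isomorphism $V_{\varpi}(G)\cong V_{\varpi}(G')$ compatible with the inclusions into $\wt G(\rmO_C)=\wt G'(\rmO_C)$. The image of $T_\varpi(G)$ is a lattice in $V_\varpi(G')$, so after multiplying by a power of $\varpi$ it lands in $T_\varpi(G')$. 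The Hodge--Tate data correspond because $W$ is identified with $\iota_\infty^*$ of the cokernel, and the Hodge--Tate embedding $W\incl T\otimes C(-1)$ is recovered from the modification by the standard duality (\cite[Proposition 5.1.6]{scwe}). Theorem \ref{thm:farclas} then produces a morphism $\varpi^N f\colon G\to G'$ with rational inverse, that is, a quasi-isogeny $f$.

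It remains to check that $f$ reduces to $\alpha$ mod $p$, which is the main obstacle. Both $f_0$ and $\alpha$ are quasi-isogenies $G_0\dashrightarrow G_0'$, and both induce the same map on $\wt G(\rmO_C)$ by construction. The plan is to use that $\Hom(G_0,G_0')[1/p]\to\Hom(\wt G,\wt G')$ is injective, which is rigidity of quasi-isogenies over the nilpotent thickening $\rmO_C/p\to k$ combined with the Dieudonné theory of \cite[Theorem A]{scwe}, adapted to $\rmO$ as in the proof of Proposition \ref{prop:Ecart}. Injectivity then forces $f_0=\alpha$. A further subtlety is the $\rmO$-linearity of $f$; this follows because everything used above is $\rmO$-equivariant and Theorem \ref{thm:farclas} is an equivalence of $\rmO$-linear categories.
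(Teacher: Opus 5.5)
The paper gives no separate argument for this corollary; it is absorbed into the Scholze--Weinstein input behind Proposition \ref{prop:modeqpdiv}. Your derivation from Theorem \ref{thm:farclas} is therefore a more explicit route, and most of it is sound. The forward direction is fine. So is the construction of the quasi-isogeny $f$ in the backward direction, granting the Scholze--Weinstein identification of the Hodge--Tate embedding $W\hookrightarrow V_\varpi(G)\otimes C(-1)$ with the one read off from the minuscule lattice $V_\varpi(G)\otimes_E\Bdrp\subset\wh{\CE}_\infty$. The step that fails as written is the last one, which you rightly single out as the crux.

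There are two problems there.

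\emph{Agreement on all of $\wt G(\rmO_C)$.} Your \emph{by construction} claim only gives $\wt f=\wt\alpha$ on the image of $V_\varpi(G)$ in $\wt G(\rmO_C)$, since $f$ was produced from its effect on Tate modules. Agreement on the whole of $\wt G(\rmO_C)$ needs an argument.

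\emph{The injectivity you invoke.} You attribute it to rigidity of quasi-isogenies along $\rmO_C/p\to k$. But this is not a nilpotent thickening: its kernel $\fkm_C/p$ is nil, not nilpotent. Rigidity genuinely fails there. Over $\rmO_C/p$ one has $\Hom(\BQ_p/\BZ_p,\mu_{p^\infty})=T_p\mu_{p^\infty}(\rmO_C/p)$, a nonzero torsion-free group, whereas over $k$ this Hom vanishes. So in general $\End(G_0)[1/p]$ is strictly larger than $\End(H)[1/p]$, and you cannot reduce to the isocrystal over $k$.

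\emph{The repair.} Use the other tool you cite. Write $D\coloneqq D_{\rmO}(H)[1/p]$. By \cite[Theorem A]{scwe}, adapted to $\rmO$ as in Proposition \ref{prop:Ecart}, $\Hom(G_0,G_0')[1/p]$ embeds via $\rho,\rho'$ into the $\varphi$-equivariant $\Bcrisp$-linear endomorphisms of $D\otimes_{\brv E}\Bcrisp$. These are the same as the $\CO_{\CX}$-linear endomorphisms of $\CE(H)$, since both equal $H^0(\CX,\CE(D^\vee\otimes D))=(D^\vee\otimes D\otimes\Bcrisp)^{\varphi=1}$. This identification is compatible with the action on $\wt G(\rmO_C)=H^0(\CX,\CE(H))$.

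Now $f_0-\alpha$ gives an endomorphism of $\CE(H)$ that kills $V_\varpi(G)$, hence kills the subsheaf $V_\varpi(G)\otimes\CO$. That subsheaf agrees with $\CE(H)$ on $U=\CX\setminus\{\infty\}$, and $\CE(H)$ is a vector bundle, so the endomorphism is zero. Hence $f_0=\alpha$, which fixes both problems at once.
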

\medskip

\subsection{Modifications of $\CE_{\Dr}$}
In this subsection we compute minuscule modifications of $\CE_{\Dr}$, the proposition is the following:
\medskip
\begin{prop}\label{prop:compodmod}
Let $\CE$ be a $\rmD$-linear modification of $\CE_{\Dr}$ of degree $d$. Then, either $\CE$ is trivial, \ie $\CE\cong \CO\otimes_E \rmD$, else there exists $r\in \llbracket 1,d-1\rrbracket$ such that
$$
\CE\cong \CO\bigl ( \tfrac 1 d \bigr )^{\oplus r} \oplus \CO\bigl (- \tfrac {r} {d(d-r)} \bigr )^{\oplus m},
$$
where $m=\lcm(d,d-r)$.
\end{prop}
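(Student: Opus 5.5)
The plan is to transport the problem, via a Morita-type equivalence, to a question about rank $d$ vector bundles, which the classification Theorem \ref{thm:clasvb} settles; this is the two--tower principle \eqref{eq:intwotow} of the introduction. The key input is that $\End(\CO(\tfrac 1 d))$ is a division algebra of invariant $\pm\tfrac 1 d$, which we identify with $\rmD^{\mathrm{op}}$. This identification gives an equivalence
$$
\CG\longmapsto \CG\otimes_{\CO}\CO\bigl(\tfrac 1 d\bigr)
$$
between vector bundles on $\CX$ and vector bundles endowed with a $\rmD$-action commuting with $\CO$, where $\rmD$ acts through the factor $\CO(\tfrac 1 d)$. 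The inverse functor is $\CE\mapsto \CHom_{\rmD}(\CO(\tfrac 1 d),\CE)$.

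First, I would check this equivalence fibrewise on the curve. It suffices to do so after pulling back along $\pi_d\colon \CX_d\to\CX$, where $\CO(\tfrac 1 d)$ becomes $\CO_d(1)^{\oplus d}$ and $\rmD\otimes_E E_d\cong M_d(E_d)$, so that ordinary Morita theory applies. By Example \ref{exa:dactond} we have $\CE_{\Dr}=\CO(\tfrac 1 d)^{\oplus d}$, which corresponds to $\CG=\CO^{d}$.

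Second, I would verify that the equivalence is compatible with modifications at $\infty$. Since it is exact and commutes with completion at $\infty$, a $\rmD$-linear modification $\CE\incl\CE_{\Dr}$ corresponds to a modification $\CG\incl \CO^d$, and the length of the cokernel is divided by $d$. Concretely, via Beauville--Laszlo (Proposition \ref{prop:bpair}), $\rmD\otimes_E C\cong M_d(C)$ cuts the $\Bdrp$-lattices $t\widehat{\CE}_{\infty}\subset M^+\subset\widehat{\CE}_{\infty}$ into $d$ identical blocks. Hence a minuscule $\rmD$-linear modification of degree $d$ corresponds to a minuscule modification $\CG\incl\CO^d$ of degree $1$. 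Dualizing gives $\CF\coloneqq\CG^{\vee}$ together with a degree $1$ modification $\CO^d\incl\CF$, so $\CF$ has rank $d$ and degree $1$.

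Third, I would classify such $\CF$ using Theorem \ref{thm:clasvb}, writing $\CF=\bigoplus_i\CO(\lambda_i)$. Since $\CO^d\to\CF$ is generically an isomorphism, every summand $\CO(\lambda_i)$ receives a nonzero map from $\CO$, so $H^0(\CO(\lambda_i))\neq 0$ and therefore $\lambda_i\geqslant 0$ by Proposition \ref{prop:expsec}. Because the slopes are nonnegative and the ranks add up to $d$ while the degrees add up to $1$, exactly one summand has positive slope. It is therefore stable of degree $1$, so it is $\CO(\tfrac 1 s)$ for some $s\in\llbracket 1,d\rrbracket$, and we get $\CF\cong\CO(\tfrac 1 s)\oplus\CO^{d-s}$. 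This is the step where the classification is genuinely used, as announced in the introduction.

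Finally, I would translate back. We obtain $\CG=\CO(-\tfrac 1 s)\oplus\CO^{d-s}$ and hence
$$
\CE=\CO\bigl(\tfrac 1 d\bigr)^{\oplus(d-s)}\oplus\bigl(\CO(-\tfrac 1 s)\otimes\CO(\tfrac 1 d)\bigr).
$$
Setting $r=d-s$, the slope of the last factor is $\tfrac1d-\tfrac1{d-r}=-\tfrac{r}{d(d-r)}$, and the tensor-product formula for the $\CO(\lambda)$ gives its multiplicity. When $r=0$ the last factor is $\CO(0)^{d}$, so $\CE\cong\CO^{d^2}\cong \CO\otimes_E\rmD$, which is the trivial case.

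I expect the main obstacle to be the second step: showing rigorously that the Morita equivalence respects minuscule modifications and divides the degree by exactly $d$. This requires checking carefully that $\rmD$ acts on the completed stalk through $\rmD\otimes_E\Bdrp\cong M_d(\Bdrp)$. The multiplicity bookkeeping in the final formula should also be double-checked against the tensor-product formula for the $\CO(\lambda)$.
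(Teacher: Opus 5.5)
Your argument follows the paper's route. The Morita (two-tower) equivalence of Proposition~\ref{prop:twin} and Corollary~\ref{cor:modmod} reduces the question to a rank $d$ bundle $\CF$ with a degree-one modification $\CO^d\incl\CF$, and that bundle is classified exactly as in Proposition~\ref{prop:ltmod1}. Your observation that each $\CO(\lambda_i)$ receives a nonzero map from $\CO$ is the paper's argument in slightly different words. Using the covariant functor $\mathcal{G}\mapsto\mathcal{G}\otimes\CO(\tfrac1d)$ and then dualizing is equivalent to the paper's $\CF\mapsto\CHom(\CF,\CO(\tfrac1d))=\CF^{\vee}\otimes\CO(\tfrac1d)$. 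It has one advantage: the degree count follows from exactness of tensoring with a locally free sheaf of rank $d$, since a cokernel $Q$ becomes $Q\otimes\CO(\tfrac1d)$, of $d$ times the length. You therefore avoid the paper's computation of $\CExt^1(\iota_{\infty,*}C,\CO(\tfrac1d))$ via Banach--Colmez dimensions.

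The multiplicity check you postponed, however, does not confirm the statement; it refutes it. Write $\lambda=-\tfrac{r}{d(d-r)}$ and let $m(\lambda)$ be its denominator. The tensor-product formula gives $\CO(\tfrac1d)\otimes\CO(-\tfrac1{d-r})\cong\CO(\lambda)^{\oplus m}$ with $m=d(d-r)/m(\lambda)=\gcd(r,d(d-r))=\gcd(r,d^2)$. The rank count alone forces the same value, since this summand must have rank $d^2-dr$.

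For $r\in\llbracket 1,d-1\rrbracket$ one has $\gcd(r,d(d-r))\leqslant r<d\leqslant\lcm(d,d-r)$, so the printed value $m=\lcm(d,d-r)$ is never correct. Already for $d=2$, $r=1$ one gets $\CE\cong\CO(\tfrac12)\oplus\CO(-\tfrac12)$, whereas the statement would give a bundle of rank $6$. The statement therefore contains a misprint; the paper's proof only asserts that computing $\CHom(\CF,\CO(\tfrac1d))$ recovers it. What your argument actually proves is the corrected version with $m=\gcd(r,d(d-r))$. This does not affect the rest of the paper, because Proposition~\ref{prop:cimadr} only uses that a nontrivial $\CE$ has a direct summand $\CO(\tfrac1d)$.

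A smaller slip: for $r=0$ the last factor is $\CO^{\oplus d^2}$, not $\CO(0)^{\oplus d}$.
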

\medskip
\subsubsection{Minuscule $\rmD$-linear modifications}
Let us first define what $\rmD$-bundles and $\rmD$-modification mean:
\medskip
\begin{defi}
A \emph{$\rmD$-bundle} on $\CX$ is a vector bundle $\CE$ on $\CX$ of rank $d^2$ endowed with a faithful action of the algebra $\rmD$ (\ie a locally free right $\rmD$-module of rank $1$ in the category of vector bundles on $\CX$). We moreover say that a map $f\colon \CE'\rightarrow \CE$ between $\rmD$-bundles is \emph{$\rmD$-linear} if $f$ commutes with the action of $\rmD$. In particular, we say that $\CE'$ is a $\rmD$-linear modification of $\CE$ if it is given by a $\rmD$-linear map. 
\end{defi}
\medskip
Notice that if $\CE'$ is a $\rmD$-linear modification of $\CE$ then the cokernel of the map $\CE'\incl \CE$ is a coherent sheaf concentrated at $\infty$ endowed with an action of $\rmD^{\times}$. Moreover, if it is minuscule, then its degree is divisible by $d$. We could also express this definition in terms of $\rmD^{\times}$-torsors on $\CX$. For example $\CE_{\Dr}$ and $\CO\otimes_E\rmD$ are $\rmD$-bundles on $\CX$. 

We prove the following lemma:
\medskip
\begin{lemm}\label{lem:odmodif}
We have a natural bijection
$$
\left \{ 
\text{$\rmD$-linear minuscule modifications of $\CE_{\Dr}$}
\right \}
\cong {} 
\BP^{d-1}(C)
$$
\end{lemm}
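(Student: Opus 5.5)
Via Proposition \ref{prop:bpair} and Proposition \ref{prop:minmod}, minuscule modifications of $\CE_{\Dr}$ correspond to $\Bdrp$-lattices $M^+$ with $t\wh{\CE}_{\Dr,\infty}\subset M^+\subset \wh{\CE}_{\Dr,\infty}$. These lattices are in turn determined by the $C$-subspace $W^\perp\coloneqq M^+/t\wh{\CE}_{\Dr,\infty}$ of the fiber $\CE_{\Dr}(\infty)\coloneqq \wh{\CE}_{\Dr,\infty}/t\wh{\CE}_{\Dr,\infty}$, which has $C$-dimension $d^2$. The modification is $\rmD$-linear if and only if $M^+$ is $\rmD$-stable, that is, if and only if $W^\perp$ is a $\rmD\otimes_E C$-submodule. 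So the task is to classify $\rmD\otimes_E C$-submodules of $\CE_{\Dr}(\infty)$ of codimension $d$ (the degree being $d$, as in Lemma \ref{lem:odmodif}).

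First I will compute $\CE_{\Dr}(\infty)$ as a $\rmD\otimes_E C$-module. Since $\CE_{\Dr}=\CE(\bM(1))$, its fiber at $\infty$ is $\bM\otimes_{\brvO}C$, with the action of $\rmD$ coming from the left $\rmOD$-action on $\bM=\rmOD\otimes_{\rmO}\rmW_{\rmO}(k)$. Inverting $p$ gives $\bM\otimes_{\brvO}C\cong \rmD\otimes_E C$ as a $\rmD\otimes_E C$-module. Because $\rmD\otimes_E C\cong \rmM_d(C)$, this module is free of rank one over $\rmM_d(C)$, so it is isomorphic to $(C^d)^{\oplus d}=C^d\otimes_C C^d$. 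Here $\rmM_d(C)$ acts on the first factor, and the second factor is the multiplicity space $N\coloneqq \Hom_{\rmD\otimes C}(C^d,\CE_{\Dr}(\infty))\cong C^d$.

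Next, by Morita theory, the $\rmM_d(C)$-submodules of $C^d\otimes N$ are exactly those of the form $C^d\otimes N'$ with $N'\subset N$ a $C$-subspace. Such a submodule has codimension $d$ if and only if $N'$ is a hyperplane in $N$. Hence the $\rmD$-linear minuscule modifications of degree $d$ are in natural bijection with the hyperplanes of $N\cong C^d$, that is, with $\BP^{d-1}(C)$ (or its dual; I will fix the convention to match the rank-one-quotient description of $\BP^{d-1}$). I will also check that isomorphism classes agree with the sub-lattices themselves. This holds because two modifications of $\CE_{\Dr}$ are identified precisely when their sub-lattices inside $\wh{\CE}_{\Dr,\infty}$ coincide, by the equivalence in Proposition \ref{prop:bpair}.

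The main obstacle is naturality: the bijection should be canonical and compatible with the $\GL_d(E)$-action, so that it agrees with the period map later. To handle this I would make $N$ explicit using the grading. Inside $\bM\otimes_{\brvO}C=\bigoplus_{i\in\BZ/d}\bM_i\otimes C$ (grading by $\rmO_d$), the action of $\rmO_d\otimes C=C^d$ splits the fiber into eigenspaces, and $\Pi$ acts with degree $1$. A $\rmD$-submodule is therefore determined by its degree-$0$ part inside $\bM_0\otimes C\cong \boeta^0\otimes_{\rmO}C\cong C^d$, since the other parts are obtained by applying $\Pi$, which is invertible after inverting $p$. This identifies $N$ with $\boeta^0\otimes_{\rmO}C$ through $\iota_0$, so that a modification corresponds to the quotient line $(E^d\otimes C)\to \Lie_0$. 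That quotient is a point of $\BP^{d-1}(C)=\BP(E^d)(C)$, and the description is visibly equivariant by Remark \ref{rem:glequiv}.
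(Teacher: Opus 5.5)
Your proof is correct and follows the paper's route. You reduce via Proposition \ref{prop:minmod} (adding the $\rmD$-action) to $\rmD$-stable codimension-$d$ subspaces of $\bM\otimes_{\brv E}C$. You then identify these with hyperplanes in the degree-$0$ part $\boeta^0\otimes_{\rmO}C$, i.e.\ with points of $\BP(\boeta_0\otimes_{\rmO}C)=\BP^{d-1}(C)$. Your Morita count is exactly the paper's representation-theoretic remark, and your argument that a submodule equals $\bigoplus_i\Pi^iW_0$ (because $\Pi$ is invertible in $\rmD$) justifies the step the paper attributes to Lemma \ref{lem:trivtoslop} more cleanly, since that lemma literally concerns $\bV$-stable lattices rather than $C$-subspaces.
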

\medskip
\begin{proof}
Let $\CE=\CE_{\Dr}$, $\wh{\CE}_{\infty}/t\wh{\CE}_{\infty}\cong \bM \otimes_{\brv E}C$ is endowed with its natural $\rmD$ action. Adding the action of $\rmD$ to Proposition \ref{prop:minmod}, we see that $\rmD$ stable points of $\Gr_d(\wh{\CE}_{\infty}/t\wh{\CE}_{\infty})$ correspond to $\rmD$-linear minuscule modifications of $\CE$. Using Lemma \ref{lem:trivtoslop} we get that $\rmD$-stable points of $\Gr_d(\bM \otimes_{\brv E}C)$ are identified with
$$
\BP(\boeta_0\otimes_{\rmO}C)=\BP^{d-1}(C).
$$
\end{proof}
\begin{rema}
There is a more \og representation-theoretic\fg way to see this lemma. Notice that $\bM \otimes_{\brv E}C\cong C^d\otimes_C(C^d)^*$ where $\rmD^{\times}\incl \GL_d(C)$ acts as the standard representation on the left factor and trivially on the right factor. Hence, we are looking for $\GL_d(C)$-equivariant subspaces of $C^d\otimes_C(C^d)^*$ of codimension $d$ which corresponds to $1$-dimensional subspaces of $(C^d)^*$: this is just $\BP^{d-1}(C)$.
\end{rema}
\subsubsection{The two tower principle}
The so called "two tower principle" usually designates the isomorphism between the Lubin--Tate tower and the Drinfeld tower. Through the Fargues--Fontaines curve this can naturally be described as an equivalence between $\rmD$-bundles and rank $d$-bundles on $\CX$. We prove the following:
\medskip
\begin{prop}\label{prop:twin}
There is an anti-equivalence of categories
$$
\left \{ 
\text{$\rmD$-bundles on $\CX$}
\right \}
\xrightleftharpoons[\CHom\bigl (\  \cdot\  ,\CO\bigl (\tfrac 1 d \bigr)\bigr )]{\CHom_{\rmD}\bigl(\  \cdot\  ,\CO\bigl (\tfrac 1 d \bigr)\bigr)}
\left \{ 
\text{Rank $d$ vector bundles on $\CX$}
\right \}
$$
given by $\CE\mapsto \CHom_{\rmD}(\CE,\CO\bigl(\tfrac 1 d \bigr))$ and whose inverse is given by $\CF\mapsto\CHom(\CF,\CO\bigl (\tfrac 1 d \bigr))$. Moreover, this anti-equivalence is compatible with modifications, sending $\rmD$-linear minuscule modification of degree $dr$ to minuscule modifications of degree $r$.
\end{prop}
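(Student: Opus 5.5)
The plan is to reduce everything to the endomorphism algebra of $\CO\bigl(\tfrac 1 d\bigr)$, which is $\rmD$, i.e.\ Morita theory. By Theorem \ref{thm:clasvb} and the computation of $\CO(\lambda)\otimes\CO(\lambda')$, we have $\CEnd\bigl(\CO\bigl(\tfrac 1 d\bigr)\bigr)\cong \CO\bigl(\tfrac1d\bigr)^\vee\otimes\CO\bigl(\tfrac1d\bigr)\cong \CO^{\oplus d^2}$. Its global sections form the $E$-algebra $\End(D_{1/d})$, which by Dieudonné--Manin is the division algebra of invariant $\pm 1/d$. I will first fix conventions (left versus right actions, and possibly replacing $\rmD$ by $\rmD^{op}$) so that $\CO\bigl(\tfrac1d\bigr)$ becomes a $\rmD$-bundle. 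I will also check that $\CO\bigl(\tfrac1d\bigr)$ is then locally free of rank one over $\CO\otimes_E\rmD$. This holds since, étale-locally on $\CX_d$ (or already after completing at any point), $\rmD\otimes_E\CO$ splits as $M_d(\CO)$ and $\CO\bigl(\tfrac1d\bigr)$ becomes the standard column module.

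The key structural step is then the Morita equivalence. The sheaf of algebras $\CA\coloneqq \CO\otimes_E\rmD$ is an Azumaya algebra of rank $d^2$ on $\CX$, and $\CP\coloneqq\CO\bigl(\tfrac1d\bigr)$ is a locally projective $\CA$-module with $\CEnd_{\CA}(\CP)=\CO$ and $\CEnd_{\CO}(\CP)=\CA$. So $\CP$ is a Morita bimodule between $\CA$ and $\CO$. This gives mutually inverse anti-equivalences between locally free $\CA$-modules of rank one and rank $d$ vector bundles, via $\CE\mapsto\CHom_{\CA}(\CE,\CP)$ and $\CF\mapsto\CHom_{\CO}(\CF,\CP)$. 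The unit maps $\CE\to\CHom(\CHom_\rmD(\CE,\CP),\CP)$ and $\CF\to\CHom_\rmD(\CHom(\CF,\CP),\CP)$ are natural, so it suffices to check they are isomorphisms locally. Étale-locally (on $\CX_d$, or on stalks, where $\CA\cong M_d$ and $\CE\cong\CP^{\oplus d}$), this is the classical Morita computation. The ranks work out: $\CHom_{\CA}(\CA,\CP)=\CP$ has rank $d$, and $\CHom(\CO^d,\CP)=\CP^{d}$ has rank $d^2$ with a rank-one $\CA$-structure.

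For the compatibility with modifications, I will use that both functors are exact on vector bundles, being $\CHom$ into a locally projective object over an Azumaya algebra. They also send injections with torsion cokernel at $\infty$ to injections (reversing direction) with $\Ext^1$-cokernels at $\infty$. For a $\rmD$-linear minuscule modification $\CE'\hookrightarrow\CE$ with cokernel $Q$ supported at $\infty$ and killed by $t$, the functor $\CHom_\rmD(\cdot,\CP)$ gives $\CHom_\rmD(\CE,\CP)\hookrightarrow\CHom_\rmD(\CE',\CP)$ with cokernel $\CExt^1_\rmD(Q,\CP)$. I will compute this cokernel at $\infty$ via Beauville--Laszlo (Proposition \ref{prop:bpair}). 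There, everything is given by $\Bdrp$-lattices, $\rmD\otimes_E\Bdrp\cong M_d(\Bdrp)$, and Morita reduces $Q\cong (C^d)^{\oplus r}$ as an $M_d(C)$-module to a $C$-vector space of dimension $r$ killed by $t$. Since $\rmD$-linear minuscule torsion of length $dr$ corresponds to $r$ copies of the simple $M_d(C)$-module, the degree $dr$ goes to $r$. The converse direction is symmetric. Minuscularity is preserved because $t$ acts centrally.

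I expect the main obstacle to be bookkeeping rather than mathematics. One difficulty is getting the sidedness and sign right, so that $\End(\CO(1/d))$ is really $\rmD$ and not $\rmD^{op}$ (invariant $-1/d$); if needed, I will twist by the canonical anti-involution or use $\CO(-\tfrac1d)$ conventions. The other is verifying the Morita isomorphisms globally on the non-proper-base-friendly scheme $\CX$. For the latter, I will reduce to checking on the completed local rings and on $U$, again using Proposition \ref{prop:bpair}, where all modules become free over split matrix algebras after the étale base change $\CX_d\to\CX$.
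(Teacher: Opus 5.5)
Your proposal is correct. Its first half is the same Morita argument as the paper's, and the real divergence is in how the degree is tracked.

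\textbf{The Morita step.} The paper also starts from $\CEnd_{\CO}\bigl(\CO\bigl(\tfrac 1 d\bigr)\bigr)=\rmD\otimes_E\CO$. It then checks Bass's conditions, namely $\CO(\lambda)\otimes_{\rmD\otimes_E\CO}\CO(-\lambda)=\CO$. You instead verify the unit maps after locally splitting the Azumaya algebra $\CO\otimes_E\rmD$. These are two packagings of the same input.

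\textbf{The degree computation.} The paper computes the global group $\Ext^1\bigl(\iota_{\infty,*}C,\CO\bigl(\tfrac1d\bigr)\bigr)$. It applies $\Hom\bigl(\cdot,\CO\bigl(\tfrac1d\bigr)\bigr)$ to $0\to\CO\to\CO(1)\to\iota_{\infty,*}C\to0$, which exhibits this group as an extension of $H^1\bigl(\CX,\CO\bigl(\tfrac1d-1\bigr)\bigr)$ by $H^0\bigl(\CX,\CO\bigl(\tfrac1d\bigr)\bigr)$. It then reads off $\dim_C=d$ from additivity of Colmez's Dimension on Banach--Colmez spaces.

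You compute the cokernel directly at $\infty$. Over $\rmD\otimes_E\Bdrp\cong M_d(\Bdrp)$ the torsion module is $(C^d)^{\oplus r}$, and Morita reduces everything to $\Ext^1_{\Bdrp}(C,\Bdrp)=C$.

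Because $\CExt^1$ of a skyscraper sheaf is a local object, your route is more elementary. It needs no Banach--Colmez theory and no passage from the global $\Ext^1$ to the stalk of $\CExt^1$, and it handles both directions symmetrically. The paper's route is heavier but showcases the curve techniques used elsewhere in that section.

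\textbf{One slip to fix.} $\CO\bigl(\tfrac1d\bigr)$ has rank $d$. So it is not a $\rmD$-bundle in the paper's sense, and it is not locally free of rank one over $\CO\otimes_E\rmD$; the column module over $M_d(\CO)$ is not free. What is true is that $\CO\bigl(\tfrac1d\bigr)$ is a local progenerator, with $\CO\bigl(\tfrac1d\bigr)^{\oplus d}\cong\CO\otimes_E\rmD$ locally. Your own Morita paragraph correctly uses only this property.
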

\medskip

Recall that the action of $\rmD$ on $\CO\bigl ( \tfrac 1 d \bigr )$ is given by the Frobenius (\cf Example \ref{exa:dactond}). This proposition can be seen as a Morita equivalence (\cf \cite[Chapter II, Theorem 3.5]{bas} where it is stated for modules\footnote{The translation for vector bundles can be either deduced from the proofs or from Berger pairs. With more care, it can also be deduced from isocrystals.}.

\begin{proof}
To avoid ugly displays, let $\lambda=\tfrac 1 d$. To show that the functors are Morita equivalences between the categories of vector bundles and vector bundles endowed with a (faithful) action of $\rmD$, we check the conditions of \cite[Chapter II, Definition 3.2]{bas}:
$$
\CO(\lambda)\otimes_{\CO}\CO(-\lambda)=\CEnd_{\CO}\CO(\lambda)= \rmD\otimes_E \CO
$$
hence we get :
$$
\CO(\lambda)\otimes_{\rmD\otimes_E\CO}\CO(-\lambda)=\CO.
$$
Restricted to $U=\CX\setminus \{\infty\}$, we get that this Morita equivalence sends modifications to modifications and considering the action by $t$, we get that it sends minuscule modifications to minuscule modifications. It only remains to show the claims on rank and degree, and, since we already have a Morita equivalence, it suffices to check that $\CF\mapsto\CHom(\CF,\CO(\lambda))$ sends rank $d$ vector bundles to rank $d^2$-vector bundles and degree $r$ modifications to degree $dr$ modifications.

First, let $\CF$ be a rank $d$ vector bundle, then by the multiplicativity of the rank for the tensor product we get
$$
\rank \CHom(\CF,\CO(\lambda))=\rank(\CF^{\vee}\otimes_{\CO}\CO(\lambda))=d^2.
$$

For the claim on the degree, we need to check $\CExt^1(\iota_{\infty,*}C,\CO(\lambda))=\iota_{\infty,*}C^d$. Let 
$$
r\coloneqq \dim_C\Ext^1(\iota_{\infty,*}C,\CO(\lambda)).
$$
Since this Ext-sheaf is supported at $\infty\in \CX$, it is enough to check that $r=d$. We apply $\Hom(\ \cdot \ , \CO(\lambda))$ to the fundamental exact sequence 
$$
0\rightarrow \CO\rightarrow \CO(1)\rightarrow \iota_{\infty,*}C\rightarrow 0.
$$
Using Proposition \ref{prop:expsec}, and the fact that there are no nonzero maps $\iota_{\infty,*}C\rightarrow \CO(\lambda)$, we get the exact sequence
$$
0\rightarrow H^0(\CX,\CO(\lambda))\rightarrow \Ext^1(\iota_{\infty,*}C,\CO(\lambda))\rightarrow H^1(\CX,\CO(\lambda-1))\rightarrow 0
$$
which lifts to an exact sequence of Banach--Colmez spaces: 
$$
0\rightarrow (\Bcrisp)^{\varphi^d=\varpi}\rightarrow C^r \rightarrow \frac{\Bdrp}{t^{d-1}\Bdrp+E_d}\rightarrow 0.
$$
Computing $E$-Dimensions using Example \ref{exa:bccomp} we get $(1,d)$ for the left term and $(d-1,d)$ for the right one: by Theorem \ref{thm:bcspace}, we get $r=d$.
\end{proof}

\medskip
\begin{exem}
Let $\lambda=\tfrac 1 d$, since $\CE_{\Dr}=\CO(\lambda)^{\oplus d}$ the proof of the proposition gives $\CHom_{\rmD}(\CE_{\Dr},\CO(\lambda))=\CO^{\oplus d}$. Hence a $\rmD$-linear modification 
$$
0\rightarrow \CE \rightarrow \CE_{\Dr}\rightarrow \iota_{\infty,*}C^d\rightarrow 0
$$
is sent to the modification
$$
0\rightarrow \CO^d\rightarrow \CF\rightarrow \iota_{\infty,*}C\rightarrow 0,
$$
where $\CF\coloneqq \CHom_{\rmD}(\CE,\CO(\lambda))$. This gives the next corollary.
\end{exem}
\medskip

\medskip
\begin{coro}\label{cor:modmod}
The equivalence of Proposition \ref{prop:twin} induces a natural bijection
$$
\left \{ \begin{array}{cc}
\text{$\rmD$-linear modifications of $\CE_{\Dr}$}\\
\text{of degree $d$}
\end{array}
\right \}
\cong {} 
\left \{ \begin{array}{cc}
\text{Rank $d$ vector bundles $\CF$ equipped with}\\
\text{a trivial modification $\CO^d\incl \CF$}\\
\text{of degree $1$}
\end{array}
\right \}.
$$
\end{coro}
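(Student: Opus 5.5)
The bijection should come directly from the anti-equivalence of Proposition \ref{prop:twin}, applied to the morphism $\CE\incl\CE_{\Dr}$ and transported along a fixed identification of the image of $\CE_{\Dr}$. Throughout, set $\lambda=\tfrac 1 d$ and write $\Phi\coloneqq\CHom_{\rmD}(\ \cdot\ ,\CO(\lambda))$ and $\Psi\coloneqq\CHom(\ \cdot\ ,\CO(\lambda))$ for the two quasi-inverse anti-equivalences.

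\textbf{Step 1: fix the identifications.} First I fix, once and for all, an isomorphism $\Phi(\CE_{\Dr})\cong\CO^{d}$. Since $\CE_{\Dr}=\CO(\lambda)^{\oplus d}$, this reduces to $\CEnd_{\rmD}(\CO(\lambda))=\CO$. That equality holds because $\CEnd_{\CO}(\CO(\lambda))=\rmD\otimes_E\CO$, and the commutant of $\rmD$ in $\rmD\otimes_E\CO$ is the centre $\CO$. This is exactly the computation recorded in the example following Proposition \ref{prop:twin}. Dually, $\Psi(\CO^d)=\CO(\lambda)^{\oplus d}=\CE_{\Dr}$, and the two identifications are chosen compatibly via the unit and counit of the anti-equivalence.

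\textbf{Step 2: forward map.} Take a $\rmD$-linear modification $0\to\CE\to\CE_{\Dr}\to\iota_{\infty,*}W\to 0$ of degree $d$. Applying $\Phi$, which is contravariant, gives a morphism $\CO^d=\Phi(\CE_{\Dr})\to\Phi(\CE)\eqqcolon\CF$. To control it, I would use the long exact $\CExt_{\rmD}$-sequence:
\begin{itemize}
\item $\CHom_{\rmD}(\iota_{\infty,*}W,\CO(\lambda))=0$, since $\CO(\lambda)$ is torsion-free, so the map $\CO^d\to\CF$ is injective;
\item $\CExt^1_{\rmD}(\CE_{\Dr},\CO(\lambda))=0$, since $\CE_{\Dr}$ is locally free over $\rmD\otimes_E\CO$, so the cokernel is $\CExt^1_{\rmD}(\iota_{\infty,*}W,\CO(\lambda))$.
\end{itemize}
By the degree statement of Proposition \ref{prop:twin} (degree $dr$ goes to degree $r$), this cokernel is $\iota_{\infty,*}C$. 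Concretely, this is the computation $\CExt^1(\iota_{\infty,*}C,\CO(\lambda))=\iota_{\infty,*}C^d$ carried out in its proof, after accounting for the $\rmD$-action. Hence $\CO^d\incl\CF$ is a modification of degree $1$. Its source is trivial by construction, and $\CF$ has rank $d$ by the proposition.

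\textbf{Step 3: inverse map and bijectivity.} Conversely, given $\CO^d\incl\CF$ of degree $1$, apply $\Psi$. This gives $\Psi(\CF)\to\Psi(\CO^d)=\CE_{\Dr}$, which is $\rmD$-linear because $\rmD$ acts through $\CO(\lambda)$. The same $\CExt$-argument shows it is injective with cokernel $\CExt^1(\iota_{\infty,*}C,\CO(\lambda))=\iota_{\infty,*}C^d$. So it is a $\rmD$-linear modification of degree $d$. The two constructions are mutually inverse on isomorphism classes of modifications, because $\Phi$ and $\Psi$ are quasi-inverse and full faithfulness identifies isomorphisms of modifications over $\CE_{\Dr}$ with isomorphisms under $\CO^d$.

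The main obstacle is Step 1 together with the compatibility of Step 3. One must check that the fixed identifications $\Phi(\CE_{\Dr})\cong\CO^d$ and $\Psi(\CO^d)\cong\CE_{\Dr}$ are intertwined by the unit of the Morita equivalence. Otherwise the round trip only returns a modification of $\CE_{\Dr}$ twisted by a $\rmD$-linear automorphism, which is an element of $\GL_d(E)$. I would handle this by defining the second identification as $\Psi$ applied to the first, composed with the counit $\CE_{\Dr}\xrightarrow{\sim}\Psi\Phi(\CE_{\Dr})$. With that choice, naturality of the counit makes the composite the identity.
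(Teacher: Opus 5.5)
Your proposal is correct and follows the paper's route. The paper likewise applies the anti-equivalence of Proposition \ref{prop:twin}, uses $\CHom_{\rmD}(\CE_{\Dr},\CO(\tfrac 1 d))=\CO^{\oplus d}$ (the Example preceding the corollary), and invokes the compatibility with degrees to send degree-$d$ $\rmD$-linear modifications to degree-$1$ modifications of $\CO^d$. Your explicit $\CExt$ sequences and your care in intertwining the two identifications via the counit fill in details the paper leaves implicit.
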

\medskip
\subsubsection{Trivial minuscule modifications of degree $1$}
By Corollary \ref{cor:modmod} we reduced the proof of Proposition \ref{prop:compodmod} to the proof the following (\cf \cite[Theorème 5.6.29]{fafo}, but we give a different proof):
\medskip
\begin{prop}\label{prop:ltmod1}
Let $\CF$ be a rank $d$ vector bundle on $\CX$ admitting a trivial modification of degree $1$, then there exists $r\in \llbracket 0, d-1 \rrbracket$ such that
$$
\CF\cong \CO^r\oplus \CO\bigl (\tfrac 1 {d-r} \bigr ).
$$
\end{prop}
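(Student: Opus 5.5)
We prove the statement by combining the classification of vector bundles (Theorem \ref{thm:clasvb}) with degree and slope bookkeeping, in the spirit of the intended ``first proof''.

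\textbf{Setup.} Let $0\rightarrow \CO^d\rightarrow \CF\rightarrow \iota_{\infty,*}C\rightarrow 0$ be the given modification. Then $\rank \CF=d$ and $\deg \CF=1$. By Theorem \ref{thm:clasvb}, $\CF$ splits as
$$
\CF\cong \bigoplus_j \CO(\lambda_j)^{\oplus m_j},
$$
a direct sum of stable bundles.

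\textbf{Step 1: all slopes are nonnegative.} Suppose some summand $\CO(\lambda)$ had $\lambda<0$. Consider the projection $\CF\rightarrow \CO(\lambda)$ restricted to $\CO^d$. The restricted map $\CO^d\rightarrow \CO(\lambda)$ is zero, since $H^0(\CX,\CO(\lambda))=0$ by Proposition \ref{prop:expsec}. Hence the surjection $\CF\rightarrow \CO(\lambda)$ factors through $\CF/\CO^d=\iota_{\infty,*}C$. This is impossible, because $\CO(\lambda)$ is a nonzero vector bundle and so cannot be a quotient of a torsion sheaf.

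\textbf{Step 2: slopes lie in $[0,1]$.} Dually, $\CF^{\vee}\hookrightarrow (\CO^d)^{\vee}=\CO^d$ is again a minuscule modification of degree $1$. Indeed, dualizing the sequence gives $0\rightarrow \CF^\vee\rightarrow \CO^d\rightarrow \CExt^1(\iota_{\infty,*}C,\CO)\rightarrow 0$, and this $\CExt^1$ is a skyscraper of length one at $\infty$. Any summand of $\CF^\vee$ of positive slope $\mu$ would then inject into $\CO^d$, contradicting $\Hom(\CO(\mu),\CO)=H^0(\CX,\CO(-\mu))=0$. So all slopes of $\CF$ lie in $[0,1]$. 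An easier alternative is to use $\CO^d\subset\CF\subset\CO^d\otimes\CO(1)$, which holds because the cokernel is killed by $t$; I am not certain this bound is needed anyway.

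\textbf{Step 3: the degree count.} Write $\CF=\CO^{r}\oplus\CF'$, where every slope of $\CF'$ is positive. Then $\deg\CF'=1$ and $\rank\CF'=d-r\ge1$. Every stable summand $\CO(a/h)$ with $a/h>0$ has degree $a\geq 1$. Since the total degree of $\CF'$ is $1$, $\CF'$ consists of exactly one stable summand, of degree $1$ and rank $d-r$. Thus $\CF'=\CO\bigl(\tfrac1{d-r}\bigr)$ with $r\in\llbracket 0,d-1\rrbracket$, as required.

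\textbf{Main obstacle.} The only delicate step is Step 1, namely ruling out negative slopes. It rests on the vanishing of $H^0(\CX,\CO(\lambda))$ for $\lambda<0$ (Proposition \ref{prop:expsec}) together with the torsion-quotient argument. Once negative slopes are excluded, the integrality of degrees forces the answer directly.
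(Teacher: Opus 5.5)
Your proof is correct and follows essentially the same route as the paper's first proof. You split $\CF$ by the classification theorem, rule out negative slopes because $\Hom(\CO,\CO(\lambda))=0$ would force $\CO(\lambda)$ to be a quotient of the torsion cokernel, and then use the degree count $\deg\CF=1$ with integral degrees of positive-slope summands to get exactly one summand $\CO\bigl(\tfrac{1}{d-r}\bigr)$.

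Step 2 is superfluous, and its dualization argument does not prove what you claim. It only shows that slopes of $\CF^\vee$ are $\le 0$, which re-proves the lower bound on slopes of $\CF$, not the upper bound $1$. The upper bound, though unneeded, follows from Step 3 or from $\CF\subset\CO(1)^d$.
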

\medskip
Indeed, computing $\CHom(\CF,\CO\bigl (\tfrac 1 {d} \bigr ))$ we get back Proposition \ref{prop:compodmod}. 

We first give a shorter proof, using the classification of vector bundles on the Fargues--Fontaine curve.
\begin{proof}
Let 
$$
0\rightarrow \CO^d\rightarrow \CF \rightarrow \iota_{\infty,*}C\rightarrow 0
$$
be a modification. By Theorem \ref{thm:clasvb} we can write $\CF=\bigoplus_{i\in I} \CO(\lambda_i)$ for $\lambda_i\in \BQ$.

Let us show that for all $i\in I$, $\lambda_i\geqslant 0$. Suppose otherwise, let $j\in I$ be such that $\lambda_j<0$. Since $\Hom(\CO,\CO(\lambda_j))=0$ we get that $\CO^d\incl \bigoplus_{i\neq j}\CO(\lambda_i)$, so $\CO(\lambda_j)$ is contained in the cokernel. This is a contradiction since the cokernel is supported at $\infty\in \CX$.

Now since $\deg \CF=1$ by additivity of the degree function, and since $\deg \CO(\lambda_i)\geqslant 1$ if and only if $\lambda_i\geqslant 0$, we deduce that there exists at most one $i\in I$ such that $\lambda_i>0$ and that $\lambda_i=\tfrac 1 {r'} $ for $r'\in \BN$. Since $\rank \CO(\lambda_i)=r'$ we get $r'\leqslant d$ and finally
$$
\CF\cong \CO^r\oplus \CO\bigl (\tfrac 1 {d-r} \bigr ).
$$
\end{proof}

To show different techniques, we give a different proof. Using Theorem \ref{thm:scwe} we reduce the proposition to the following:
\medskip
\begin{prop}\label{prop:ltmod2}
Let $K$ be either $C$ or a finite extension of $\breve{E}$. Let $G$ be a $\varpi$-divisible $\rmO$-module over $\rmO_K$ of height $d$ and dimension $1$. Then there exists $r\in \llbracket 0,d-1\rrbracket$ such that
$$
D_{\rmO}(G)=D_0^r\oplus D_{1/(d-r)}.
$$
\end{prop}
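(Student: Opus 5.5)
The plan is to split $G$ along its connected--étale sequence and compute the isocrystal of each piece by a Newton polygon argument. Since $\rmO_K$ is henselian, $G$ sits in an exact sequence of $\varpi$-divisible $\rmO$-modules
$$
0\rightarrow G^{0}\rightarrow G\rightarrow G^{\mathrm{et}}\rightarrow 0,
$$
with $G^{0}$ formal (connected) and $G^{\mathrm{et}}$ étale. The $\rmO$-action passes to both pieces, and strictness is automatic because $\Lie(G^{\mathrm{et}})=0$ and $\Lie(G^{0})=\Lie(G)$.

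Write $r\coloneqq \heig_{\rmO}G^{\mathrm{et}}$. Then $G^{0}$ has height $h\coloneqq d-r$ and dimension $1$. In particular $G^{0}\neq 0$, so $r\in\llbracket 0,d-1\rrbracket$.

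Next I pass to the special fiber, whose residue field is the algebraically closed field $\bar k_K$ in both cases ($K=C$ or $K$ finite over $\brv E$). The isocrystal $D_{\rmO}(G)\otimes\Qp$ depends only on $G_{\bar k_K}$ up to isogeny, by Remark \ref{rem:pdivtoisoc}. Over an algebraically closed field the category of $E$-isocrystals is semisimple by Dieudonné--Manin, so the exact sequence above gives
$$
D_{\rmO}(G)\cong D_{\rmO}(G^{0})\oplus D_{\rmO}(G^{\mathrm{et}}).
$$
The étale part is $T_\varpi(G^{\mathrm{et}})\otimes_{\rmO}\brv E$ with Frobenius $\id\otimes\sigma$ (up to the normalising twist), so it is isoclinic of slope $0$, i.e. $D_0^{r}$.

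The remaining step, which I expect to be the only real content, is to show that $D_{\rmO}(G^0)\cong D_{1/h}$. Here I would use three facts.
\begin{itemize}
\itemb Since $G^0$ is formal, $V$ is topologically nilpotent on $M_{\rmO}(G^0)$ by Proposition \ref{prop:pdivtoisoc}. With the paper's normalisation (the one making $\CE_{\Dr}$ of slope $\tfrac1d$), this excludes slope $0$, so all slopes lie in $(0,1]$.
\itemb By Proposition \ref{prop:Ecart}, the Hodge polygon of $D_{\rmO}(G^0)$ is determined by $\dim M/VM=1$. Mazur's inequality then shows that the Newton polygon runs from $(0,0)$ to $(h,1)$ and lies above the Hodge polygon, with the same endpoints.
\itemb By Dieudonné--Manin, the breakpoints of the Newton polygon are integral.
\end{itemize}
A convex polygon from $(0,0)$ to $(h,1)$ with strictly positive slopes and integral breakpoints can have no interior breakpoint: any such vertex would need an integer ordinate strictly between $0$ and $1$. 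Hence the polygon is a single segment of slope $\tfrac1h$, and $D_{\rmO}(G^0)\cong D_{1/h}=D_{1/(d-r)}$. The case $h=1$ gives $D_1$, which is consistent with this.

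The main care needed is with the slope normalisation (covariant Cartier module versus the twist $D(1)$) when excluding slope $0$ for the connected part and slope $0$ only for the étale part. I would check it on the two extreme examples. The Lubin--Tate formal $\rmO$-module should give slope $1$. The étale module $E/\rmO$ should give slope $0$.
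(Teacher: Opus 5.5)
Your argument is correct, but it takes a different route from the paper.

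\textbf{The paper's route.} It first reduces the case $K=C$ to $K$ finite over $\breve{E}$. This uses Rapoport--Zink representability of the deformation space of $H=G\otimes_{\rmO_K}k$: an $\rmO_C$-point exists if and only if an $\rmO_K$-point exists for some finite $K$. For finite $K$ it then invokes the classification over $\rmO_K$ (weak admissibility). This shows that the Newton polygon of all of $G$ lies above the Hodge polygon $(1,0,\dots,0)$ with the same endpoints, and the paper concludes from integrality of the breakpoints.

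\textbf{Your route.} You work entirely over the residue field. You split off the étale part, which has slope $0$. On the connected part, topological nilpotence of $V$ gives strictly positive slopes. The endpoint $(h,1)$ together with integrality then forces a single segment.

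\textbf{Comparison.} $D_{\rmO}(G)$ depends only on $G\otimes_{\rmO_K}k$, so your route treats $K=C$ and $K$ finite uniformly. It needs neither representability nor any $p$-adic Hodge theory over $\rmO_K$. What the paper's route buys is the link with weak admissibility recorded in the remark after its proof.

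\textbf{Simplifications.}
\begin{itemize}
\item Your polygon argument never uses the ``above the Hodge polygon'' half of Mazur's inequality. It only uses that the slopes of the connected part sum to $\dim G^0=1$, i.e.\ $\mathrm{length}(M/VM)=1$.
\item You can take the connected--étale decomposition of $G_k$ directly over the perfect field $k$, where it even splits. This avoids having to justify the connected--étale sequence over the non-noetherian henselian ring $\rmO_C$.
\item The splitting is not needed at all. For $G_k$ as a whole the slopes lie in $[0,1]$ and the polygon ends at $(d,1)$. With integral breakpoints and convexity, every interior vertex must sit at height $0$. This already gives the shape $D_0^r\oplus D_{1/(d-r)}$ with $r\leqslant d-1$.
\end{itemize}
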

\medskip

\begin{proof}
Let us first reduce this claim to the case where $K$ is a finite extension of $\breve{E}$. Let $H=G\otimes_{\rmO_K}k$ be the associated special fiber, which is a $\varpi$-divisible $\rmO$-module over $k$. Then, by the representability theorem of Rapoport--Zink (\cf \cite[Theorem 3.25]{razi}), we get that the functor on $\Nilp_{\brv{\rmO}}$ defined by
$$
\brv{\CM}_H\colon R\mapsto \left\{(G,\rho) \mid \begin{array}{l} \text{ $G$ is a $\varpi$-divisible $\rmO$-module over $R$ of dimension $1$, } \\
					\rho \colon H \otimes_{k}  R/p \dashrightarrow G \otimes _R R/p \text{ an $\rmO$-linear quasi-isogeny }
					\end{array} \right \},
$$
is represented by a formal scheme over $\breve{\rmO}$ formally locally of finite type. Hence $\brv{\CM}_H(\rmO_C)\neq 0$ if and only if $\brv{\CM}_H(\rmO_K)\neq 0$ for some finite extension $K/\breve{E}$. This means that Proposition \ref{prop:ltmod2} is true for $K=C$ if and only if it is true for all finite extensions $K$ of $\breve{E}$. 

Let $K$ be a finite extension of $\breve{E}$ and let $G$ be a $\varpi$-divisible $\rmO$-module over $\rmO_K$. Let $D=D_{\rmO}(G)$ and let $0\leqslant \lambda_1\leqslant \dots \leqslant \lambda_s\leqslant 1$ be its slopes. By the classification of $\varpi$-divisible $\rmO$-modules over $\rmO_K$ we know that the Newton polygon of $G$ is above the Hodge polygon of $G$, given by $(1,\underbrace{0,\dots, 0}_{d-1})$, and that they have the same end point. Since the Newton polygon of $G$ has integral break points, it takes the following form:
\FloatBarrier
\centering

\begin{figure}[h]
  \centering 
  \begin{tikzpicture}[scale=2]
    \draw[axearrow] (0,0) -- (4,0) node[right]{}; 
    \draw[axearrow] (0,0) -- (0,2.5) node[above]{}; 
    \draw (3,1.5) node[above right,coo] {}; 
    \fill (3,1.5) circle (0.02); 
    \fill (1,0) circle (0.02); 
    \fill (2,0) circle (0.02); 
    \fill (3,0) circle (0.02); 
    \fill (0,1.5) circle (0.02); 
    \draw[dashed] (3,1.5) -- (3,0) node[below] {$d$}; 
    \draw[dashed] (3,1.5) -- (0,1.5) node[left] {$1$}; 
    \draw[thick,blue] (3,1.5) -- (2,0) node[below] {$d-1$}; 
    \draw[thick,black] (0,0) -- (3,1.5) node[below] {}; 
    \draw[thick,blue] (0,0) -- (2,0) node[below] {}; 
    \draw[thick,red] (3,1.5) -- (1,0) node[below] {$r$}; 
    \draw[thick,red] (0,0) -- (1,0) node[below] {}; 
  \end{tikzpicture}
  \caption{In red, the Newton polygon of $G$, in blue the Hodge polygon of $G$.}\label{fig:newhdg}
\end{figure}
\FloatBarrier
Hence, there exists an integer $r\in \llbracket 0,d-1\rrbracket$ such that the slopes of $D$ are $0$ (with multiplicity $r$) and $\tfrac{1}{d-r}$ (with multiplicity $d-r$), which proves the proposition.
\end{proof}
\begin{rema}
Note that the condition on the polygons in Figure \ref{fig:newhdg} can be expressed as the weak admissibility condition for the filtered $\varpi$-module $(D_{\cris}(V_{\varpi}(G)),D_{\dR}(V_{\varpi}(G)))$. 

\end{rema}

\subsection{The period morphism}
Recall that we write $\rmM_{\brv E}$ the generic fiber (in the sense of Berthelot) of $\CM_{\Dr}$ which is a rigid space over $\breve{E}$. We refer to \cite[Chapter 5]{razi}, and more specifically to \cite[5.47]{razi} for the definition of the period morphism in the Drinfeld case, which we now recall.

\subsubsection{Definition}\label{subsubsec:defpermap}
We define the Grothendieck--Messing period map $\pi\colon \rmM_{\brv E}\rightarrow \BP^{d-1}_{\breve{E}}$ where $\BP^{d-1}_{\breve{E}}$ is the projective space of dimension $d-1$ seen as a rigid space over $\breve{E}$. Let $A$ be an affinoid algebra over $\brv E$ and $A^+\subset A$ the ring of definition given by power bounded elements; $A^+$ is a $p$-adically complete $\brv{\rmO}$-algebra and $A\coloneqq A^+\otimes_{\rmO}E$, for $x=(X,\rho)\in \rmM_{E}(A)=\CM_{\Dr}(A^+)$, we write $D\coloneqq \BD_{\rmO}(X)_{A^+}$ for the Dieudonné crystal of $X$ evaluated at $A^+$ (\cf Remark \ref{rema:bbm}) and $D_0\subset D$ its $0$-th graded piece. Recall, by the definition of the crystal and \S\ref{subsubsec:godsfd}
$$
\BD_{\rmO}(X\otimes_{A^+}A^+/p)_{(A^+\rightarrow A^+/p)}\cong D=\bigoplus_{i\in \BZ/d}D_i
$$
and that the map induced by the Hodge filtration defines a graded map $h_X\colon D\rightarrow \Lie(X)$ (\cf \ref{eq:mesisgrad}). Moreover, the quasi-isogeny $\rho \colon \bX\otimes_{k}A^+/p\rightarrow X\otimes_{A^+}A^+/p$ induces a graded isomorphism $\bM\otimes_{\brvO}A\cong D\otimes_{A^+}A$ which gives
$$
\bM_0\otimes_{\rmO}A\cong D_0\bigl [ \tfrac 1 p \bigr ].
$$
Then $\pi$ is defined by
$$
\pi(x)\coloneqq \left [A\otimes_E E^d\rightarrow D_0\bigl [\tfrac 1 p \bigr]\xrightarrow{h_{X,0}} \Lie\, (X)_0\bigl [\tfrac 1 p \bigr]\right ]\in \BP^{d-1}_{\brv E}(A)
$$
where we used the identification $E^d\cong \bM_0^{\bU=1}\otimes_{\rmO}E$ from \S \ref{subsubsec:refob}. 
\medskip
\begin{rema}
Here is a slightly more conceptual way to understand the period map $\pi \colon \rmM_{\brv E}\rightarrow \BP^{d-1}_{\brv E}$:
 
 As described in \cite[5.16]{razi} (\cf also \cite[I.2.1]{far} in the Lubin--Tate case, which is similar, but in terms of convergent crystals), we can consider $(X^{\univ}, \rho^{\univ})$ the universal object over $\CM_{\Dr}$ and consider the Dieudonné crystal $\BD_{\rmO}(X^{\univ})$ of $X^{\univ}$ and evaluate it at $\CO_{\Dr}^+$, the structure sheaf of $\CM_{\Dr}$ before inverting $p$:
$$
D^{\univ,+}\coloneqq \BD_{\rmO}(X^{\univ})_{(\CO_{\Dr}^+\rightarrow\CO_{\Dr}^+/p)},\quad D^{\univ}\coloneqq D^{\univ,+}\bigl[ \tfrac 1 p \bigr ].
$$
This defines a $\CO_{\Dr}$-module and Rapoport--Zink show (\cf \cite[Proposition 5.15]{razi}) that $\rho^{\univ}$ induces an isomorphism of $\CO_{\Dr}$-modules commuting with the action of $\rmOD$:
\begin{equation}\label{eq:isot}
\bM\otimes_{\brvO}\CO_{\Dr}\cong D^{\univ}
\end{equation}
Moreover, Grothendieck--Messing theory gives a surjective morphism 
$$
h_{X^{\univ}}\colon D^{\univ,+}\rightarrow \Lie(X^{\univ})
$$
whose composition with \eqref{eq:isot} gives a surjective morphism
$$
\bM\otimes_{\brvO}\CO_{\Dr}\rightarrow \Lie(X^{\univ})\bigl[ \tfrac 1 p \bigr ].
$$
Restricting this map to the $0$-th graded piece, this defines the Grothendieck--Messing period map $\pi\colon \rmM_{\brv E}\rightarrow \BP^{d-1}_{\breve{E}}$.
\end{rema}
\medskip
\subsubsection{Injectivity on points}

By Proposition \ref{prop:compmodfil}, if $x=(X,\rho)\in \rmM_{\brv E}(C)$, then $\pi(x)\in \BP^{d-1}(C)$ corresponds by Lemma \ref{lem:odmodif} with the modification of $\CE_{\Dr}$ defined by $x$. Hence on $C$-points, the period map $\pi \colon \rmM_{\brv E}(C)\rightarrow \BP^{d-1}(C)$ is given by combining theorem \ref{thm:scwe} (to which we add an action of $\rmOD$) and lemma \ref{lem:odmodif}.
\medskip
\begin{prop}
For a valued field extension $K/\brv E$, the map $\pi\colon \rmM_{\brv E}(K)\rightarrow \BP^{d-1}(K)$ is injective. 
\end{prop}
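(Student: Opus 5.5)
Since $\pi$ is compatible with base change of fields, and any valued field extension $K/\brv E$ embeds into an algebraically closed complete field $C$ (the completion of an algebraic closure of the completion of $K$), it suffices to treat $K=C$. For $K\subset C$ the map $\rmM_{\brv E}(K)\rightarrow \rmM_{\brv E}(C)$ is injective because $\rmM_{\brv E}$ is separated, being the generic fiber of a formal scheme. Injectivity of $\pi$ on $C$-points then gives injectivity on $K$-points by the commutative square with $\BP^{d-1}(K)\subset \BP^{d-1}(C)$.

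For $K=C$, let $x=(X,\rho)$ and $x'=(X',\rho')$ be in $\rmM_{\brv E}(C)$ with $\pi(x)=\pi(x')$. By Proposition \ref{prop:compmodfil}, together with the discussion preceding the statement, $\pi(x)$ is the point of $\BP^{d-1}(C)$ attached by Lemma \ref{lem:odmodif} to the $\rmD$-linear minuscule modification
$$
0\rightarrow V_{\varpi}(X)\otimes_E\CO\rightarrow \CE_{\Dr}\rightarrow \iota_{\infty,*}\Lie(X)\otimes_{\rmO_C}C\rightarrow 0
$$
of Corollary \ref{cor:sw}. Since Lemma \ref{lem:odmodif} is a bijection, the two modifications attached to $x$ and $x'$ are isomorphic as $\rmD$-linear modifications of $\CE_{\Dr}$. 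By Theorem \ref{thm:scwe}, enriched with the $\rmOD$-action by functoriality, and by the corollary on isogenies of isotrivial deformations, the quasi-isogeny $\rho'\circ\rho^{-1}$ lifts to an $\rmOD$-linear quasi-isogeny $f\colon X\dashrightarrow X'$. Moreover, $f$ has height $0$, since both $\rho$ and $\rho'$ have height $0$.

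It remains to upgrade $f$ to an isomorphism, which is the main point. Via Theorem \ref{thm:farclas}, $f$ corresponds to an isomorphism $V_{\varpi}(X)\cong V_{\varpi}(X')$ compatible with the Hodge–Tate data. It is an isomorphism of pairs $(T,W)$ exactly when it carries $T_{\varpi}(X)$ onto $T_{\varpi}(X')$.

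For an SFD over $\rmO_C$, $T_{\varpi}(X)$ is an $\rmOD$-stable $\rmO$-lattice in the rank one free $\rmD$-module $V_{\varpi}(X)$. The $\rmOD$-stable lattices in $\rmD$ are exactly the $\Pi^n\rmOD$, because $\rmOD$ is a discrete valuation ring with uniformizer $\Pi$ and every one-sided ideal is two-sided. Hence $f(T_{\varpi}(X))=\Pi^nT_{\varpi}(X')$ for some $n\in\BZ$. Computing heights, $\heig f=\log[T_{\varpi}(X'):f(T_{\varpi}(X))]$ is, up to sign, the $\rmO$-length of $\rmOD/\Pi^n\rmOD$, namely $dn$. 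Since $f$ has height $0$, we get $n=0$. Therefore $f$ restricts to an isomorphism of Tate modules compatible with $W$, so by Theorem \ref{thm:farclas} it is an isomorphism $X\cong X'$ lifting $\rho'\circ\rho^{-1}$, and $x=x'$.

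The step I expect to require care is the bookkeeping of the height. Concretely, one must check that the heights of $\rho$, $\rho'$ and of the lift $f$ are related through the Tate module lattice index. I would argue this via the isocrystal, using that the height of a quasi-isogeny is read off from the relative index of the Dieudonné lattices, and that this index matches the index of the Tate lattices through the comparison $\wt X(\rmO_C)\cong\Gamma(\CX,\CE_{\Dr})$.
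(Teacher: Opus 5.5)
Your proposal is correct and follows the paper's route: you reduce to $K=C$, turn $\pi(x)=\pi(x')$ into isomorphic $\rmD$-linear modifications via Lemma \ref{lem:odmodif}, lift $\rho'\circ\rho^{-1}$ to a height-zero $\rmOD$-linear quasi-isogeny $f$ by Theorem \ref{thm:scwe}, and conclude by full faithfulness in Theorem \ref{thm:farclas} once $f$ carries $T_{\varpi}(X)$ onto $T_{\varpi}(X')$. Your lattice step, $f(T_{\varpi}(X))=\Pi^nT_{\varpi}(X')$ with $n=0$ forced by the height, is actually the accurate form of the paper's shortcut ``$\End_{\rmOD}(T_1[\tfrac 1 p])=E$''. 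That endomorphism ring is really $\rmD^{\mathrm{op}}$, whose height-zero elements are $\rmOD^{\times}$. The height bookkeeping you flag is immediate: for an isogeny $g$ over $\rmO_C$ one has $\ker g(C)\cong T_{\varpi}(X')/g(T_{\varpi}(X))$, so no isocrystal comparison is needed.
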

\medskip
\begin{proof}
It is enough to prove the proposition for $K=C$ an algebraically closed complete field extension of $\brv E$. Let $x_1=(X_1,\rho_1),\ x_2=(X_2,\rho_2)\in \rmM_{\brv E}(C)$ be two points such that $\pi(x_1)=\pi(x_2)$, meaning that these two points define isomorphic modifications of $\CE_{\Dr}\coloneqq \CE(\bX)$. Hence, by Theorem \ref{thm:scwe}, we get an $\rmOD$-linear quasi-isogeny $f\colon X_1\rightarrow X_2$ of height zero. Now by Theorem \ref{thm:farclas}, if we write $(T_i,W_i)\coloneqq (T_{\varpi}(X_i),\Lie\,  X_i\otimes_{\rmO_C}C)$ for $i=1,2$, then $f$ induces a $\rmOD$-linear isomorphism
$$
f\colon T_1 \bigl [ \tfrac 1 p \bigr ]\xrightarrow{\sim} T_2 \bigl [ \tfrac 1 p \bigr ],
$$
matching $W_1$ and $W_2$ when tensoring with $C(-1)$, and we only need to check that this isomorphism induces an isomorphism between $T_1$ and $T_2$ by the fully faithfulness. But recall that 
$$
\End_{\rmOD}(T_1\bigl [ \tfrac 1 p \bigr ])= E
$$
and since $f$ is of height zero it is given by an invertible element $f\in \rmO^{\times}$. This finishes the proof.
\end{proof}
\subsubsection{Surjectivity on points}
The following lemma gives the weakly admissible condition for a $\rmD$-stable two step filtration on $\CF\subset \bM\otimes_{\breve E}K$. This describes the image of $\pi$ on classical points:
\medskip
\begin{lemm}\label{lem:wadrin}
Let $K/\brv E$ be a valued field extension. Let $\CF\subset \bM\otimes_{\brv E}K$ be a $K$-subspace of codimension $d$ which is stable by $\rmD$. The filtered $\varphi$-module defined by $\bM$ with the two step filtration $\Fil^{\bullet}$ defined by $\Fil^1=\CF$ is weakly admissible if and only if $\CF_0\subset \boeta_0\otimes_{\rmO}K$ does not contain an $E$-rational line.

\end{lemm}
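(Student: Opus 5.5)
The plan is to test weak admissibility on sub-objects, using Lemma~\ref{lem:trivtoslop} to list all of them. Weak admissibility of $(\bM\otimes_{\brv E}\brv E,\Fil^\bullet)$, with its $\rmD$-action, amounts to a numerical condition on the $\rmD$-stable sub-isocrystals $N\subset \bM\otimes_{\rmO}E$: for each such $N$ we need $t_H(N)\leqslant t_N(N)$, with equality for $N=\bM$. The induced filtration on $N_K$ is $\Fil^1(N_K)=N_K\cap\CF$, and the Hodge number is $t_H(N)=\dim_K N_K/(N_K\cap \CF)$. We work with $\rmD$-stable sub-objects only. This is legitimate because the filtered module is a $\rmD$-object, and the Harder--Narasimhan (maximal destabilizing) sub-object is canonical, hence $\rmD$-stable. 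So it suffices to test $\rmD$-stable $N$. The condition has to be taken with the normalization matching the twist $\bM(1)$ and the grading, and the first step is to fix this normalization.

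\textbf{Step 1: parametrize and compute Newton numbers.} By Lemma~\ref{lem:trivtoslop}, $\rmD$-stable sub-isocrystals $N$ are in bijection with $E$-rational subspaces $W\subset E^d=\boeta_0\otimes_{\rmO}E$, of dimension $s$ say. Such an $N$ has $\brv E$-dimension $ds$ and pure slope $\tfrac{d-1}{d}$, so $t_N(N)=s(d-1)$. The total object has $t_N=d(d-1)$, and $\CF$ has codimension $d$, so $t_H(\bM)=\dim \bM_K/\CF$ must be matched accordingly. I expect the correct convention to give equality for $N=\bM$ once slopes are measured so that $\Fil^1$ of dimension $d^2-d$ contributes $d(d-1)$; I will check this first.

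\textbf{Step 2: compute Hodge numbers via the grading.} Since $\CF$ and $N$ are $\rmD$-stable, both decompose along the $\BZ/d$-grading, and $\Pi$ permutes the graded pieces up to isomorphism after inverting $p$. Hence $\dim(N_K\cap\CF)=d\cdot\dim(N_{0,K}\cap \CF_0)$. Here $N_{0,K}=W\otimes_E K$, and $\CF_0\subset \boeta_0\otimes K$ is a hyperplane. The intersection $W_K\cap\CF_0$ therefore has dimension $s-1$, unless $W_K\subset \CF_0$, in which case it has dimension $s$.

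\textbf{Step 3: compare.} When $W_K\not\subset\CF_0$, the inequality holds with the expected margin, and it is strict for $0<s<d$. When $W_K\subset\CF_0$, the filtration jump is too large and $N$ destabilizes. A violating $W$ exists if and only if some rational subspace lies in $\CF_0$, which happens if and only if some $E$-rational line lies in $\CF_0$, since any nonzero rational $W$ contains a rational line. This gives the claimed equivalence.

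The main obstacle is the bookkeeping of conventions in Step 1. This includes the sign and twist of the filtration versus the Newton slope $\tfrac{d-1}{d}$, and whether $\CF$ plays the role of $\Fil^1$ or of its complement, so that the inequality comes out strict exactly in the case $W_K\not\subset\CF_0$. I would pin this down by testing $N=\bM$ and a line $W$.
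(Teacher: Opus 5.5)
Your route is the paper's (following \cite[Proposition 1.45]{razi}). You parametrize the $\rmD$-stable sub-isocrystals by $E$-rational subspaces $W\subset E^d$ via Lemma~\ref{lem:trivtoslop}, use the grading and $\Pi$ to get $\dim_K(N_K\cap\CF)=d\dim_K(W_K\cap\CF_0)$, and compare with the pure slope $\tfrac{d-1}{d}$. Your version is in fact slightly more complete. The paper only writes the test for the rank-$d$ sub-isocrystals attached to rational lines. It does not explain why $\rmD$-stable sub-objects suffice, nor why larger $W$ cause no trouble. Your canonical-destabilizing-sub-object remark and your treatment of every $W$ fill both points.

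The one thing you must fix is the normalization. As written it is wrong, not merely unchecked. Take your definition $t_H(N)=\dim_K N_K/(N_K\cap\CF)$ together with $t_N(N_W)=s(d-1)$. Then $\bM$ itself gives $d\neq d(d-1)$. A rational line with $W_K\not\subset\CF_0$ would have to satisfy $d\leqslant d-1$, so Step 3 would fail.

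The correct choice is $t_H(N)=\dim_K(N_K\cap\CF)$, i.e.\ $\Fil^1=\CF$ with jump $1$. This is the paper's convention, and it is what your Step 1 anticipates. With it, $t_H(\bM)=d^2-d=t_N(\bM)$. For $\dim_E W=s\geqslant 1$:
\begin{itemize}
\item[$\bullet$] If $W_K\not\subset\CF_0$, the condition reads $d(s-1)\leqslant s(d-1)$, i.e.\ $s\leqslant d$, which holds and is strict for $s<d$.
\item[$\bullet$] If $W_K\subset\CF_0$, it reads $ds\leqslant s(d-1)$, which fails.
\end{itemize}
Equivalently, you may keep the codimension as $t_H$, but then you must pair it with the slope $\tfrac1d$ of $V$, so $t_N(N_W)=s$, and reverse the inequality. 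With this fixed, your Steps 2--3 prove the lemma.
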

\medskip
\begin{proof}
The proof is given in \cite[Proposition 1.45]{razi}, we shortly recall it. Let $\CL_0\subset \boeta_0\otimes_{\rmO}E$ be a $1$-dimensional $E$-vector space. We will show that $\CF$ defines an admissible filtration if and only if 
$$
\dim_K(\CF_0\cap (\CL_0\otimes_EK))\leqslant \frac{d-1}{d},
$$
which implies the claim. Let $\CL\subset \bM$ be the $d$-dimensional isocrystal associated with $\CL_0$ by lemma \ref{lem:trivtoslop} which is of slope $\tfrac{d-1}{d}$. Moreover:
$$
\dim_K(\CF\cap (\CL\otimes_{\brv E}K))=d \dim_K(\CF_0\cap (\CL_0\otimes_EK))
$$
Hence, writing down the weakly-admissible condition (\og the Newton point is above the Hodge point \fg) for the filtered subisocrystal given by $\CL\subset \bM$ we get
$$
d \dim_K(\CF_0\cap (\CL_0\otimes_EK))=\dim_K(\CF\cap (\CL\otimes_{\brv E}K))\leqslant \dim_{\brv E}(\CL)\cdot \frac{d-1}{d}=d-1.
$$ 
This proves the claim.
\end{proof}

\medskip
\begin{prop}\label{prop:cimadr}
Let $x\in \BP^{d-1}(C)$ defining a modification $\CE_x$ of $\CE_{\Dr}$. Then the vector bundle $\CE_x$ is trivial if and only if $x\in \BH_{\brv E}^d(C)$, meaning that the hyperplane defined by $x$ does not contain any $E$-line.
\end{prop}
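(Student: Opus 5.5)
We prove the two implications separately, using the explicit list of possible modifications given by Proposition \ref{prop:compodmod}, together with Lemma \ref{lem:trivtoslop}, which identifies sub-isocrystals of $\bM$ with $E$-rational subspaces.

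\emph{Non-trivial $\CE_x$ forces $x\notin\BH^d_{\brv E}(C)$.} Suppose $\CE_x$ is not trivial. By Proposition \ref{prop:compodmod}, for some $r\in\llbracket 1,d-1\rrbracket$,
$$
\CE_x\cong \CO\bigl(\tfrac 1 d\bigr)^{\oplus r}\oplus\CO\bigl(-\tfrac{r}{d(d-r)}\bigr)^{\oplus m}.
$$
Let $\CF'\subset \CE_x$ be the maximal-slope part of the Harder--Narasimhan filtration. It is $\CO(\tfrac1d)^{\oplus r}$, and since it is canonical it is stable under $\rmD$. Its composite with the inclusion $\CE_x\incl\CE_{\Dr}=\CO(\tfrac1d)^{\oplus d}$ is a map between semistable bundles of the same slope. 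Such a map has saturated image, so it is a direct summand: in the abelian category of semistable bundles of slope $\tfrac1d$, the cokernel of an injection is again semistable of slope $\tfrac1d$, which I justify either by the classification (Theorem \ref{thm:clasvb}) or by a degree argument. By Theorem \ref{thm:clasvb} this summand comes from a $\rmD$-stable sub-isocrystal of $\bM(1)$, and by Lemma \ref{lem:trivtoslop} that sub-isocrystal corresponds to a nonzero $E$-rational subspace $W\subset E^d$. Here $W$ has dimension $r/d\cdot d$ in the appropriate normalization, so it is nonzero and proper.

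The fact that $\CF'\subset\CE_x$ means the modification at $\infty$ restricts to $\CF'$ as the identity. In other words, the fiber of the summand at $\infty$, namely $W\otimes_E C$ in degree $0$, lies in the kernel hyperplane $\CF_0$ attached to $x$ under the identification of Lemma \ref{lem:odmodif}. An $E$-rational line inside $W$ then lies in the hyperplane $x$, so $x\notin\BH^d_{\brv E}(C)$.

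\emph{$x\notin\BH^d_{\brv E}(C)$ forces $\CE_x$ non-trivial.} Conversely, suppose the hyperplane $x$ contains an $E$-rational line $\CL_0$. Lemma \ref{lem:trivtoslop} gives a $\rmD$-stable sub-isocrystal $\CL\subset\bM$, whose bundle is $\CL_{\CX}\coloneqq\CE(\CL(1))\cong\CO(\tfrac1d)^{\oplus ?}$ and is a direct summand of $\CE_{\Dr}$. This summand has rank $d$ and degree $1$. Because $\CL_0\otimes C\subset\CF_0$, and by $\rmD$-stability $\CL\otimes C\subset \CF$, the lattice defining $\CE_x$ contains $\wh{\CL_{\CX}}_{\infty}$. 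Hence $\CL_{\CX}\subset\CE_x$, which is a subbundle of positive slope $\tfrac1d$. A trivial bundle has no subbundle of positive degree (all its subbundles have slope at most $0$), so $\CE_x$ is not trivial.

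The equivalence is essentially the weak-admissibility computation of Lemma \ref{lem:wadrin}. The step needing the most care is the second half of the first implication: identifying the canonical summand $\CF'$ with a rational subspace contained in the hyperplane, and checking that the saturation argument and the $\rmD$-linearity are compatible with the grading convention of Lemma \ref{lem:odmodif}. I would first settle this by computing fibers at $\infty$ directly from the Berger-pair description in Proposition \ref{prop:bpair}.
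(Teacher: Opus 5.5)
Your proof is correct and follows essentially the paper's route: Proposition \ref{prop:compodmod} produces a slope-$\tfrac1d$ piece of $\CE_x$ whose image in $\CE_{\Dr}$ is a $\rmD$-stable sub-isocrystal contained in the kernel $\CF$, and Lemma \ref{lem:trivtoslop} turns it into an $E$-rational subspace of $\CF_0$. You are more careful than the paper in three places it leaves implicit: the $\rmD$-stability and saturation of this piece (your Harder--Narasimhan and same-slope arguments), and the converse direction (a trivial bundle admits no nonzero map from $\CO(\tfrac1d)$). One point to add: passing from a same-slope subbundle to a sub-isocrystal uses full faithfulness of $D\mapsto\CE(D)$ on isoclinic objects, not just the bijection on isomorphism classes stated in Theorem \ref{thm:clasvb}.
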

\medskip
\begin{proof}
By Proposition \ref{prop:compodmod}, the vector bundle $\CE_x$ is not trivial if and only if it is of the form $\CE_x\cong \CO(\tfrac{1}{d})\oplus \CE'$. In that case we get a $\rmD$-linear injective map $\CO(\tfrac{1}{d})\incl\CE_{\Dr}$ which is in the kernel of map $\CE_{\Dr}\rightarrow \iota_{\infty,*}C^d$ defining the modification. Moreover, this inclusion defines a subisocrystal $D'\subset \bM$ which is stable by $\rmD$ and by the previous observation, if we write $\CF\subset \bM\otimes_{\brv{E}}C$ the subspace defined by $x$, we get $D'\subset \CF$. But then, using the same construction as in \ref{lem:wadrin} we get an $E$-line $(D_0')^{\bU=1}\subset  \boeta_0\otimes_{\rmO}E$ such that $(D_0')^{\bU=1}\subset \CF_0$. Hence we have proved that $\CE_x$ is not trivial if and only if $x\in \BP^{d-1}(C)$ contains an $E$-line which is equivalent to
$$
x\in \bigcup_{H\in\CH_E}H(C).
$$
\end{proof}
\medskip
\begin{rema}
Notice that the former proposition is a \og weakly admissible $\implies$ admissible\fg theorem for the Drinfeld space and it can be deduced from the general theorem of Chen--Fargues--Shen (\cf \cite{cfs}). The proof is in fact the simplified version of their argument, explained in \cite[Remark 6.2]{cfs}, which is the case where modifications can be described by subisocrystals.
\end{rema}
\medskip
\begin{coro}
Let $K/\breve E$ be a valued field extension, then
$$
\pi(\rmM_{\brv E}(K))=\BH_{\brv E}^d(K).
$$
\end{coro}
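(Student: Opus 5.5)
The plan is to prove the two inclusions separately, first for $K=C$ algebraically closed and complete, and then to deduce the statement for general $K$.

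For $K=C$ we use the description of $\pi$ on $C$-points given above. By Proposition \ref{prop:compmodfil}, a point $x=(X,\rho)\in \rmM_{\brv E}(C)$ has image $\pi(x)\in \BP^{d-1}(C)$, and under Lemma \ref{lem:odmodif} this corresponds to the $\rmD$-linear minuscule modification
$$
0\rightarrow V_{\varpi}(X)\otimes_E\CO\rightarrow \CE_{\Dr}\rightarrow \iota_{\infty,*}\Lie(X)\otimes_{\rmO_C}C\rightarrow 0
$$
from Corollary \ref{cor:sw}. This modification is trivial.

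\emph{Inclusion} $\pi(\rmM_{\brv E}(C))\subset \BH^d_{\brv E}(C)$. By Proposition \ref{prop:cimadr}, triviality of the modification forces $\pi(x)\in\BH^d_{\brv E}(C)$.

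\emph{Converse inclusion.} Let $y\in \BH^d_{\brv E}(C)$. By Proposition \ref{prop:cimadr}, the associated $\rmD$-linear modification $\CE_y\incl\CE_{\Dr}$ is trivial. Proposition \ref{prop:modeqpdiv}, with the $\rmOD$-action carried along by functoriality of Theorem \ref{thm:farclas}, then produces an isotrivial deformation $(X,\rho)$ of $\bX$ over $\rmO_C$ inducing this modification.

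Several properties of $X$ must be checked.
\begin{itemize}
\item[$\bullet$] $X$ is a special formal $\rmOD$-module. The $\rmO_d$-grading of $\Lie(X)\otimes C=\bM\otimes C/\CF$ has each graded piece of dimension $1$: the subspace $\CF$ is $\rmD$-stable of codimension $d$, and $\rmO_d$ permutes nothing, so each graded piece of $\bM\otimes C$ has dimension $d$ and the cokernel distributes as $1$ per piece, as in Lemma \ref{lem:trivtoslop}. Since $\rmO_C$ is a valuation ring, $\Lie(X)$ is torsion-free, so each $\Lie(X)_i$ is free of rank $1$. Formality holds because $\bX$ has no étale part, hence neither does $X$.
\item[$\bullet$] The height of $\rho$ need not be $0$. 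We change it to $0$ by composing $\rho$ with a power of $\Pi\in\rmOD$, which is an $\rmOD$-linear quasi-isogeny of height $d$. This shifts the height by $d$ and does not change $\pi(x)$, since scalars in $\rmD^\times$ act on the period domain trivially up to the normalization. Strictly, $\Pi$ acts with degree shift, so one should instead use $\varpi$, of height $d^2$. Here we expect to need Lemma \ref{lem:specarthei} together with the index computation to see that the residual height is a multiple of $d^2$, so that only multiples of $\varpi$ are needed.
\end{itemize}
This gives $y=\pi(x)$ with $x\in\rmM_{\brv E}(C)$.

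For general $K$, take $C=\wh{\bar K}$. If $x\in\rmM_{\brv E}(K)$, then $\pi(x)\in \BP^{d-1}(K)\cap\BH^d_{\brv E}(C)=\BH^d_{\brv E}(K)$. Conversely, if $y\in\BH^d_{\brv E}(K)$, we obtain $x\in\rmM_{\brv E}(C)$ with $\pi(x)=y$. This $x$ is fixed by every continuous automorphism of $C/K$, because $\pi$ is equivariant for such automorphisms and injective on $C$-points by the preceding proposition. We then need $x$ to descend to a $K$-point. For this we use that $\pi$ is étale (Corollary \ref{cor:gmod}): the fiber of $\pi$ over the $K$-point $y$ is étale over $K$, and a Galois-invariant $C$-point of an étale $K$-scheme is a $K$-point. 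We expect this descent step to be the main obstacle, since for non-discretely valued $K$ it requires the Ax–Sen–Tate type fact that $C^{\Gal(\bar K/K)}=\wh K=K$; we would appeal to it directly.
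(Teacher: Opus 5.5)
Your outline is the argument the paper intends: the corollary is stated as an immediate consequence of Proposition \ref{prop:cimadr} and of the description of $\pi$ on $C$-points through Theorem \ref{thm:scwe} and Lemma \ref{lem:odmodif}. However, the height normalization in your converse inclusion fails.

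Proposition \ref{prop:modeqpdiv} only provides \emph{some} member $(X,\rho)$ of the isogeny class attached to the modification. Composing with powers of $\varpi$ moves $\heig_{\rmO}\rho$ only by multiples of $d^2$. The index computation you allude to (as in Lemma \ref{lem:specarthei}) gives $\log[M_{i+1}:M'_{i+1}]=\log[M_i:M'_i]$ for two special Dieudonn\'e modules of height $d^2$, because $M_{i+1}/\Pi M_i$ and $M'_{i+1}/\Pi M'_i$ both have length $1$. Hence heights of $\rmOD$-linear quasi-isogenies lie in $d\BZ$, and nothing better holds. Concretely, if $(X_0,\rho_0)$ has height $0$ and induces your modification, then $(X_0/X_0[\Pi],\mathrm{pr}\circ\rho_0)$ induces the same modification and has height $d$. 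No power of $\varpi$ brings that back to $0$ when $d\geqslant 2$.

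The missing step is to move within the isogeny class on the side of $X$. Since $\Pi a=\sigma(a)\Pi$ for $a\in\rmO_d$, the subgroup $X[\Pi]$ is $\rmOD$-stable, so $X\to X/X[\Pi]$ is an $\rmOD$-linear isogeny of $\rmO$-height $d$. Via the isomorphism induced by $\Pi$, its target is $X$ with the $\rmOD$-action conjugated by $\Pi$. It is therefore still special, since the Lie grading is merely shifted. This is what makes your first idea with $\Pi$ correct: $\rho\circ\Pi$ is $\rmOD$-linear only after this twist. Since $(X/X[\Pi],\mathrm{pr}\circ\rho)$ is isogenous to $(X,\rho)$, the sub-bundle of $\CE_{\Dr}$ is unchanged, and so is the point of $\BP^{d-1}(C)$ given by Lemma \ref{lem:odmodif}. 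Combined with $\varpi^{-1}$ (height $-d^2$), this reaches height $0$ from any height in $d\BZ$. Equivalently, in Theorem \ref{thm:farclas} the $\rmOD$-stable lattices in $V_{\varpi}(X)\cong\rmD$ are the $\Pi^nT$, $n\in\BZ$, and you must pick the one giving height $0$.

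Two smaller points.
\begin{itemize}
\item[$\bullet$] The graded pieces of $\CF$ all have codimension $1$ because $\Pi$ is an invertible degree-one operator on $\bM\otimes C$, not because ``$\rmO_d$ permutes nothing''.
\item[$\bullet$] Your descent for general $K$ is correct but needs no Ax--Sen--Tate. The fibre $\pi^{-1}(y)$ is \'etale over $\Sp K$, hence a disjoint union of $\Sp L$ with $L/K$ finite separable. So a $\Gal(\bar K/K)$-fixed $C$-point factors through $\bar K^{\Gal(\bar K/K)}=K$.
\end{itemize}
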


\subsubsection{The period morphism is an open immersion}
We end the proof of the theorem by showing that the period morphism is an open immersion, which implies it is an isomorphism on its image. We start with a lemma, coming from\footnote{In the scheme case, the fact that an injective étale map is an open immersion is sometimes known as \emph{Grothendieck's theorem}. For a rigid space there is the additional feature that it is enough to ask injectivity on geometric points, since the local rings are Henselian.} \cite[Lemme I.42]{far}. 
\medskip
\begin{lemm}\label{lem:farlem}
Let $K$ be a complete non-archimedean valued field and $f\colon X\rightarrow Y$ an étale map between sheafy adic spaces (in particular adic spaces associated to rigid spaces) over $K$. If for every valued field extension $L/K$ the induced map $f(L)\colon X(L)\rightarrow Y(L)$ is injective then $f$ is an open immersion.
\end{lemm}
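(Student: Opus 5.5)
We reduce to the classical statement that an étale morphism which is universally injective is an open immersion, using that adic spaces are built out of local rings with valued residue fields. The first step is to see that $f$ is injective on underlying topological spaces. A point $x\in X$ gives a valued field $(k(x),k(x)^+)$. Let $L$ be the completion of $k(x)$. The statement quantifies over valued field extensions $L/K$, so we read it as including higher-rank valuations, or else first treat rank one points and handle the rest by specialization. With that reading, the canonical morphism $\Spa(L,L^+)\to X$ defines an $L$-point. If $f(x)=f(x')=y$, we choose a common valued field $L$ containing both $k(x)$ and $k(x')$ over $k(y)$; this is possible by amalgamating completed residue field extensions. We then obtain two $L$-points of $X$ with the same image in $Y(L)$, so they coincide, and hence $x=x'$. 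The same argument applied to all extensions $L$ shows more: for every $y$ and every $L/k(y)$, the fibre $X\times_Y\Spa(L,L^+)$ has at most one $L$-point. In other words, $f$ is "universally injective" in the adic sense.

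The second step uses étaleness. Étale morphisms of adic spaces are open maps (Huber). So $f(X)$ is open, and $f$ is a continuous open injection, hence a homeomorphism onto the open subspace $f(X)$. It remains to show that $f^\sharp\colon \CO_{f(X)}\to f_*\CO_X$ is an isomorphism. By Huber's local structure theorem, $f$ is locally a composition of an open immersion and a finite étale map. Near $x$ we factor $f$ as $U\hookrightarrow V\to Y$ with $V\to W\subset Y$ finite étale.

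The third step is the main obstacle: showing that the finite étale part has degree $1$ near $y=f(x)$. The fibre $V_y$ is $\Spa$ of a finite étale $\widehat{k(y)}$-algebra $B$, and $U_y$ is an open-closed piece of it containing the point $x$. The factor of $B$ corresponding to $x$ is a finite separable field extension $L'$ of $\widehat{k(y)}$. Passing to a Galois closure $L$ of $L'$, the $L$-points of $U_y$ lying over $x$ number $[L':\widehat{k(y)}]$. Injectivity on $L$-points forces $[L':\widehat{k(y)}]=1$, and we argued above that $x$ is the unique point over $y$. Because the local rings are henselian (as in the footnote), the degree of $V\to W$ at $x$, restricted to the connected component through $x$, is locally constant. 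It therefore equals $1$ on a neighborhood, so on a smaller open $U'\ni x$ the map $f|_{U'}$ is finite étale of degree $1$ onto an open subset, i.e.\ an isomorphism onto that open subset. I expect the delicate point here to be the passage from "fibre of degree $1$ at $y$" to "degree $1$ on a neighborhood". I would first try to handle this via the henselian property, using \cite[Lemme I.42]{far} as a model, or by citing Huber's result that a finite étale map which is bijective on fibres over a neighborhood is an isomorphism.

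Finally, these local isomorphisms glue, since $f$ is injective. Hence $f$ is an open immersion.
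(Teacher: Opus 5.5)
Your Steps 1 and 2 match the first half of the paper's proof. Your amalgamation argument for injectivity on $\lvert X\rvert$ is in fact more explicit than the paper, which only asserts it.

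The gap is at the end of Step 3. The inference ``the degree of $V\to W$ on the connected component through $x$ is locally constant, hence equal to $1$ near $y$'' fails even when the lemma's hypotheses hold. Take residue characteristic $\neq 2$, $W=\{\lvert T\rvert=1\}$, $V=\{S^2=T\}$, $y=\{T=1\}$, $x=\{S=1\}$ and $U=\{\lvert S-1\rvert\leqslant\lvert\varpi\rvert\}$. Then:
\begin{itemize}
\item $U\to W$ is an open immersion.
\item The factor of the fibre at $x$ has degree $1$.
\item $V$ is connected and of degree $2$ over $W$.
\end{itemize}
The fibre $V_y$ also contains the second point $\{S=-1\}\notin U$. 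For the same reason your fallback citation (a finite étale map bijective on fibres over a neighbourhood is an isomorphism) does not apply to $V\to W$.

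There is a smaller problem in the same step. The number of $L$-points over $x$ equals $[L':\widehat{k(y)}]$ only when the valuation of $y$ extends uniquely to $L'$. This holds at rank-one points, since complete rank-one fields are henselian. At higher-rank points it can fail: conjugate embeddings $L'\hookrightarrow L$ can land on other points of $V_y$ lying outside $U$, and injectivity then does not force degree $1$. You should make the count at the rank-one generalization $x_0\in U$ of $x$, which has the same completed residue field and sees the same factor $L'$.

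The idea that closes the gap, and which is how the paper concludes, is that you need no degree statement on a neighbourhood. Once Step 2 makes $f$ a homeomorphism onto an open subset, it suffices that every stalk map $g\colon \CO_{Y,f(x)}\to\CO_{X,x}$ be an isomorphism. The paper argues as follows:
\begin{itemize}
\item $g$ is a finite étale local homomorphism of henselian local rings.
\item The point-injectivity hypothesis forces $g$ to induce an isomorphism on residue fields.
\item Hence $g$ is an isomorphism, since a finite étale local map is free of rank equal to the residue degree.
\end{itemize}
If you do want an actual neighbourhood on which $f$ is an isomorphism, replace ``connected component'' by the local factor at $x$. You must split it off with an idempotent over the henselian stalk $\CO_{W,y}$ and spread that idempotent to a neighbourhood of $y$. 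That uses exactly the same henselian input as the paper's stalk argument, so the detour through Huber's local structure theorem and global degrees buys nothing.
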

\medskip
\begin{proof}
For convenience, we detail the proof. Recall that an étale map between rigid spaces is open (since it is smooth, \cf \cite[Proposition 1.7.8]{hub}) and the hypothesis gives that $f$ induces an injection $\lvert f \rvert \colon \lvert X \rvert \incl \lvert Y \rvert$ on the underlying topological spaces that is a homeomorphism on its image. This means that $\lvert f \rvert \colon \lvert X \rvert \incl \lvert Y \rvert$ is an open inclusion and we only need to check that for $x\in X$, the induced map $g=f^*_x\colon \CO_{Y,f(x)}\rightarrow \CO_{X,x}$ is an isomorphism. Write $A\coloneqq \CO_{Y,f(x)}$ and $B\coloneqq \CO_{X,x}$ which are henselian $K$-algebras by \cite[Proposition 3.2.10]{fuka}. Then $g\colon A\rightarrow B$ is finite étale. The hypothesis tells us that for every valued field extension $L/K$ we get a bijection
$$
\Hom(B,L)\xrightarrow[\cdot \circ g]{\sim}\Hom(A,L)
$$
meaning in particular that $g$ induces an isomorphism between the residue fields $\kappa(g)\colon \kappa(A)\xrightarrow {\sim}\kappa(B)$. But since $A$ and $B$ are henselian this implies that $g$ is an isomorphism, which concludes the proof.
\end{proof}

As a consequence of the lemma we get the following corollary, which concludes the proof of Theorem \ref{thm:genef}.
\medskip
\begin{coro}
The period morphism $\pi\colon \rmM_{\brv E}\rightarrow \BP^{d-1}_{\brv E}$ is an open immersion whose image is $\BH_{\brv{E}}^d$.
\end{coro}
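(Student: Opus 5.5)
We combine Lemma \ref{lem:farlem} with the two results just established on points: injectivity of $\pi$ on $K$-points, and the equality $\pi(\rmM_{\brv E}(K))=\BH_{\brv E}^d(K)$. The first step is to check that $\pi\colon \rmM_{\brv E}\rightarrow \BP^{d-1}_{\brv E}$ is étale.

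For this we use the local model diagram. The map $\pi_2\colon \brv{\CN}_{\Dr}\rightarrow \wh{\rmM}^{\loc}_{\brvO}$ is formally smooth by Corollary \ref{cor:gmod}. On generic fibers $\rmM^{\loc}_{\brv E}\cong\BP^{d-1}_{\brv E}$. After choosing a local trivialization $\gamma$ compatible with the rational isomorphism \eqref{eq:isot} induced by $\rho^{\univ}$, the period map is obtained from $\pi_2$ on generic fibers. Concretely, Grothendieck--Messing theory (Corollary \ref{cor:gmod}) identifies first-order deformations of $(X,\rho)$ with $\rmOD$-stable lifts of the Hodge filtration in $\BD_{\rmO}(X)$. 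Rationally these are lifts of the corresponding point of $\BP^{d-1}$, via the identification of Lemma \ref{lem:odmodif} between $\rmD$-stable codimension-$d$ subspaces and lines in $\boeta_0\otimes K$. Hence $\pi$ induces an isomorphism on complete local rings at classical points, so $\pi$ is étale; this is the standard statement \cite[5.17]{razi}.

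Given étaleness, Lemma \ref{lem:farlem} applies to $f=\pi$. The injectivity proposition gives injectivity of $\pi$ on $L$-points for every valued field extension $L/\brv E$, so $\pi$ is an open immersion. Its image is an open subspace whose $L$-points are exactly $\BH_{\brv E}^d(L)$ for all such $L$, by the surjectivity corollary.

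It remains to show that the image equals $\BH^d_{\brv E}$. Both are open subsets of $\BP^{d-1}_{\brv E}$ with the same points over all complete valued fields, hence the same underlying set in the adic space. An open immersion is determined by its image, so $\pi$ induces $\rmM_{\brv E}\xrightarrow{\sim}\BH^d_{\brv E}$. Here $\BH^d_{\brv E}$ carries the open structure of Proposition \ref{prop:symgenfib}: it is the complement of the rational hyperplanes, an admissible open.

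The main obstacle is étaleness: making precise that the rigid generic fiber of the formally smooth map $\pi_2$, twisted by $\rho^{\univ}$, descends along the $\CP$-torsor $\pi_1$ to the étale map $\pi$. I would cite \cite[Proposition 5.17]{razi} rather than redo the argument, checking only that the $\rmOD$-linear grading picks out $\BP^{d-1}=\BP(\boeta_0\otimes E)$.
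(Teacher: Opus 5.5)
Your proposal is correct and follows the paper's argument: $\pi$ is étale by Grothendieck--Messing theory (Corollary~\ref{cor:gmod}), Lemma~\ref{lem:farlem} combined with injectivity on points makes it an open immersion, and the image is identified through the equality $\pi(\rmM_{\brv E}(K))=\BH_{\brv E}^d(K)$. The paper also does not reprove étaleness at this point (it only attributes it to Grothendieck--Messing theory in the introduction), so citing \cite[Proposition 5.17]{razi} for that step matches the paper's level of detail.
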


\newpage

\section{The (perfected) special fibers}\label{sec:perfspe}
Recall that $k$ is the residue field of $\brvO$. Let $\brv{\rmM}_k$ be the special fiber of $\brv{\CM}_{\Dr}$ and $\rmM_k$ be the special fiber of $\CM_{\Dr}$. Let $X$ be a scheme over $k$, let $F\colon X\rightarrow X$ be its absolute Frobenius. We define the \emph{perfection} of $X$ as
\begin{equation}\label{eq:defperf}
X^{\perf}\coloneqq \varprojlim_F X.
\end{equation}
Note that we then have $\lvert X^{\perf}\rvert=\lvert X\rvert$. Moreover, $X^{\perf}$ represents the functor $\Hom(\ \cdot \ ,X)$ restricted to the category $\Perf_k$ of perfect schemes over $k$. The perfection defines a functor from schemes over $k$ to perfect schemes over $k$: if$f\colon X_1\rightarrow X_2$ is a morphism, we get a morphism $f^{\perf}\colon X_1^{\perf}\rightarrow X_2^{\perf}$,  \emph{the perfection of $f$}.

We consider $\brv{\rmM}_k^{\perf}$ the perfection of $\brv{\rmM}_k$, and $\brv{\BH}_k^{d,\perf}$ the perfection of the special fiber of $\brv{\BH}^{d}_{\brvO}$.

We will prove the following
\medskip
\begin{theo}\label{thm:perfiso}
There exists a $\GL_d(E)$-equivariant isomorphism
$$
f\colon \brv{\rmM}_k^{\perf}\xrightarrow{\sim} \brv{\BH}_k^{d,\perf}.
$$
\end{theo}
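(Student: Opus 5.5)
The plan is the one sketched in the introduction: decompose both sides along the Bruhat--Tits building, compare the pieces one lattice class at a time, and glue. Throughout I work with perfect test rings $R\in\Perf_k$. Witt vectors over such rings are $\varpi$-torsion free, so Cartier theory (Proposition \ref{prop:classod}) applies cleanly.

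\textbf{Step 1: existence of critical indices.} For $(X,\rho)\in\brv{\rmM}_k^{\perf}(R)$, the loci $Z_i\subset\Spec R$ where $\Pi_i\colon\Lie(X)_i\to\Lie(X)_{i+1}$ vanishes are closed. By Lemma \ref{lem:havcrit} they cover $\Spec R$ pointwise. I would then show that on an irreducible perfect $R$ some $Z_i$ equals all of $\Spec R$. The argument: $\prod_i\Pi_i=\varpi=0$ and the target line bundles are invertible. Moreover $Z_i$, with its reduced (equivalently, perfect) structure, is closed, and universal homeomorphisms between perfect schemes are isomorphisms, so reducedness upgrades the pointwise statement to a scheme-theoretic one.

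\textbf{Step 2: lattices from critical indices.} On $Z_i$, Lemma \ref{lem:inva} produces the Zariski-locally constant $\rmO$-lattice $\eta^i_X=M_i^{\Pi=V}$. Through $\rho$ and $\iota_i$ (\S\ref{subsubsec:refob}), it embeds in $E^d$. Using the operators $\Pi$ and $V$, I would propagate this to all indices, obtaining a graded flat $\rmO[\Pi]$-module $\eta^\bullet_X$ together with $r^\bullet$. This defines a point of $\brv{\sL}_d$, that is, by Proposition \ref{prop:repldbt}, a continuous map $\Spec R\to\brv{\BrTi}_{\!d}$. For each vertex $\bar\eta$ this gives the lattice subschemes $\rmM_k^{\bar\eta}$, and these form a closed cover of $\brv{\rmM}_k^{\perf}$. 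On the other side, the composite $\brv{\BH}^d_{\rmO}\to\brv{\sL}_d$ gives the decomposition of $\brv{\BH}_k^{d,\perf}$ into the perfections $(X_k^{\bar\eta})^{\perf}$ of the irreducible components.

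\textbf{Step 3: comparing components.} This is the core of the proof. For $(X,\rho)\in\rmM_k^{\bar\eta}(R)$ with $i$ critical, define
$$u\colon\eta^i_X\otimes_{\rmO}R\to M_i/VM_{i-1}=\Lie(X)_i,$$
and similarly at the other indices. Together with $L=\Lie(X)$, the datum $(\eta^\bullet_X,L,u,r^\bullet)$ should be a point of Definition \ref{def:drin} lying over $\bar\eta$. The conditions to check are:
\begin{itemize}
\itemb surjectivity of $u$, which holds because $\eta^i_X$ spans $M_i$ over $W_{\rmO}(R)$ (Lemma \ref{lem:inva});
\itemb pointwise injectivity modulo $\Pi$, which I would deduce over each perfect field point from the isocrystal picture (Lemma \ref{lem:critspe}) together with the special condition;
\itemb the determinant condition, which follows from $\heig\rho$.
\end{itemize}
For the inverse map, given $(\eta,L,u,r)$ over a perfect $R$, I would reconstruct a special Cartier module. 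Set
$$M_i\coloneqq\eta^i\otimes_{\rmO}W_{\rmO}(R),\qquad V\coloneqq\Pi\otimes\sigma^{-1}$$
on a critical index, then modify it along $\ker u$ to recover the remaining graded pieces and the actual $V$. The special condition should follow from the invertibility of the $L_i$. Via Proposition \ref{prop:classod} this yields $(X,\rho)$. Proving that these constructions are mutually inverse, i.e.\ the explicit isomorphisms $f_{\bar\eta}\colon\rmM_k^{\bar\eta}\xrightarrow{\sim}(X_k^{\bar\eta})^{\perf}$, is where I expect the main difficulty. The delicate parts are the bookkeeping of indices across non-critical steps, and checking that the recovered $V$ is injective with $M_i/VM_{i-1}$ free of rank $1$. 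I would handle this first in the case where every index is critical (the open stratum), and then treat the remaining cases through the lattice chain of the simplex.

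\textbf{Step 4: gluing and equivariance.} On overlaps $\rmM_k^{\bar\eta}\cap\rmM_k^{\bar\eta'}$, both $i$ and $i'$ are critical. The two lattice constructions are related by $\Pi$, which matches the inclusions in the chain of Lemma \ref{lem:explocmod}. Hence the $f_{\bar\eta}$ agree on overlaps, and since the cover is by closed subschemes of perfect schemes, they descend to a global $f$. Equivariance holds because $g$ acts on both sides through $\rho\circ g^{-1}$ and $r\circ g^{-1}$, and the maps $\iota_i$ are $\GL_d(E)$-equivariant (Remark \ref{rem:glequiv}).

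Finally, to finish the gluing I need that a morphism between perfect schemes defined on a finite closed cover, agreeing on pairwise intersections, glues. I would prove this via $v$-descent, or more simply by using that perfect schemes satisfy descent for finite closed covers (h-descent for perfect schemes). This is a technical point that I would isolate as a separate lemma.
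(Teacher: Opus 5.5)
Your Steps 1, 2 and 4 broadly match the paper: critical indices on irreducible perfect schemes via reducedness, a closed cover by lattice subschemes, gluing along closed covers, and equivariance through the $\iota_i$. Step 3, however, carries the whole content of the theorem, and it is not proved. You define the forward map $(X,\rho)\mapsto(\eta^\bullet_X,\Lie(X),u,r^\bullet)$, but the inverse is only described as ``modify along $\ker u$''. You also say yourself that showing the two constructions are mutually inverse is the main difficulty.

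That difficulty is concentrated on the boundary of a component, as soon as $d\geqslant 3$. From $u_i$ at the index $i$ of $\eta$ you do recover $M_{i-1}$, since $VM_{i-1}=\ker(M_i\to L_i)$ and $V$ is fixed rationally by $\rho$. To continue, though, you need the maps $M_j\to L_j$ for $j\neq i$.
\begin{itemize}
\item On the dense open part these maps are forced, because the $\Pi_j$ with $j\neq i$ are invertible on $L$.
\item At points where several indices are critical they are genuinely additional data. They must be extracted from the other lattices of the simplex, uniformly in families along which the set of critical indices jumps.
\end{itemize}
The proposal only gestures at this (``through the lattice chain of the simplex'') without giving a construction. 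Your plan to start with ``the case where every index is critical (the open stratum)'' also has the strata reversed. The locus where all $\Pi_j$ vanish is zero-dimensional; it contains the point $\bX$. The dense open $\BH_k^{\{\eta\}}$ of the component $\BH_k^{\eta}$ is where exactly one index is critical. Finally, a construction carried out stratum by stratum would not by itself give a morphism of schemes.

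The missing idea, which is how the paper proceeds, is to pass through the flag-variety model $X_k^{\eta,\perf}$ of Definition \ref{def:deluvar}. Set $D_j\coloneqq r_j(M_j)\subset E^d\otimes_{\rmO}W_{\rmO}(R)$. A point of $\rmM_k^{\eta}$ is then literally a $\varpi$-periodic chain with rank one graded quotients, $D_i=\eta\otimes_{\rmO}W_{\rmO}(R)$ and $D_j\subset\sigma(\sigma^*D_{j+1})$. The inclusions encode $\Pi$ and the Frobenius condition encodes $V$ (Lemma \ref{lemm:rcomp}). Conversely, such a chain gives back a special Cartier module, the main input being that each $D_j$ is locally free. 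So $\rmM_k^{\eta}\cong X_k^{\eta,\perf}$ holds with no case distinction at all.

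Drinfeld's data enter only on the dense open Deligne--Lusztig locus, $Y_k^{\eta,\perf}\cong\BH_k^{\{\eta\},\perf}$ (Lemma \ref{lem:opdl}), where a single line determines the whole flag. This isomorphism extends to $X_k^{\eta,\perf}\cong\BH_k^{\eta,\perf}$ because both sides are perfections of smooth, proper, irreducible varieties; for $\BH_k^{\eta}$ this is Lemma \ref{lem:valu} and Corollary \ref{cor:iredcomph}. Hence an isomorphism between dense opens extends uniquely (Proposition \ref{prop:cpctdl}). This extension argument is what replaces your explicit reconstruction on the boundary. Without it, or a genuine substitute, Step 3 remains a gap.
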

\medskip

\subsection{The special fiber of $\brv{\BH}_{\rmO}^d$}\label{subsec:speh}
The purpose of this section is to show that the irreducible components of\footnote{The results of this paragraph also hold for the unperfected version over $k_E$.} $\brv{\BH}_{k}^{d,\perf}$ are compactified Deligne--Lusztig varieties.

\subsubsection{Deligne--Lusztig varieties and compactifications}
We start by defining the flag variety where, for later convenience, we introduce a shift in the indices and work over perfect algebras. If the indices confuse the reader, it is harmless to consider $i=0$. 

Let $\eta$ be an $\rmO$-lattice of $E^d$ and $i\in \BZ$ its logarithmic index. Let $\Fl_{\eta}^{\perf}$ be the perfect $k$-scheme representing the functor defined by
$$
R\in\Perf_k\mapsto \{(D_j)_{j\in \BZ}\}
$$
where $(D_j)_{j\in \BZ}$ is a set of $W_{\rmO}(R)$-submodules of $E^d\otimes_{\rmO}W_{\rmO}(R)$ such that: 
\begin{enumerate}
\item $\forall j\in \BZ \ D_j\subsetneq D_{j+1}$ and $D_{j+1}/D_j$ is a locally free $R$-module of rank $1$,
\item $D_i=\eta\otimes_{\rmO}W_{\rmO}(R)$,
\item $\forall j\in \BZ$, $D_j=\varpi D_{j+d}$.
\end{enumerate}

Considering the data modulo $\varpi \eta$, since $W_{\rmO}(R)$ is $\varpi$-complete and torsion-free because $R$ is perfect, we get that $\Fl_{\eta}^{\perf}$ is isomorphic to the perfection of the flag variety associated to $\eta/\varpi \eta$.  Hence by \cite[Proposition (9.9.3)]{ega1}, this functor is represented by a projective scheme over $k$. To avoid heavy indices, we will just write $(D_j)_j\in \Fl_{\eta}^{\perf}(R)$. Recall that $\GL_d(E)$ acts on $E^d$ through the standard action and let 
$$
\GL(\eta)\coloneqq \{g\in \GL_d(E)\mid g.\eta = \eta\}\subset \GL_d(E),
$$
the stabilizer of $\eta\subset E^d$. Then $\GL(\eta)$ naturally acts on $\Fl_{\eta}^{\perf}$ for $g\in \GL(\eta)$ by
\begin{equation}\label{eq:gactonflag}
g\colon (D_j)_j\mapsto (g\cdot D_j)_j.
\end{equation}
Notice that this action factors through the map $\GL(\eta)\rightarrow \GL(\bar \eta)$ induced by the reduction mod $\varpi$.

Recall that for $R\in \Perf_k$, $W_{\rmO}(R)$ is equipped with a Frobenius endomorphism $\sigma\colon W_{\rmO}(R)\rightarrow W_{\rmO}(R)$ which is moreover an automorphism. In particular, $\eta \otimes_{\rmO}W_{\rmO}(R)$ is equipped with a Frobenius automorphism which we still denote by $\sigma$.
\medskip
\begin{defi}\label{def:deluvar}
\
\begin{itemize}
\itemb Let $X_k^{\eta,\perf}$ be the subfunctor of $\Fl_{\eta}^{\perf}$ defined by
$$
R\in\Perf_k\mapsto X_k^{\eta,\perf}(R)\coloneqq \{(D_j)_{j}\in \Fl_{\eta}^{\perf}(R)\mid \forall j\in \BZ,\  D_{j}\subset \sigma(\sigma^{*}D_{j+1})\}
$$
\itemb Let $Y_k^{\eta,\perf}$ be the subfunctor of $X_k^{\eta,\perf}$ defined by
$$
R\in \Perf_k\mapsto Y^{\eta,\perf}_k(R)\coloneqq\{(D_{j})_{j}\in \Fl_{\eta}^{\perf}(R)\mid \forall j\in \llbracket i+1,i+d \rrbracket,\   D_j=\bigoplus_{l=0}^{j-i-1}\sigma^l((\sigma^*)^lD_{i+1})\}
$$
\end{itemize}
\end{defi}
\medskip
It is clear that $X_k^{\eta,\perf}$ is representable by a closed subscheme of $\Fl_{\eta}^{\perf}$, hence it is projective, and $Y_k^{\eta,\perf}$ is representable by an open subscheme of $X_k^{\eta,\perf}$. Moreover, these schemes are stable under the action $\GL(\eta)$ on $\Fl_{\eta}^{\perf}$ hence $\GL(\eta)$ acts on $Y_k^{\eta,\perf}$ and $X_{k}^{\eta,\perf}$ through the formula \eqref{eq:gactonflag}. Note that the isomorphism classes of these spaces do not depend on $\eta$ but only on $d$, as illustrated by the following remark:
\medskip
\begin{rema}\label{rema:dl}
The variety $Y_k^{\eta,\perf}$ is isomorphic to the perfection of the \emph{Deligne--Lusztig variety} (\cf \cite{delu}), for $\GL_d(k_E)$ associated to the longest Coxeter element $w\coloneqq s_{d-1}\cdots s_1$, which we denote $Y_k$. Moreover, $X_k^{\eta,\perf}$ is isomorphic to the perfection of the standard compactification of $Y_k$, which we denote $X_k$. Indeed, considering the data mod $\varpi$, one can prove that (\cf \cite[2.2]{delu}):
$$
Y_k\coloneqq \BP_k^{d-1}\setminus \bigcup_{H \in \sH_{k_E}} H
$$
where $\sH_{k_E}$ is the set of hyperplanes of $\BP_k^{d-1}$ defined over $k_E$. Using a basis of $\eta/\varpi$, we get an isomorphism $\BP_{\bar \eta}\cong \BP^{d-1}_k$ and the above map is given, for $R\in \Alg_k$, by
$$
(D_j)_{j}\in Y_k(R) \mapsto D_i/D_{i-1}\in \BP_{\bar \eta}(R)\cong\BP_{k}^{d-1}(R)
$$
Moreover, $X_k$ is the \emph{Demazure--Hansen} compactification of $Y_k$ defined by Deligne and Lusztig (\cf \cite[9.1, 9.10]{delu}). Hence it is smooth and projective over $k$ (\cf \cite[Lemma 9.11]{delu}).

With a little more work (\cf \cite[Chapitre II, Proposition 1.6]{gen}), we can show that $X_k$ is an iterated blow-up of the projective space. For $i\in \BN$ we define $R_i\subset \BP^{d-1}_{k}$ the closed subscheme parametrizing $k_E$-rational linear subspaces of dimension $i$. Then $X_k$ is the successive blow-up of $\BP^{d-1}_{k}$ along $\wt R_i$, the strict transform of $R_i$. Formally, this is
$$
X_k\coloneqq \Bl_{\wt R_{d-2}}(\cdots \Bl_{R_0}(\BP^{d-1}_{k})\cdots ).
$$
For example, if $d=2$, we get that $X^{\eta,\perf}_k$ is the perfection of $\BP_{\bar \eta}\cong \BP^1_k$ and that $Y^{\eta,\perf}_k$ is the perfection of
$$
\BP_{\bar \eta}\setminus\BP(\bar \eta)\cong \BP^1_k\setminus \BP^1(k_E).
$$
\end{rema}
\medskip
\subsubsection{Connected components as lattice subschemes}

\medskip
\begin{lemm}\label{lem:valu}
The $k$-scheme $\brv{\BH}_k^d$ satisfies the valuation criterion. Meaning that any morphism $\Spec\, K\rightarrow \brv{\BH}_k^d$, where $K$ is the fraction field of a discrete valuation ring $A$ over $k$, lifts uniquely to a morphism $\Spec\, A \rightarrow \brv{\BH}_k^d$.
\end{lemm}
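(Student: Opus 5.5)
We will reduce to the local charts $F_\Delta$ and use the explicit semistable equations of Lemma \ref{lem:semist}. A $K$-point of $\brv{\BH}_k^d$ is, by Definition \ref{def:drin} and Proposition \ref{prop:repldbt}, a tuple $(\eta,L,u,r)$ over $\Spec K$. Since $\Spec K$ is a point, $\eta$ is constant, so this datum determines a single simplex $\Delta\in\brv{\BrTi}_{\! d}$. The point therefore lies in $F_\Delta\otimes k$, and after enlarging to a maximal simplex $\Delta'\supset\Delta$ we may assume $\Delta$ is maximal, because $F_\Delta\subset F_{\Delta'}$ is open.

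By Lemma \ref{lem:semist}, $F_{\Delta}\otimes k$ is an open subscheme of $\Spec k[x_0,\dots,x_{d-1}]/(x_0\cdots x_{d-1})$, which is affine. Working with $\brv{\BH}_k^d$ directly would force us to handle a non-quasi-compact union. So we do not lift into a fixed chart; instead we lift using the moduli description, choosing the simplex after the fact.

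\textbf{Existence.} Start from $(\eta,L_K,u_K,r)$. Extend each invertible $K$-module $L^i_K$ to the $A$-lattice $L^i$ generated by the image $u_K(\eta^i)$. This is a finitely generated, torsion-free, hence free rank-one $A$-module, since $A$ is a DVR and $u_K\otimes K$ is surjective. The map $\Pi$ preserves these lattices because $u$ is $\rmO[\Pi]$-linear. By construction $u\otimes A\colon \eta\otimes A\to L$ is surjective.

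The lifted point may live over a different simplex at the closed point, because some $\Pi_i$ may vanish mod $\fkm_A$. We therefore define the new constructible sheaf $\eta_A$ on $\Spec A$ to be $\eta$ on the generic point. On the special point we take the lattice chain recording where $\Pi_i$ vanishes: $\eta^i_s$ is the preimage in $\eta^i\otimes E$ of $L^i$ under the appropriate $\varpi$-rescaling, mirroring Lemma \ref{lem:compatlatnorm}, where lattices were read off from the valuation of images. We must then check conditions (1)--(3) of Definition \ref{def:drin}. Condition (2), injectivity of $\eta/\Pi\eta\to L/\Pi L\otimes k(s)$, is the delicate one. We would verify it via the valuation: a nonzero element in the kernel would give an $\rmO$-linear relation among the images of lattice vectors in $L\otimes k(s)$, contradicting the minimality of the chosen lattice chain. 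This is the same argument that identifies points of $\BH$ with norms in Proposition \ref{prop:normsp}, applied to the valuation of $A$ instead of an absolute value.

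\textbf{Uniqueness.} Any lift must have $L^i$ equal to the $A$-span of $u(\eta^i)$, by surjectivity of $u\otimes A$ and flatness. The constructible sheaf at the special point is then forced by conditions (1) and (3) together with the vanishing loci $S_i$. This gives uniqueness.

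The main obstacle is the existence step: verifying pointwise injectivity (2) at the closed point and the compatibility of the specialized lattice chain with the determinant normalization (3). In effect, one needs to show that the specialization map $\brv{\BrTi}_{\! d}$-valued is continuous for the poset topology, which we expect to handle by the explicit normal form of Lemma \ref{lem:diagnorm}.
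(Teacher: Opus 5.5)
Your existence step has a genuine gap. You build the line bundles from the wrong lattices, and with that choice no lift can exist in exactly the case that matters.

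You set $L^{i,+}\coloneqq A\cdot u_K(\eta^i)$ using the \emph{generic} stalk $\eta^i$, and only afterwards change the lattices at the closed point. But in a lift the closed stalk $\eta^i_s$ injects into $\eta^i$. Surjectivity of $u\otimes A$ then forces $L^{i,+}=A\cdot u(\eta^i_s)$, which is in general a proper submodule of yours.

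Worse, whenever $\Pi_i\colon\eta^i\to\eta^{i+1}$ is an isomorphism generically, your choice gives $L^{i+1,+}=\Pi L^{i,+}$. So $\Pi_i$ stays nonzero on $L^{\bullet,+}\otimes_A k$. Condition (2) of Definition \ref{def:drin}, together with surjectivity of $u$, implies that at every point $\Pi_i$ is an isomorphism on the stalk of $\eta$ exactly when $\Pi_i\otimes k(s)\neq 0$ on $L$. Since $V(\Pi_i)$ is closed, the closed point then has exactly the same critical indices as the generic point. Condition (1) then forces $\eta$ to be constant on $\Spec A$. So no lattice can change, and your scenario where ``some $\Pi_i$ may vanish mod $\fkm_A$'' cannot occur.

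A concrete failure: take $d=2$ and the $K$-point of the open vertex stratum with $u(e_0)=t$, $u(e_1)=1$. This is a point of $\BP^1_k\setminus\BP^1(k_E)$ specializing to a $k_E$-rational point. You get $L^{0,+}=A$ and $e_0\mapsto 0$ in $L^{0,+}/t$, so condition (2) fails at the closed point and cannot be repaired. The actual lift lies over an edge, where both $\Pi_0$ and $\Pi_1$ vanish on $L\otimes_A k$.

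The missing idea is how to produce the closed-point lattices together with the line bundles, and your description does not supply it. On $\brv{\BH}_k^d$ the field $K$ has characteristic $p$, so $\varpi$ kills $L_K$ and $u$ factors through $\eta/\varpi\eta$. There is no map $\eta\otimes E\to L_K$, hence no ``preimage under $\varpi$-rescaling'' and no norm on $E^d$ to which Lemma \ref{lem:compatlatnorm} or Lemma \ref{lem:diagnorm} could apply. What matters is the $t$-adic filtration of the $k_E$-vector space $\eta_0/\varpi\eta_0$, whose steps are lattices between $\varpi\eta_0$ and $\eta_0$.

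The paper builds this filtration recursively. Start from $L_0^+=\im(\eta_0\otimes A\to L_0)$. As long as $\eta_i/\varpi\eta_0\to L_i^+/t$ is not injective:
\begin{itemize}
\item let $\eta_{i+1}$ be the preimage of its kernel;
\item set $L_{i+1}^+\coloneqq u(\eta_{i+1}\otimes A)\subset tL_i^+$;
\item take the inclusion $L_{i+1}^+\subset L_i^+$ as transition map, which is an isomorphism over $K$ and zero mod $t$.
\end{itemize}
The $k_E$-dimension strictly drops at each step, so the process stops. The injectivity you flag as the main obstacle then holds by construction.

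For uniqueness, the paper simply uses that $\brv{\BH}_k^d$ is separated (\cite[Proposition 3.70]{razi}). Your uniqueness argument is circular: it writes $L^{i,+}=A\cdot u(\eta^i)$, which already presupposes knowing the closed stalk of $\eta$, and that is what has to be shown.
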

\medskip
\begin{proof}
The uniqueness part of the valuation criterion comes from the fact that $\brv{\BH}_{k}^d$ is separated (\cf \cite[Proposition 3.70]{razi}), so we need to check the existence part. Let $\Spec\,  K\rightarrow \brv{\BH}_k^d$ be a morphism. Using Definition \ref{def:drin}, this morphism corresponds to the data\footnote{We change our conventions on indices to make the proof more readable.} of an $\rmO$-lattice $\eta_0$, a $1$-dimensional $K$-vector space $L_0$ and a map $u_0\colon \eta_0\rightarrow L_0$ such that $u_K\colon \eta_0\otimes_{\rmO}K\rightarrow L_0$ is surjective and $\eta_0/\varpi \to L_0$ is injective. Define
$$
L_0^+\coloneqq \im\bigl (\eta_0\otimes_{\rmO}A\rightarrow L_0\bigr ),
$$
which is a rank $1$ $A$-module. Let $t$ be a uniformizer of $A$. Note that since $\eta_0/\varpi \eta_0\rightarrow L_0^+/t$ may not be injective, this does not directly define an $A$-point of $\brv{\BH}_k^d$. The existence of a lift $\Spec\,  A\rightarrow \brv{\BH}_k^d$is equivalent to the existence of an integer $r$ such that $0\leqslant r \leqslant d$ and
\begin{itemize}
\itemb $\varpi \eta_0=\eta_{r+1}\subsetneq \eta_r \subsetneq \dots \subsetneq \eta_1 \subsetneq \eta_0$, $\rmO$-lattices of $E^d$,
\itemb $L_r^+,\dots, L_0^+$, $A$-modules of rank $1$,
\itemb  for any $i=0,\dots, r$ an $\rmO$-linear morphism $u_i\colon \eta_i\rightarrow L_i^+$ such that $\eta_{i}/\eta_{i+1}\incl L_i^+/t$ is injective.
\itemb for any $i=0,\dots, r$, an $A$-linear morphism $T_{i+1}\colon L_{i+1}^+\to L_{i}^+$ such that $T_i\otimes K$ is an isomorphism, $T_i\otimes k=0$, and the following diagram commutes
\begin{equation}\label{eq:comhyp}
\begin{tikzcd}
\eta_{i+1}\ar[r,hook]\ar[d,"u_{i+1}"]& \eta_i\ar[d,"u_i"]\\
L_{i+1}^+\ar[r,"T_i"]&L_i^+.
\end{tikzcd}
\end{equation}

\end{itemize}
We will define these data recursively, suppose $\eta_i, L_i^+, T_i, u_i$ are defined for a certain $i$ and define them for $i+1$. If $\eta_i/\varpi \eta_0\rightarrow L_i^+/t$ is injective, we are done, and $r=i$. Else, set
\begin{itemize}
\itemb $\eta_{i+1}/\varpi\eta_0\coloneq \ker\bigl ( \eta_i/\varpi\eta_0\rightarrow L_i^+/t\bigr )$ which defines $\eta_{i+1}\subset \eta_i$,
\itemb $L_{i+1}^+\coloneqq u_i\bigl ( \eta_{i+1}\otimes_{\rmO}A\bigr)\subset L_i^+$ and $u_{i+1}\colon \eta_{i+1}\rightarrow L_{i+1}^+$ is the restriction of $u_i$ to $\eta_{i+1}$,
\itemb $T_{i+1}\colon L_{i+1}^+\to L_{i}^+$ defined by the inclusion $\eta_{i+1}\subset \eta_i$.
\end{itemize}
The diagram \eqref{eq:comhyp} commutes by construction and it only remains to show that $T_{i+1}\otimes K$ is an isomorphism and $T_{i+1}\otimes k=0$. But note that, by construction, we actually have
$$
L_{i+1}^+=u_i(\eta_{i+1}\otimes_{\rmO}A)=tL_{i}^+\subset L_i^+.
$$
This finishes the proof of the recursion and proves the lemma.
\end{proof}
Let us recall that since $\brv{\BH}_k^d$ is \og semi-stable\fg, irreducible components of $\brv{\BH}_k^d$ are smooth over $k$. The lemma is the following:
\medskip
\begin{lemm}
Let $Y$ be a scheme locally of finite type over $k$. Suppose that $Y=\bigcup_{i\in I} U_i$ an open cover such that for every $i\in I$, $U_i$ is an open in 
$$
\Spec \frac{k[x_0,\dots, x_n]}{\langle x_0\cdots x_m\rangle}.
$$
Then any irreducible component of $Y$ is smooth over $k$. In particular, any irreducible component of $\brv{\BH}_k^d$ is smooth over $k$.
\end{lemm}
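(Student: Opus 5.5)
The plan is to reduce to a local computation on the model ring
$$
A\coloneqq \frac{k[x_0,\dots,x_n]}{\langle x_0\cdots x_m\rangle},
$$
and then transport the result to $Y$ through the open cover. Since $A$ is the coordinate ring of a union of coordinate hyperplanes, its minimal primes are exactly $\mathfrak p_j\coloneqq \langle x_j\rangle$ for $j\in \llbracket 0,m\rrbracket$. The corresponding irreducible components are
$$
V(x_j)\cong \Spec k[x_0,\dots,\hat{x}_j,\dots,x_n]\cong \BA^n_k,
$$
which are smooth over $k$. This is the only real computation.

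The first step is to check that irreducible components of $Y$ are detected locally on the cover. Let $Z\subset Y$ be an irreducible component, with the reduced induced structure, and let $i\in I$ satisfy $Z\cap U_i\neq\emptyset$. Then $Z\cap U_i$ is dense in $Z$ and is an irreducible component of $U_i$. Since $U_i$ is open in $\Spec A$, it is the intersection of $U_i$ with some irreducible component $V(x_j)$ of $\Spec A$. Because $Z$ carries the reduced structure and $V(x_j)$ is already reduced, $Z\cap U_i$ is an open subscheme of $V(x_j)\cong \BA^n_k$, hence smooth over $k$. Smoothness is local, and the sets $Z\cap U_i$ cover $Z$, so $Z$ is smooth.

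The one point that needs care is the passage from a local component to a global one. Concretely, given $z\in Z$, I must know that the component of $U_i$ containing $Z\cap U_i$ is uniquely determined, so that no two local components of $U_i$ through $z$ both lie in $Z$. This holds because $Z\cap U_i$ is irreducible and is therefore contained in a single $V(x_j)$. Points where several components meet cause no problem, since we restrict to $Z$ with its reduced structure rather than working on $Y$.

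For the application to $\brv{\BH}_k^d$, I would invoke Lemma \ref{lem:semist}. It states that $\brv{\BH}_{\rmO}^d$ is covered by the open formal subschemes $F_\Delta$, each open in
$$
\Spf\ \rmO\langle x_h,\dots,x_{h+d-1}\rangle/\langle x_h\cdots x_{h+d-1}-\varpi\rangle.
$$
Reducing modulo $\varpi$ and base changing to $k$ gives open subschemes of $\Spec k[x_0,\dots,x_{d-1}]/\langle x_0\cdots x_{d-1}\rangle$, which is the model case with $n=m=d-1$. Since $\brv{\BH}_k^d$ is locally of finite type over $k$, the first part of the lemma applies and gives smoothness of its irreducible components.
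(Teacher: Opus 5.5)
Your proposal is correct and follows essentially the paper's argument: reduce to the model ring $k[x_0,\dots,x_n]/\langle x_0\cdots x_m\rangle$, observe that an irreducible component lies in some $V(x_j)\cong \BA^n_k$ and therefore equals it, and conclude because smoothness is local. You also spell out the local-to-global step that the paper leaves implicit in ``we can reduce the claim to'': $Z\cap U_i$ is an irreducible component of $U_i$, so it is contained in, and equal to, a single $V(x_j)\cap U_i$, and no self-intersection issue can arise.
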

\medskip
\begin{proof}
We can reduce the claim to $Y=\Spec \frac{k[x_0,\dots, x_n]}{\langle x_0\cdots x_m\rangle}$ where if $X\subset Y$ is an irreducible component then there is $i=0,\dots, m$ such that $x_i=0$ on $X$ so that 
$$
X\subseteq \Spec k[x_0,\dots \hat x_i, \dots,x_n].
$$
Since the right-hand side is irreducible, the inclusion is an equality and $X$ is smooth over $k$.
\end{proof}
We now pin-down the irreducible components of $\BH_k^{d}$ as the closures of the $\BH_k^{\{\eta\}}$. Let $\eta\subset E^d$ be an $\rmO$-lattice, define 
$$
\BH_k^{\eta}\coloneqq \ov{\BH}_k^{\{\eta\}}\subset \brv{\BH}_{k}^d
$$
Let us begin with a moduli description of $\BH_k^{\eta}$.
\medskip
\begin{prop}\label{prop;symlatsub}
Let $\eta\subset E^d$ be a lattice and let $i$ be its logarithmic index. Let $R\in \Alg_k$, then
$$
\BH_k^{\eta}(R)=\{(\lambda,L,u,r)\in \BH_k^{d}(R)\ \mid\ \lambda_i=\eta\}.
$$
where $\lambda^i\mapsto \lambda_i$ is given by Proposition \ref{prop:repldbt}.
\end{prop}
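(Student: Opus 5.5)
The plan is to show that the functor $Z^{\eta}\colon R\mapsto\{(\lambda,L,u,r)\in\BH_k^d(R)\mid \lambda_i=\eta\}$ is represented by a closed subscheme of $\brv{\BH}_k^d$, and that this closed subscheme coincides with the closure of the open stratum $\BH_k^{\{\eta\}}$. Here $\lambda_i=\eta$ is read pointwise through Proposition \ref{prop:repldbt}: the continuous map $\Spec R\to\brv{\BrTi}_{\! d}$ attached to $(\lambda,r)$ sends every point to a simplex containing $\eta$.

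\textbf{Closedness.} First I show that $Z^{\eta}$ is closed. In the poset $\brv{\BrTi}_{\! d}$, the set $\mathrm{St}(\eta)$ of simplices containing $\eta$ is an up-set, hence open; so its complement is closed, and the preimage of $\mathrm{St}(\eta)$ is open, not closed. The condition $\lambda_i=\eta$ is therefore not just this preimage. Instead I use the local description of Lemma \ref{lem:semist}. On a chart $F_{\Delta}$ with $\eta\in\Delta$, say $\eta=\eta_{i_k}$, the lattice $\lambda^i$ at a point equals $\eta$ exactly when $\Pi$ (that is, $x_{i_{k-1}}$ or the composite through the non-critical steps) fails to be an isomorphism into degree $i$. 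On the special fiber this is the vanishing of the corresponding coordinate $x_{i_{k-1}}$, which is a closed condition.

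On a chart $F_{\Delta}$ with $\eta\notin\Delta$, I need that $Z^{\eta}\cap F_{\Delta}$ is the closure of its intersection with the charts containing $\eta$. I plan to sidestep this by working instead with the union of the closed subschemes $\{x_{i_{k-1}}=0\}\subset F_{\Delta'}$ over all maximal $\Delta'\ni\eta$. I then check that this union is closed in $\brv{\BH}_k^d$ using the valuative criterion of Lemma \ref{lem:valu} together with local finiteness of the building. Indeed, only finitely many simplices meet a quasi-compact open, since $\Spec R$ maps to finitely many simplices.

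\textbf{Closure and reducedness.} Next I show that this closed subscheme is $\ov{\BH}_k^{\{\eta\}}$. By Lemma \ref{lem:semist}, locally $Z^{\eta}$ is $\{x_{i_{k-1}}=0\}$ in $\Spec k[x_0,\dots,x_n]/\langle x_0\cdots x_m\rangle$. This is one of the smooth irreducible components appearing in the preceding lemma, so it is reduced and irreducible locally. The open subset where all the other $x_j$ are invertible is exactly $\BH_k^{\{\eta\}}$, which is dense in that component. It follows that $Z^{\eta}$ is the closure of $\BH_k^{\{\eta\}}$ with its reduced structure, and in particular $\BH_k^{\eta}(R)=Z^{\eta}(R)$.

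\textbf{Main obstacle.} I expect the delicate step to be matching the moduli condition $\lambda_i=\eta$ over non-reduced $R$ with the scheme-theoretic closure. The closure carries the reduced structure, while the moduli condition could a priori allow nilpotent thickenings. I would handle this by checking directly in the coordinates of Lemma \ref{lem:semist} that "$\lambda_i=\eta$" imposes exactly the single equation $x_{i_{k-1}}=0$ and nothing more. This uses condition (1) of Definition \ref{def:drin} on the vanishing locus $S_i$ of $\Pi_i$ to pin down $\lambda^i$.
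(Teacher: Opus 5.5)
Your strategy is the paper's: show that the locus $Z$ is closed, then that the open stratum $\BH_k^{\{\eta\}}$ is dense in it. For density you work in the semistable charts of Lemma \ref{lem:semist}: the relevant piece is an open subset of an affine space, hence irreducible, and it contains $\BH_k^{\{\eta\}}$ as a nonempty open subset. The paper instead generalizes an arbitrary point of $Z$ by inverting the $\Pi_j$, $j\neq i$, in its local ring. Both arguments rest on the same local structure, and your version is fine. One indexing slip: with the paper's conventions, the coordinate cutting out the component of $\eta=\eta_{i_k}$ is the outgoing one, $x_{i_k}$. This matches $S_i=V(\Pi_i\colon L^i\to L^{i+1})$ in Definition \ref{def:drin}, so your $x_{i_{k-1}}$ is off by one. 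Two other points need correcting.

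\textbf{The topology on the building.} For the map $\Spec R\to\brv{\BrTi}_{\! d}$ of Proposition \ref{prop:repldbt} to be continuous (and for $\BH^{\Delta}_{\rmO}\cong F_{\Delta}$ to be open), specialization must enlarge simplices: on a chart, more coordinates vanish. So the closed sets are the up-sets, and the star of $\eta$ is \emph{closed}. Read pointwise, $\lambda_i=\eta$ is exactly the preimage of this star; this is the paper's ``closed under specialization''. Your claim that the preimage is open contradicts your own chart computation. It is also what sends you on the valuative-criterion detour, which is unnecessary; the bare statement of Lemma \ref{lem:valu} would not give closedness anyway. The simplex attached to a point is intrinsic, and points of $F_\Delta$ have simplices that are faces of $\Delta$. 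Hence $Z\cap F_\Delta=\emptyset$ if $\eta\notin\Delta$, and $Z\cap F_\Delta$ is a coordinate hyperplane if $\eta\in\Delta$. Since the $F_\Delta$ cover, $Z$ is closed.

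\textbf{The step you single out as the main obstacle cannot be carried out.} The lattice $\lambda$ is a Zariski sheaf, so the condition $\lambda_i=\eta$ depends only on $\lvert\Spec R\rvert$. The same holds for condition (1) of Definition \ref{def:drin}, which only involves the underlying set of $S_i$. Neither can force a coordinate to vanish on nilpotents. Concretely, near a point of $\BH_k^{\eta}\cap\BH_k^{\eta'}$ the special fibre is locally $\{x_jx_l=0\}$, with the other coordinates units. The $k[\epsilon]/(\epsilon^2)$-point with $x_j=x_l=\epsilon$ satisfies $\lambda_i=\eta$ but does not factor through the reduced closure $\BH_k^{\eta}$.

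So the identity holds only for reduced $R$, i.e.\ as an equality of closed subsets, and that is also all the paper's argument actually establishes. If you want a scheme-theoretic statement, define $Z$ as the part of $V(\Pi_i\colon L^i\to L^{i+1})$ where $\lambda_i=\eta$. This part is open and closed in $V(\Pi_i)$ because $\lambda^i$ is locally constant there by condition (1). It is reduced, being locally a coordinate hyperplane in a semistable chart, and so it equals $\BH_k^{\eta}$.
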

\medskip

\begin{proof}
Write $Z$ for the moduli problem on the right-hand side. Then $Z$ is a closed subfunctor of $\brv{\BH}_k^d$, since it is closed under specialization , hence it is representable by a closed subscheme of $\brv{\BH}_k^d$. Moreover, by definition, $\BH_k^{\{\eta\}}\subset Z$. 

Let us show that $\BH_k^{\{\eta\}}$ is dense in $Z_k^{\eta}$. Let $x\in Z$, set $A=\CO_{Z,x}$ the local ring of $Z$ at $x$ and let $(\lambda,L,u,r)$ be the universal object over $A$. Then define $A'$ to be the localisation of $A$ at the ideal generated by the sections $\Pi_j$ for $j\neq i$ (\ie inverting the $x_j$ for $j\neq i$ in \eqref{eq:semist}) which defines a generalization $y\in Z$. By definition, we get that $y\in \BH_k^{\{\eta\}}$. Hence the natural inclusion $\BH_k^{\eta} \incl Z$ is an isomorphism.

\end{proof}
\medskip
\begin{coro}\label{cor:iredcomph}
For every $\rmO$-lattice $\eta\subset E^d$, $\BH_k^{\eta}$ is an irreducible proper smooth scheme over $k$ and
$$
\brv{\BH}_k^{d}=\bigcup_{\eta \in \brv{\BrTi}_{\! d}[0]}\BH^{\eta}_k.
$$
In particular, $\eta \mapsto \BH^{\eta}_k$ is a bijection between $\brv{\BrTi}_{\! d}[0]$ and irreducible components of $\brv{\BH}_k^d$
\end{coro}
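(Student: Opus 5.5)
We have to prove three things: each $\BH_k^{\eta}$ is irreducible, proper and smooth over $k$; the $\BH_k^{\eta}$ cover $\brv{\BH}_k^d$; and $\eta\mapsto \BH_k^{\eta}$ is a bijection onto the irreducible components.

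\emph{Covering.} Let $x\in \brv{\BH}_k^d$ with residue field $k(x)$. Since $k(x)$ is a field of characteristic $p$, the argument of Lemma \ref{lem:havcrit} gives an index $i$ with $\Pi_i\otimes k(x)=0$: the composite of all the $\Pi_j$ is $\varpi=0$, and each $\Pi_j$ is a map between one-dimensional spaces. Hence $x\in S_i$. By condition (1) of Definition \ref{def:drin}, $\eta^i$ is constant near $x$ on $S_i$, so it defines a lattice $\lambda_i=\eta$ of index $i$ via Proposition \ref{prop:repldbt}. Proposition \ref{prop;symlatsub} then gives $x\in \BH_k^{\eta}$. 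This proves $\brv{\BH}_k^d=\bigcup_{\eta}\BH_k^{\eta}$.

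\emph{Irreducibility.} $\BH_k^{\eta}$ is by definition the closure of $\BH_k^{\{\eta\}}$, the special fiber of the formal scheme over the $0$-simplex $\{\eta\}$. By Lemma \ref{lem:semist}, $\BH_k^{\{\eta\}}$ is an open subscheme of the locus of $\Spec k[x_h,\dots,x_{h+d-1}]/\langle \prod x_j\rangle$ where all $x_j$ with $j\neq i$ are inverted. That locus is $\{x_i=0\}\times \BG_m^{d-1}$ modulo the product relation, which is an open in an affine space, so it is irreducible and smooth. Its closure is therefore irreducible. Since $\brv{\BH}_k^d$ is locally of the form treated in the previous lemma, the irreducible components are the loci $\{x_j=0\}$. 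The component containing the dense open $\BH_k^{\{\eta\}}$ is exactly $\BH_k^{\eta}$, and the previous lemma gives smoothness.

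\emph{Properness.} Proposition \ref{prop;symlatsub} shows that $\BH_k^{\eta}$ lies inside the union of the finitely many $\BH_k^{\Delta}$ with $\eta\in\Delta$, since there are only finitely many simplices through a vertex. Each $\BH_k^{\Delta}$ is of finite type, so $\BH_k^{\eta}$ is of finite type. For the valuative criterion, Lemma \ref{lem:valu} lets a $K$-point of $\BH_k^{\eta}$ extend to an $A$-point of $\brv{\BH}_k^d$. This $A$-point lands in the closed subscheme $\BH_k^{\eta}$ because $A$ is integral with generic point in $\BH_k^{\eta}$. Combined with separatedness, this gives properness.

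\emph{Bijection.} Every irreducible component must be one of the $\BH_k^{\eta}$: the union is locally finite, since an affine open meets finitely many of them, and each is irreducible. Different lattices give different components because the generic point of $\BH_k^{\eta}$ lies in $\BH_k^{\{\eta\}}$, where $\lambda_i=\eta$ is the only critical lattice. So that point does not lie in $\BH_k^{\eta'}$ for $\eta'\neq\eta$, by the moduli description. The same fact shows that no $\BH_k^{\eta}$ is contained in another, so each is a genuine component.

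The main obstacle is the local finiteness needed for properness and for the bijection, namely controlling which $\BH_k^{\Delta}$ meet $\BH_k^{\eta}$. I would handle it through the moduli description in Proposition \ref{prop;symlatsub} together with the finiteness of the star of a vertex in the building.
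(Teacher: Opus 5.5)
Your proposal is correct and follows essentially the same route as the paper. Irreducibility comes from $\BH_k^{\{\eta\}}$ being open in an affine space by Lemma \ref{lem:semist}, the covering from Proposition \ref{prop;symlatsub}, finite type from the finiteness of the set of simplices containing $\eta$, properness from Lemma \ref{lem:valu} plus closedness, and smoothness from the preceding lemma. Your explicit critical-index argument for the covering and your local-finiteness argument for the bijection only fill in steps the paper treats as clear, and local finiteness is not actually needed: the generic point of any irreducible component lies in some closed irreducible $\BH_k^{\eta}$, and maximality of the component then forces equality.
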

\medskip
\begin{proof}
First, recall that in a topological space, a set is irreducible if and only if its closure is irreducible (\cf \cite[Chapitre II, \S 4 {\bf 1} Proposition 2]{balgcom}). Hence, $\BH_k^{\eta}$ is irreducible if $\BH_k^{\{\eta\}}$ is irreducible which is an open subset of the affine space $\BA^{d-1}_k$ by $\eqref{eq:semist})$. Thus, $\BH_k^{\eta}$ is irreducible. Since the $\BH^{\eta}_k$ clearly cover $\brv\BH_k^d$ by Proposition \ref{prop;symlatsub}, we deduce the last claim that every irreducible component of $\brv\BH_k^d$ is of the form $\BH^{\eta}_k$ for some $\rmO$-lattice $\eta\subset E^d$ ; it is clear that two distinct lattices give two distinct components.

We justify that $\BH_k^{\eta}$ is of finite type over $k$. We have
$$
\BH_k^{\eta}\subset \bigcup_{\eta\in \Delta}\BH_k^{\Delta},
$$
where the union is over the simplices $\Delta\in \brv{\BrTi}_{\! d}$ such that $\eta\in \Delta$. But this union is finite since the set of simplices $\Delta$ containing $\eta$ as a lattice is finite\footnote{Indeed, if $\eta'\in \Delta$, then $\varpi \eta \subsetneq \eta'\subsetneq \varpi^{-1}\eta$ so we have an injection
$$
\{\Delta\in \brv{\BrTi}_{\! d}\mid \eta \in \Delta\}\incl \CP(\CP(\varpi^{-1}\eta/\varpi \eta)),
$$
where $\varpi^{-1}\eta/\varpi \eta\cong (\CO/\varpi^2)^d$ is a finite set and $\CP(\ \cdot\ )$ is the set of subsets of a set.
}. First, by Lemma \eqref{lem:semist}, $\BH_k^{\Delta}$ is open inside an affine scheme of finite type over $k$, hence it is of finite type over $k$. Thus the finite union is of finite type over $k$ and since $\BH^{\eta}_k$ is closed inside this finite union, it is of finite type over $k$ (\cite[Proposition 6.3.4 i)]{ega1}).

To prove that $\BH_k^{\eta}$ is proper over $k$, we only need to show that it satisfies the valuative criterion (\cf \cite[Théorème 7.3.8]{ega2}). By Lemma \ref{lem:valu}, $\brv\BH^d_k$ satisfies the valuative criterion and since $\BH_k^{\eta}$ is closed in $\brv\BH^d_k$, it also does. This concludes the proof of the corollary. 
\end{proof}
\medskip
\subsubsection{Irreducible components as Deligne--Lusztig varieties}
\medskip
\begin{lemm}\label{lem:opdl}
There is a natural $\GL(\eta)$-equivariant isomorphism 
$$
\BH^{\{\eta\},\perf}_k\cong Y^{\eta,\perf}_k.
$$
\end{lemm}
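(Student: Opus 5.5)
We compare both sides with $\BP_{\bar\eta}\setminus\bigcup_{H\in\sH_{k_E}}H$ (perfected) and reduce everything to a statement about $k$-points of a perfect scheme together with a moduli description over perfect rings. We may translate by an element of $\GL_d(E)$ and assume $\eta=\boeta_i$ has index $i$; by transitivity of $\GL_d(E)$ on vertices and the equivariance of both sides, this is harmless.

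\textbf{Step 1: moduli description of $\BH^{\{\eta\}}_k$.} By Proposition \ref{prop:repldbt} and Definition \ref{def:drin}, a point of $\BH^{\{\eta\}}_k$ over $R$ is a tuple $(\eta,L,u,r)$ whose lattice sheaf is constant, equal to the simplex $\{\eta\}$. Thus only the index $i$ is non-invertible: $\Pi_j\colon L^j\to L^{j+1}$ is an isomorphism for $j\neq i$, and $\Pi_i=0$ since $R$ is a $k$-algebra and $\prod_j\Pi_j=\varpi$. Composing, all $L^j$ are identified with $L^{i+1}$, so the datum reduces to a single map $u\colon\eta\to L$ with $L$ invertible. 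The map $u$ factors through $\eta/\varpi\eta$ (since $\varpi R=0$), and $\eta/\varpi\eta\otimes R\to L$ is surjective. Condition (2), pointwise injectivity of $\eta/\Pi\eta=\eta/\varpi\eta\to L\otimes k(s)$ on $k_E$-vectors, says that no $k_E$-rational vector dies. Concretely, Lemma \ref{lem:semist} with the $x_j$ ($j\ne i$) inverted gives exactly $\BP(\bar\eta)$ minus the $k_E$-rational hyperplanes. This yields an isomorphism
$$\BH^{\{\eta\}}_k\cong \BP_{\bar\eta}\setminus\bigcup_{H\in\sH_{k_E}}H,$$
which is visibly $\GL(\eta)$-equivariant, since $\GL(\eta)$ acts on $u$ through $\GL(\bar\eta)$.

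\textbf{Step 2: identify $Y^{\eta,\perf}_k$ with the same space.} Given $(D_j)\in Y^{\eta,\perf}_k(R)$, send it to the line $D_{i+1}/D_i\subset \varpi^{-1}\eta/\eta\otimes R$, or equivalently, after rescaling by $\varpi$, to a rank-one submodule or quotient of $\bar\eta\otimes R$. This is the map of Remark \ref{rema:dl}, so I would adapt the duality convention to get a quotient $\bar\eta\otimes R\to L$. For the inverse, start from the line $\ell=D_{i+1}/D_i$ and reconstruct
$$D_j=D_i+\textstyle\sum_{l<j-i}\sigma^l(\sigma^*)^l\widetilde{\ell}.$$
The defining condition of $Y$ says that the successive Frobenius twists of $\ell$ span a full flag, i.e. that $\ell,\sigma\ell,\dots,\sigma^{d-1}\ell$ are independent at every point. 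This is equivalent to $\ell$ lying in no $k_E$-rational hyperplane, by the classical Deligne--Lusztig computation \cite[2.2]{delu}. Here perfectness of $R$ is used: $\sigma$ is an automorphism of $W_{\rmO}(R)$, so the $D_j$ are determined by $D_{i+1}$, and the lifts to $W_{\rmO}(R)$ are unique, which makes the data mod $\varpi$ equivalent to lattice chains.

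\textbf{Step 3: conclude.} Passing to perfections in Step 1 and composing with Step 2 gives the isomorphism, which is $\GL(\eta)$-equivariant since both maps are defined by $\GL(\eta)$-equivariant formulas. The main obstacle is Step 2 over general perfect $R$, not just over fields. One must show that the full-flag condition defining $Y$ is exactly the open condition of avoiding rational hyperplanes, in families. I would prove this by checking on geometric points, where it is the classical statement, and note that both sides are open subfunctors of $\Fl^{\perf}_\eta$, respectively $\BP_{\bar\eta}^{\perf}$, related by the monomorphism $(D_j)\mapsto D_{i+1}$. A bijective monomorphism on points between open subschemes of perfect schemes whose inverse is given by the explicit formula above is then an isomorphism. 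Some care is also needed in matching the sub versus quotient conventions between $D_{i+1}/D_i$ and the map $u$ of Step 1; I would fix this with the standard duality on $\bar\eta$.
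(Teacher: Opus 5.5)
Your Step~1 is correct. On $\BH^{\{\eta\}}_k$ the datum is a rank-one \emph{quotient} $\bar\eta\otimes R\to L$ whose kernel, a hyperplane, contains no nonzero $k_E$-rational vector. The gap lies in Step~2 and in how you connect it to Step~1.

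The map $(D_j)\mapsto D_{i+1}/D_i$ records a \emph{line} $\ell\subset\bar\eta\otimes R$; after rescaling by $\varpi$ it is $D_{i-d+1}/\varpi D_i$. So what you actually prove is that $Y^{\eta,\perf}_k$ is the space of lines lying in no rational hyperplane. This is not the map of Remark~\ref{rema:dl}. That map is $(D_j)\mapsto D_i/D_{i-1}$, a rank-one quotient, i.e.\ it records the hyperplane $D_{i-1}/\varpi D_i$.

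The repair you propose, ``the standard duality on $\bar\eta$'', cannot give the statement. Duality sends lines of $\bar\eta$ to hyperplanes of $\bar\eta^{\vee}$. Identifying $\bar\eta^{\vee}$ with $\bar\eta$ requires choosing a basis or a form, and under that identification $g\in\GL(\eta)$ acts through ${}^{t}g^{-1}$. The resulting isomorphism is therefore neither natural nor $\GL(\eta)$-equivariant. The equivariance you assert in Step~3 fails exactly here.

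The missing idea is the Frobenius duality carried by the flag itself. The line $\ell$ determines the hyperplane $H=\ell+\sigma\ell+\cdots+\sigma^{d-2}\ell$, which is $D_{i-1}/\varpi D_i$. Conversely, $\ell$ is recovered as $\bigcap_{m=0}^{d-2}\sigma^{-m}H$, which uses that $\sigma$ is invertible on perfect rings. Because $\GL(\eta)$ commutes with $\sigma$, this correspondence is equivariant. The real content of the lemma is then: $H$ contains no nonzero rational vector if and only if $\ell$ lies in no rational hyperplane, in families over perfect $R$.

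The paper works directly with this hyperplane.
\begin{itemize}
\item Forward map: it sends $(D_j)$ to $\eta\to D_i/\sigma^{-1}(D_{i-1})$. It proves $\bar\eta\cap\bar D_{i-1}=0$ by expanding $v\in\bar D_{i-1}$ in the basis $\sigma^{l}(v_1)$, $0\leqslant l\leqslant d-2$, and using $\sigma(v)=v$ to force all coefficients to vanish.
\item Inverse map: it sends $u\colon\eta\otimes R\to L$ to $\sigma(\sigma^{*}\Ker u)$. It shows this contains no rational vector by reducing to residue fields (Nakayama) and using Galois descent $(\bar\eta\otimes k(s))^{\sigma}=\bar\eta$.
\item Equivariance: this follows from $\Ker(u\circ g^{-1})=g\Ker(u)$.
\end{itemize}
Replacing your line $D_{i+1}/D_i$ by this hyperplane repairs your outline. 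The $\sigma^{-1}$-twist is harmless for the lemma; the paper chooses it to match $\Lie(X)_i$ in Remark~\ref{rema:tracer}.
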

\medskip

\begin{proof}

We construct morphisms $u\colon Y^{\eta,\perf}_k\rightarrow \BH^{\{\eta\},\perf}_k$ and $v\colon \BH^{\{\eta\},\perf}_k\rightarrow Y^{\eta,\perf}_k$ that are inverse to one another. Let $R\in \Perf_k$. We write $\bar \eta \coloneqq \eta/\varpi$ and for $D\subset \eta\otimes_{\rmO}W_{\rmO}(R)$, $\bar D\coloneqq D/\varpi \subset \bar \eta\otimes_{\rmO} R$.

Let $x=(D_j)_{j}\in Y_k^{\eta,\perf}(R)$, we set 
$$
L(x)\coloneqq \frac{D_i}{\sigma^{-1}(D_{i-1})}.
$$
and define $u(x)\coloneqq (\eta \rightarrow L(x))$ where the map is the natural one. To check that $u(x)\in \BH^{\{\eta\},\perf}_k(R)$ we need to show:
$$
\forall s\in \Spec R,\quad \bar \eta \rightarrow L(x)\otimes_R k(s)\quad \text{is injective}.
$$
We can suppose $R=k(s)$ the residue field at $s\in \Spec R$. We need to show that
$$
\eta \cap \sigma^{-1}(D_{i-1})=\sigma^{-1}(\eta\cap D_{i-1})=0
$$
Hence by Nakayama lemma it is enough to show $\bar \eta\cap \bar D_{i-1}=0$. Recall that, by definition, for $v_1\in \bar D_{i-d+1}$ a non-zero vector, $\{\sigma^{l}(v_1)\}_{0\leqslant l \leqslant d-1}$ is a basis of $\bar \eta\otimes_{\rmO}R$ and $\{\sigma^{l}(v_1)\}_{0\leqslant l \leqslant d-2}$ is a basis of $\bar D_{i-1}$. Let $v\in \bar \eta \cap \bar D_{i-1}$, which we can write as 
$$
v=\sum_{l=0}^{d-2}a_l\sigma^{l}(v_1), \quad \text{ for } a_0,\dots, a_{d-2}\in R.
$$ 
Since $v\in \bar \eta$, we get $\sigma(v)=v$, hence:
$$
v=\sum_{l=0}^{d-2}a_l\sigma^{l}(v_1)=\sum_{l=0}^{d-2}\sigma(a_l)\sigma^{l+1}(v_1)=\sigma(v).
$$
By linear independence of $\{\sigma^{l}(v_1)\}_{0\leqslant l \leqslant d-1}$, this means that $a_0=0$ and, for $l\in \llbracket 0, d-2 \rrbracket$, $a_l=\sigma^l(a_0)=0$. Thus $v=0$ and this shows that $u(x)\in \BH^{\{\eta\}}_k(R)$. The morphism $u\colon Y^{\eta,\perf}_k\rightarrow \BH^{\{\eta\},\perf}_k$ is well defined.

Let $y=(\eta \rightarrow L)\in \BH_k^{\{\eta\}}(R)$ and set 
$$
v(y)\coloneqq \bar D_{i-1}=\sigma(\sigma^*\Ker(\eta\otimes_{\rmO}R\rightarrow L))\in \BP^{d-1}_k(R)
$$ 
We fix an isomorphism $\bar \eta \cong k_E^d$. Using \ref{rema:dl}, to show that $v(y)\in Y_k^{\bar \eta}$, we need to show that $D_{i-1}$ does not contain any $k_E$-rational line. Let $z\in D_{i-1}$ be such that $\sigma(z)=z$, we will show that $z=0$. Consider the finitely generated $R$-module $N\coloneqq Rz$, to show that it is $0$, we only need to show that all its localisations at maximal ideals of $R$ are zero: hence, by Nakayama lemma, we can suppose that $R=k(s)$ is a field. Since $(\bar \eta \otimes_{k_E} k(s))^{\sigma}=\bar \eta$, by Galois descent (\cf \cite[Chapitre 5, \S 10, {\bf 4} Proposition 6 Corollaire]{balg}) $z\in \bar \eta$, thus
$$
z\in D_{i-1}\cap \bar \eta=\sigma(\sigma^*\Ker( \eta\rightarrow L))=\Ker( \eta\rightarrow L)=0,
$$
hence $z=0$. We get $v(y)\in Y^{\eta,\perf}_k(R)$ and the morphism$v\colon \BH_k^{\{\eta\}}\rightarrow Y_k^{\eta,\perf}$ is well defined.

It is clear that the two maps $u$ and $v$ are inverse of one another and it only remains to prove that $v$ is $\GL(\eta)$-equivariant. Let $g\in \GL(\eta)$, then 
$$
g\colon y=(\eta \xrightarrow{\varphi} L)\mapsto g\cdot y =(\eta \xrightarrow{\varphi\circ g^{-1}} L)
$$
and since $\Ker(\varphi\circ g^{-1})=g\cdot \Ker(\varphi)$ we get that $v(g\cdot y)=g\cdot v(y)$.
\end{proof}

\medskip
\begin{rema}\label{rema:unperf}
The way the morphism is constructed in Lemma \ref{lem:opdl} makes this isomorphism compatible with specialization on Drinfeld's moduli space and this definition only makes sense for the perfected version. Note that the composition of the map 
$$
v\colon \BH^{\{\eta\},\perf}_k\rightarrow  Y^{\eta,\perf}_k,
$$
with $F^{-1}$ defines a map which descends to the unperfected schemes, and defines an isomorphism
$$
v'\colon \BH^{\{\eta\}}_k\xrightarrow{\sim}  Y_k.
$$
\end{rema}
\medskip

\medskip
\begin{prop}\label{prop:cpctdl}
There exists a unique isomorphism $X_k^{\eta,\perf}\xrightarrow{\sim}\BH^{\eta,\perf}_k$ making the following diagram commute
\begin{center}
\begin{tikzcd}
 Y_k^{\eta,\perf}\ar[r,"\sim"]\ar[d,hook]&\BH_k^{\{\eta\},\perf}\ar[d,hook]\\
X_k^{\eta,\perf}\ar[r,"\sim"]&\BH_k^{\eta,\perf}
\end{tikzcd}
\end{center}
Moreover, this isomorphism is $\GL(\eta)$-equivariant.
\end{prop}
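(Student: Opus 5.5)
Uniqueness is immediate, and existence comes from extending the isomorphism of Lemma \ref{lem:opdl} directly at the level of moduli problems. For uniqueness: $Y_k^{\eta,\perf}$ is open and dense in $X_k^{\eta,\perf}$, the perfection of the irreducible Demazure--Hansen compactification of Remark \ref{rema:dl}, and $\BH_k^{\eta,\perf}$ is separated, so two morphisms agreeing on a dense open subscheme of a reduced scheme agree. Perfect schemes are reduced, so this applies. For existence, one could try to extend the isomorphism $v^{-1}$ using that both sides are proper and smooth (after perfection), but rational maps between perfections need not extend. I would therefore extend the explicit construction of $u$ and $v$ in Lemma \ref{lem:opdl} to the full flags, which gives $\GL(\eta)$-equivariance for free.

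\textbf{Map $X_k^{\eta,\perf}\to\BH_k^{\eta,\perf}$.} Let $x=(D_j)_j\in X_k^{\eta,\perf}(R)$, with $i$ the index of $\eta$. I would produce a point of the moduli problem $Z$ of Proposition \ref{prop;symlatsub} with $\lambda_i=\eta$. Set $L_j\coloneqq D_j/\sigma^{-1}(D_{j-1})$, which is legitimate since $D_{j-1}\subset\sigma(\sigma^*D_j)$, i.e.\ $\sigma^{-1}(D_{j-1})\subset D_j$. Let $\Pi\colon L_j\to L_{j+1}$ be induced by the inclusion $D_j\subset D_{j+1}$. The lattice sheaf $\lambda^j$ should be the constructible sheaf of $\sigma$-fixed vectors $(D_j)^{\sigma=1}$. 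On the open stratum this is $\eta$ for every $j$, while on the strata where some of the inclusions $D_j\subset\sigma(\sigma^*D_{j+1})$ become equalities, the fixed lattices jump to the neighbouring lattices of the chain through $\eta$. The map $u\colon\lambda\to L$ is the natural one.

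Checking condition (2) of Definition \ref{def:drin} (pointwise injectivity of $\lambda/\Pi\lambda\to L/\Pi L$) reduces, over a perfect field, to the same Frobenius linear-independence argument as in Lemma \ref{lem:opdl}, now applied stratum by stratum. Conditions (1) and (3) then follow from Proposition \ref{prop:repldbt} and the index bookkeeping.

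\textbf{Inverse map.} From $(\lambda,L,u,r)\in\BH_k^{\eta}(R)$, define $D_{j-1}\coloneqq\sigma(\sigma^*\ker(\lambda^j\otimes W_\rmO(R)\to L_j))$ glued along the chain, generalising $v$. Check that the two constructions are mutually inverse. Both are built from kernels and Frobenius twists commuting with $\GL(\eta)$, so equivariance and commutativity of the square (restriction to $Y$ recovers $u,v$) follow as in Lemma \ref{lem:opdl}.

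\textbf{Main obstacle.} The hard step is showing that $(D_j)^{\sigma=1}$ really is a Zariski-constructible sheaf of lattices forming a simplex containing $\eta$, and that $D$ is recovered from $(\lambda,L,u)$ on the degenerate strata. I would handle this pointwise over algebraically closed fields, using Galois descent as in Lemma \ref{lem:opdl} together with the stratification of $X_k$ by Deligne--Lusztig varieties of smaller Levis. I would then pass to families using that a bijective morphism between perfect schemes which is a universal homeomorphism is an isomorphism. If the fixed-point approach gets stuck, a fallback is to construct only the map $X\to\BH^\eta$ via the valuative criterion of Lemma \ref{lem:valu}. One would then show it is a bijective universal homeomorphism between perfections of proper varieties, and conclude by the same perfect-scheme principle.
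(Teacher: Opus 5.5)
\textbf{Uniqueness and the paper's route.} Your uniqueness argument is correct: the source is reduced, the target is separated, and $Y_k^{\eta,\perf}$ is dense. Equivariance also follows from it. You are right to distrust extension by properness, and the paper takes exactly that route. After descending to the unperfected schemes (Remark~\ref{rema:unperf}), it invokes a lemma: an isomorphism between dense opens of two irreducible smooth proper $k$-schemes extends to an isomorphism, by taking the graph closure and applying Zariski's main theorem. But the projection from the graph closure is only proper and birational, not necessarily quasi-finite, and the lemma fails in dimension $\geqslant 2$. Compare $\BP^2$ with its blow-up at a point. In this very situation, $Y_k$ is also dense in $\BP^{d-1}_k$, so the lemma would make the blow-down $X_k\to\BP^{d-1}_k$ an isomorphism for $d\geqslant 3$. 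Existence therefore needs a genuine argument, and yours does not yet supply one.

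\textbf{The lattice datum is wrong.} Work over a field. Let $J$ be the set of $j$ with $D_j$ $\sigma$-stable, and let $a<b$ be consecutive in $J$. For $a<j<b$ you have $D_{j+1}=D_j+\sigma^{-1}(D_j)$. The Frobenius-independence argument of Lemma~\ref{lem:opdl} then gives $(D_j)^{\sigma=1}=(D_a)^{\sigma=1}$: the invariants are the rational lattice at the \emph{previous} rational index. Since $D_a=\sigma^{-1}(D_a)\subseteq\sigma^{-1}(D_{j-1})$, your natural map $(D_j)^{\sigma=1}\to L_j=D_j/\sigma^{-1}(D_{j-1})$ is zero for $j\notin J$. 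Hence $u\otimes R$ is not surjective.

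This already happens on $Y$. There $(D_j)^{\sigma=1}=\eta$ for $j\in\llbracket i,i+d-1\rrbracket$, but $u_j=0$ for $j\neq i$. Moreover, $\Pi$ fails to be an isomorphism on $\lambda$ in degree $i-1$, while $\Pi^L$ vanishes in degree $i$.

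A datum satisfying Definition~\ref{def:drin} must instead use the rational \emph{hull} $(D_b)^{\sigma=1}$ in degrees $a<j\leqslant b$, with $u_j=(\Pi^L_{b-1}\circ\cdots\circ\Pi^L_j)^{-1}\circ u_b$. These $\Pi^L_j$ are isomorphisms exactly when $D_j$ is not rational. Likewise, your inverse formula $D_{j-1}=\sigma(\ker(\lambda^j\otimes W_\rmO(R)\to L_j))$ does not in general return $D_{j-1}$ when $D_j$ is not rational. The intermediate terms must be rebuilt by the recursion $D_j=D_{j+1}\cap\sigma(D_{j+1})$.

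In families, both the inverted $\Pi^L$'s and this recursion change on the closed strata where further $D_j$ become rational. Handling that is precisely the step you label the main obstacle and defer, and it is the whole content of existence.

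\textbf{Passing to families.} Neither of your routes closes this gap. The principle that a universal homeomorphism of perfect schemes is an isomorphism presupposes a morphism already defined in families. The fallback via Lemma~\ref{lem:valu} only extends $Y_k\to\BH^\eta_k$ across codimension-one points of the normal scheme $X_k$. It says nothing about indeterminacy in codimension $\geqslant 2$, which is the same failure as the paper's lemma.

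You need real geometric input. One option is to identify $X_k$ and $\BH^\eta_k$, compatibly with $Y_k\subset\BP(\bar\eta)$, with the same iterated blow-up of $\BP(\bar\eta)$ along strict transforms of the $k_E$-rational linear subspaces (for $X_k$, see Remark~\ref{rema:dl}). Another is to construct one of the two maps correctly in families, stratum-compatibly over the opens $\BH^\Delta_k$ with $\Delta\ni\eta$, and prove it is bijective on geometric points. Properness and the perfect-scheme argument then finish the proof.
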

\medskip

\begin{proof}
By Remark \ref{rema:unperf}, it is enough to prove the unperfected version of the proposition by composing with $F^{-1}$. We are going to use the following standard lemma:
\medskip
\begin{lemm}
Let $X,X'$ be irreducible smooth proper schemes over $k$ and let $U\subset X$ and $U'\subset X'$ be dense open subschemes. Suppose we have an isomorphism $f\colon U\xrightarrow{\sim} U'$. Then there exists a unique isomorphism $g\colon X\xrightarrow{\sim}X'$ extending $f$, in the sense that the following diagram 
\begin{center}
\begin{tikzcd}
U\ar[r,"f"]\ar[d,hook] &U'\ar[d,hook]\\
X\ar[r,"g"]&X'
\end{tikzcd}
\end{center}
commutes.
\end{lemm}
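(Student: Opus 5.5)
The plan is to treat uniqueness and existence separately. Uniqueness is formal. Existence is where the real content lies, and as stated I expect it to need more input than the hypotheses provide.

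\emph{Uniqueness.} Let $g_1,g_2\colon X\rightarrow X'$ both extend $f$. Since $X'$ is proper over $k$ it is separated, so the equalizer of $g_1$ and $g_2$ is a closed subscheme of $X$. This closed subscheme contains the dense open $U$. Since $X$ is smooth over $k$ it is reduced, and a closed subscheme of a reduced scheme containing a dense open is the whole scheme. Hence $g_1=g_2$.

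\emph{Existence.} Let $\Gamma\subset X\times_k X'$ be the scheme-theoretic closure of the graph of $f$; it is integral because $U$ is irreducible. The projection $p_1\colon \Gamma\rightarrow X$ is proper because $X'$ is proper, and it is birational, being an isomorphism over $U$. The same holds for $p_2\colon\Gamma\rightarrow X'$. It then suffices to show that $p_1$ and $p_2$ are isomorphisms, and to set $g\coloneqq p_2\circ p_1^{-1}$. Because $X$ is normal, Zariski's main theorem reduces the claim for $p_1$ to showing that $p_1$ is quasi-finite, i.e.\ that its fibres are finite; symmetrically for $p_2$.

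\emph{Main obstacle.} As stated, this step cannot follow from smoothness and properness alone. The blow-up of $\BP^2_k$ at a point and $\BP^2_k$ itself share a dense open subset but are not isomorphic. Some extra information on the boundaries $X\setminus U$ and $X'\setminus U'$ is therefore needed, and I would add it to the hypotheses. In the intended application this information comes from the moduli descriptions: $X_k^{\eta}$ via flags, and $\BH_k^{\eta}$ via Proposition \ref{prop;symlatsub}.

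Concretely, I would first try to extend $f$ and $f^{-1}$ to morphisms $X\rightarrow X'$ and $X'\rightarrow X$ by hand. For the map towards $\BH_k^{\eta}$ I would use the recursive lattice construction of Lemma \ref{lem:valu} to extend over every boundary stratum. For the map towards $X_k^{\eta}$ I would send a point of $\BH_k^{\eta}$ to the flag of kernels of the $\varphi_j$, which is well defined because of the pointwise injectivity conditions. Once both extensions exist, their two composites restrict to the identity on the dense opens $U$ and $U'$. The uniqueness argument above then shows both composites are the identity, so each extension is an isomorphism extending $f$. Alternatively, Remark \ref{rema:dl} identifies both sides as the same iterated blow-up of $\BP^{d-1}_k$ along strict transforms of rational linear subspaces, which would give the extension directly.
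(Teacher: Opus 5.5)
Your uniqueness argument is correct, and so is your diagnosis of existence. The statement is false as written, and the paper's proof breaks exactly where you say.

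The paper takes the closure $G\subset X\times_k X'$ of the graph of $f$. It then asserts that the projection $\pi\colon G\rightarrow X$, being proper and birational with normal target, is an isomorphism by Zariski's main theorem. That theorem needs $\pi$ to be quasi-finite, and nothing in the hypotheses gives this. Your example refutes it: there $G\cong\Bl_p\BP^2_k$ and $\pi$ is the blow-down, which contracts the exceptional curve.

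The failure is also visible inside the paper. $Y_k$ is a dense open of both $X_k$ and $\BP^{d-1}_k$. So the lemma would give an isomorphism $X_k\xrightarrow{\sim}\BP^{d-1}_k$ extending $\id_{Y_k}$, and by uniqueness it would have to be the blow-down of Remark \ref{rema:dl}. For $d\geqslant 3$ the blow-down is not an isomorphism; for $d=3$ it contracts the exceptional curves over the $q^2+q+1$ rational points.

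The lemma does hold when $\dim X=1$, since then $\pi$ is automatically quasi-finite, so the paper's argument only covers $d=2$. For $d\geqslant 3$, the proof of Proposition \ref{prop:cpctdl} needs input specific to these varieties, as you say. The same applies to the identification $\rmM_k^{\eta}\cong\BH_k^{\eta,\perf}$ used for Theorem \ref{thm:perfiso}, which rests on that proposition.

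As for your repairs, the overall structure is the right one: build morphisms $X\rightarrow X'$ and $X'\rightarrow X$ extending $f$ and $f^{-1}$, then conclude by your uniqueness argument.

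\emph{Route via Lemma \ref{lem:valu}.} This cannot produce such a morphism. Extension along discrete valuation rings holds for every rational map into a proper target, by the valuative criterion for the target, so it carries no information. For instance, $\BP^2_k\dashrightarrow\Bl_p\BP^2_k$ extends along every DVR but not to a morphism. What you need is a construction on $R$-points, functorial in $R$, like the one the paper gives for $\rmM_k^{\eta}\cong X_k^{\eta,\perf}$ in Section \ref{sec:perfspe}.

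\emph{Flag-of-kernels route.} Your map $\BH_k^{\eta,\perf}\rightarrow X_k^{\eta,\perf}$ is of the right kind but is only sketched. At points of $\BH_k^{\Delta}$ you must assemble a full flag $(D_j)_j$ with $D_j\subset\sigma(\sigma^*D_{j+1})$ from all the lattices of $\Delta$ and their maps $\varphi$, and do so in families. You must also check that it restricts on $\BH_k^{\{\eta\}}$ to the Frobenius-twisted map $v$ of Lemma \ref{lem:opdl}.

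\emph{Blow-up route.} Remark \ref{rema:dl} only describes $X_k$ as an iterated blow-up. That $\BH_k^{\eta}$ is the same iterated blow-up has to be proved separately (for instance from the local equations of Lemma \ref{lem:semist}) or imported from Genestier. The two identifications with the blow-up must then be shown to match $f$ on $Y_k$.
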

\medskip
\begin{proof}
We first prove existence and uniqueness of the morphism $g\colon X\rightarrow X'$ extending $f$. We still write $f\colon U\rightarrow U'\rightarrow X'$ for the composition. Let $G\subset X\times_kX'$ be the closure of the graph of $f$ and write $\pi \colon G\rightarrow X$ for the projection on the first factor. For $g$ to exist (and be unique) we need to show that $\pi$ is an isomorphism. But by the hypothesis $\pi$ is proper and birational. Hence, since $X$ is irreducible and $X'$ is normal and integral, $\pi$ is an isomorphism by Zariski's main theorem (\cf \cite[Lemme 8.12.10.1]{ega43}). 

We now prove that $g$ is an isomorphism. By exchanging the roles of $X$ and $X'$ we get a unique morphism $g'\colon X'\rightarrow X$ extending $f^{-1}$. But $g\circ g'$ (resp. $g'\circ g$) is the identity when restricted to $U$ (resp. $U'$). By the preceding uniqueness property applied to $X=X'$, this proves that $g'$ is the inverse of $g$ and thus that it is an isomorphism.
\end{proof}
Note that the lemma also provides the $\GL(\eta)$-equivariance.
\end{proof}

Let $r\in \llbracket 0,d-1\rrbracket$ and $\Delta\in \BrTi_{\! d}[r]$ an $r$-simplex. Using Proposition \ref{prop:cpctdl} we get the following:
\medskip
\begin{coro}\label{cor:interslatt}
For any $\eta\in \Delta$ the isomorphism $f_{\eta}\colon\BH_k^{\eta,\perf}\cong X_k^{\eta,\perf}$ identifies $\bigcap_{\eta\in \Delta} \BH^{\eta,\perf}_k$ with the closed subscheme $X_k^{\Delta,\perf}$ defined, for $R\in \Perf_k$, by
$$
X_k^{\Delta,\perf}(R)\coloneqq \{(D_j)_j\in X_k^{\eta,\perf}(R)\mid \forall \eta_i\in \Delta,\ D_{i}=\eta_i \otimes_{\rmO}W_{\rmO}(R)\}.
$$

\end{coro}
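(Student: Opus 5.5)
The plan is to compare the two sides inside each component and then intersect. Fix $\eta\in\Delta$ with logarithmic index $i$, and use the isomorphism $f_{\eta}\colon \BH_k^{\eta,\perf}\cong X_k^{\eta,\perf}$ of Proposition \ref{prop:cpctdl}. For every other $\eta'\in\Delta$ of index $i'$ we need to identify the closed subscheme $f_{\eta}(\BH^{\eta,\perf}_k\cap\BH^{\eta',\perf}_k)$ with the locus $\{D_{i'}=\eta'\otimes_{\rmO}W_{\rmO}(R)\}$. Once this is done for each $\eta'$, the corollary follows by intersecting over $\eta'\in\Delta$. The condition for $\eta$ itself is built into $\Fl^{\perf}_{\eta}$.

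\textbf{Step 1 (moduli description of the intersection).} By Proposition \ref{prop;symlatsub}, $\BH^{\eta}_k\cap\BH^{\eta'}_k$ is the closed subfunctor of points $(\lambda,L,u,r)$ with $\lambda_i=\eta$ and $\lambda_{i'}=\eta'$. Both sides of the claimed identification are reduced closed subschemes of the perfect scheme $X_k^{\eta,\perf}$: perfect schemes are reduced, and the locus $\{D_{i'}=\eta'\otimes W_{\rmO}(R)\}$ is closed because it is an incidence condition in a flag variety. It therefore suffices to compare them on $\kappa$-points, with $\kappa$ algebraically closed, and then to check that the resulting equality of closed subsets is an equality of perfect closed subschemes. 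The second point is automatic: a perfect closed subscheme of a perfect scheme is determined by its underlying closed set, since both are the perfections of their reductions.

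\textbf{Step 2 (pointwise comparison).} We describe $f_{\eta}$ on $\kappa$-points and extend from the open stratum by continuity.
\begin{itemize}
\item On $Y^{\eta,\perf}_k$ the map is explicit (Lemma \ref{lem:opdl}): $D_{i-1}$ corresponds to $\sigma$ of the kernel of $\eta\otimes\kappa\to L_i$.
\item At a point of a deeper stratum, with simplex $\Delta_x\ni\eta$, the lattices $\lambda_{i'}$ for $i'\in C_{\Delta_x}$ give $\rmO$-rational (i.e.\ $\sigma$-stable) members of the flag.
\end{itemize}
I would show that for a $\kappa$-point $x$ of $\BH^{\eta}_k$, the flag $(D_j)=f_{\eta}(x)$ has $D_{i'}$ $\sigma$-stable exactly when $i'\in C_{\Delta_x}$, and that in this case $D_{i'}=\lambda_{i'}\otimes W_{\rmO}(\kappa)$. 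The intended argument uses the blow-up description of Remark \ref{rema:dl}: the stratification of $X_k$ by which $D_j$ are rational matches the stratification of $\BH^\eta_k$ by the simplices $\Delta$ containing $\eta$. Both are indexed by chains of $k_E$-rational subspaces of $\bar\eta$, i.e.\ by lattices between $\varpi\eta$ and $\eta$. One checks this compatibility on the open stratum and then propagates it by the valuative criterion (Lemma \ref{lem:valu}): specialize along a DVR from a generic point of $\BH_k^{\{\eta\}}$ to $x$. In the recursive construction of that lemma, the lattices $\eta_{l+1}=\ker(\eta_l/\varpi\eta_0\to L^+_l/t)$ are precisely the rational members appearing in the limit flag.

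\textbf{Main obstacle.} The hard step is exactly this limit computation. One must show that the limit in $X^{\eta,\perf}_k$ of the flags $\sigma$-generated by a kernel line is the flag whose rational members are the lattices produced in the proof of Lemma \ref{lem:valu}, with matching indices. I would first reduce to the maximal simplex $\Delta=\{\boeta_0,\dots,\boeta_{d-1}\}$ by $\GL_d(E)$-equivariance. There I would compute in the explicit coordinates of Lemma \ref{lem:semist} and Remark \ref{rem:eqlocmod}, where the strata are cut out by the vanishing of the $x_j$.
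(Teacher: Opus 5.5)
Your Step~1 is sound. The scheme-theoretic intersection of the perfect closed subschemes $\BH^{\eta',\perf}_k$ is again perfect, the locus $\{D_{i'}=\eta'\otimes_{\rmO}W_{\rmO}(R)\}$ is closed in $X_k^{\eta,\perf}$, and perfect closed subschemes are determined by their supports, so comparing closed points suffices.

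The gap is in Step~2. Your pointwise claim is true: that $D_{i'}$ of $f_\eta(x)$ is $\sigma$-stable exactly for $i'\in C_{\Delta_x}$, and then equals $\lambda_{i'}\otimes W_{\rmO}(\kappa)$. But it \emph{is} the corollary, and you only assert it. The sentence saying that the lattices produced by the recursion of Lemma~\ref{lem:valu} ``are precisely the rational members appearing in the limit flag'' is what has to be proved, and you postpone it to a coordinate computation you never carry out. The charts of Lemma~\ref{lem:semist} make the strata of $\BH_k^{\eta}$ transparent. They say nothing about $f_\eta$ on the boundary, where $f_\eta$ is only known as the unique extension from the open Deligne--Lusztig part.

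The limit is also not formal. Let $\ell$ be a primitive form on $\bar\eta\otimes A$ cutting out the kernel over $K$. The members of the flag over $K$ are annihilated by consecutive Frobenius twists of $\ell$, and these rescale $t$-adic valuations by powers of $q$. So their saturations are not governed by $\ell \bmod t$: for $d=3$, $\ell=(1,t,t^2)$ and $\ell=(1,t,tb)$ with $\bar b\notin k_E$ have the same reduction but different limit lines. The paper itself only says the corollary follows from Proposition~\ref{prop:cpctdl}, so it also leaves this boundary identification implicit.

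To complete your route, take a $k_E$-basis $(e_j)$ of $\bar\eta$ adapted to the filtration $\bar\eta=U_0\supsetneq U_1\supsetneq\cdots\supsetneq U_r\supsetneq 0$ given by the jumps of $v\mapsto v_t(\ell(v))$; this is exactly the filtration the recursion of Lemma~\ref{lem:valu} produces. The Pl\"ucker coordinates of the span of $l$ consecutive twists of $\ell$ are powers of Moore determinants $\det(\ell(e_j)^{q^a})_{a,\,j\in J}$. The Moore determinant formula gives their valuations as a fixed multiple of $\sum v_t(\ell(v))$, summed over the $k_E$-rational lines $[v]$ of $\langle e_j\rangle_{j\in J}$. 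When $l=\mathrm{codim}\,U_s$, this sum has a unique minimizer, namely $J=\{j: e_j\notin U_s\}$. That forces the corresponding member of the limit flag to be $U_s\otimes\kappa$. For the other values of $l$ you still must show the limit is not rational, either from the leading coefficients or by an irreducibility and dimension count.

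A softer alternative avoids limits altogether:
\begin{itemize}
\item[$\bullet$] By Proposition~\ref{prop:cpctdl}, $f_\eta$ maps $\BH_k^{\eta,\perf}\setminus\BH_k^{\{\eta\},\perf}$ onto the complement of $Y_k^{\eta,\perf}$, so it permutes the irreducible boundary divisors.
\item[$\bullet$] On both sides these divisors are indexed $\GL(\eta)$-equivariantly by the proper nonzero subspaces of $\bar\eta$: via $\eta'$ on one side, via the loci $\{D_{i'}=\eta'\otimes_{\rmO}W_{\rmO}\}$ on the other.
\item[$\bullet$] Maximal parabolics of $\GL(\bar\eta)$ are self-normalizing and pairwise non-conjugate, so the induced bijection is the identity. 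This settles $\Delta=\{\eta,\eta'\}$, and intersecting together with your Step~1 gives the corollary.
\end{itemize}
This route needs three inputs: irreducibility of $\BH_k^{\eta}\cap\BH_k^{\eta'}$ (density of its open stratum, as in Proposition~\ref{prop;symlatsub}), irreducibility of the Deligne--Lusztig boundary divisors, and the fact that those divisors exhaust the boundary.
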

\subsection{Critical indices over perfect algebras}
In this subsection, we prove the following theorem: 
\medskip
\begin{theo}\label{thm:critexist}
Let $S$ be an irreducible perfect scheme over $k$ and $X$ a special formal $\rmOD$-module over $S$. Then $X$ admits a critical index.
\end{theo}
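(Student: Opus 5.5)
Write $\Lie(X)=\bigoplus_{i\in \BZ/d}\Lie(X)_i$, where each $\Lie(X)_i$ is an invertible $\CO_S$-module, and let $\Pi_i\colon \Lie(X)_i\rightarrow \Lie(X)_{i+1}$ be the maps induced by $\Pi$, as in Definition \ref{def:sfd}. Each $\Pi_i$ is a global section of the line bundle $\CHom(\Lie(X)_i,\Lie(X)_{i+1})$. Let $Z_i\subset S$ be its vanishing locus; this is a closed subscheme. The index $i$ is critical exactly when $\Pi_i=0$, that is, when $Z_i=S$ as schemes. The proof therefore has two steps: a topological step showing $\lvert Z_i\rvert=\lvert S\rvert$ for some $i$, and a scheme-theoretic step upgrading this to $Z_i=S$ using perfectness.

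\emph{Topological step.} For every point $s\in S$, the fiber $X_{k(s)}$ is an SFD over the field $k(s)$, which is perfect because $S$ is perfect. Lemma \ref{lem:havcrit} gives an index $i$ with $\Pi_i\otimes k(s)=0$, so $s\in Z_i$. Hence $\lvert S\rvert=\bigcup_{i\in \BZ/d}\lvert Z_i\rvert$. This is a finite union of closed subsets, and $S$ is irreducible, so $\lvert S\rvert=\lvert Z_i\rvert$ for some $i$. Equivalently, the generic point $\xi$ of $S$ lies in $Z_i$ for this $i$, and then every point does.

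\emph{Scheme-theoretic step.} Once $\lvert Z_i\rvert=\lvert S\rvert$, the closed immersion $Z_i\incl S$ is a bijective closed immersion, hence a universal homeomorphism. Locally on $S=\Spec R$, trivialize both line bundles; then $\Pi_i$ is a function $a\in R$ vanishing at every point, so $a$ is nilpotent. Since $R$ is perfect, it is reduced, so $a=0$. Therefore $\Pi_i=0$ on all of $S$. This is the statement \og universal homeomorphisms between perfect schemes are isomorphisms\fg\ from the introduction, and here only reducedness of perfect rings is actually needed. We must still check that the vanishing is compatible with the trivializations: $\Pi_i$ is a map between line bundles, so vanishing at a point means vanishing in the fiber $k(s)$. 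We must also check that $Z_i$ is locally cut out by one function. Both follow from $\Lie(X)_i$ being invertible.

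The main (mild) obstacle is the first step: we need Lemma \ref{lem:havcrit} at non-closed points, whose residue fields are perfect but not necessarily algebraically closed. The argument of that lemma only uses that $\varpi=0$ in a field of characteristic $p$ and that each $\Pi_i$ is a map between one-dimensional spaces, so it applies verbatim. We also need the base change of $\Lie(X)$ to the residue field to commute with the grading. This holds because the decomposition of $R_d$ is compatible with base change.
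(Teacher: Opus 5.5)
Your proof is correct and is essentially the paper's argument. The paper forms the closed subscheme $\rmM_k^{\crit}=\bigcup_{i\in\BZ/d}V(\Pi_i)\subset\rmM_k$ and uses Lemma \ref{lem:havcrit} on field-valued points to see that it is a universal homeomorphism onto $\rmM_k$; reducedness (or the fact that universal homeomorphisms of perfect schemes are isomorphisms) then makes it an isomorphism after perfection, and irreducibility of $S$ finishes exactly as in your argument. Your version works directly with the vanishing loci $Z_i\subset S$, so it avoids passing through the moduli space and needs no quasi-isogeny to $\bX$, and you are right that only reducedness of perfect rings is used.
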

\medskip

\subsubsection{Subscheme of critical indices}
We define the subscheme of $\rmM_k$ of special formal $\rmOD$-modules locally having a critical index. For $R\in \Alg_k$ we define 
$$
\rmM_k^{\crit}(R)\coloneqq\left \{(X,\rho)\in \rmM_k(R)\mid\begin{array}{c}\forall S\subset \Spec R\text{ irreducible,}\\
														\text{ $X_S$ admits a critical index} \end{array} \right \}.
$$
\medskip
\begin{lemm}\label{lem:repclo}
The functor on $k$-algebras $R\longmapsto \rmM_k^{\crit}(R)$ is representable by a closed subscheme $\rmM_k^{\crit}\subset \rmM_k$.
\end{lemm}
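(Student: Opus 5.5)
The idea is to write $\rmM_k^{\crit}$ as a finite union of closed subschemes cut out by the vanishing of the maps $\Pi_i$ on the Lie algebra, and then to check that the functor defined via irreducible subsets agrees with this union.

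Let $(X^{\univ},\rho^{\univ})$ be the universal object over $\rmM_k$. For each $i\in \BZ/d$ we have $\Pi_i\colon \Lie(X^{\univ})_i\rightarrow \Lie(X^{\univ})_{i+1}$, a morphism of invertible $\CO_{\rmM_k}$-modules. Locally, after trivializing both line bundles, $\Pi_i$ is multiplication by a section $a_i$. Its vanishing locus
$$
Z_i\coloneqq V(\Pi_i)\subset \rmM_k
$$
is therefore a closed subscheme, namely the one defined by the ideal image of $\Lie(X^{\univ})_i\otimes \Lie(X^{\univ})_{i+1}^{\vee}\rightarrow \CO$. By construction, for $R\in \Alg_k$ a point $(X,\rho)\in \rmM_k(R)$ lies in $Z_i(R)$ exactly when $\Pi_i=0$ on $\Lie(X)_i$, that is, when $i$ is a critical index of $X$. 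This uses only that the formation of $\Lie$ and of $\Pi_i$ commutes with base change. I would then set $\rmM_k^{\crit}\coloneqq Z_0\cup\dots\cup Z_{d-1}$, the scheme-theoretic union, with ideal $\prod_i I_i$ or $\bigcap_i I_i$, and show that it represents the functor in the statement.

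It remains to compare the two functors. Take $R$ with $\Spec R\rightarrow \rmM_k$. If the map factors through the union, then for every irreducible $S\subset \Spec R$, with generic point $s$, some $Z_i$ contains $s$ and hence all of $S$. So $\Pi_i$ vanishes on $S$, at least on the reduced structure, and $X_S$ has a critical index. Conversely, suppose each irreducible $S$ has a critical index. Then $\Spec R$, taken set-theoretically, is covered by the $Z_i$.

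The main obstacle is nilpotents. The functorial condition, read literally on an irreducible but non-reduced $S$, asks for $\Pi_i=0$ on $S$ itself, whereas a scheme-theoretic union of the $Z_i$ only controls the product of the ideals. I would handle this by interpreting "$S\subset \Spec R$ irreducible" as a closed irreducible subscheme and checking that the condition is equivalent to the following: for each irreducible component $S$ of $\Spec R$ there is $i$ with $a_i|_S=0$. Since $\Spec R$ has only finitely many components locally, $\prod_i a_i$ then vanishes on each component, hence in $R$ once we use the product ideal. If this equivalence fails for non-reduced $R$, the fallback is to note that only perfect, hence reduced, $R$ are used later (Theorem \ref{thm:critexist}). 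On reduced rings the set-theoretic and scheme-theoretic statements coincide, so we may define $\rmM_k^{\crit}$ as the reduced closed subscheme $(\bigcup_i Z_i)_{\mathrm{red}}$ and compare the functors only on $\Perf_k$. This suffices for the perfection arguments that follow.
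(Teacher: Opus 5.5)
Your construction of $Z_i=V(\Pi_i)$ as the zero locus of $\Pi_i\in H^0(\rmM_k,\sL_{i+1}\otimes\sL_i^{\vee})$, together with their finite union, is exactly the paper's proof. The paper stops there and never compares the union with the functor. The comparison you add contains an invalid step. Knowing that some $a_i$ vanishes on each irreducible component, with its reduced structure, only gives $\prod_i a_i\in\mathrm{nil}(R)$, not $\prod_i a_i=0$. Also, a non-noetherian $\Spec R$ can have infinitely many components, even locally.

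The observation you are missing settles the comparison at once. By strictness $\Pi_{d-1}\circ\cdots\circ\Pi_0=\varpi$ on $\Lie(X)_0$, so after trivializing, $a_0a_1\cdots a_{d-1}=\varpi=0$ in $\CO_{\rmM_k}$. This has two consequences.

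\begin{itemize}
\item First, $V(\prod_i I_i)=\rmM_k$.
\item Second, if $S$ carries its reduced structure it is integral. So some $a_i$ vanishes at its generic point and therefore on all of $S$, by the argument of Lemma~\ref{lem:havcrit}.
\end{itemize}

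Hence, under the reduced reading, the functor is all of $\rmM_k$ on arbitrary $k$-algebras, and the lemma holds with $\rmM_k^{\crit}=\rmM_k$. No restriction to $\Perf_k$ is needed, and Theorem~\ref{thm:critexist} follows directly.

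Under the non-reduced reading, no closed subscheme represents the functor, so your primary ``equivalence'' cannot be proved. Take $d=2$ and write $\bM_0=\langle e_0,\Pi e_1\rangle$, $\bM_1=\langle e_1,\Pi e_0\rangle$. Grothendieck--Messing identifies the deformations of $(\bX,\id)$ over $k[\epsilon]/(\epsilon^2)$ with the lifts $\langle \Pi e_1+\alpha\epsilon e_0\rangle$, $\langle \Pi e_0+\beta\epsilon e_1\rangle$ of the Hodge filtration, for $(\alpha,\beta)\in k^2$. On the resulting Lie algebra, $\Pi_0$ and $\Pi_1$ act by $-\beta\epsilon$ and $-\alpha\epsilon$. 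So the tangent vectors with a critical index form $\{\alpha\beta=0\}$. This is not a linear subspace, hence not the tangent space of any subscheme.

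Your fallback, using reduced test rings and $(\bigcup_i Z_i)_{\mathrm{red}}=(\rmM_k)_{\mathrm{red}}$, is sound and suffices for Corollary~\ref{cor:univho} and what follows. It proves less than the one-line identity above.
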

\medskip
\begin{proof}
We only need to check that the condition defining $\rmM_k^{\crit}$ is closed, since a closed immersion is relatively representable. Let $X^{\univ}\rightarrow \rmM_k$ be the universal special formal $\rmOD$-module and for every $i\in \BZ/d$, $\sL_i\coloneqq \Lie(X^{\univ})_i$ is an invertible sheaf on $\rmM_k$. Then $\Pi\in \rmOD$ defines an morphism
$$
\Pi_i\colon \sL_i\rightarrow \sL_{i+1}
$$
and hence defines a section $\Pi_i\in H^0(\rmM_k,\sL_{i+1}\otimes \sL_i^{\vee})$. Let $V(\Pi_i)$ be the zero locus of this section, which defines a closed subscheme of $\rmM_k$ and thus the finite union
$$
\rmM_k^{\crit}=\bigcup_{i\in \BZ/d}V(\Pi_i)
$$
is a closed subscheme in $\rmM_k$.
\end{proof}
Since special formal $\rmOD$-modules have critical indices over fields of characteristic $p$ (\cf Lemma \ref{lem:havcrit}) we get the following corollary:
\medskip
\begin{coro}\label{cor:univho}
The morphism $\rmM_k^{\crit}\incl \rmM_k$ is a universal homeomorphism.
\end{coro}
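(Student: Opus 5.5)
We prove that the closed immersion $\rmM_k^{\crit}\incl \rmM_k$ of Lemma \ref{lem:repclo} is a universal homeomorphism by combining three standard facts: a closed immersion is universally closed and universally injective, and it is a universal homeomorphism as soon as it is surjective. Closed immersions are stable under base change, and surjectivity is stable under base change. So once we know that $\rmM_k^{\crit}\incl \rmM_k$ is a surjective closed immersion, it follows that it is integral, universally injective and surjective, hence a universal homeomorphism (\cf \cite[Chapitre IV, 18.12.11]{ega44}, or the Stacks Project characterization of universal homeomorphisms).

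The main step is surjectivity on points. Let $s\in \rmM_k$ be a point with residue field $k(s)$, and pass to an algebraic closure $\kappa$ of $k(s)$. The fiber of the universal object gives a special formal $\rmOD$-module $X_\kappa$ over the perfect field $\kappa$. By Lemma \ref{lem:havcrit}, $X_\kappa$ has a critical index $i$, that is, $\Pi_i=0$ on $\Lie(X_\kappa)_i$.

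Vanishing of a section of a line bundle can be checked after a field extension. Hence the section $\Pi_i\in H^0(\rmM_k,\sL_{i+1}\otimes\sL_i^{\vee})$ already vanishes at $s$, so $s\in V(\Pi_i)\subset \rmM_k^{\crit}$ by the description $\rmM_k^{\crit}=\bigcup_i V(\Pi_i)$ in the proof of Lemma \ref{lem:repclo}. Thus the underlying map of topological spaces is a bijection.

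The only point needing care is the identification of $\rmM_k^{\crit}$ with the finite union $\bigcup_i V(\Pi_i)$, and not just with a subfunctor of it. This was already used in Lemma \ref{lem:repclo}, so we take it as given. There we need only that the underlying sets agree: a point lies in $\rmM_k^{\crit}$ exactly when its residue field, an irreducible scheme, carries a critical index.

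With surjectivity established, the universal homeomorphism property follows formally. As a remark for later use, since the union $\bigcup_i V(\Pi_i)$ need not be reduced, the inclusion is in general not an isomorphism. After perfection, however, it becomes one, because universal homeomorphisms between perfect schemes are isomorphisms. This is exactly what is needed for Theorem \ref{thm:critexist}.
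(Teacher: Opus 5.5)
Your proof is correct and follows the paper's own argument: the closed immersion of Lemma \ref{lem:repclo} is integral and universally injective, so it suffices to prove surjectivity, and Lemma \ref{lem:havcrit} applied at field-valued points gives this. The only difference is that you pass to an algebraic closure to meet the perfectness hypothesis of Lemma \ref{lem:havcrit} and then descend the vanishing of $\Pi_i$ to the residue field. The paper skips this step and applies the lemma directly to an arbitrary field of characteristic $p$, so your version is slightly more careful.
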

\medskip
\begin{proof}
By lemma \ref{lem:repclo} the map $\rmM_k^{\crit}\rightarrow \rmM_k$ is a closed immersion. Hence it is universally injective and integral and to show that it is a universal homeomorphism we only need to show that it is surjective (\cf \cite[04DF]{stacks}). Let $\kappa/k$ be a field extension, which is a field of characteristic $p$, and let $(X,\rho)\in \rmM_k(\kappa)$. Then by Lemma \ref{lem:havcrit}, $X$ has a critical index and thus $(X,\rho)\in \rmM_k^{\crit}(\kappa)$. This means that $\rmM_k^{\crit}\rightarrow \rmM_k$ is surjective on points. This finishes the proof.
\end{proof}
\subsubsection{Perfect subscheme of critical indices}
We now prove theorem \ref{thm:critexist}. We let $\rmM_k^{\crit,\perf}$ be the perfection of $\rmM_k^{\crit}$. Using corollary \ref{cor:univho} and the fact that universal homeomorphism between perfect schemes are isomorphisms, we get the result:
\medskip
\begin{coro}
The natural map $\rmM_k^{\crit,\perf}\rightarrow \rmM_k^{\perf}$ is an isomorphism.
\end{coro}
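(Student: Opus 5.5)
The plan is to deduce the statement formally from Corollary \ref{cor:univho}, together with the general fact that the perfection functor turns universal homeomorphisms into isomorphisms.

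First, Corollary \ref{cor:univho} says that the closed immersion $i\colon \rmM_k^{\crit}\incl \rmM_k$ is a universal homeomorphism; its proof relies on Lemma \ref{lem:havcrit} for surjectivity. Taking perfections gives a morphism $i^{\perf}\colon \rmM_k^{\crit,\perf}\rightarrow \rmM_k^{\perf}$. Since $\lvert X^{\perf}\rvert=\lvert X\rvert$ and perfection commutes with base change along perfect schemes, $i^{\perf}$ is again a universal homeomorphism, and it is still a closed immersion on underlying spaces.

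Second, I will invoke the standard result (Bhatt--Scholze, \emph{Projectivity of the Witt vector affine Grassmannian}, Lemma 3.8) that a universal homeomorphism between perfect $\BF_p$-schemes is an isomorphism. For a closed immersion this can be seen directly, and I would include the argument as a sanity check. Locally, $i$ is $\Spec A/I\rightarrow \Spec A$, and being a homeomorphism means $I$ consists of nilpotent elements. Passing to colimits along Frobenius, $A^{\perf}\rightarrow (A/I)^{\perf}$ is surjective with kernel $I^{\perf}$. Every element of $I^{\perf}$ has the form $x^{1/p^n}$ with $x\in I$, so it is nilpotent. A perfect ring is reduced, so this kernel is zero and $i^{\perf}$ is an isomorphism.

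Third, I would note that the statement is local on $\rmM_k$, so working on affine opens suffices, and that the maps in question are natural, which gives compatibility with the $\GL_d(E)$-action for later use. With the corollary established, Theorem \ref{thm:critexist} follows. A morphism from an irreducible perfect $S$ to $\rmM_k$ factors through $\rmM_k^{\perf}\cong \rmM_k^{\crit,\perf}$, hence through $\rmM_k^{\crit}$, and is therefore a union of the closed sets $V(\Pi_i)$. Irreducibility forces $S$ to lie in a single $V(\Pi_i)$. For a general special formal $\rmOD$-module not necessarily equipped with a quasi-isogeny, the same argument runs with the vanishing loci $V(\Pi_i)$ taken on $S$ itself.

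The only real obstacle is the identification of the kernel of $A^{\perf}\rightarrow (A/I)^{\perf}$ with $I^{\perf}$, that is, the exactness of the colimit along Frobenius. This holds because filtered colimits are exact.
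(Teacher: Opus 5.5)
Your proposal is correct and matches the paper's proof, which gives the same two arguments: the direct one (the ideal of the perfected closed immersion is nil because the map is bijective on points, hence zero since perfect schemes are reduced) and the conceptual one via Bhatt--Scholze's lemma that universal homeomorphisms between perfect schemes are isomorphisms. Your direct argument is slightly more careful than the paper's: you claim only that the kernel consists of nilpotent elements rather than that it is a nilpotent ideal, which is the right statement for the non-noetherian perfected rings.
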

\medskip
\begin{proof}
We give two different proofs of the statement:
\begin{itemize}
\itemb The first proof is more elementary. We know that $f\colon \rmM_k^{\crit,\perf}\rightarrow \rmM_k^{\perf}$ is a closed immersion; let $\CI$ be the ideal sheaf defining it. Since this closed immersion is bijection on points by corollary \ref{cor:univho} we deduce that $\CI$ is a nilpotent ideal. But since $\rmM_k^{\perf}$ is perfect, hence reduced, $\CI=0$, which implies $f$ is an isomorphism. Note that we only used the claim contained in \ref{cor:univho} that $f$ is a bijection on points.
\itemb The second proof is more conceptual. By \cite[Lemma 3.8]{bhsc} we know that a universal homeomorphism between perfect schemes is an isomorphism. Hence by corollary \ref{cor:univho} $\rmM_k^{\crit,\perf}\rightarrow \rmM_k^{\perf}$ is an isomorphism.
\end{itemize}
\end{proof}
\medskip
\begin{rema}
Note that this does not mean that an SFD over a perfect scheme has a critical index. But if $X$ is an SFD over a perfect scheme $S$ and if for $i\in \BZ/d$ we write $S_i\subset S$ for the closed subscheme where $i$ is critical then $S=\bigcup_{i \in \BZ/d} S_i$. In particular, if $S$ is irreducible, $X$ has a critical index which proves Theorem \ref{thm:critexist}.
\end{rema}
\medskip
\subsection{Lattice subschemes}
In this subsection, we define lattice subschemes of $\rmM_k^{\perf}$ and prove that they are represented by certain subschemes of flag varieties, given as compactifications of Deligne--Lusztig varieties.
\subsubsection{Definition}
Let $R\in \Perf_k$, $(X,\rho)\in\rmM_k(R)$ and let $M=\rmM_{\rmO}(X)$ be the special Cartier $R$-module associated to $X$. 

For $i\in \BZ$, let $r_i$ be the composition
\begin{equation}\label{eq:defri}
r_i\colon M_i\rightarrow M_i\otimes_{\Zp}\Qp\xrightarrow{\rho^{-1}}\bM_i\otimes_{\breve{\rmO}} W_{\rmO}(R)_{\Qp}\xrightarrow{\iota_{i}\otimes \id}E^d\otimes_{\rmO} W_{\rmO}(R)
\end{equation}
where:
\begin{itemize}
\itemb the first map is the natural map $M_i\incl M_i\otimes_{\Zp}\Qp$ which is an inclusion since $W_{\rmO}(R)$ is torsion-free as $R$ is perfect,
\itemb the second map is the morphism on rational Cartier modules given by the inverse of the quasi-isogeny $\rho \colon \bX \otimes_{k} R/p \rightarrow X \otimes _R R/p$ ; it is thus an isomorphism,
\itemb the last map is the isomorphism $\iota_i\colon \boeta^i\otimes_{\rmO} E\xrightarrow{\sim} E^d$, defined in paragraph \ref{subsubsec:refob} given by $g_{\Pi}^{-i}$, tensored with $W_{\rmO}(R)$ which defines an isomorphism.
\end{itemize}
We get an injective $W_{\rmO}(R)$-linear morphism $r_i\colon M_i\incl E^d\otimes_{\rmO} W_{\rmO}(R)$. We get the following lemma on these maps:
\medskip
\begin{lemm}\label{lemm:rcomp}

The following diagrams commute:
$$
\begin{array}{cc}
\begin{tikzcd}
M_i\ar[rr,"\Pi"]\ar[dr,swap,"r_i"]&&M_{i+1}\ar[dl,"r_{i+1}"]\\
& E^d\otimes_{\rmO} W_{\rmO}(R)
\end{tikzcd},
&
\begin{tikzcd}
M_i\ar[r,"V"]\ar[d,"r_i"]&\sigma^*M_{i+1}\ar[d,"\sigma^{*}r_{i+1}"]\\
E^d\otimes_{\rmO} W_{\rmO}(R)\ar[r,"(\id\otimes \sigma^{-1})"]&E^d\otimes_{\rmO} \sigma^{*}W_{\rmO}(R)
\end{tikzcd}
\end{array}
$$

\end{lemm}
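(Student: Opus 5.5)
The plan is to factor each $r_i$ as the composition of three maps, as in \eqref{eq:defri}, and check that each factor is compatible with $\Pi$ (for the first triangle) and with $V$ (for the square). Compatibility then follows by composing.

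For the triangle, note first that $\Pi$ is an $\rmOD$-endomorphism of $X$. The quasi-isogeny $\rho$ is $\rmOD$-linear, so the induced isomorphism on rational Cartier modules $\rho^{-1}\colon M\otimes\Qp\xrightarrow{\sim}\bM\otimes W_{\rmO}(R)_{\Qp}$ commutes with $\Pi$. The natural inclusion $M_i\incl M_i\otimes\Qp$ obviously commutes with $\Pi$ as well. It therefore remains to show that $(\iota_{i+1}\otimes\id)\circ\Pi=\iota_i\otimes\id$ on $\bM_i\otimes W_{\rmO}(R)_{\Qp}$. By $W_{\rmO}(R)$-linearity it suffices to check this on $\boeta^i\otimes_{\rmO}E$, where it is exactly the identity $\iota_i=\iota_{i+1}\circ\Pi$. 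This identity follows from Remark \ref{rem:glequiv}: we have $\iota_{i+1}=\iota_0\circ\Pi^{-i-1}$, so $\iota_{i+1}\circ\Pi=\iota_0\circ\Pi^{-i}=\iota_i$. Equivalently, it is the commutative diagram in \S\ref{subsubsec:refob}, where $\Pi$ becomes the inclusion $\boeta_i\incl\boeta_{i+1}$. One point needs care: $\iota_i$ is defined on $\boeta^i\subset\bM_i$, and it is extended $W_{\rmO}(R)$-linearly using $\bM_i=\boeta^i\otimes_{\rmO}\brvO$, which holds since $\boeta^i$ spans $\bM_i$.

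For the square, we argue the same way. $\rho$ is a quasi-isogeny of formal $\rmO$-modules, so by functoriality of Cartier theory (Proposition \ref{prop:Ecart}) it commutes with $V$ on rational Cartier modules. This reduces the claim to the reference object: we must show $\sigma^*(\iota_{i+1}\otimes\id)\circ\bV=(\id\otimes\sigma^{-1})\circ(\iota_i\otimes\id)$ on $\bM_i\otimes W_{\rmO}(R)$. Both sides are $\sigma^{-1}$-semilinear, so it suffices to check equality on the $\rmO$-lattice $\boeta^i$, whose elements are fixed by $\bU=\bV^{-1}\Pi$. For $x\in\boeta^i$ we have $\bV x=\Pi x$. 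Hence the left-hand side gives $\iota_{i+1}(\Pi x)\otimes1=\iota_i(x)\otimes 1$ by the first step. The right-hand side also gives $\iota_i(x)\otimes1$, because $\sigma^{-1}$ fixes $\rmO$. Semilinear extension then gives equality everywhere.

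The main obstacle I expect is bookkeeping rather than mathematics. First, one must make precise what $\sigma^*M_{i+1}$ and $\sigma^*r_{i+1}$ mean, since $V$ is $\sigma^{-1}$-linear and so is a linear map into a Frobenius twist. Second, one must make sure the conventions $\bV=\Pi\otimes\sigma^{-1}$ and the sign of the $\iota_i$ match, given the footnoted typo in \cite[3.73]{razi}. Third, one must check that the quasi-isogeny, which is only defined over $R/p=R$, indeed induces a $V$- and $\Pi$-equivariant map of rational Cartier modules. Since $R$ is perfect and $W_{\rmO}(R)$ is torsion-free, all of this is harmless.
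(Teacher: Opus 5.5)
Your proposal is correct and follows essentially the same route as the paper. The triangle comes from the $\rmOD$-linearity of $\rho$ together with the identity $\iota_{i+1}\circ\Pi=\iota_i$. The square comes from the $V$-equivariance of $\rho$ together with the fact that $\bV$ agrees with $\Pi$ on the $\bU$-fixed lattice $\boeta^i$, which the paper phrases as $\bV=\Pi\circ(\id\otimes\sigma^{-1})$; your semilinear extension from $\boeta^i$ is just a more explicit version of the paper's one-line computation.
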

\medskip
\begin{proof}
The first diagram commutes because
$$
\iota_{i+1}\circ \Pi= g_{\Pi}\circ \iota_{i+1}=\iota_{i},
$$

Now for the other diagram, recall that on $\bM$, $\bV=\Pi\circ (\id\otimes \sigma^{-1})$ so by the first diagram
$$
(\iota_{\langle i+1 \rangle}\otimes \id) \circ \bV=\iota_{\langle i \rangle}\otimes \sigma^{-1},
$$
so since $\rho^{-1}\circ V=\bV\circ \rho^{-1}$, we get
$$
\sigma^*r_{i+1}\circ V=(\iota_{\langle i+1 \rangle }\otimes \id)\circ \bV \circ \rho^{-1} =(\iota_{\langle i \rangle}\otimes \sigma^{-1}) \circ \rho^{-1}=(\id \otimes \sigma^{-1})\circ r_{i}.
$$

\end{proof}
\medskip
\medskip
\begin{rema}\label{rem:requiv}
By Remark \ref{rem:zerograd}, the map $r_i$ is $\GL_d(E)$-equivariant in the sense that for $g\in \GL_d(E)$, 
$$
r_i\circ g = g\circ r_{i+v},
$$
where $v=v_{\varpi}(\det(g))$.
\end{rema}
\medskip
\begin{defi}\label{def:lattsubsc}
 Let $\eta\subset E^d$ be a $\rmO$-lattice and let $i\in \BZ$ be its index. We define $\rmM^{\eta}_k$ as a subfunctor of $\brv{\rmM}_k^{\perf}$ defined on perfect $k$-algebras by
 $$
 R\in \Perf_k\mapsto \rmM^{\eta}_k(R)\coloneqq 
 \left\{(X,\rho)\in \brv{\rmM}_k^{\perf}(R) \mid  M_{\rmO}(X)_i\xrightarrow[\sim]{r_i}\eta\otimes_{\rmO} W_{\rmO}(R)\right \}.
 $$

\end{defi}
\medskip
Since $\rmM^{\eta}_k$ is defined by a closed subfunctor of a representable functor it is itself representable by a closed subscheme of $\brv{\rmM}_k^{\perf}$. Using theorem \ref{thm:critexist}, we get the following corollary :
\medskip
\begin{coro}

$$
\brv{\rmM}_k^{\perf}=\bigcup_{\eta\in \brv{\BrTi}_{\!d}[0]}\rmM^{\eta}_k.
$$
\end{coro}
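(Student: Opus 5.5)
The plan is to show that every point of $\brv{\rmM}_k^{\perf}$, viewed as a map $\Spec R\rightarrow \brv{\rmM}_k^{\perf}$, lies Zariski-locally in one of the closed subschemes $\rmM_k^{\eta}$. Since each $\rmM_k^{\eta}$ is closed, it is enough to prove two things. First, the $\rmM_k^{\eta}$ cover $\brv{\rmM}_k^{\perf}$ set-theoretically. Second, the family is locally finite, so that the union is again closed. The scheme-theoretic equality then comes from reducedness: $\brv{\rmM}_k^{\perf}$ is perfect, hence reduced, and a union of closed subschemes covering a reduced scheme set-theoretically equals it.

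For the set-theoretic covering, take a point $s$ with perfect residue field $\kappa$, pass to an algebraic closure, and look at the corresponding $(X,\rho)$. By Theorem \ref{thm:critexist}, or more simply Lemma \ref{lem:havcrit} over a field, $X$ has a critical index $i\in\BZ/d$. I will choose the lift of $i$ to $\BZ$ that is compatible with the height of $\rho$. Lemma \ref{lem:inva} then gives the $\rmO$-lattice $\eta^i_X=M_i^{\Pi=V}$, and $M_i=\eta^i_X\otimes_{\rmO}W_{\rmO}(\kappa)$. Here the case $R=\kappa$ algebraically closed is exactly Lemma \ref{lem:critspe}: $U=V^{-1}\Pi$ is a bijective $\sigma$-linear operator on $M_i$, and Dieudonné--Manin for slope $0$ produces a $U$-fixed basis.

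Next I apply $r_i$, using Lemma \ref{lemm:rcomp}. On the reference object, $U$ corresponds to $\id\otimes\sigma$, so $r_i$ carries $U$-invariants into $E^d\otimes 1$. Hence $\eta\coloneqq r_i(\eta^i_X)\subset E^d$ is an $\rmO$-lattice, and $r_i(M_i)=\eta\otimes_{\rmO}W_{\rmO}(\kappa)$. The index of $\eta$ should come out equal to $i$ by the height normalisation (compare condition (3) in Definition \ref{def:drin}), and this is why the lift of $i$ matters. The conclusion is $s\in\rmM_k^{\eta}$.

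The step I expect to be the main obstacle is finiteness. I need that a quasi-compact open of $\brv{\rmM}_k^{\perf}$ meets only finitely many $\rmM_k^{\eta}$. The approach is to use that $\rho$ has bounded denominators on a quasi-compact $\Spec R$, meaning there is $N$ with $\varpi^N$ times the reference lattice contained in $M_i$, which in turn is contained in $\varpi^{-N}$ times it. This confines $\eta$ to finitely many lattices between $\varpi^N\rmO^d$ and $\varpi^{-N}\rmO^d$. Alternatively, if the union is meant only set-theoretically or as a union of subfunctors on perfect rings, I can avoid this: use Theorem \ref{thm:critexist} on irreducible components, then Lemma \ref{lem:inva} to make $\eta^i_X$ Zariski-locally constant, and conclude directly that each point of $\Spec R$ has a neighbourhood mapping into some $\rmM_k^{\eta}$.
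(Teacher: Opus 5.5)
\textbf{Comparison with the paper.} Your core step is the same as the paper's: find a critical index, take the unit-root lattice $M_i^{\Pi=V}$, and push it into $E^d$ with $r_i$ using Lemma \ref{lemm:rcomp}. The difference is in how you pass from points to the scheme.

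The paper takes an $R$-point over an irreducible base and lets Theorem \ref{thm:critexist} do the work; that theorem rests on the fact that a closed subscheme of a reduced scheme with full support is everything. The paper does not address local finiteness, nor in what sense an infinite union of closed subschemes is meant.

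You work at geometric points instead, where Lemma \ref{lem:havcrit} and Lemma \ref{lem:critspe} suffice. You then upgrade the set-theoretic covering using reducedness of $\brv{\rmM}_k^{\perf}$ together with local finiteness. This is the same perfectness mechanism applied one level higher. It buys a genuinely scheme-theoretic equality: locally, finitely many ideals intersect to zero. That is what the gluing before Corollary \ref{cor:gluit} actually uses ($\CI_1\cap\CI_2=0$).

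\textbf{Three points need correcting.}

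(1) The lift of $i$ does not control the index. Since $\iota_{i+d}=\varpi^{-1}\iota_i$ (Remark \ref{rem:glequiv}), you get $r_{i+d}=\varpi^{-1}r_i$, so changing the lift by $d$ shifts both $i$ and $\log[\eta:\rmO^d]$ by $d$. The difference $\log[\eta:\rmO^d]-i$ equals $\log[\eta:\boeta_i]$, the relative volume of $\rho^{-1}(M_i)$ against $\bM_i\otimes W_{\rmO}(\kappa)$. Each $M_{j+1}/\Pi M_j$ has rank one (proof of Lemma \ref{lem:specarthei}, for $M$ and for $\bM$), so every graded piece carries $\tfrac{1}{d}\heig\rho$. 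Hence this difference vanishes exactly when $\heig\rho=0$. You should therefore argue on the height-zero part and reach the other components through $\CM^h_{\Dr}\cong\CM_{\Dr}$. With $r_i$ as literally defined in \eqref{eq:defri}, no lift works when $\heig\rho\neq 0$, and the window $\llbracket h-d+1,h\rrbracket$ in the paper's proof does not fix this either.

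(2) The same identity gives $\rmM_k^{\eta}=\rmM_k^{\varpi\eta}$, so every nonempty member of the family is repeated infinitely often. Local finiteness must therefore be stated for distinct subschemes. Your bound $\varpi^{N}\boeta_i\subset\eta\subset\varpi^{-N}\boeta_i$ yields finitely many $\eta$ only after $i$ is restricted to a window of length $d$. That restriction is legitimate precisely because of this periodicity, and with it your bounded-denominator argument works.

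(3) Drop the fallback at the end. Take a point of $\rmM_k^{\eta}\cap\rmM_k^{\eta'}$ with $\eta,\eta'$ non-homothetic. It has no open neighbourhood in $\brv{\rmM}_k^{\perf}$ contained in a single $\rmM_k^{\eta''}$. So the equality is false as a union of subfunctors, and the reading via reducedness is the correct one.
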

\medskip
\begin{proof}
Let $x=(X,\rho)\in \rmM_k^{\perf}$ and let $h=\heig \rho$. After replacing $x$ by a suitable specialization, we may assume that it lies on an irreducible component. Let $M=M_{\rmO}(X)$ be the Cartier module of $X$. By Theorem \ref{thm:critexist}, $X$ has a critical index $i\in \BZ/d$ and let $\eta^{i}\coloneqq M_i^{\Pi=V}\subset \boeta^i$ defined by Lemma \ref{lem:inva}. Let $i'\in \BZ$ be the unique lift of $i$ such that $i'\in \llbracket h-d+1,h\rrbracket$, and $\eta=\iota_{i'}(\eta^{i})\subset E^d$. Then
$$
r_{i'}\colon M_i\cong \eta\otimes_{\rmO}W_{\rmO}(R),
$$
and $x\in \rmM_k^{\eta}$.
\end{proof}

\subsubsection{Constructing the morphisms}
In this paragraph we construct morphisms 
$$
\rmM^{\eta}_k\xrightleftharpoons[g_{\eta}]{f_{\eta}} X^{\eta,\perf}_k
$$ 
which will be inverse to one another. We simply write $f=f_{\eta}$ and $g=g_{\eta}$ in the proof.

We first define the map $f\colon\rmM^{\eta}_k\rightarrow X^{\eta,\perf}_k$. Let $R\in \Perf_k$ and let $x=(X,\rho)\in \rmM^{\eta}_k(R)$. Let $M=M_{\rmO}(X)$ be the special Cartier $R$-module associated to $X$ and for $j\in\BZ$ set
\begin{equation}\label{eq:flagdef}
D_j(x)\coloneqq {r_{j}(M_{j})}\subset  E^d\otimes_{\rmO} W_{\rmO}(R).
\end{equation}
\medskip
By definition, $D_i(x)=\eta\otimes_{\rmO} W_{\rmO}(R)$.
\begin{lemm}
Let $R\in \Perf_k$ and $x=(X,\rho)\in \rmM^{\eta}_k(R)$. Then $D(x)\coloneqq (D_j(x))_{j}\in X^{\eta,\perf}_k(R)$. In other words, the map $x\mapsto D(x)$ defines a morphism $f\colon\rmM^{\eta}_k\rightarrow X^{\eta,\perf}_k$.
\end{lemm}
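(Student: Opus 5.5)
We must check the three conditions of $\Fl_{\eta}^{\perf}$ together with the Deligne--Lusztig condition $D_j\subset \sigma(\sigma^*D_{j+1})$. All of them come from translating the structure of the special Cartier module $M=M_{\rmO}(X)$ through the embeddings $r_j$, using Lemma \ref{lemm:rcomp}.

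\textbf{Step 1: periodicity and inclusions.} Since $\Pi^d=\varpi$ and $\Pi$ raises the degree by $1$, for $j\in\BZ$ we have $M_{j+d}=M_j$ as graded pieces. By construction $r_{j+d}=\varpi^{-1}r_j$, because $\iota_{j+d}=\varpi^{-1}\iota_j$ by Remark \ref{rem:glequiv}. Hence
\[
D_{j+d}=\varpi^{-1}D_j,
\]
which is condition 3. The first diagram of Lemma \ref{lemm:rcomp} gives $r_j=r_{j+1}\circ\Pi$, so $D_j=r_{j+1}(\Pi M_j)\subset D_{j+1}$. The condition $D_i=\eta\otimes_{\rmO}W_{\rmO}(R)$ holds by definition of $\rmM_k^{\eta}$.

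\textbf{Step 2: the graded quotients are locally free of rank $1$.} By injectivity of $r_{j+1}$ we get
\[
D_{j+1}/D_j\cong M_{j+1}/\Pi M_j.
\]
Lemma \ref{lem:specarthei} gives that these ranks are constant in $j$ and sum over a period to $\rank M_i/\varpi M_i=d$. So each rank equals $1$, at least pointwise. For local freeness, we use that $R$ is perfect: $W_{\rmO}(R)$ is $\varpi$-torsion free and $M_{j+1}$ is finite projective over $W_{\rmO}(R)$. The quotient $M_{j+1}/\Pi M_j$ is then a finitely presented $W_{\rmO}(R)$-module killed by $\varpi$, hence a finitely presented $R$-module. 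It has constant fibre rank $1$ and sits inside the chain
\[
M_{j+1}/\varpi M_{j+1-d}\cong (M_{j+1}/\varpi)\otimes R,
\]
which is projective of rank $d$ and filtered by $d$ pieces of fibre rank $1$. On a reduced ring, constant fibre rank of a finitely presented module forces local freeness. Since perfect rings are reduced, $D_{j+1}/D_j$ is locally free of rank $1$. This is condition 1.

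\textbf{Step 3: the Deligne--Lusztig condition.} This is the main point. We want $D_j\subset\sigma(\sigma^*D_{j+1})$, i.e.\ $(\id\otimes\sigma^{-1})(D_j)\subset \sigma^*D_{j+1}$. By the second diagram of Lemma \ref{lemm:rcomp},
\[
(\id\otimes\sigma^{-1})\, r_j(M_j)=\sigma^*r_{j+1}(VM_j).
\]
Since $VM_j\subset M_{j+1}$, the image lies in $\sigma^*r_{j+1}(M_{j+1})=\sigma^*D_{j+1}$. The only care needed is to keep the $\sigma$-semilinearity conventions straight: $V$ is $\sigma^{-1}$-linear, and $\sigma^*$ is an actual automorphism because $R$ is perfect. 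So $D(x)\in X_k^{\eta,\perf}(R)$.

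\textbf{Step 4: functoriality.} The construction is compatible with base change $R\to R'$ of perfect rings, since Cartier modules and $\rho$ commute with base change. Therefore $x\mapsto D(x)$ is a morphism $f\colon \rmM_k^{\eta}\to X_k^{\eta,\perf}$.

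The step I expect to need the most care is Step 2, namely establishing genuine local freeness rather than only pointwise rank $1$, where perfectness (reducedness and torsion-freeness of Witt vectors) is essential.
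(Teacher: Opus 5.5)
Your proof is correct and is essentially the paper's argument: the first diagram of Lemma~\ref{lemm:rcomp} gives $D_j\subset D_{j+1}$ with quotient $M_{j+1}/\Pi M_j$, and the second diagram together with $VM_j\subset M_{j+1}$ gives $D_j\subset\sigma(\sigma^*D_{j+1})$. The differences are in detail only. You justify local freeness of $M_{j+1}/\Pi M_j$ via pointwise rank $1$ plus finite presentation over the reduced ring $R$, using the standard fact that $M_{j+1}$ is finite projective over $W_{\rmO}(R)$, and you check the periodicity $D_j=\varpi D_{j+d}$ explicitly; the paper simply cites Lemma~\ref{lem:specarthei} for local freeness and leaves periodicity implicit.
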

\medskip
\begin{proof}
We first show that $D(x)=(D_j)_j\in \Fl_{\eta}^{\perf}(R)$. We write $M=M_{\rmO}(X)$ the special Cartier $R$-module of $X$ and $W=W_{\rmO}(R)$.

For $j\in \BZ$, let $P_j\coloneqq M_{j+1}/\Pi M_{j}$. Using the first diagram of lemma \ref{lemm:rcomp} and the exact sequence
$$
0\rightarrow  M_{j}\xrightarrow{\Pi} M_{j+1}\rightarrow P_j\rightarrow 0,
$$
we get an inclusion $r_j(M_j)\subset r_{j+1}(M_{j+1})$ whose quotient is naturally identified with $P_j$. Recall by Lemma \ref{lem:specarthei} that $P_j$ is a locally free $R$-module of rank $1$. Hence $D(x)\in \Fl_{\eta}^{\perf}(R)$.

To check that $D(x)\in X^{\eta,\perf}_{k}(R)$, we use the second diagram of lemma \ref{lemm:rcomp}. Let $j\in \BZ$, applying $r_j$ to $M_j\xrightarrow{V}\sigma^*M_{j+1}$ we get
$$
r_j(M_j)\xrightarrow{(1\otimes \sigma^{-1})}r_{j+1}(\sigma^*M_{j+1})=\sigma^*r_{j+1}(M_{j+1}).
$$

Rewriting this, we get $D_j\subset\sigma(\sigma^*D_{j+1})$, which proves that $D(x)\in X^{\eta,\perf}_k(R)$.
\end{proof}
\medskip
\begin{rema}\label{rema:tracer}
Tracing the action of $V$ in the proof, note that for $x=(X,\rho)\in \rmM_k^{\perf}(R)$, we get that
$$
\Lie(X)_i\cong D_i/\sigma^{-1}((\sigma^{-1})^*D_{i-1}),
$$
and because $\eta\otimes_{\rmO}W_{\rmO}(R)=D_i$ the natural map $\eta\rightarrow D_i/\sigma^{-1}((\sigma^{-1})^*D_{i-1})$ coincides with the composition $\eta\xrightarrow{r_i} M_i \rightarrow \Lie(X)_i$.
\end{rema}
\medskip
We now define a map $g\colon X^{\eta,\perf}_k\rightarrow \rmM^{\eta}_k$. Let $R\in \Perf_k$ and let $y=(D_j)_{j}\in X^{\eta,\perf}_k(R)$. We have maps
$$
\eta\otimes_{\rmO}R\xleftarrow{\mod \varpi} \eta\otimes_{\rmO}W_{\rmO}(R)\xrightarrow{\iota^{-1}_j\otimes \id}\bM_j\otimes_{\brv{\rmO}}W_{\rmO}(R)_{\Qp}.
$$
Set for $j\in \BZ$, 
$$
M_{j}=M_{j}(y)\coloneqq (\iota_j\otimes \id)^{-1}(D_j).
$$
Note that $M_j=M_{j+d}$ so that the index can be considered $j\in \BZ/d$. Then $M=M(y)\coloneqq \bigoplus_{j\in \BZ/d}M_j$ is a $\BZ/d$-graded $W_{\rmO}(R)$ submodule of $\bM\otimes_{\rmO}W_{\rmO}(R)_{\Qp}$. Notice that the natural inclusion induces an isomorphism 
$$
s(y)^{-1}\colon M(y)\otimes_{\Zp}\Qp\xrightarrow{\sim}\bM\otimes_{\rmO}W_{\rmO}(R)_{\Qp}.
$$
We now define $V,\Pi$ on $M$.
\begin{itemize}
\itemb The inclusion $D_j\subset D_{j+1}$ defines a $W_{\rmO}(R)$-linear map $\Pi_{i+j}\colon M_{i+j}\rightarrow M_{i+j+1}$ which defines $\Pi=\Pi(y)\colon M\rightarrow M$. Because $D_{j}=\varpi D_{j+d}$, we get that $\Pi^d=\varpi$.
\itemb Similarly, the injective maps $D_j\xrightarrow{1\otimes\sigma^{-1}} \sigma^*D_{j+1}$ defines a $W_{\rmO}(R)$-linear injective map $V\colon M\rightarrow \sigma^*M$, \ie a degree $1$, $\sigma^{-1}$-linear injective map $V\colon M\rightarrow M$. Moreover, $V^dM\subset \varpi M$ so that the action of $V$ is topologically nilpotent. Note that $\Pi$ and $V$ commute since $\Pi V$ and $V\Pi$ are both defined by the natural inclusions $D_j\subset \sigma(\sigma^*D_{j+2})$.
\itemb To define $F$ we will use the relation $VF=\varpi$. First note that $\varpi M\subset V M$ since $M/VM$ is an $R$-module on which $\varpi$ acts by $0$. Since $V$ is injective we can define $F\coloneqq V^{-1}\varpi \colon M\rightarrow M$ which is a degree $-1$, $\sigma$-linear map commuting with $\Pi$ and satisfying $FV=VF=\varpi$.
\end{itemize}
\medskip
Hence we have defined $M(y)$ as a $\BZ/d$-graded $\BE_{\rmO}(R)$-module with an endomorphism $\Pi(y)$.
\medskip
\begin{prop}
Let $R\in \Perf_k$ and $y\in X^{\eta,\perf}_k(R)$. Then $M(y)$ defines a special Cartier module. If $X(y)$ is its associated special formal $\rmOD$-module over $R$ then $s(y)$ defines an $\rmOD$-linear quasi-isogeny $\rho(y)\colon \bX\otimes_k R/p\rightarrow X(y)\otimes_R R/p$. In other words the map $y\mapsto (X(y),\rho(y))$ defines a morphism $g\colon X^{\eta,\perf}_k\rightarrow \rmM_k^{\eta}$.
\end{prop}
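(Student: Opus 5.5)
We must check three things: $M(y)$ is a reduced Cartier $R$-module satisfying the special condition; $s(y)$ yields an $\rmOD$-linear quasi-isogeny; and the result lands in $\rmM^{\eta}_k$ with height compatible with the index of $\eta$.

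\emph{Cartier module axioms.} By construction $V$ is injective and $\sigma^{-1}$-linear of degree $1$, and $F=V^{-1}\varpi$ is $\sigma$-linear of degree $-1$ with $FV=VF=\varpi$. All of $F$, $V$, $\Pi$ commute, because each is induced by inclusions of the $D_j$ and Frobenius twists. Since $D_j=\varpi D_{j+d}$ and $D_{j+1}/D_j$ is locally free of rank one, each $M_j$ is a finite projective $W_{\rmO}(R)$-module of rank $d$; Zariski locally it is free, because $D_j$ is a $\varpi$-adically complete lift of a rank $d$ projective $R$-module. The relation $V^dM\subset\varpi M$ gives $V$-adic completeness from $\varpi$-adic completeness of $W_{\rmO}(R)$.

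\emph{Special condition.} This is the key step, namely that $M_j/VM_{j-1}$ is locally free of rank $1$ over $R$. Through $r_j$ this quotient is $D_j/\sigma(\sigma^*D_{j-1})$, using the condition $D_{j-1}\subset\sigma(\sigma^*D_j)$ from Definition \ref{def:deluvar}. Since $\sigma$ is an automorphism of $W_{\rmO}(R)$ ($R$ perfect), $\sigma(\sigma^*D_{j-1})$ is again a lattice with $D_{j-2}$-type colength, so it sits as
\[
\sigma(\sigma^*D_{j-1})\subset D_j
\]
with $\varpi D_j\subset\sigma(\sigma^*D_{j-1})$. Its quotient has the same length as $D_j/D_{j-1}$, i.e.\ it has rank $1$. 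To make this precise I would reduce mod $\varpi$. There the quotient is $\bar D_j/\sigma(\bar D_{j-1})$ inside $\bar\eta\otimes R$, and $\sigma(\bar D_{j-1})$ is a direct summand of corank one because Frobenius pullback preserves ranks of locally free quotients. Local freeness of the quotient of $D_j$ then follows from the corresponding statement for the flag modulo $\varpi$. Then Lemma \ref{lem:specarthei} gives dimension $d$, and the height is $d^2$ since $M\otimes\BQ_p\cong\bM\otimes W_{\rmO}(R)_{\BQ_p}$.

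\emph{Quasi-isogeny and lattice condition.} Proposition \ref{prop:classod} attaches $X(y)$ to $M(y)$. The isomorphism $s(y)$ is compatible with $\Pi$ (Lemma \ref{lemm:rcomp}, first diagram, read backwards via $\iota_{j+1}\circ\Pi=\iota_j$) and with $V$ (second diagram). Hence $s(y)$ is an isomorphism of rational special Cartier modules. Since $\varpi^NM\subset\bM\otimes W_{\rmO}(R)\subset\varpi^{-N}M$ Zariski locally, $s(y)$ is a quasi-isogeny by the equivalence of Proposition \ref{prop:Ecart}. Over perfect $R$, quasi-isogenies of formal $\rmO$-modules correspond to isomorphisms of rational Cartier modules respecting these bounds, and they extend from $R$ to $R/p=R$. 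Its height is read off from $\log[\eta:\rmO^d]$, so it has the height matching the component of $\eta$. Finally, $r_i(M_i)=D_i=\eta\otimes W_{\rmO}(R)$ by construction, so $(X(y),\rho(y))\in\rmM^{\eta}_k(R)$. Functoriality in $R$ is clear, which gives the morphism $g$.
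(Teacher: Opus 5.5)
Your outline matches the paper's: check the Cartier axioms, show each $D_j$ is finite locally free over $W_{\rmO}(R)$ by lifting from $D_j/\varpi D_j$, then apply Proposition~\ref{prop:classod}. You are actually more thorough than the paper. The paper only proves that $M_j$ is locally free over $W_{\rmO}(R)$, which does not by itself make $M_j/VM_{j-1}$ invertible, and it treats the quasi-isogeny $s(y)$ as evident. However, three of your steps are wrong as written. All three are repaired by a fact you use only implicitly (when you work inside $\bar\eta\otimes R$): $D_i=\eta\otimes_{\rmO}W_{\rmO}(R)$ is $\sigma$-stable, and hence so is every $D_{i+kd}=\varpi^{-k}D_i$.

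First, the Frobenius goes the other way. Under $r_j$, $VM_{j-1}$ corresponds to $\sigma^{-1}(D_{j-1})$ (compare Remark~\ref{rema:tracer}). The condition $D_{j-1}\subset\sigma(\sigma^*D_j)$ gives $\sigma^{-1}(D_{j-1})\subset D_j$, not $\sigma(D_{j-1})\subset D_j$; the latter is the opposite Deligne--Lusztig condition and fails in general for $d\geqslant 3$.

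Second, the inclusion $\varpi D_j\subset\sigma^{-1}(D_{j-1})$, i.e.\ $\varpi M\subset VM$, does not follow from $\sigma$ being an automorphism, and you also need it to define $F=V^{-1}\varpi$. By periodicity you may take $j\in\llbracket i+1,i+d\rrbracket$, and then $\varpi D_j\subset D_i=\sigma^{-1}(D_i)\subset\sigma^{-1}(D_{j-1})$. The resulting chain $D_i\subset\sigma^{-1}(D_{j-1})\subset D_j\subset D_{i+d}$ is exactly what justifies your reduction mod $\varpi$. It makes $M_j/VM_{j-1}$ the quotient of the rank $j-i$ direct summand $D_j/D_i$ of $D_{i+d}/D_i\cong\bar\eta\otimes_{\rmO}R$ by the rank $j-i-1$ direct summand $\sigma^{-1}(D_{j-1})/D_i$. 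The latter is the image of $D_{j-1}/D_i$ under the semilinear automorphism $\sigma^{-1}$, using that $R$ is perfect. Hence $M_j/VM_{j-1}$ is invertible.

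Third, $V^dM\subset\varpi M$ is false in general; the paper asserts it too. Take $d=2$, $i=0$ and $D_1/D_0=\ell$. Then $V^2M_1\subset\varpi M_1$ holds iff $\sigma^{-2}(D_1)\subset D_1$, i.e.\ iff $\ell$ is $\BF_{q^2}$-rational. What does hold is $V^{2d-1}M\subset\varpi M$. Let $m\geqslant j$ be minimal with $m\equiv i \bmod d$; then $\sigma^{-(2d-1)}(D_j)\subset D_m\subset D_{j+d-1}=\varpi D_{j+2d-1}$. Together with $\varpi M\subset VM$, this makes the $V$-adic and $\varpi$-adic topologies agree, which gives $V$-adic completeness.

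After these corrections, the rest of your argument goes through: local freeness of $D_j$, dimension and height, compatibility of $s(y)$ with $\Pi$ and $V$, the quasi-isogeny bounds, and $r_i(M_i)=\eta\otimes_{\rmO}W_{\rmO}(R)$.
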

\medskip
\begin{proof}
We only need to check that $M=M(y)$ is a special Cartier module. First note that since $R$ is perfect, $M$ is $V$-complete since it is $\varpi$-complete. We only need to check that for $j\in \BZ/d$, $M_j/VM_{j-1}$ is a finite locally free $R$-module; we will show that $M_j$ is in fact a finite locally free $W_{\rmO}(R)$-module which is equivalent to showing that $D_j$ is a finite locally free $W_{\rmO}(R)$-module. We will use the following lemma :
\medskip
\begin{lemm}
Let $D$ be a $W=W_{\rmO}(R)$-module, where $R\in \Perf_k$. Let $n\geqslant 1$ be an integer and suppose that 
\begin{itemize}
\itemb $D/\varpi$ is a finite locally free $R$-module of rank $n$,
\itemb $D\subset D\bigl [ \tfrac 1 {\varpi} \bigr ]$ and $D\bigl [ \tfrac 1 {\varpi} \bigr ]$ is a finite free $W\bigl [ \tfrac 1 {\varpi} \bigr ]$-module of rank $n$.
\end{itemize}
Then $D$ is a finite locally free $W$-module of rank $n$.
\medskip
\end{lemm}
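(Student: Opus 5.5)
The plan is to work Zariski locally on $\Spec R$ so that $D/\varpi$ is free of rank $n$, lift a basis to a map $W^n\rightarrow D$, and show that this map is an isomorphism. Fix an open $\Spec R_f\subset \Spec R$ on which $D/\varpi D$ is free. Since $W_{\rmO}(R_f)=W_{\rmO}(R)\wotimes$-localization is compatible with $[f]$-inversion for perfect $R$, I will replace $R$ by $R_f$, $W$ by $W[\tfrac{1}{[f]}]^{\wedge}$ and $D$ by the corresponding base change. I will check in passing that both hypotheses survive this localization. After this I assume $D/\varpi D\cong R^n$.

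Choose $d_1,\dots,d_n\in D$ lifting a basis of $D/\varpi D$, and let $u\colon W^n\rightarrow D$ be the induced map with image $D'$.

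\emph{Injectivity.} Suppose $u(a)=0$. Reducing mod $\varpi$ gives $a\equiv 0$, so $a=\varpi a'$. Then $\varpi u(a')=0$, and since $D\subset D[\tfrac 1\varpi]$ is $\varpi$-torsion-free, $u(a')=0$. Iterating, $a\in\bigcap_m\varpi^mW^n=0$, because $W$ is $\varpi$-separated ($R$ perfect).

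\emph{Cokernel is uniquely divisible.} Set $Q\coloneqq D/D'$. Since $D'/\varpi D'\rightarrow D/\varpi D$ is an isomorphism, $D=D'+\varpi D$, so $Q=\varpi Q$. The same isomorphism gives $D'\cap\varpi D=\varpi D'$. Hence if $\varpi x\in D'$ with $x\in D$, then $\varpi x=\varpi y$ with $y\in D'$, and torsion-freeness gives $x=y\in D'$. So $Q$ is $\varpi$-torsion-free and $\varpi$-divisible, i.e.\ a $W[\tfrac1\varpi]$-module. Inverting $\varpi$ then yields an exact sequence
$$
0\rightarrow W[\tfrac1\varpi]^n\xrightarrow{A} D[\tfrac1\varpi]\rightarrow Q\rightarrow 0 .
$$
Using the second hypothesis, $A$ is an $n\times n$ matrix over $W[\tfrac1\varpi]$. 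Its injectivity follows from the injectivity of $u$ and flatness of localization, and $Q=\coker A$ is finitely presented.

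\emph{Showing $Q=0$: the main obstacle.} It remains to prove that $\det A$ is a unit in $W[\tfrac1\varpi]$. My first attempt is to test this pointwise. For each $s\in\Spec R$, consider the perfect residue field $k(s)$ and base change along $W\rightarrow W_{\rmO}(k(s))$, a complete DVR. There the image $\bar D$ of $D$ is a torsion-free module over a DVR with $\bar D/\varpi\cong k(s)^n$ inside an $n$-dimensional vector space. Such a module is a lattice, since a submodule of $K^n$ with $n$-dimensional reduction and no divisible part must be free of rank $n$. This forces $\det A$ to have valuation $0$ at $s$. I would then conclude that $\det A\in W[\tfrac 1\varpi]^{\times}$ by showing that an element of $W[\tfrac1\varpi]$ which is a unit in every $W_{\rmO}(k(s))[\tfrac1\varpi]$ is a unit; this can be checked on the $\varpi$-adic expansion and its leading Teichmüller coefficient.

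The delicate point is whether the torsion-freeness and the rank conditions really survive base change to $W_{\rmO}(k(s))$. If this turns out to be problematic, the fallback is the situation of the application: there $D$ is the image of $M_j$, which is $\varpi$-complete. In that case $D=D'+\varpi^mD$ for all $m$ together with completeness directly gives $D=D'$, because $D'\cong W^n$ is $\varpi$-adically closed.
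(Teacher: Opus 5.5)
Your injectivity argument and the reduction to $Q=D/D'=0$, with $Q$ uniquely $\varpi$-divisible, are correct. The gap is that the proposal never proves $Q=0$, and the pointwise strategy cannot prove it.

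\emph{The pointwise route fails.} The unit criterion you intend to use is false. For $R=k[t^{1/p^\infty}]$, the element $[t]+\varpi\in W$ has nonzero image in the field $W_{\rmO}(k(s))[\tfrac1\varpi]$ for every $s\in\Spec R$. Yet it is not a unit of $W[\tfrac1\varpi]$: from $([t]+\varpi)c=\varpi^m$ with $c\in W$, reducing mod $\varpi$ in the domain $R$ and dividing by $\varpi$ repeatedly forces $t\in R^\times$. ``Valuation $0$ at $s$'' does not follow either. Even granting that the image $\bar D_s$ of $D$ in the fibre is a lattice, Nakayama over the DVR only gives $\bar D_s=A_s(W_{\rmO}(k(s))^n)$. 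So $v_s(\det A)$ is just the index of $\bar D_s$ relative to the lattice spanned by your arbitrary basis of $D[\tfrac1\varpi]$, and nothing fibrewise makes it zero or locally constant in $s$. The survival of torsion-freeness and of ``no divisible part'' under $W\to W_{\rmO}(k(s))$ is also left unproved.

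\emph{The fallback does not rescue it.} It adds a hypothesis not in the statement, namely $\varpi$-adic separatedness of $D$. Its justification in the application is circular, since there $M_j$ is \emph{defined} from $D_j$. What actually holds in the application is $\varpi D_{i+d}=D_i=\eta\otimes_{\rmO}W\subset D_j\subset D_{i+d}$ for $i\leqslant j\leqslant i+d$, which makes $D_j$ finitely generated.

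\emph{The missing idea is global: use completeness of $D'\cong W^n$ to split off $Q$.}
\begin{enumerate}
\item From $D=D'+\varpi D$, $D'\cap\varpi D=\varpi D'$ and torsion-freeness, induction gives $D'\cap\varpi^mD=\varpi^mD'$.
\item Given $x\in D$, write successively $x=y_0+\varpi x_1$, $x_1=y_1+\varpi x_2,\dots$ with $y_j\in D'$. Then $y=\sum_j\varpi^jy_j$ converges in $D'$, and $x-y\in D_\infty\coloneqq\bigcap_m\varpi^mD$.
\item Since $D'\cap D_\infty=\bigcap_m\varpi^mD'=0$, this gives $D=D'\oplus D_\infty$ with $D_\infty\cong Q$.
\item Hence your sequence $0\to W[\tfrac1\varpi]^n\to D[\tfrac1\varpi]\to Q\to 0$ splits. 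So $Q$ is a finite projective $W[\tfrac1\varpi]$-module with $W[\tfrac1\varpi]^n\oplus Q\cong W[\tfrac1\varpi]^n$, hence of rank $0$ at every prime, hence $Q=0$.
\end{enumerate}
This is where the rank hypothesis must enter. Without it, $D=W^n\oplus W[\tfrac1\varpi]$ satisfies everything else but is not even finitely generated.

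\emph{Comparison with the paper.} The paper argues in the opposite order. A lifted basis generates $D$ ``by Nakayama''. Freeness then follows because a surjection $W[\tfrac1\varpi]^n\to W[\tfrac1\varpi]^n$ is an isomorphism and $D\subset D[\tfrac1\varpi]$. That Nakayama step tacitly uses finite generation of $D$, which is available in the application by the sandwich above. So you were right to single out surjectivity as the real point, and the splitting argument is what proves the lemma as literally stated.

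Separately, you never verify that your completed localisation preserves the hypotheses. You can avoid it by running the same argument with a lift $P\to D$, where $P$ is a finite projective $W$-module lifting $D/\varpi D$; such a $P$ exists because $W$ is $\varpi$-adically complete.
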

\medskip
\begin{proof}
Up to localising, we can suppose $D/\varpi$ and $D\bigl [ \tfrac 1 {\varpi} \bigr ]$ are free. Let  $x_1,\dots, x_n\in D$ be a lift of a basis of $D/\varpi$. Then by Nakayama lemma $x_1,\dots, x_n$ is a generating family of $D$ and we need to prove it is free. Since, in particular, $x_1,\dots, x_n$ generates $D\bigl [ \tfrac 1 {\varpi} \bigr ]$ as a $W\bigl [ \tfrac 1 {\varpi} \bigr ]$-module and $D\bigl [ \tfrac 1 {\varpi} \bigr ]$ is free of rank $n$, we deduce that the family $x_1,\dots, x_n$ is linearly independent in $D\bigl [ \tfrac 1 {\varpi} \bigr ]$. Because $D\subset D\bigl [ \tfrac 1 {\varpi} \bigr ]$, $x_1,\dots, x_n$ are linearly independent in $D$.
\end{proof}
First we note that $ D_j\bigl [ \tfrac 1 {\varpi} \bigr ]\cong \eta\otimes_{\rmO}W_{\rmO}(R)[ \tfrac 1 {\varpi} \bigr ]$ is free of rank $d$ over $W_{\rmO}(R)\bigl[ \tfrac 1 {\varpi} \bigr ]$ so we only need to check that $D_j/\varpi$ is locally free as an $R$-module. But since $ D_j/\varpi$ is by definition filtered by $d$ submodules whose successive quotients are projective of rank $1$, it is projective of rank $d$.

\end{proof}
It is clear that two maps constructed above are inverse to one another which gives the following corollary:
\begin{coro}
The maps $f$ and $g$ are inverse to each other and define an isomorphism $\rmM^{\eta}_k\cong X^{\eta,\perf}_k$. Moreover, $\rmM^{\eta}_k\cong \BH_k^{\eta,\perf}$.
\end{coro}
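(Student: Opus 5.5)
The corollary has two parts. The first is that $f$ and $g$ are mutually inverse. The second is the identification $\rmM^{\eta}_k\cong \BH_k^{\eta,\perf}$, which I would obtain by composing with the isomorphism of Proposition \ref{prop:cpctdl}.

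For the first part I would check both composites directly on $R$-points, $R\in\Perf_k$.

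Start with $y=(D_j)_j\in X^{\eta,\perf}_k(R)$. By construction $M(y)_j=(\iota_j\otimes\id)^{-1}(D_j)$, and the quasi-isogeny $\rho(y)$ is induced by the inclusion $s(y)$. So the map $r_j$ attached to $(X(y),\rho(y))$ is exactly $\iota_j\otimes \id$ restricted to $M(y)_j$, and hence $D_j(g(y))=r_j(M(y)_j)=D_j$. Thus $f\circ g=\id$. Note also that $D_i=\eta\otimes_{\rmO}W_{\rmO}(R)$ shows $g(y)$ indeed lands in $\rmM^{\eta}_k$.

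Conversely, start with $x=(X,\rho)\in\rmM^{\eta}_k(R)$ and put $M=M_{\rmO}(X)$. The maps $r_j$ together identify $M$, inside $\bM\otimes W_{\rmO}(R)_{\Qp}$ via $\rho^{-1}$, with the graded module $M(f(x))$. Lemma \ref{lemm:rcomp} says that under this identification:
\begin{itemize}
\itemb $\Pi$ corresponds to the inclusions $D_j\subset D_{j+1}$;
\itemb $V$ corresponds to $1\otimes\sigma^{-1}$.
\end{itemize}
These are exactly the operators used to define $M(f(x))$. The Frobenius $F$ is then forced, since $V$ is injective and $VF=\varpi$. Hence $M\cong M(f(x))$ as special Cartier modules, compatibly with the rational identifications. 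By the equivalence of Proposition \ref{prop:classod}, applied over the perfect ring $R$ (where Cartier theory is fully faithful, including on quasi-isogenies after inverting $p$), this gives an isomorphism $(X,\rho)\cong (X(f(x)),\rho(f(x)))$. So $g\circ f=\id$.

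Both constructions are visibly functorial in $R$, so they define morphisms of functors on $\Perf_k$. Since both sides are perfect schemes, this yields an isomorphism of schemes.

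For the second part I compose $f_\eta$ with the isomorphism $X_k^{\eta,\perf}\xrightarrow{\sim}\BH_k^{\eta,\perf}$ of Proposition \ref{prop:cpctdl}. If one wants equivariance, Remark \ref{rem:requiv} gives that $f_\eta$ is $\GL(\eta)$-equivariant, and Proposition \ref{prop:cpctdl} is $\GL(\eta)$-equivariant as well.

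The only subtle point is the step $g\circ f=\id$. It needs the Cartier module of $X$, together with $\rho$, to be recoverable from its image $r_\bullet(M)$ inside the rational module, and this uses that $W_{\rmO}(R)$ is $\varpi$-torsion-free for $R$ perfect. I would make this explicit by noting that $r_j$ is injective, which was already observed in the definition \eqref{eq:defri}.
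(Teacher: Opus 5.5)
Your proposal is correct and follows the paper's approach. The paper simply asserts that $f$ and $g$ are evidently inverse, and you spell out that check via Lemma \ref{lemm:rcomp}, the injectivity of $r_j$, and the equivalence of Proposition \ref{prop:classod}; the identification $\rmM^{\eta}_k\cong \BH_k^{\eta,\perf}$ then comes, as you say, from composing with Proposition \ref{prop:cpctdl}.
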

These isomorphisms are equivariant under the action of $\GL_d(E)$:
\medskip
\begin{lemm}\label{lem:gleqis}
The isomorphism $f_{\eta}$ is $\GL(\eta)$-equivariant.

Moreover, if $g\in \GL_d(E)$, then 
$$
g\cdot \rmM_{k}^{\eta}=\rmM_{k}^{g\eta},\quad g\cdot \BH_k^{\eta}= \BH_k^{g\cdot \eta},
$$
and $f_{g\eta}\circ g = g\circ f_{\eta}$.
\end{lemm}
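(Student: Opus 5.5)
The plan is to make the action of $g\in \GL_d(E)$ explicit on both sides and check compatibility on $R$-points for $R\in \Perf_k$. On $\brv{\rmM}_k^{\perf}$, $g$ sends $(X,\rho)$ to $(X,\rho\circ g^{-1})$, so the Cartier module $M=M_{\rmO}(X)$ is unchanged and only the maps $r_j$ of \eqref{eq:defri} change. Let $v=v_{\varpi}(\det g)$. By Remark \ref{rem:requiv} (which comes from Remark \ref{rem:zerograd} and Remark \ref{rem:glequiv}), the new rigidification $r'_j$ attached to $g\cdot x$ satisfies, up to the index shift by $v$, $r'_{j+v}=g\circ r_j$ as maps $M_j\to E^d\otimes_{\rmO}W_{\rmO}(R)$. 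I will first pin down this index bookkeeping carefully, since the height of $\rho\circ g^{-1}$ differs from that of $\rho$ by $dv$ and the index of $g\eta$ is $i+v$ by \eqref{eq:calcindex}.

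With this identity the first claim $g\cdot \rmM_k^{\eta}=\rmM_k^{g\eta}$ is immediate. If $r_i(M_i)=\eta\otimes W_{\rmO}(R)$, then $r'_{i+v}(M_i)=g\eta\otimes W_{\rmO}(R)$, and $i+v$ is exactly the index of $g\eta$. Applying the same argument to $g^{-1}$ gives the reverse inclusion. The statement $g\cdot\BH_k^{\eta}=\BH_k^{g\eta}$ follows in the same way from Proposition \ref{prop;symlatsub}: the action on $\brv{\BH}^d$ is $r\mapsto r\circ g^{-1}$, and Proposition \ref{prop:repldbt} is $\GL_d(E)$-equivariant, so the lattice $\lambda_i=\eta$ is sent to $g\eta$, sitting in index $i+v$.

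For the compatibility $f_{g\eta}\circ g=g\circ f_\eta$, recall the definition \eqref{eq:flagdef}: $D_j(x)=r_j(M_j)$. Hence
$$
D_{j+v}(g\cdot x)=r'_{j+v}(M_j)=g\cdot r_j(M_j)=g\cdot D_j(x).
$$
The action of $g$ on flags, sending $\Fl_{\eta}^{\perf}$ to $\Fl_{g\eta}^{\perf}$ by $(D_j)_j\mapsto (gD_{j-v})_j$, extends \eqref{eq:gactonflag}. It visibly preserves the condition $D_j\subset\sigma(\sigma^*D_{j+1})$, because $g$ has coefficients in $E$ and therefore commutes with $\id\otimes\sigma$. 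So it maps $X_k^{\eta,\perf}$ to $X_k^{g\eta,\perf}$, and the displayed identity is exactly $f_{g\eta}(g x)=g f_\eta(x)$.

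The $\GL(\eta)$-equivariance is the special case $g\eta=\eta$. Then $v=0$, because $g$ preserves $\eta$ and so $\det g\in\rmO^\times$, and the action reduces to \eqref{eq:gactonflag}. To transfer everything to $\BH_k^{\eta,\perf}$, I will compose with the isomorphism of Proposition \ref{prop:cpctdl}, which is $\GL(\eta)$-equivariant; for general $g$ I use the uniqueness in that proposition together with the corresponding equivariance of Lemma \ref{lem:opdl}.

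I expect the main obstacle to be the sign and index conventions. One has to verify that $r_j$ composed with the new quasi-isogeny $\rho\circ g^{-1}$ really equals $g\circ r_j$, rather than $g^{-1}$ or a $\Pi$-conjugate. The relevant fact is that $g$ acts on $\boeta^j$ through $\Pi^j g\Pi^{-j}$, while $\iota_j=\iota_0\circ\Pi^{-j}$ (Remark \ref{rem:glequiv}). I will check this carefully on the reference module $\bM$.
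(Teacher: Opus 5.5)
Your proposal follows the paper's proof, which deduces the lemma in one line from the equivariance of the $r_i$ (Remark~\ref{rem:requiv}) and of the isomorphism in Proposition~\ref{prop:cpctdl}; your added checks (that $g$ commutes with $\id\otimes\sigma$ and so preserves the Deligne--Lusztig condition, and that $v=0$ for $g\in\GL(\eta)$) are exactly what that line leaves implicit. One sign in the bookkeeping you flagged is off: by \eqref{eq:calcindex} the lattice $g\eta$ has index $i-v$, not $i+v$ (for $g=\varpi$ one has $v=d$ and the index drops by $d$), so the relations should read $r'_j=g\circ r_{j+v}$ and $(D_j)_j\mapsto(gD_{j+v})_j$ on flags --- the former is how Remark~\ref{rem:requiv} is meant, with the shift coming from the change of height of $\rho\circ g^{-1}$ and not from $\iota_j$, which is strictly equivariant --- but since you flipped both signs consistently, the argument is otherwise unaffected.
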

\medskip
\begin{proof}
This is an immediate consequence of the $\GL_d(E)$-equivariance of $r_i$ given by remark \ref{rem:requiv} and the $\GL_d(E)$-equivariance of the isomorphism $X_k^{\eta}\cong \BH_k^{\eta}$ from Proposition \ref{prop:cpctdl}.
\end{proof}
\subsection{Gluing lattice subschemes}
In this subsection we will glue the isomorphisms $f_{\eta}$ into an isomorphism, getting the following proposition which concludes the proof of theorem \ref{thm:perfiso}
\medskip
\begin{prop}
There exists a unique $\GL_d(E)$-equivariant isomorphism $f\colon \brv{\rmM}^{\perf}_k\xrightarrow{\sim}\brv{\BH}_k^{d,\perf}$ such that for any lattice $\eta$ in $E^d$
$$
\restr{f}{\rmM^{\eta}_k}=f_{\eta}\colon \rmM^{\eta}_k\rightarrow \BH^{\eta,\perf}_k.
$$
\end{prop}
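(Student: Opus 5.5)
The plan is to glue the isomorphisms $f_{\eta}$ along the closed covers $\brv{\rmM}_k^{\perf}=\bigcup_{\eta}\rmM_k^{\eta}$ (Corollary after Definition \ref{def:lattsubsc}) and $\brv{\BH}_k^{d,\perf}=\bigcup_{\eta}\BH_k^{\eta,\perf}$ (Corollary \ref{cor:iredcomph}). Both covers are locally finite: each $\BH_k^{\eta}$ meets only the finitely many $\BH_k^{\eta'}$ with $\eta'$ adjacent to $\eta$ in $\brv{\BrTi}_{\! d}$, and we will transport this to $\rmM_k$ through the $f_{\eta}$. For perfect schemes, a morphism on a locally finite union of closed subschemes is determined by compatible morphisms on the pieces. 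Indeed, $X^{\perf}$ is reduced, so the pushout of the closed pieces along their intersections recovers $X$ once we pass to the perfection; this pushout statement is the (v-/h-)descent for perfect schemes along finite closed covers, or more elementarily follows from the seminormality of perfect schemes. So I reduce to the following compatibility: for two lattices $\eta,\eta'$, the maps $f_{\eta}$ and $f_{\eta'}$ agree on $\rmM_k^{\eta}\cap\rmM_k^{\eta'}$ and carry it onto $\BH_k^{\eta,\perf}\cap\BH_k^{\eta',\perf}$.

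For the compatibility, I first describe the intersections. A point $x\in\rmM_k^{\eta}(R)$ lies in $\rmM_k^{\eta'}(R)$ if and only if the flag $D(x)=(D_j(x))_j$ of \eqref{eq:flagdef} satisfies $D_{i'}(x)=\eta'\otimes_{\rmO}W_{\rmO}(R)$, where $i'$ is the index of $\eta'$. This is literally the definition, since $D_j(x)=r_j(M_j)$ does not depend on $\eta$. Hence $f_{\eta}$ and $f_{\eta'}$ are both given by $x\mapsto D(x)$, and they agree as flags on the overlap. It is nonempty only when $\{\eta,\eta'\}$ is a simplex $\Delta$, and $f_{\eta}$ identifies it with $X_k^{\Delta,\perf}$. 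On the other side, Corollary \ref{cor:interslatt} identifies $\BH_k^{\eta,\perf}\cap\BH_k^{\eta',\perf}$ with the same $X_k^{\Delta,\perf}$ inside $X_k^{\eta,\perf}$.

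What remains is to check that the isomorphisms $X_k^{\eta,\perf}\cong\BH_k^{\eta,\perf}$ and $X_k^{\eta',\perf}\cong\BH_k^{\eta',\perf}$ of Proposition \ref{prop:cpctdl} agree on $X_k^{\Delta,\perf}$. I expect this to be the main obstacle, because those isomorphisms were produced abstractly, by extending from the dense open Deligne--Lusztig locus. The approach I would try first is to write down an explicit moduli-theoretic map $X_k^{\eta,\perf}\to\brv{\BH}_k^{d,\perf}$ defined on all of $X_k^{\eta,\perf}$, in the spirit of Remark \ref{rema:tracer}. It should send a flag $(D_j)$ to the lattice chain given by the indices $j$ where $D_j$ is $\sigma$-stable, that is $D_j=\eta_j\otimes W_{\rmO}(R)$, with line bundles $L_j=D_j/\sigma^{-1}((\sigma^{-1})^*D_{j-1})$ and with the natural maps $u_j$. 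This map restricts to $u$ of Lemma \ref{lem:opdl} on $Y_k^{\eta,\perf}$, so by the uniqueness in Proposition \ref{prop:cpctdl} it equals the extension. Since the formula depends only on the flag and not on $\eta$, it agrees with the corresponding map for $\eta'$ on $X_k^{\Delta,\perf}$. Constructibility of the lattice sheaf comes from the stratification by which $D_j$ are rational.

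Finally, I would conclude as follows. The glued $f$ is a morphism whose restriction to each piece of the cover is an isomorphism onto the corresponding piece, and it matches the intersections. Hence it is bijective on points, and gluing the inverses $g_{\eta}$ in the same way gives an inverse. Uniqueness is clear because the $\rmM_k^{\eta}$ cover $\brv{\rmM}_k^{\perf}$ and the scheme is reduced. $\GL_d(E)$-equivariance follows from Lemma \ref{lem:gleqis}: $g\circ f$ and $f\circ g$ agree on each $\rmM_k^{\eta}$, since $f_{g\eta}\circ g=g\circ f_{\eta}$, so by uniqueness they agree everywhere.
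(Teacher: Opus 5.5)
Your proposal is correct and follows the paper's proof. Like the paper, you glue the $f_\eta$ along the closed covers using the pushout property of a union of closed subschemes (the paper's gluing lemma and Corollary \ref{cor:gluit}), check agreement on pairwise overlaps (Lemma \ref{lem:itglues}), and get equivariance from uniqueness and Lemma \ref{lem:gleqis}. The difference is only in detail: the paper justifies the overlap compatibility just by citing Corollary \ref{cor:interslatt} and the definition of $f_\eta$, whereas you make it explicit by describing the extension $X_k^{\eta,\perf}\cong\BH_k^{\eta,\perf}$ of Proposition \ref{prop:cpctdl} through a flag-theoretic formula independent of $\eta$ (identified with the abstract extension by uniqueness from the dense Deligne--Lusztig locus), and you also address the reducedness and local finiteness that the paper's gluing lemma uses tacitly.
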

\medskip
\subsubsection{Gluing isomorphisms}
\medskip
\begin{lemm}
Let $X$ be a scheme and $X_1,X_2\subset X$ be two closed subschemes such that $X=X_1\cup X_2$. Let $Z=X_1\cap X_2$, then $Z$ is the fibered pushout of $X_1$ and $X_2$ along $Z$, \ie $X=X_1\sqcup_ZX_2$.
\end{lemm}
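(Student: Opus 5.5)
The claim is local and purely about rings, so I would first reduce to the affine case. The closed subschemes $X_1,X_2,Z$ are determined by quasi-coherent ideal sheaves $\CI_1,\CI_2$ and $\CI_1+\CI_2$, and $X=X_1\cup X_2$ is read scheme-theoretically: $X$ is the closed subscheme cut out by $\CI_1\cap\CI_2$, and we assume this is $0$. Under this reading, taking any affine open $U=\Spec A\subset X$ gives $X_1\cap U=\Spec A/I_1$, $X_2\cap U=\Spec A/I_2$, $Z\cap U=\Spec A/(I_1+I_2)$, with $I_1\cap I_2=0$. The key algebraic step is then the exactness of
$$
0\rightarrow A\rightarrow A/I_1\times A/I_2\rightarrow A/(I_1+I_2)\rightarrow 0,
$$
where the middle map is $(a,b)\mapsto \bar a-\bar b$. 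Injectivity on the left is exactly $I_1\cap I_2=0$. For exactness in the middle, if $a-b\in I_1+I_2$, write $a-b=i_1+i_2$; then $c=a-i_1=b+i_2$ maps to $(\bar a,\bar b)$. So $A=A/I_1\times_{A/(I_1+I_2)}A/I_2$ as rings.

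Next I would globalize. A fiber product of rings along surjections corresponds to a pushout of affine schemes along closed immersions, and this pushout is computed in all schemes (or locally ringed spaces). This is \cite[0B7J]{stacks}-type pushouts of closed immersions with an affine base, or a direct check. On the topological side, $|X|=|X_1|\cup|X_2|$ with closed pieces meeting in $|Z|$, so $|X|$ is the topological pushout. For a closed cover, a map out of $|X|$ is continuous if its restrictions to $|X_1|$ and $|X_2|$ are. For the structure sheaves, the exact sequence above sheafifies to $\CO_X=\CO_{X_1}\times_{\CO_Z}\CO_{X_2}$ on every affine open. Given morphisms $h_1\colon X_1\to Y$ and $h_2\colon X_2\to Y$ agreeing on $Z$, one first glues them topologically and then defines $h^\sharp$ on the sheaves through the fiber product. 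The remaining thing to check is that the glued map is a morphism of locally ringed spaces: locally, at a point of $X_1\setminus Z$ the local ring of $X$ equals that of $X_1$, and at a point of $Z$ a local homomorphism into either factor is local into the fiber product.

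I expect the main obstacle to be the hypothesis that $X$ equals $X_1\cup X_2$ scheme-theoretically rather than merely set-theoretically. The statement is false otherwise, for example with nilpotents supported on $Z$. I would state this interpretation explicitly. In the intended application everything is perfect, hence reduced, so a set-theoretic cover of a reduced scheme by reduced closed subschemes automatically gives $\CI_1\cap\CI_2=0$, since this ideal consists of functions vanishing at every point. Uniqueness of the induced maps from the pushout is clear from the affine computation.
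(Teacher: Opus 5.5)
Your proposal is correct and takes the paper's route: reduce to rings and check exactness of $0\to A/(I_1\cap I_2)\to A/I_1\oplus A/I_2\to A/(I_1+I_2)\to 0$ with $I_1\cap I_2=0$. You also fill in two points the paper leaves implicit: gluing the morphisms as locally ringed spaces, and the fact that $\CI_1\cap\CI_2=0$ needs $X$ to be the scheme-theoretic union, not just the set-theoretic one. That hypothesis is automatic in the paper's application, where $X$ is perfect and hence reduced, but the paper's ``since $X$ is covered by $X_1$ and $X_2$'' glosses over it.
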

\medskip
\begin{proof}
Let $\CI_1,\CI_2\subset \CO_X$ be the ideals defining $X_1$ and $X_2$ respectively. Then $Z$ is defined by $\CI\coloneqq\CI_1+\CI_2$ and since $X$ is covered by $X_1$ and $X_2$ we have $\CI_1\cap \CI_2=0$. It suffices to show that the diagram of $\CO_X$-modules
\begin{center}
\begin{tikzcd}
\CO_X\ar[r]\ar[d]&\CO_X/\CI_1\ar[d]\\
\CO_X/\CI_2\ar[r]&\CO_X/\CI
\end{tikzcd}
\end{center}
is cartesian which can be checked on rings. The claim then becomes that for a ring $A$ and $I_1,I_2\subset A$ ideals, setting $I=I_1+I_2$, $I'=I_1\cap I_2$ the following sequence
$$
0\rightarrow A/I'\rightarrow A/I_1\oplus A/I_2\rightarrow A/I\rightarrow 0
$$
is exact, which can immediately be checked.
\end{proof}
\medskip
\begin{coro}\label{cor:gluit}
Let $I$ be a countable set and $X$ and $Y$ be schemes such that 
$$
X=\bigcup_{i\in I} X_i,\quad Y=\bigcup_{i\in I} Y_i
$$
where, for every $i\in I$, $X_i$ and $Y_i$ are respectively closed in $X$ and $Y$. Suppose that for every $i\in I$ we have isomorphisms $f_i\colon X_i\xrightarrow{\sim} Y_i$ such that for any $j\in I$, $\restr{f_i}{X_i\cap X_j}=\restr{f_j}{X_i\cap X_j}$. Then there exists a unique isomorphism $f\colon X\xrightarrow{\sim} Y$ such that for every $i\in I$, $\restr{f}{X_i}=f_i$.
\end{coro}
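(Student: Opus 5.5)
The plan is to build $f$ by gluing and to deduce uniqueness from the fact that the $X_i$ cover $X$. Uniqueness is the easy half. Suppose $f, f'$ both restrict to $f_i$ on every $X_i$. They then agree on each closed subscheme $X_i$, and the underlying topological spaces coincide. To compare them on structure sheaves we need $\CO_X \to \prod_i \CO_{X_i}$ to be injective, i.e. $\bigcap_i \CI_i = 0$ for the ideal sheaves $\CI_i$ of the $X_i$. This is where the hypothesis needs care. For finitely many pieces, "$X=\bigcup X_i$" means $\CI_1\cap\dots\cap\CI_n=0$ (the convention used in the previous lemma). For infinitely many pieces I will interpret it scheme-theoretically: every point has an open neighbourhood meeting only finitely many $X_i$, and on that neighbourhood the finitely many ideals intersect to zero. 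This local finiteness is what holds in our application, because $\brv{\rmM}_k^{\perf}$ and $\brv{\BH}_k^{d,\perf}$ are locally of finite type up to perfection and each $\BH^{\eta}_k$ meets only finitely many others (see the finiteness footnote in Corollary \ref{cor:iredcomph}).

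For existence, I work locally on $X$ and glue by the uniqueness just established. First fix a point $x$ and an open affine $U\ni x$ meeting only $X_{i_1},\dots,X_{i_n}$, with $U=\bigcup_k (X_{i_k}\cap U)$. I then iterate the preceding lemma, inducting on $n$: write $U = (X_{i_1}\cup\dots\cup X_{i_{n-1}})\cap U \,\cup\, X_{i_n}\cap U$. The lemma exhibits $U$ as the pushout of these two closed pieces along their scheme-theoretic intersection $Z$. Here the induction step needs two checks:
\begin{itemize}
\itemb $(X_{i_1}\cup\dots\cup X_{i_{n-1}})\cap X_{i_n}$ is covered by the closed subschemes $X_{i_k}\cap X_{i_n}$, which holds because intersecting ideals distributes correctly in the reduced/perfect setting, or by using the ideal $\CI_{i_n}+\bigcap_{k<n}\CI_{i_k}$ directly;
\itemb the maps already built agree on $Z$, which follows from the compatibility $f_i|_{X_i\cap X_j}=f_j|_{X_i\cap X_j}$.
\end{itemize}
The pushout property then yields a unique morphism $U\to Y$ restricting to the $f_{i_k}$. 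Its image lands in the open $V=Y\setminus\bigcup_{j\notin\{i_k\}}Y_j$ corresponding to $U$.

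Symmetrically, the $f_i^{-1}$ glue to a morphism $V\to X$. Both composites restrict to the identity on every piece, so by uniqueness they are the identity, and $f|_U$ is an isomorphism. These local isomorphisms agree on overlaps, again by uniqueness, so they glue to a global $f$. I will argue the statement in the case of finitely many pieces plus local finiteness, and note that countability of $I$ only serves to index the cover.

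The main obstacle is the first check in the induction step, namely that scheme-theoretic unions and intersections interact correctly: the lemma needs $\CI_1\cap\CI_2=0$ for the two pieces. I would first try to handle this by noting that in the intended application all schemes are perfect, hence reduced. For reduced schemes, closed subschemes covering $X$ set-theoretically, taken with reduced structures, have intersecting ideals equal to the nilradical, which is $0$, and the distributivity issues reduce to radical ideals. If needed, I will add the hypothesis that $X$ and $Y$ are reduced and the $X_i,Y_i$ are reduced closed subschemes.
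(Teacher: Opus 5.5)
Your route is the paper's own: reduce to two closed pieces, apply the pushout lemma, and get the inverse by exchanging $X$ and $Y$. You make it more careful by localizing to a locally finite cover. You also correctly spot that the paper's ``by induction'' silently uses $\bigl(\bigcap_{k<n}\CI_k\bigr)+\CI_n=\bigcap_{k<n}(\CI_k+\CI_n)$.

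\textbf{First gap: reducedness does not repair the induction.} In a reduced ring a sum of radical ideals need not be radical, and the statement is then false. Take $X=Y=\Spec k[x,y]/(xy(x-y))$, which is reduced. Let $X_i=Y_i$ be the lines $V(x)$, $V(y)$, $V(x-y)$. Set $f_1=f_2=\id$, and let $f_3$ be the scaling $t\mapsto ct$ of the third line, with $c\in k^{\times}\setminus\{1\}$.
\begin{itemize}
\item Every pairwise intersection is the reduced origin, where the $f_i$ agree.
\item However, $(xy)+(x-y)=(x-y,x^2)$ is a fat point.
\item No $f$ exists: the linear part $\alpha x+\beta y$ of $f^*(x)$ would have to satisfy $\beta=0$, $\alpha=1$ and $\alpha+\beta=c$.
\end{itemize}

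The missing idea is perfectness, which is exactly what the application provides. In a perfect $\BF_p$-algebra a sum of radical ideals is radical: if $z^{p^n}=b_1+b_2$ with $b_i\in I_i$, then $z=b_1^{1/p^n}+b_2^{1/p^n}\in I_1+I_2$. Consequently:
\begin{itemize}
\item perfect closed subschemes of a perfect scheme correspond to closed subsets;
\item their scheme-theoretic intersections are again perfect;
\item a set-theoretic cover is automatically scheme-theoretic;
\item the distributivity you need is just the one for closed subsets.
\end{itemize}
With this, your induction goes through.

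\textbf{Second gap: the inverse step needs an extra hypothesis.} Your step ``symmetrically, the $f_i^{-1}$ glue'', and your claim that $f(U)$ lands in $V$, require $f_i$ to map $X_i\cap X_j$ onto $Y_i\cap Y_j$. This is not among the hypotheses. Without it the statement fails already for two pieces: let $X=X_1\sqcup X_2$ be two reduced points and $Y=Y_1=Y_2=\Spec k$. All hypotheses hold, yet the glued map is not an isomorphism.

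This condition must be added; the paper's ``exchanging $X$ and $Y$'' uses it too. In the application it follows from Corollary \ref{cor:interslatt} together with the definition \eqref{eq:flagdef} of $f_\eta$. You also need local finiteness of the $Y_j$, not just of the $X_i$, for $V$ to be open.
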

\medskip
\begin{proof}
By induction, we only need to prove this for $I=\{1,2\}$ which is straightforward using the previous lemma. Since we have maps $f_i\colon X_i\rightarrow Y_i\rightarrow Y$ which coincide on $X_1\cap X_2$, the pushout property gives a map $f\colon X\rightarrow Y$. By exchanging $X$ and $Y$ we get $g\colon Y\rightarrow X$. By the uniqueness of the pushout these maps are inverses of one another.
\end{proof}

To get the morphism of Theorem \ref{thm:perfiso}, we need the following lemma, which is a consequence of Corollary \ref{cor:interslatt} and the definition of $f_{\eta}$ in \eqref{eq:flagdef}:
\medskip
\begin{lemm}\label{lem:itglues}
Let $\eta,\eta'\in \brv{\BrTi}_{\! d}[0]$ be two lattices, then 
$$
\restr{f_{\eta}}{\rmM^{\eta}_k\cap\rmM^{\eta'}_k}=\restr{f_{\eta'}}{\rmM^{\eta}_k\cap\rmM^{\eta'}_k}.
$$
\end{lemm}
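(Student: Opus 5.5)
The plan is to reduce the statement to a comparison of flags inside the common target $\Fl$-type data, using that both $f_{\eta}$ and $f_{\eta'}$ are defined by the same formula \eqref{eq:flagdef}. Fix $R\in \Perf_k$ and $x=(X,\rho)\in (\rmM^{\eta}_k\cap\rmM^{\eta'}_k)(R)$, with $i$ and $i'$ the indices of $\eta$ and $\eta'$. Then $f_{\eta}(x)=(r_j(M_j))_j\in X_k^{\eta,\perf}(R)$ and $f_{\eta'}(x)=(r_j(M_j))_j\in X_k^{\eta',\perf}(R)$: the same family $(D_j(x))_j$ of $W_{\rmO}(R)$-submodules of $E^d\otimes_{\rmO}W_{\rmO}(R)$, because $r_j$ depends only on $x$ and $j$, not on the chosen lattice. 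Moreover, this flag satisfies both $D_i=\eta\otimes_{\rmO}W_{\rmO}(R)$ and $D_{i'}=\eta'\otimes_{\rmO}W_{\rmO}(R)$.

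So the content of the lemma is that the two targets are compared correctly inside $\brv{\BH}_k^{d,\perf}$. First I will show that $\eta$ and $\eta'$ lie in a common simplex $\Delta$ whenever the intersection is nonempty. This holds because $D_i\subset D_{i'}\subset \varpi^{-1}D_i$ (or the reverse) is forced by the flag conditions. Evaluating at a geometric point and using $W_{\rmO}(\kappa)$-faithful flatness gives $\varpi\eta\subset \eta'\subset\eta$ up to order. If the intersection is empty, there is nothing to prove.

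Next I invoke Corollary \ref{cor:interslatt}. Under $X_k^{\eta,\perf}\cong \BH_k^{\eta,\perf}$ (Proposition \ref{prop:cpctdl}), the intersection $\BH_k^{\eta,\perf}\cap \BH_k^{\eta',\perf}$ corresponds to $X_k^{\{\eta,\eta'\},\perf}$, and the same holds with the roles of $\eta$ and $\eta'$ swapped. The remaining point is that the two identifications $X_k^{\{\eta,\eta'\},\perf}\subset X_k^{\eta,\perf}\cong \BH_k^{\eta,\perf}$ and $X_k^{\{\eta,\eta'\},\perf}\subset X_k^{\eta',\perf}\cong\BH_k^{\eta',\perf}$ agree as maps to $\brv{\BH}_k^{d,\perf}$.

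This compatibility is the main obstacle. I would check it on the dense open locus $\BH_k^{\{\eta,\eta'\}}$, where it reduces to an explicit comparison. There the point of $\brv{\BH}_k^d$ is $(L_j=D_j/\sigma^{-1}(D_{j-1}))_j$ with the natural maps from the lattices, as in Lemma \ref{lem:opdl} and Remark \ref{rema:tracer}. This description is symmetric in $\eta$ and $\eta'$, since it only uses the flag. I would then extend to the whole intersection by reducedness of perfect schemes and separatedness of $\brv{\BH}_k^d$, the two morphisms agreeing on a dense open of a reduced scheme.

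If density of $\BH_k^{\{\eta,\eta'\}}$ in the intersection is problematic, the fallback is to use the moduli description directly. A point of $\brv{\BH}_k^{d,\perf}$ with lattices $D_{j_+}$ and lines $L_j$ determined by the flag can be written down functorially from $(D_j)_j$ for all simplices. Both $f_\eta$ and $f_{\eta'}$ then factor through this single functorial construction, which by Remark \ref{rema:tracer} is $x\mapsto(\eta^\bullet_x,\Lie(X),\ldots)$.
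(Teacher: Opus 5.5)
Your first and third steps are the paper's whole proof. The paper invokes only the common flag $(r_j(M_j))_j$ of \eqref{eq:flagdef} and Corollary~\ref{cor:interslatt}. You are right that this leaves one point implicit. The identification of $\BH_k^{\eta,\perf}\cap\BH_k^{\eta',\perf}$ with $X_k^{\{\eta,\eta'\},\perf}$ obtained through $X_k^{\eta,\perf}$ must agree with the one obtained through $X_k^{\eta',\perf}$. This is not formal: nothing in Corollary~\ref{cor:interslatt} or in the equivariance rules out, say, a Frobenius twist between them.

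The gap is in how you settle this point. You assert that on the stratum where the simplex is exactly $\{\eta,\eta'\}$, the image point is the datum $L_j=D_j/\sigma^{-1}(D_{j-1})$ with the natural maps, ``as in Lemma~\ref{lem:opdl} and Remark~\ref{rema:tracer}''. Neither result gives this:
\begin{itemize}
\item Lemma~\ref{lem:opdl} describes the isomorphism only on $Y_k^{\eta,\perf}\cong\BH_k^{\{\eta\},\perf}$, which is disjoint from $\BH_k^{\eta}\cap\BH_k^{\eta'}$.
\item On the boundary, the isomorphism of Proposition~\ref{prop:cpctdl} is defined only as the closure of a graph, with no formula.
\item Remark~\ref{rema:tracer} computes $\Lie(X)$ from the flag on the Cartier side and says nothing about that closure.
\end{itemize}
So the explicit formula on the $\{\eta,\eta'\}$-stratum is exactly what has to be proved. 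Density of that stratum in the intersection is harmless but does not address it. Your fallback has the same hole. Remark~\ref{rema:tracer} gives the description $x\mapsto(\eta^\bullet_x,\Lie(X),\dots)$ only for a single critical index. For several critical indices this description is Proposition~\ref{prop:calsptspe}, which the paper deduces from the present lemma, so citing it would be circular.

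What is missing is the following argument.
\begin{enumerate}
\item Define the flag-to-datum construction $\Phi$ directly, either on all of $X_k^{\eta,\perf}$ or on an open subset containing both $Y_k^{\eta,\perf}$ and the $\{\eta,\eta'\}$-stratum. It takes the lines $D_j/\sigma^{-1}(D_{j-1})$, with maps induced by $D_j\subset D_{j+1}$. At the indices $j$ where $D_j$ is $\sigma$-stable (equivalently $\Pi_j=0$), it takes the lattices $D_j^{\sigma=1}$.
\item Verify that $\Phi$ satisfies the conditions of Definition~\ref{def:drin}, in particular the pointwise injectivity.
\item Observe that $\Phi$ restricts to the map $u$ of Lemma~\ref{lem:opdl} on $Y_k^{\eta,\perf}$.
\item Conclude that $\Phi$ coincides with the isomorphism of Proposition~\ref{prop:cpctdl}. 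This uses density of $Y_k^{\eta,\perf}$ in $X_k^{\eta,\perf}$ (not density of the stratum in the intersection), reducedness of the source, and separatedness of $\brv{\BH}_k^{d}$.
\end{enumerate}
Only then does the fact that $\Phi$ depends on the flag alone give the lemma.

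A smaller point: the inclusion $D_{i'}\subset\varpi^{-1}D_i$ follows from the flag conditions only when $|i-i'|<d$. Otherwise you get a simplex only after replacing $\eta'$ by a homothetic lattice, so this step depends on how indices are normalized in Definition~\ref{def:lattsubsc}.
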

\medskip
This defines a morphism 
$$
f\colon \brv \rmM_k^{\perf}\rightarrow (\brv \BH^d_{k})^{\perf}.
$$
Finally, we need to check that this morphism is $\GL_d(E)$-equivariant, which is a consequence of the uniqueness and Lemma \ref{lem:gleqis}.
\medskip
\begin{coro}
The morphism $f$ is $\GL_d(E)$-equivariant.
\end{coro}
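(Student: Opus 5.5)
The morphism $f$ is obtained from Corollary \ref{cor:gluit}, so it is the unique morphism with $\restr{f}{\rmM^{\eta}_k}=f_{\eta}$ for every lattice $\eta\in\brv{\BrTi}_{\! d}[0]$. The plan is to use this uniqueness: for a fixed $g\in \GL_d(E)$, show that $g^{-1}\circ f\circ g$ restricts to $f_{\eta}$ on every $\rmM^{\eta}_k$, and conclude $g^{-1}\circ f\circ g=f$.

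First, check that $g$ acts by automorphisms on both sides and permutes the closed covers compatibly. By Lemma \ref{lem:gleqis}, $g\cdot\rmM^{\eta}_k=\rmM^{g\eta}_k$ and $g\cdot\BH^{\eta,\perf}_k=\BH^{g\eta,\perf}_k$. Since $g$ is an automorphism of $\brv{\rmM}^{\perf}_k$ (its inverse is the action of $g^{-1}$), it restricts to an isomorphism $\rmM^{\eta}_k\xrightarrow{\sim}\rmM^{g\eta}_k$. Likewise it restricts to an isomorphism $\BH^{\eta,\perf}_k\xrightarrow{\sim}\BH^{g\eta,\perf}_k$ on the other side. The action on $\brv{\BH}^{d,\perf}_k$ is the perfection of the action on $\brv{\BH}^d_{\rmO}$, so this is just functoriality of the perfection.

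Second, compute on each piece. For $x$ in $\rmM^{\eta}_k$, the point $g\cdot x$ lies in $\rmM^{g\eta}_k$, so $f(g\cdot x)=f_{g\eta}(g\cdot x)$. Lemma \ref{lem:gleqis} gives $f_{g\eta}(g\cdot x)=g\cdot f_{\eta}(x)$. Hence
$$
\restr{(g^{-1}\circ f\circ g)}{\rmM^{\eta}_k}=g^{-1}\circ f_{g\eta}\circ \restr{g}{\rmM^{\eta}_k}=f_{\eta}.
$$
This identity is between morphisms of schemes, not just on points, because all the maps involved are morphisms and the restriction to a closed subscheme is functorial.

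Finally, $g^{-1}\circ f\circ g$ is an isomorphism $\brv{\rmM}^{\perf}_k\to\brv{\BH}^{d,\perf}_k$ whose restrictions agree with the $f_{\eta}$. By the uniqueness statement in Corollary \ref{cor:gluit}, it equals $f$, so $f\circ g=g\circ f$.

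The one point needing care is that uniqueness in Corollary \ref{cor:gluit} is stated for the glued isomorphism, and the index set is countable but infinite. The induction there really goes through the pushout property, which is local. So I would argue that two morphisms agreeing on each closed subscheme of a locally finite closed cover agree. This holds because $\brv{\rmM}^{\perf}_k$ is perfect, hence reduced. Moreover, by the finiteness argument in Corollary \ref{cor:iredcomph}, every quasi-compact open meets only finitely many $\rmM^{\eta}_k$, so the equality can be checked on a finite closed cover using the pushout lemma.
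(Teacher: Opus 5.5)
Your proposal is correct and follows the paper's argument: the paper derives equivariance from the uniqueness of the glued morphism together with the relation $f_{g\eta}\circ g=g\circ f_{\eta}$ of Lemma \ref{lem:gleqis}. You also justify uniqueness for the infinite but locally finite closed cover, using that the perfect scheme is reduced, which fills in a point the paper glosses over in Corollary \ref{cor:gluit}.
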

\medskip
\newpage
\section{End of proof}
\subsection{The specialization morphisms}\label{subsec:specomp}
The final theorem we need to prove is the compatibility of the specialization maps:
\medskip
\begin{theo}\label{thm:specomp}
The specialization morphisms $\spe_{\Dr}\colon \rvert \rmM_{\brv E}\rvert \rightarrow \lvert \rmM_k\rvert$ and $\spe_{\BH}\colon \lvert \BH_{\brv E}^d\rvert \rightarrow \lvert \BH_k^d\rvert$ are compatible in the sense that the following diagram commutes: 
\begin{center}
\begin{tikzcd}
\rvert \rmM_{\brv E}\rvert  \ar[r,"\pi"]\ar[d,"\spe_{\Dr}"]&\rvert \BH_{\brv E}^d\rvert \ar[d,"\spe_{\BH}" ] \\
\lvert \rmM_{k}\rvert \ar[r]&\rvert \BH_{k}^d\rvert
\end{tikzcd}
\end{center}
where the top isomorphism is given by theorem \ref{thm:genef} and the bottom isomorphism by theorem \ref{thm:perfiso}.
\end{theo}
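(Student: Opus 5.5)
We will check the commutativity on classical points. Both specialization maps are continuous and the classical points (those $x$ defined over finite extensions $K/\brv E$) are dense in $\lvert \rmM_{\brv E}\rvert$. Moreover, $\lvert \rmM_k\rvert$ and $\lvert \BH_k^d\rvert$ are the underlying spaces of schemes locally of finite type over $k$ (with $\lvert X^{\perf}\rvert=\lvert X\rvert$), so on closed points commutativity can be tested on $k$-points. It therefore suffices to show the following. Let $K/\brv E$ be finite with ring of integers $\rmO_K$, and let $x=(X,\rho)\in\CM_{\Dr}(\rmO_K)$. Then $f(\spe_{\Dr}(x))=\spe_{\BH}(\pi(x))$ as $k$-points of $\brv{\BH}_k^d$.

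\textbf{Step 1: the point $\spe_{\BH}(\pi(x))$.} By Proposition \ref{prop:symgenfib}, $\pi(x)$ is a surjection $E^d\otimes_E K\to \ell$ onto a line. By the valuative argument of Lemma \ref{lem:valu} and the description in Lemma \ref{lem:compatlatnorm}, the reduction is computed as follows. The norm $\lVert\cdot\rVert_{\pi(x)}$ gives a simplex $\{\eta_{i_0}\subsetneq\cdots\subsetneq\eta_{i_r}\}$, namely the chain of lattices on which $\pi(x)$ is integral onto the powers $\fkm_K^{es_{i_j}}$ with surjective $\rmO_K$-span. The $k$-point is then the diagram $\eta_{i_j}\to \fkm_K^{es_{i_j}}/\fkm_K^{es_{i_j}+1}$ with the induced transition maps $x_{i_j}$. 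By Corollary \ref{cor:matchlatnorm}, this simplex is the image of $r(\pi(x))$ in $\brv{\BrTi}_{\!d}$. In particular, a vertex $\eta=\eta_{i_j}$ with $\spe_{\BH}(\pi(x))\in\BH^{\eta}_k$ is characterized by the image $L^+\subset \Lie$ of $\eta\otimes\rmO_K$ being the unit ball after a twist.

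\textbf{Step 2: the point $f(\spe_{\Dr}(x))$.} Let $X_k=X\otimes k$ and $M=M_{\rmO}(X_k)$, which by \eqref{eq:cartvsmess} is identified with $\BD_{\rmO}(X_k)$ evaluated at $\brvO$. Grothendieck--Messing (Corollary \ref{cor:gmod}, together with Remark \ref{rema:bbm}) gives $\BD_{\rmO}(X)_{\rmO_K}\cong M\otimes_{\brvO}\rmO_K$ and a graded surjection $h_X\colon M\otimes_{\brvO}\rmO_K\to\Lie(X)$. Its reduction modulo $\fkm_K$ is the projection $M\to M/VM=\Lie(X_k)$, so the triangle in the introduction commutes. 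By the definition in \S\ref{subsubsec:defpermap}, $\pi(x)$ is $h_{X,0}[1/p]$ precomposed with $E^d\cong \bM_0^{\bU=1}\otimes E$ and $\rho$.

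Choose a critical index $i$ of $X_k$ and set $\eta=r_i(M_i)$, which is an $\rmO$-lattice by Lemma \ref{lem:inva}; then $\spe_{\Dr}(x)\in\rmM^{\eta}_k$. Using Lemma \ref{lemm:rcomp}, transport $h_{X,i}$ to $E^d$ via $\Pi^i$ and $g_\Pi^{i}$ (Remark \ref{rem:zerograd}). This shows that $\eta\otimes\rmO_K\to\Lie(X)_i$ is surjective, since $M_i\otimes\rmO_K\to \Lie(X)_i$ is and $M_i$ is spanned by $\eta$ over $\brvO$. The reduction of this map modulo $\fkm_K$ is $\eta\to M_i/VM_{i-1}=\Lie(X_k)_i$. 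By Remark \ref{rema:tracer}, this is exactly the datum $\eta\to D_i/\sigma^{-1}(D_{i-1})$ that defines $f_\eta(\spe_{\Dr}(x))$ through Lemma \ref{lem:opdl} and Proposition \ref{prop:cpctdl}.

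\textbf{Step 3: comparison.} Step 2 shows that $\Lie(X)_i$ is the unit ball of the line $\pi(x)$ twisted by $\Pi^i$ on the lattice $\eta$. Lemma \ref{lem:compatlatnorm} then says $\eta$ is a vertex of the simplex of Step 1, and the two reductions $\eta\to \ell^+/\fkm_K\ell^+$ agree. The transition maps $x_{i_j}$ also agree, because on both sides they are induced by $\Pi$ on the $\Lie$ pieces. Hence the two $k$-points coincide in $\BH^{\eta}_k$, using the moduli description of Proposition \ref{prop;symlatsub}.

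I expect Step 3 to be the main obstacle. It requires careful bookkeeping of the index shifts $g_\Pi^{-i}$ and of the Frobenius twist $\sigma^{-1}$ appearing in Remark \ref{rema:tracer} and in Lemma \ref{lem:opdl} (where $v$ involves $\sigma(\sigma^*\cdot)$). It also requires showing that the $\rmO_K$-integral structure $\Lie(X)_i$ is exactly the one produced by the norm of Lemma \ref{lem:compatlatnorm}, and not one off by a power of $\fkm_K$. My first attempt will be to treat the case where $\spe_{\Dr}(x)$ lies in the open stratum $\BH^{\{\eta\}}_k$, in which case only one critical index occurs. I would then deduce the general case by density, or by running the same argument at each critical index.
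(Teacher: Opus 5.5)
Your route is the paper's: reduce to classical points, use the Grothendieck--Messing lemma that $h_X$ reduces modulo $\fkm_K$ to the projection modulo $V$, read off $f(\spe_{\Dr}(x))$ from Remark \ref{rema:tracer}, and compare with the lattice description of $\spe_{\BH}$. (A small slip: the lattice should be $\eta=r_i(M_i^{\Pi=V})$, so that $r_i(M_i)=\eta\otimes_{\rmO}W_{\rmO}(k)$.)

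\textbf{The gap is in Step 3, at the point you flag.} The inference ``$\eta\otimes\rmO_K\to\Lie(X)_i$ is surjective, hence $\eta$ is a vertex of the simplex of $\lVert\cdot\rVert_{\pi(x)}$'' is invalid. For \emph{every} $\rmO$-lattice $\eta'\subset E^d$, $\eta'\otimes\rmO_K$ surjects onto the $\rmO_K$-span of its image in the line, so surjectivity cannot single out $\eta$. The vertices are the balls $\{v:\lVert v\rVert\leqslant c\}$. Since $\Lie(X)_i$ is by construction the span of the image of $\eta$, the real question is not whether it is off by a power of $\fkm_K$. It is whether $\eta$ is the \emph{full} preimage of $\Lie(X)_i$. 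That is an injectivity statement modulo $\fkm_K$: the pointwise injectivity in the definition of $F_\Delta$, i.e.\ condition (2) of Definition \ref{def:drin}. Your argument never uses it, and the uniqueness invoked via Lemma \ref{lem:compatlatnorm} in Proposition \ref{prop:calsptgen} can only hold because of it.

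One common vertex is also not enough. When $X_k$ has several critical indices, the point lies on the boundary of $\BH^{\eta}_k\cong X^{\eta}_k$. There the single datum $\eta\to\Lie(X_k)_i$ records, up to Frobenius twist, only the image under the blow-down $X^{\eta}_k\to\BP_{\bar\eta}$ of Remark \ref{rema:dl}, which is not injective there for $d\geqslant 3$. You need the whole critical chain $\{\eta_i\}_{i\in C}$, as in Propositions \ref{prop:calsptspe} and \ref{prop:calsptgen}. Density cannot replace this. The classical points specializing into the boundary form the tube over a closed subset, an open subset disjoint from the tube over the open strata, so agreement on the latter says nothing about the former.

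\textbf{How to close it.} The missing input is already available to you and lets you bypass norms entirely.
\begin{enumerate}
\item Take $\Delta=\{\eta_i\}_{i\in C}$ and $\varphi_i\coloneqq h_{X,i}\circ r_i^{-1}\colon\eta_i\to\Lie(X)_i$, with transition maps the appropriate powers of $\Pi$ on $\Lie(X)$. The diagram commutes by Lemma \ref{lemm:rcomp} and the $\Pi$-equivariance of $h_X$.
\item For this to be an $\rmO_K$-point of $F_\Delta$, the only nonformal condition is pointwise injectivity; surjectivity of $\varphi\otimes\rmO_K$ follows by Nakayama. Since each $\Spec(\rmO_K/\varpi_K^n)$ is a single point, this is a condition modulo $\fkm_K$ only.
\item Modulo $\fkm_K$ the diagram is $(\eta_i\to\Lie(X_k)_i)_{i\in C}$ by the Grothendieck--Messing lemma. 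This is $f(\spe_{\Dr}(x))$ by Proposition \ref{prop:calsptspe}, so it satisfies the condition.
\item This gives $y\in\brv{\BH}^d_{\brvO}(\rmO_K)$ with $y\otimes k=f(\spe_{\Dr}(x))$. Its image in $\BP^{d-1}(K)$ under Proposition \ref{prop:symgenfib} is $u^0\circ r^0=\pi(x)$, by the definition of $\pi$ in \S\ref{subsubsec:defpermap}.
\item Since $\rmO_K$-points of $\brv{\BH}^d_{\brvO}$ correspond bijectively to $K$-points of its generic fiber, $y$ is the integral model of $\pi(x)$. Hence $\spe_{\BH}(\pi(x))=f(\spe_{\Dr}(x))$.
\end{enumerate}
As a by-product the critical chain equals the norm simplex, and Lemma \ref{lem:compatlatnorm} is not needed.
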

\medskip
One feature that makes the proof of this theorem particularly easy is that we only need to check it on classical points. Let $K/\brv E$ be a finite extenson of valued fields and let $\rmO_K\subset K$ be its ring of integers. We write $\varpi_K\in \rmO_K$ the uniformizer of $K$ and $\fkm_K\subset \rmO_K$ the maximal ideal it generates.
\subsubsection{The specialization morphisms}
We first describe the specialization morphism for $\CM_{\Dr}$: $\spe_{\Dr} \colon\lvert \rmM_{\brv E}\rvert \rightarrow \lvert \rmM_k\rvert$. Let $x=(X,\rho)\in \rmM_{\brv E}(K)=\CM_{\Dr}(\rmO_K)$ 

Then the value of the specialization $\spe_{\Dr}$ on $x$ is given by
$$
\spe_{\Dr}(x) = (X/\fkm_K,\rho/\fkm_K)\in \rmM_k(k)
$$
where $X/\fkm_K\coloneqq X\otimes_{\rmO_K} k$ and $\rho/\fkm_K \coloneqq \rho\otimes_{\rmO_K} k \colon \bX\dashrightarrow (X/\fkm_K)$ is the quasi-isogeny obtained by applying $\otimes_{\rmO_K} k$ to $\rho$.

We now describe the specialization morphism for $\BH_{\brvO}^d$: $\spe_{\BH}\colon \lvert \BH_{\brv E}^d\rvert \rightarrow \lvert \BH_{k}^d\rvert$. Let $x\in \BH_{\brv E}^d(K)=\BH_{\brvO}^d(\rmO_K)$ which corresponds to data $(\eta,L,u,r)$ over the local ring $\rmO_K$. The value of the specialization $\spe_{\BH}$ on $x$ is given by
$$
\spe_{\BH}(x) =(\eta,L\otimes_{\rmO_K}k,u\otimes_{\rmO_K}k,r)\in \BH_k^d(k)
$$
Let $\Delta=\{\eta_{i_1},\dots, \eta_{i_r}\}$ be a simplex in $\brv{\BrTi}_{\! d}$, the specialization map restricts to $\BH^{\Delta}_{\rmO}$ in the obvious way: 
$$
\spe_{\BH_{\rmO}^{\Delta}}\colon (\eta_{i_j}\xrightarrow{\varphi_{i_j}}L_{i_j})_j\mapsto (\eta_{i_j}\xrightarrow{\varphi_{i_j}/\fkm_K}L_{i_j}\otimes_{\rmO_K}k)_j
$$
where $\varphi_{i_j}/\fkm_K\colon v\mapsto \varphi_{i_j}(v)\mod \fkm_K$.

\medskip
\begin{rema}\label{rema:speconsym}
Let us comment on how the specialization map behaves under the isomorphism 
$$
\BH_{\brv E}^d\cong \BP^{d-1}_{\brv E}\setminus \bigcup_{H\in \sH_E}H
$$
of Proposition \ref{prop:symgenfib}. Let $x\in  \BP^{d-1}_{\brv E}\setminus \bigcup_{H\in \sH_E}H$ be a $K$-valued point. Then $x$ corresponds to an injective map 
$$
x\colon E^d\incl K.
$$
Let $\Delta = \{\eta_{i_1},\dots, \eta_{i_r}\}$ be the simplex associated to $\lVert \cdot \rVert_x$, which according to Lemma \ref{lem:compatlatnorm} fits into a diagram of the form  $(\eta_{i_j}\rightarrow \fkm_K^{n_j})_j$ where $n_j\in \BN$. We then have 
$$
\spe_{\BH}(x)=(\eta_{i_j}\rightarrow \fkm_K^{n_j}/\fkm_K^{n_j+1})_j\in \BH_k^{\Delta}(k)
$$
\end{rema}

\subsubsection{Matching the specialization morphisms}

The main lemma describing the compatibility between the specialization morphisms is the following:
\medskip
\begin{lemm}
Let $K/\brv E$ be a finite extension with ring of integers $\rmO_K\subset K$ and maximal ideal $\fkm_K\subset \rmO_K$. Let $G$ be a $\varpi$-divisible $\rmO$-module over $\rmO_K$ and let $h_G\colon D_{\rmO}(G_k)\rightarrow \Lie (G)$ be the map defined by the Hodge filtration on $D_{\rmO}(G)\subset\BD_{\rmO}(G)_{\rmO_K}$. Then the following diagram commutes:
\begin{center}
\begin{tikzcd}
D_{\rmO}(G_k)\ar[r,"h_G"]\ar[dr,swap, "\mod V"]& \Lie(G) \ar[d,"\mod \fkm_K"]\\
 & \Lie(G_k)
\end{tikzcd}
\end{center}
In particular, $\Lie(G_k) = \Lie(G)\otimes_{\rmO_K}k$.
\end{lemm}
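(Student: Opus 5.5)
The plan is to reduce everything to the functoriality of the Hodge map $h$ with respect to the base change $\rmO_K\rightarrow k$, together with the identification of Dieudonné theory with Cartier theory over $k$. Recall that $h_G$ is the Lie algebra of the universal vector extension $E_{\rmO}(G)\rightarrow G$ evaluated at $\rmO_K$. Grothendieck--Messing theory also uses the crystal property \eqref{eq:crisprop}. Since $\rmO_K\rightarrow k$ is a surjection whose kernel $\fkm_K$ carries (after passing to the $p$-adic limit as in Remark \ref{rema:bbm}) nilpotent $\rmO$-divided powers compatible with those of $\varpi$, it gives an identification
$$
\BD_{\rmO}(G)_{\rmO_K}\otimes_{\rmO_K}k\cong \BD_{\rmO}(G_k)_k .
$$
Here we regard $D_{\rmO}(G_k)\otimes_{\brvO}\rmO_K\cong \BD_{\rmO}(G_k)_{(\rmO_K\rightarrow k)}=\BD_{\rmO}(G)_{\rmO_K}$. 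Concretely, $W_{\rmO}(k)=\brvO\rightarrow\rmO_K\rightarrow k$ is a morphism of PD-thickenings of $k$, and Corollary \ref{cor:diecarcomp} gives $D_{\rmO}(G_k)=\BD_{\rmO}(G_k)_{\brvO}$.

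First, by functoriality of \eqref{eq:vectext} in $S$, applied to $\Spec k\incl\Spec\rmO_K$, the universal vector extension of $G$ base changes to that of $G_k$. Taking Lie algebras, we get the commutative square
$$
\begin{tikzcd}
\BD_{\rmO}(G)_{\rmO_K}\ar[r,"h_G"]\ar[d]&\Lie(G)\ar[d]\\
\BD_{\rmO}(G_k)_k\ar[r,"h_{G_k}"]&\Lie(G_k)
\end{tikzcd}
$$
whose vertical maps are reduction modulo $\fkm_K$. Lie algebras commute with base change for formal groups, and this gives $\Lie(G_k)=\Lie(G)\otimes_{\rmO_K}k$. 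Composing with $D_{\rmO}(G_k)\rightarrow D_{\rmO}(G_k)\otimes_{\brvO}k=\BD_{\rmO}(G_k)_k$, the diagonal of the desired triangle becomes the composite $D_{\rmO}(G_k)\rightarrow D_{\rmO}(G_k)/\varpi\xrightarrow{h_{G_k}}\Lie(G_k)$.

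Second, I must identify the map $D_{\rmO}(G_k)/\varpi\rightarrow\Lie(G_k)$ induced by the Hodge filtration over $k$ with the quotient map $M\rightarrow M/VM$ of Cartier theory. For this I would use \eqref{eq:cartvsmess} and Proposition \ref{prop:pdivtoisoc}, which give $D_{\rmO}(G_k)\cong M_{\rmO}(G_k)$. I would then use the classical fact that, over a perfect field, the Hodge filtration in $\BD(G_k)_k=M/\varpi M$ is $VM/\varpi M$, so that $h_{G_k}$ becomes $M/\varpi M\rightarrow M/VM=\Lie(G_k)$, as in Proposition \ref{prop:Ecart}. For $\rmO=\Zp$ this is the standard compatibility of Messing's crystal with classical Dieudonné theory, as in Fontaine and \cite{bbm}. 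For general $\rmO$ I would transport it through the reduction steps (unramified, then totally ramified) in the proof of Proposition \ref{prop:Ecart}. In each step $V'$ is $V^f$ or $V$, and the quotient by $V$ is preserved.

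The main obstacle is this second step: making the sign and normalization conventions (covariant versus contravariant, $V$ versus $F$) line up, so that the filtration is exactly $VM$ and not $FM$. The PD-structure subtlety for ramified $K$ in the first step is handled by the $\rmO$-divided powers, together with the convention of Remark \ref{rema:bbm} of evaluating on the limit of the thickenings $\rmO_K/p^n\rightarrow k$.
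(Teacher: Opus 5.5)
Your proposal is essentially the paper's proof. It has the same three steps: the base-change square for $\rmO_K\to k$ (giving $\BD_{\rmO}(G)_{\rmO_K}\otimes_{\rmO_K}k\cong\BD_{\rmO}(G_k)_k$ and $\Lie(G_k)=\Lie(G)\otimes_{\rmO_K}k$); then the identification of $h_{G_k}$ with reduction modulo $V$, which the paper simply quotes from \cite[Proposition 4.3.9]{bbm} instead of transporting it from the $\BZ_p$ case; and finally $\varpi D_{\rmO}(G_k)\subset VD_{\rmO}(G_k)$, so the composite $D_{\rmO}(G_k)\to D_{\rmO}(G_k)/\varpi\to\Lie(G_k)$ is reduction modulo $V$.

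One correction: $\fkm_K$ carries no $\rmO$-divided powers once the ramification index of $K/\brv E$ exceeds $q-1$, so $\brvO\to\rmO_K\to k$ is not a morphism of PD-thickenings in general. The bottom-left identification does not need this claim; base change of the universal vector extension suffices. The comparison of $D_{\rmO}(G_k)$ with $\BD_{\rmO}(G)_{\rmO_K}$, compatible with reduction, has to be taken as part of the data defining $h_G$, which is how the statement and the paper's ``rewriting'' step treat it.
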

\medskip
\begin{proof}
First notice that $\Lie(G_k)\cong \Lie(G)\otimes_{\rmO_K}k$ by functoriality of the Lie algebra and $\BD_{\rmO}(G)_k\cong \BD_{\rmO}(G_k)_k$ by functoriality and the crystal property of $\BD_{\rmO}(G)$. By functoriality of the Hodge filtration we get a diagram
\begin{center}
\begin{tikzcd}
\BD_{\rmO}(G)_{\rmO_K}\ar[r,"h_G"]\ar[d]& \Lie(G) \ar[d,"\mod \fkm_K"]\\
 \BD_{\rmO}(G_k)_k\ar[r,"h_{G_k}"]& \Lie(G_k)
\end{tikzcd}
\end{center}
which rewrites to
\begin{center}
\begin{tikzcd}
D_{\rmO}(G_k)\ar[r,"h_G"]\ar[dr,dashed]\ar[d]& \Lie(G) \ar[d,"\mod \fkm_K"]\\
 D_{\rmO}(G_k)\otimes k\ar[r,"h_{G_k}"]& \Lie(G_k)
\end{tikzcd}.
\end{center}
We check that the diagonal dashed arrow is given by the reduction mod $V$. First, note that by \cite[Proposition 4.3.9]{bbm} 
$$
h_{G_k}={\rm mod}\,  V,\quad D_{\rmO}(G_k)\otimes_{\rmO} k=D_{\rmO}(G_k[\varpi]).
$$
Moreover, the dashed map is defined by the composition 
$$
D_{\rmO}(G_k)\xrightarrow{\mod \varpi} D_{\rmO}(G_k)\otimes_{\brvO} k\xrightarrow{\mod V} \Lie(G_k) 
$$
which is given by the reduction modulo $V$ since $\varpi D_{\rmO}(G_k)\subset VD_{\rmO}(G_k)$, which concludes the proof.
\end{proof}

Let us now describe the isomorphisms on (classical) points. As usual, we can restrict to $\CM_{\Dr}$, \ie suppose the height of the quasi-isogeny is $=0$. Let $x=(X,\rho)\in \CM_{\Dr}(\rmO_K)$, and set $X_k\coloneqq X\otimes_{\rmO_K}k$ its reduction mod $\varpi$. Let $M=D_{\rmO}(X_k)$ be the Dieudonné module of $X_k$ and let $\bar C\coloneqq \{ \bar i_0,\dots, \bar i_r\}\subset \BZ/d$ be the set of critical indices of $X_k$. For any $ i \in \bar C$ we set 
$$
\eta^i\coloneqq M_i^{\Pi=V}\subset M_i
$$
the finite free $\rmO$-module given by lemma \ref{lem:inva}. Consider $C\subset \llbracket 0,d-1\rrbracket$ the unique lift of $\bar C$ and order this set increasingly, \ie $C=\{i_1,\dots, i_r\}$. For any $i_j\in C$ let
$$
\eta_{i_j}\coloneqq r_{i_j}(\eta^{i_j})\subset E^d,
$$
where $r_{i_j}$ is defined in \eqref{eq:defri} and depending on $\rho$. This defines a simplex $\Delta=\{\eta_{i_0},\dots, \eta_{i_r}\}\in \brv{\BrTi}_{\! d}[r]$.

For the special fiber, the reduction mod $\fkm_K$ of $x$ defines a point $\bar x \in \rmM_k$, and the following proposition gives its image under $f$:
\medskip
\begin{prop}\label{prop:calsptspe}
Through the map $f\colon \lvert \rmM_k\rvert \rightarrow \lvert \BH_k^d\rvert $, the point $\bar x\in \rmM_k(k)$ is sent to
$$
f(\bar x )=(\eta_{i_j}\rightarrow \Lie(X_k)_{i_j})_j\in \lvert \BH_k^d\rvert,
$$
given for $i\in C$ by the composition $\eta_{i}\xrightarrow{r_{i}^{-1}}M_{i}\xrightarrow{\mod V} \Lie(X_k)_{i}$.
\end{prop}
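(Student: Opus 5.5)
The plan is to compute $f(\bar x)$ through one of the lattice subschemes $\rmM^{\eta}_k$ on which $f$ is defined by gluing, namely $f=f_{\eta}$ on $\rmM^{\eta}_k$ for the lattices $\eta=\eta_{i_j}$ attached to the critical indices of $X_k$. Throughout, $R=k$, so $W_{\rmO}(k)=\brvO$ and $M=D_{\rmO}(X_k)\cong M_{\rmO}(X_k)$ by \eqref{eq:cartvsmess}.

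\textbf{Step 1: $\bar x$ lies in $\rmM^{\eta_{i}}_k$ for every $i\in C$.} Fix $i\in C$. By Lemma \ref{lem:critspe}, $\Pi M_i=VM_i$, so $U=V^{-1}\Pi$ is a $\sigma$-linear bijection of $M_i$. Lemma \ref{lem:inva}, or Dieudonné--Manin for slope $0$, then gives $M_i=\eta^{i}\otimes_{\rmO}\brvO$. Applying $r_i$ shows $r_i(M_i)=\eta_i\otimes_{\rmO}W_{\rmO}(k)$, which is exactly the membership condition $\bar x\in\rmM^{\eta_i}_k$ of Definition \ref{def:lattsubsc}. Moreover, $\bar x$ lies in $\bigcap_{i\in C}\rmM^{\eta_i}_k$, and by Lemma \ref{lem:itglues} all the $f_{\eta_i}$ agree there. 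By Corollary \ref{cor:interslatt}, $f(\bar x)$ then lies in $\BH^{\Delta}_k$ with $\Delta=\{\eta_{i_0},\dots,\eta_{i_r}\}$.

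\textbf{Step 2: describe $f_{\eta}$ on $k$-points.} By \eqref{eq:flagdef}, $f_{\eta}(\bar x)$ is the flag $D_j=r_j(M_j)$. It remains to push this flag through the isomorphism $X^{\eta,\perf}_k\cong\BH^{\eta,\perf}_k$ of Proposition \ref{prop:cpctdl}. On the open stratum $Y^{\eta}_k$ this isomorphism is the map $u$ of Lemma \ref{lem:opdl}, namely $D\mapsto\bigl(\eta\to D_i/\sigma^{-1}(D_{i-1})\bigr)$. By Remark \ref{rema:tracer}, this is the composition $\eta\xrightarrow{r_i^{-1}}M_i\to\Lie(X_k)_i$, the second map being reduction mod $V$, as desired.

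\textbf{Step 3 (main obstacle): points outside the open stratum.} A point $\bar x$ with several critical indices lies in the boundary $X^{\eta}\setminus Y^{\eta}$, where $u$ is not a priori defined. I would construct directly a morphism
$$
\Psi\colon X^{\eta,\perf}_k\to\BH^{\eta,\perf}_k,\qquad (D_j)_j\mapsto \bigl(\eta_{j}\to L_j\coloneqq D_j/\sigma^{-1}(D_{j-1})\bigr)_j,
$$
with transition maps $x_j$ induced by the inclusions $D_j\subset D_{j+1}$ and lattices $\eta_j$ given by the $D_j$ which are rational. The key checks are the following.
\begin{itemize}
\itemb Each $L_j$ is invertible, because $D_{j-1}\subset\sigma(\sigma^*D_j)$ with colength $1$.
\itemb $\Psi$ lands in $F_{\Delta}$: the pointwise injectivity of $\bar\varphi$ should follow exactly as in the computation of Lemma \ref{lem:opdl}, using the $\sigma$-iterated basis argument on the quotients $\eta_{i_{j+1}}/\eta_{i_j}$.
\end{itemize}
Since $\Psi$ agrees with $u$ on the dense open $Y^{\eta}$, and the target is separated, the uniqueness part of the extension lemma in the proof of Proposition \ref{prop:cpctdl} shows that $\Psi$ is the isomorphism of that proposition. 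Evaluating $\Psi$ at $D(\bar x)$ and using Remark \ref{rema:tracer} index by index for each $i\in C$ then gives
$$
f(\bar x)=\bigl(\eta_{i_j}\to\Lie(X_k)_{i_j}\bigr)_j,
$$
with each map given by $r_{i_j}^{-1}$ followed by reduction mod $V$. This is the claim of the proposition.
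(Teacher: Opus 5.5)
Your Steps 1 and 2 follow the paper's argument. You place $\bar x$ in $\bigcap_{i\in C}\rmM^{\eta_i}_k$, invoke Lemma~\ref{lem:itglues} and Corollary~\ref{cor:interslatt}, and read off the component at a critical index through Remark~\ref{rema:tracer} and the map $u$ of Lemma~\ref{lem:opdl}.

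Step 3 is where you depart from the paper, and your concern there is legitimate. Corollary~\ref{cor:interslatt} only pins down the lattice chain of $f(\bar x)$. Meanwhile $u$ is defined only on $Y^{\eta,\perf}_k$, and the isomorphism of Proposition~\ref{prop:cpctdl} is known only abstractly at boundary points, which is exactly where $\bar x$ lies once $\lvert C\rvert\geqslant 2$.

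Your remedy is sound in principle: an explicit extension $\Psi$, plus uniqueness of extensions from a reduced source to a separated target. The pointwise injectivity does follow from the $\sigma$-iterated basis argument applied to $D_b/D_a$ for consecutive critical indices $a<b$. But as written, $\Psi$ is not yet a morphism of schemes:
\begin{itemize}
\item[$\bullet$] Over a perfect $k$-algebra $R$, the set of $j$ for which $D_j$ is rational varies along $\Spec R$, so there is no single $F_\Delta$ for $\Psi$ to land in.
\item[$\bullet$] You would have to construct the constructible sheaf $\eta^{\bullet}$ of Definition~\ref{def:drin} in families. On the closed locus $V(x_j)$ one gets $D_j=\sigma^{-1}(D_j)$, and a Lang-type descent is then needed to see that $D_j$ is Zariski locally of the form $\lambda\otimes_{\rmO}W_{\rmO}(R)$.
\item[$\bullet$] You would then have to check conditions (1)--(3) of that definition.
\end{itemize}
Your sketch addresses none of these.

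You do not actually need $\Psi$. Let $\eta$ have index $i$.
\begin{itemize}
\item[$\bullet$] Index $i$ is critical on all of $\rmM^{\eta}_k$: by Lemma~\ref{lemm:rcomp}, $r_i(M_i)=\eta\otimes_{\rmO}W_{\rmO}(R)$ forces $\Pi M_i=VM_i$.
\item[$\bullet$] Index $i$ is also critical on all of $\BH^{\eta}_k$, since it is the closure of $\BH^{\{\eta\}}_k$, on which $x_i=0$.
\end{itemize}
So there are two morphisms $\rmM^{\eta}_k\to\BP(\eta\otimes_{\rmO}k)^{\perf}$: the map $x\mapsto\bigl(\eta\xrightarrow{r_i^{-1}}M_i\to\Lie(X)_i\bigr)$, and the index-$i$ component $(\eta\xrightarrow{u^i}L^i)$ of $f_{\eta}$. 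They agree on the dense open $f_{\eta}^{-1}(Y^{\eta,\perf}_k)$ by Lemma~\ref{lem:opdl} and Remark~\ref{rema:tracer}. Since $\rmM^{\eta}_k$ is reduced and the target is separated, they agree everywhere.

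Apply this to each $i\in C$, and combine it with your Step 1 for the lattice chain and with Lemma~\ref{lem:itglues}; this gives the proposition. This density argument is what the paper's two-line proof tacitly relies on.
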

\medskip
\begin{proof}
We need to trace back the isomorphisms $\rmM_k^{\eta}\cong X_{k}^{\eta}$ and $X_k^{\eta}\cong \BH_k^{\eta}$ which is done in Remark \ref{rema:tracer} for one critical index. Using Lemma \ref{lem:itglues} giving the gluing and Corollary \ref{cor:interslatt} we get the result for several critical indices.
\end{proof}

For the generic fibers, $x$ defines a classical point $\tilde x \in \rmM_{\brv E}$, and the following proposition gives its image under $\pi$:
\medskip
\begin{prop}\label{prop:calsptgen}
Through the period map $\pi \colon \lvert \rmM_{\brv E}\rvert \rightarrow \lvert \BH_{\brv E}^d\rvert$, the point $\tilde x\in \rmM_{\brv E}(K)$ is sent to 
$$
\pi(\tilde x)=(\eta_{i_j}\rightarrow \Lie(X)_{i_j}\otimes_{\rmO_K}K)_j\in \lvert \BH_{\brv E}^d\rvert,
$$
given for $i\in C$ by the composition $\eta_{i}\xrightarrow{r_{i}^{-1}}M_{i}\xrightarrow{h_{X,i}} \Lie(X)_{i}$.
\end{prop}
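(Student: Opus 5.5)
The plan is to unwind the definition of the period map in \S\ref{subsubsec:defpermap} on the $\rmO_K$-point $x=(X,\rho)$. I would then show that the resulting point of $\BP^{d-1}_{\brv E}(K)$ is the generic fiber of the $\rmO_K$-point of $\BH^{\Delta}_{\rmO}\otimes\brvO$ given by the diagram $(\eta_{i_j}\to \Lie(X)_{i_j})_j$. The first step is to identify the evaluation $D=\BD_{\rmO}(X)_{\rmO_K}$ (in the sense of Remark \ref{rema:bbm}) rationally with $M\otimes_{\brvO}K$, where $M=D_{\rmO}(X_k)$. The crystal property applied to the thickening $\rmO_K\to k$, together with Corollary \ref{cor:diecarcomp} and \eqref{eq:cartvsmess}, gives a canonical isomorphism $D\bigl[\tfrac1p\bigr]\cong M\otimes_{\brvO}K$. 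I would use the rational version of \cite[5.15]{razi} for this, which is where the divided-power issue for ramified $K$ disappears after inverting $p$. This isomorphism is compatible with $\rmOD$ and the grading, and under it the identification $\bM\otimes K\cong D\bigl[\tfrac1p\bigr]$ induced by $\rho$ becomes $\rho_k\otimes K$. Consequently the map $E^d\cong\bM_0^{\bU=1}\otimes E\to D_0\bigl[\tfrac1p\bigr]$ used to define $\pi$ equals $r_0^{-1}$ extended $K$-linearly, with $r_0$ as in \eqref{eq:defri}.

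The second step is to pass from index $0$ to the critical indices. By Lemma \ref{lemm:rcomp} we have $r_{i+1}\circ\Pi=r_i$, and $h_X$ commutes with $\Pi$ by \eqref{eq:mesisgrad}. Hence for each $i\in C$ the map $\eta_i\xrightarrow{r_i^{-1}}M_i\xrightarrow{h_{X,i}}\Lie(X)_i$ agrees, after inverting $p$, with $\Pi^i\circ h_{X,0}\circ r_0^{-1}\circ g_\Pi^{-i}$. This is exactly the compatibility $r^i=\Pi^i\circ r^0\circ g_\Pi^{-i}$ of Remark \ref{rem:zerograd}. The maps form a commutative diagram of the shape required in the definition of $F_\Delta$, because the inclusions $\eta_{i_j}\subset\eta_{i_{j+1}}$ correspond to powers of $\Pi$ on the $M_i$.

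The third step is to check that this diagram really is an $\rmO_K$-point of $\BH^{\Delta}_{\rmO}$. The conditions to verify are that $\eta_{i}\otimes\rmO_K\to\Lie(X)_i$ is surjective and that the maps $\bar\varphi$ are pointwise injective. Both conditions can be checked modulo $\fkm_K$. By the preceding lemma, $h_X$ reduces to $\bmod\, V$ on $D_{\rmO}(X_k)$, so the reduced diagram is the one in Proposition \ref{prop:calsptspe}, which is a genuine $k$-point of $\BH^{\Delta}_k$ by the construction of $f$. Its generic fiber, composed with $r^0$, is the map $E^d\to \Lie(X)_0\bigl[\tfrac1p\bigr]$ defining $\pi(\tilde x)$, as in Proposition \ref{prop:symgenfib}. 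Since $\BH^d_{\brvO}$ is $p$-adically separated and flat, an $\rmO_K$-point is determined by its generic fiber, and the claimed description of $\pi(\tilde x)$ follows.

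I expect the main obstacle to be the first step. It requires a precise comparison between the Messing crystal evaluated at $\rmO_K$ (which is not a nilpotent PD-thickening of $k$ when $K/\brv E$ is ramified) and the Cartier--Dieudonné module of $X_k$, compatible with $\rho$, the $\rmOD$-action and the Hodge map. I would handle it by evaluating on the $p$-adic PD-envelope and inverting $p$, invoking \cite[Proposition 5.15]{razi} for the rigidity of $\rho$, rather than working integrally.
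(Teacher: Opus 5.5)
Your proposal is correct and follows the same overall plan as the paper's proof: unwind the definition of $\pi$ at the $\rmO_K$-point, then invoke uniqueness of integral lattice data. The difference is in how that uniqueness is obtained.

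The paper reads off from \S\ref{subsubsec:defpermap} and Remark \ref{rema:speconsym} that $\pi(\tilde x)=(\lambda_{i_j}\to\Lie(X)_{i_j}\otimes_{\rmO_K}K)_j$, where $(\lambda_{i_j})_j$ is the chain cut out by the unit balls $\Lie(X)_{i_j}$. It then concludes $\lambda_i=\eta_i$ from the uniqueness in Lemma \ref{lem:compatlatnorm}, i.e.\ via norms on $E^d$.

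You instead build the candidate $\rmO_K$-point $(\eta_{i_j}\to\Lie(X)_{i_j})_j$ of $\BH^{\Delta}$ directly. You verify its defining conditions modulo $\fkm_K$ using the mod-$V$ lemma and Proposition \ref{prop:calsptspe}. You then conclude because an $\rmO_K$-point of $\BH^d_{\brvO}$ is determined by its image in $\BP^{d-1}(K)$ (Proposition \ref{prop:symgenfib} plus separatedness).

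These are two forms of the same uniqueness. Your version checks explicitly what the paper's one-line appeal to Lemma \ref{lem:compatlatnorm} leaves implicit:
\begin{itemize}
\item that $\eta_i\otimes_{\rmO}\rmO_K\to\Lie(X)_i$ is surjective;
\item that the chain attached to $\pi(\tilde x)$ is indexed exactly by the critical indices $C$ of $X_k$.
\end{itemize}
The paper's version, in exchange, ties the result directly to the norm picture of Corollary \ref{cor:matchlatnorm}.

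Two minor points.
\begin{itemize}
\item The integrality your third step needs does not come from the rational comparison in your first step. It comes from the lemma preceding Proposition \ref{prop:calsptspe}, where $h_G$ is defined on $D_{\rmO}(G_k)$ itself and shown to reduce to reduction modulo $V$. This is the same integral input the paper relies on, so your remark that Step 1 can be done only after inverting $p$ is not the whole story.
\item With $\eta_i=r_i(\eta^i)\subset E^d$ as in the statement, Lemma \ref{lemm:rcomp} gives $h_{X,i}\circ r_i^{-1}=\Pi^i\circ h_{X,0}\circ r_0^{-1}$ on $\eta_i$ directly. There is no factor $g_\Pi^{-i}$; that factor only appears in the coordinates of Remark \ref{rem:zerograd} on $\boeta^i\otimes_{\rmO}E$.
\end{itemize}
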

\medskip
\begin{proof}
By the definition of the period morphism in \S \ref{subsubsec:defpermap} and Remark \ref{rema:speconsym}, $\pi(\tilde x)$ is of the form 
$$
\pi(\tilde x)=(\lambda_{i_j}\rightarrow \Lie(X)_{i_j}\otimes_{\rmO_K}K),
$$
where $(\lambda_{i_j})_j$ is the chain of lattices determined by the $\rmO_K$-lattices
$$
\Lie(X)_{i_j}\subset \Lie(X)_{i_j}\otimes_{\rmO_K}K.
$$
By uniqueness of these lattices given by Lemma \ref{lem:compatlatnorm}, we deduce that $\lambda_{i}=\eta_{i}$ for $i\in C$.
\end{proof}

\medskip
\subsection{Lourenço--Scholze theorem}
In this section, we recall the main ingredient of the proof, namely that a flat normal formal scheme is determined by its generic fiber, perfectized special fiber and specialization morphism. Let $\CD$ be the category of triples $(X,Y,f)$ where:
\begin{itemize}
\itemb $X$ is a rigid space over $\brv E$,
\itemb $Y$ is a perfect scheme over $\Fpbar$,
\itemb $f\colon \lvert X \rvert \rightarrow \lvert Y \rvert$ is a continuous map between topological spaces, where $\lvert X \rvert$ denotes the space of classical points of $X$.
\end{itemize}
Also, let $\CC$ be the category of formal schemes over $\brvO$ that are flat, normal and topologically formally of finite type. Then, we have a natural functor $F\colon \CC\rightarrow \CD$ given, for $\fkX$ an object in $\CC$, by
$$
F(\fkX)\coloneqq (\fkX_{\brv E}, \fkX_{\Fpbar}^{\perf},\spe_{\fkX}),
$$
where 
\begin{itemize}
\itemb $\fkX_{\brv E}$ is the rigid space over $\brv E$ associated to $\fkX$,
\itemb $\fkX_{\Fpbar}^{\perf}$ is the perfection of the special fiber of $\fkX$,
\itemb $\spe_{\fkX}\colon \lvert \fkX_{\brv E}\rvert \rightarrow \lvert \fkX_{\Fpbar}\rvert$ is the specialization morphism.
\end{itemize}
The theorem is the following \cite[Corollary 4]{lou}, \cite[Theorem 18.4.2]{berk}:
\medskip
\begin{theo}\label{thm:lou}
The functor $F$ is fully faithful.
\end{theo}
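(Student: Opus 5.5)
The plan is to prove faithfulness directly from flatness, and then to prove fullness by reducing to affine pieces and using normality to show that integral structures are preserved. Throughout, the perfect special fiber and the specialization map serve only one purpose: they tell us which affine tube of the source is sent into which affine tube of the target.

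\emph{Faithfulness.} Let $\fkX,\fkY$ be objects of $\CC$ and let $\alpha,\beta\colon \fkY\rightarrow\fkX$ be morphisms with $F(\alpha)=F(\beta)$.
\begin{itemize}
\itemb Since $\fkY_{\Fpbar}^{\perf}$ and $\fkY_{\Fpbar}$ have the same underlying space, $\alpha$ and $\beta$ agree as maps of topological spaces.
\itemb We may therefore work over affine opens $\Spf B\subset\fkY$ and $\Spf A\subset\fkX$ with both $\alpha$ and $\beta$ sending $\Spf B$ into $\Spf A$.
\itemb Since $B$ is flat over $\brvO$, it is $p$-torsion-free, so $B\subset B\bigl[\tfrac 1 p\bigr]$.
\itemb The two ring maps $A\rightarrow B$ become equal after inverting $p$, because they induce the same morphism of rigid generic fibers. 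Hence they are equal, and $\alpha=\beta$.
\end{itemize}

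\emph{Fullness: transporting tubes.} Let $(\phi,b)$ be a morphism $F(\fkY)\rightarrow F(\fkX)$ in $\CD$, so that $\spe_{\fkX}\circ\phi=b\circ\spe_{\fkY}$ on classical points.
\begin{itemize}
\itemb Cover $\fkX$ by affines $\Spf A$. Since $\lvert b\rvert$ is continuous, we can cover $\fkY$ by affines $\Spf B$ with $b(\lvert\Spf B_{\Fpbar}\rvert)\subset\lvert \Spf A_{\Fpbar}\rvert$.
\itemb The tube $\spe_{\fkY}^{-1}(\Spf B_{\Fpbar})$ is the affinoid $\Sp B\bigl[\tfrac 1 p\bigr]$. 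By the compatibility of specializations, $\phi$ sends its classical points into the tube $\Sp A\bigl[\tfrac 1 p\bigr]$.
\itemb Since $\Sp A\bigl[\tfrac 1 p\bigr]$ is an admissible open of the rigid space and is determined by its classical points, $\phi$ restricts to a morphism $\Sp B\bigl[\tfrac 1 p\bigr]\rightarrow\Sp A\bigl[\tfrac 1 p\bigr]$. This is a ring map $g\colon A\bigl[\tfrac 1 p\bigr]\rightarrow B\bigl[\tfrac 1 p\bigr]$.
\end{itemize}

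\emph{Fullness: integrality.} The key step is to show $g(A)\subset B$.
\begin{itemize}
\itemb Every $a\in A$ satisfies $\lvert a\rvert\leqslant 1$ on $\Sp A\bigl[\tfrac 1 p\bigr]$, so $\lvert g(a)\rvert\leqslant 1$ at every point of $\Sp B\bigl[\tfrac 1 p\bigr]$. By the maximum modulus principle, $g(a)$ lies in $B\bigl[\tfrac 1 p\bigr]^{\circ}$.
\itemb Next we use the standard fact that, for $B$ topologically of finite type and $B\bigl[\tfrac 1 p\bigr]$ reduced, the power-bounded subring $B\bigl[\tfrac 1 p\bigr]^{\circ}$ is integral (indeed finite) over $B$. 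The reducedness holds because $B$ is normal and flat.
\itemb Since $B$ is normal and $\brvO$-flat, it is integrally closed in $B\bigl[\tfrac 1 p\bigr]$. Therefore $B\bigl[\tfrac 1 p\bigr]^{\circ}=B$ and $g(a)\in B$.
\end{itemize}
This gives morphisms $\Spf B\rightarrow\Spf A$. They glue by the faithfulness already proved, producing $\psi\colon\fkY\rightarrow\fkX$ with generic fiber $\phi$.

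\emph{Checking the special fiber.} It remains to show $\psi_{\Fpbar}^{\perf}=b$, and I expect this check to need the most care.
\begin{itemize}
\itemb On topological spaces the two maps agree, because classical points specialize onto all closed points and the specialization maps are compatible.
\itemb Equality of topological maps is not enough for perfect schemes: a Frobenius twist is a homeomorphism that is the identity on the underlying space but is not the identity map.
\itemb The fix is to compare $\Fpbar$-points rather than points. A closed point $y$ of $\fkY_{\Fpbar}$ is $\spe_{\fkY}$ of some $\rmO_K$-point $\tilde y$ with $K/\brv E$ finite. Then $\psi(\tilde y)$ reduces to $b(y)$ as an $\Fpbar$-point, by the compatibility with specialization. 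This holds because the residue map $\rmO_K\rightarrow\Fpbar$ is the fixed one and is not twisted.
\itemb A section of the perfect, reduced, locally finite type (after perfection) scheme $\fkY_{\Fpbar}^{\perf}$ is determined by its values at $\Fpbar$-points. Hence $\psi^{*}$ and $b^{*}$ agree on functions.
\end{itemize}
Together with the integrality step (the finiteness of $B\bigl[\tfrac 1 p\bigr]^{\circ}$ over $B$, used with normality), this is where the real content of the theorem lies.
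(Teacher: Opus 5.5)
Your proof is correct. It uses the same key input as the paper: for $B$ normal, flat and topologically of finite type over $\brvO$, one has $B\bigl[\tfrac 1 p\bigr]^{\circ}=B$. The difference is in the organisation.

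\emph{The paper's route.} It argues at the level of objects. It rebuilds $\fkX$ from $c=F(\fkX)$ as the ringed space $(\lvert Y\rvert, p_{*}\CO_X^{\circ})$ and declares $c\mapsto \fkX_c$ a quasi-inverse on the essential image. Several points are left implicit:
\begin{itemize}
\item what $p_*$ means when $p$ is only given on classical points (one must read the preimage of an open as the corresponding tube);
\item how a morphism in $\CD$ yields a morphism of formal schemes;
\item why the resulting map on perfected special fibers is the given one, and not merely a map with the same underlying continuous map.
\end{itemize}

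\emph{Your route.} You work at the level of morphisms. Faithfulness comes from $p$-torsion-freeness, and fullness from transporting affine tubes along $\lvert b\rvert$, proving $g(A)\subset B$, gluing, and then recovering $b$ itself. This makes the implicit checks above explicit. It also shows that only the source needs to be normal and flat, since $A\subset A\bigl[\tfrac 1 p\bigr]^{\circ}$ holds for any $A$. In exchange, the paper's formulation exhibits the inverse functor and so describes $\fkX$ intrinsically in terms of $F(\fkX)$.

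Three minor points.
\begin{itemize}
\item You do not need finiteness of $B\bigl[\tfrac 1 p\bigr]^{\circ}$ or reducedness. If $x$ is power-bounded, then $B[x]\subset p^{-N}B$, a finite module over the noetherian ring $B$, so $x$ is integral over $B$. This is exactly the paper's lemma.
\item In the gluing step you appeal to faithfulness, but your faithfulness argument assumes the two morphisms already agree topologically. For the locally constructed morphisms this is only established in your last part: closed points are specializations of classical points, and closed points are dense in the Jacobson special fiber. That argument should therefore come before the gluing.
\item The Frobenius-twist worry does not arise, because morphisms in $\CD$ must be $\Fpbar$-linear and the absolute Frobenius is not. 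An $\Fpbar$-morphism between perfections of reduced, locally finite type $\Fpbar$-schemes is determined by its map on closed points, since their residue fields are $\Fpbar$ with no choice of identification. Your comparison of $\Fpbar$-points is fine, but it amounts to equality of the topological maps.
\end{itemize}
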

\medskip
As a corollary, Theorem \ref{theo:main} follows from Theorem \ref{thm:genef}, Theorem \ref{thm:perfiso} and Theorem \ref{thm:specomp}. Note that the compatibility between the actions of $\GL_d(E)$ comes from the functoriality of $F$.

Let us explain the proof of \ref{thm:lou}. The proof in our case is fairly easy and the work done in \cite{lou} is mainly to extend this to pseudo-rigid spaces. Let $c=(X,Y,p)$ be an object of $\CD$ and define the ringed space 
$$
\fkX_c\coloneqq (\lvert Y\rvert, p_{*}\CO_{X}^{\circ}),
$$
where $\CO_X^{\circ}\subset \CO_X$ is the subsheaf of power-bounded elements of $\CO_X$. We need to check that if $c=F(\fkX)$ for $\fkX$ an object of $\CC$, then $\fkX\cong \fkX_c$ as a ringed spaces, which proves that $c\mapsto \fkX_c$ is a quasi-inverse of $F$, defined on the essential image of $F$. Locally, this reduces to the following lemma:
\medskip
\begin{lemm}
Let $A^+$ be a topologically finitely generated algebra over $\brvO$ which is normal and flat over $\brvO$. Let $A\coloneqq A^+[1/p]$, then the subring of power bounded elements of $A$ is $A^+$, \ie $A^{\circ}=A^+$.
\end{lemm}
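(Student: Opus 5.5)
The inclusion $A^+\subset A^{\circ}$ is immediate: $A^+$ is a ring of definition of the affinoid algebra $A$, and it is $p$-adically complete, so every element of $A^+$ is power bounded. The work is in the reverse inclusion $A^{\circ}\subset A^+$. The idea is that a power bounded element is integral over $A^+$, and normality of $A^+$ then forces it to lie in $A^+$.

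First I would make the boundedness statement precise. Flatness of $A^+$ over the discrete valuation ring $\brvO$ means $A^+$ is $p$-torsion free, so $A^+\subset A=A^+[1/p]$. Let $x\in A^{\circ}$. Since $A^+$ is an open bounded subring, the set $\{x^n\}_{n\geqslant 0}$ is contained in $p^{-N}A^+$ for some fixed $N$. Hence the $A^+$-subalgebra $A^+[x]$ lies inside the finitely generated $A^+$-module $p^{-N}A^+$.

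Second, I would pass from this containment to integrality. Because $A^+$ is a quotient of $\brvO\langle T_1,\dots,T_n\rangle$, it is noetherian. So the submodule $A^+[x]\subset p^{-N}A^+$ is finitely generated, and $x$ is integral over $A^+$ by the standard determinant trick.

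Third, I would apply normality. Since $A^+$ is normal and noetherian, it is a finite product of normal domains $A_1^+\times\dots\times A_s^+$. Because $A^+$ is $p$-torsion free, $A=A^+[1/p]$ splits compatibly as the product of the $A_j^+[1/p]$, and each $A_j^+[1/p]$ sits inside $\mathrm{Frac}(A_j^+)$. The component $x_j$ of $x$ is therefore an element of $\mathrm{Frac}(A_j^+)$ that is integral over $A_j^+$, so $x_j\in A_j^+$ by integral closedness. Hence $x\in A^+$.

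I expect the main obstacle to be this last step: we must know that $A=A^+[1/p]$ lies inside the total ring of fractions of $A^+$, so that integral closedness can be applied, and flatness is exactly what guarantees this. A second point to watch is the precise meaning of "normal" when $A^+$ is not a domain. I would handle this through the product decomposition above, or equivalently by checking the statement after localizing at each prime of $A^+$.
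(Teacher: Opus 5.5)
Your proposal is correct and follows the same argument as the paper: power boundedness gives $A^+[x]\subset p^{-N}A^+$, noetherianity of $A^+$ then makes $x$ integral over $A^+$, and normality forces $x\in A^+$. The one difference is that the paper simply asserts that normality makes $A^+$ integrally closed in $A$, whereas your decomposition of $A^+$ into a finite product of normal domains (using $p$-torsion-freeness) actually justifies that step.
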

\medskip
\begin{proof}
The argument is classical, we quickly recall it. First note that flatness ensures that $A^+\subset A$ is a subring of definition (meaning that it is an open and bounded subring). Moreover, $A^+\subset A^{\circ}$, since any element of $A^+$ is power bounded. Now since $A^+$ is normal over $\brvO$ this implies that $A^+$ is integrally closed in $A$. Hence we just need to show that any element of $A^{\circ}$ is integral over $A^+$.

For this, let $x\in A^{\circ}$. By definition, there exists an integer $N\geqslant 0$ such that for any $m\in \BN$, $x^m\in p^{-N}A^+$. Hence $A^+[x]\subset p^{-N}A^+$. But because $A^+$ is noetherian, $A[x]$ is a finite $A^+$-module since $p^{-N}A^+$ is a finite $A^+$-module, which means that $x$ is integral, thus $x\in A^+$.
\end{proof}
\subsection{Weil descent}\label{subsec:weildes}
Let us explain how the \emph{Weil descent datum} on $\brv{\CM}_{\Dr}$ is transported through the isomorphism and why the $\GL_d(E)$ invariance suffices to get the compatibility with Weil descent datum. Through the isomorphism of theorem \ref{theo:main}, translation by $1$ on $\BZ$ in \eqref{eq:concompz} corresponds with 
$$
(X,\rho)\mapsto (X^{\Pi},\Pi^{-1}\rho)
$$
on $\brv{\CM}_{\Dr}$, where $X^{\Pi}$ is the SFD associated to $X$ obtained by conjugating the action of $\rmOD$ by $\Pi$. In particular, $X^{\Pi^d}\cong X$ and the Weil descent datum is given by $(X,\rho)\mapsto (X,\varpi^{-1}\rho)$. This means that if we consider $\varpi\in \GL_d(E)$ as the scalar matrix, then the quotient of the isomorphism in theorem \ref{theo:main} by the discrete subgroup $\varpi^{\BZ}\subset \GL_d(E)$:
$$
\brv{\CM}_{\Dr}/\varpi^{\BZ}\cong \brv{\BH}_{\brvO}^{d}/\varpi^{\BZ}=(\BH_{\rmO}^d\times \BZ/d)\otimes_{\rmO}\brvO,
$$
can be descended to a $\GL_d(E)$-equivariant isomorphism between $\BH_{\rmO}^d\times \BZ/d$ and the model over $\rmO$ of $\brv{\CM}_{\Dr}/\varpi^{\BZ}$ defined in \cite[Theorem 3.49]{razi}. Thus, the compatibility with the Weil descent datum is essentially a consequence of the $\GL_d(E)$-equivariance.

\backmatter


\begin{thebibliography}{10}

\bibitem{abbr}
P.~Abramenko and K.~S.~Brown.
\newblock {\em Buildings: Theory and applications}.
\newblock Grad. Texts Math. 248, Springer (2008).

\bibitem{acz}
T.~Ahsendorf, C.~Cheng, and T.~Zink.
\newblock $\mathcal O$-displays and $\pi$-divisible formal $\mathcal O$-modules.
\newblock {\em J. Algebra}, 457:129--193 (2016).

\bibitem{bar}
S.~Bartling.
\newblock The universal special formal $\mathcal{O}_D$-module for $d=2$.
\newblock Preprint, \href{https://arxiv.org/abs/2206.13195}{arXiv:2206.13195} (2022).

\bibitem{baho}
S.~Bartling and M.~Hoff.
\newblock Moduli spaces of nilpotent displays.
\newblock {\em Int. Math. Res. Not.}, 2025(3):rnaf005 (2025).

\bibitem{bas}
H.~Bass.
\newblock {\em Algebraic $K$-theory}.
\newblock Math. Lect. Note Ser., Benjamin/Cummings, Reading, MA (1968).

\bibitem{ber}
V.~G.~Berkovich.
\newblock {\em Spectral theory and analytic geometry over non-Archimedean fields}.
\newblock Math. Surveys Monogr. 33, Amer. Math. Soc., Providence, RI (1990).

\bibitem{bbm}
P.~Berthelot, L.~Breen, and W.~Messing.
\newblock {\em Th\'eorie de {Dieudonn\'e} cristalline. II}.
\newblock Lecture Notes in Math. 930, Springer (1982).

\bibitem{beda}
M.~Bertolini and H.~Darmon.
\newblock Heegner points, $p$-adic $L$-functions, and the {Cherednik--Drinfeld} uniformization.
\newblock {\em Invent. Math.}, 131(3):453--491 (1998).

\bibitem{bhsc}
B.~Bhatt and P.~Scholze.
\newblock Projectivity of the {Witt} vector affine {Grassmannian}.
\newblock {\em Invent. Math.}, 209(2):329--423 (2017).

\bibitem{bgr}
S.~Bosch, U.~G\"untzer, and R.~Remmert.
\newblock {\em Non-Archimedean analysis: A systematic approach to rigid analytic geometry}.
\newblock Grundlehren Math. Wiss. 261, Springer (1984).

\bibitem{boer}
S.~Boucksom and D.~Eriksson.
\newblock Spaces of norms, determinant of cohomology and {Fekete} points in non-{Archimedean} geometry.
\newblock {\em Adv. Math.}, 378:107501 (2021).

\bibitem{boy}
P.~Boyer.
\newblock Bad reduction of {Drinfeld} varieties and local {Langlands} correspondence.
\newblock {\em Invent. Math.}, 138(3):573--629 (1999).

\bibitem{balg}
N.~Bourbaki.
\newblock {\em \'El\'ements de math\'ematique. Alg\`ebre. Chapitres 4 \`a 7}.
\newblock Springer, Berlin (2007).

\bibitem{balgcom}
N.~Bourbaki.
\newblock {\em \'El\'ements de math\'ematique. Alg\`ebre commutative. Chapitres 1 \`a 4}.
\newblock Springer, Berlin (2006).

\bibitem{boca}
J.-F.~Boutot and H.~Carayol.
\newblock $p$-adic uniformization of {Shimura} curves: the theorems of {Cherednik} and {Drinfeld}.
\newblock {\em Ast\'erisque}, 196--197:45--158 (1991).

\bibitem{bssw}
T.~Barthel, T.~M.~Schlank, N.~Stapleton, and J.~Weinstein.
\newblock On the rationalization of the $K(n)$-local sphere.
\newblock Preprint, \href{https://arxiv.org/abs/2402.00960}{arXiv:2402.00960} (2025).

\bibitem{car2}
H.~Carayol.
\newblock Sur les repr\'esentations galoisiennes modulo $\ell$ attach\'ees aux formes modulaires.
\newblock {\em Duke Math. J.}, 59(3):785--801 (1989).

\bibitem{car1}
H.~Carayol.
\newblock Non-abelian {Lubin--Tate} theory.
\newblock In {\em Automorphic Forms, Shimura Varieties, and $L$-Functions, Vol.~II}, pages 15--39 (1990).

\bibitem{cfs}
M.~Chen, L.~Fargues, and X.~Shen.
\newblock On the structure of some $p$-adic period domains.
\newblock {\em Camb. J. Math.}, 9(1):213--267 (2021).

\bibitem{che}
I.~V.~Cherednik.
\newblock Algebraische {Kurven}, die durch diskrete arithmetische {Untergruppen} von $PGL_2(k_w)$ uniformisierbar sind.
\newblock {\em Usp. Mat. Nauk}, 30(3):181--182 (1975).

\bibitem{col1}
P.~Colmez.
\newblock Espaces de {Banach} de dimension finie.
\newblock {\em J. Inst. Math. Jussieu}, 1(3):331--439 (2002).

\bibitem{col2}
P.~Colmez.
\newblock Espaces vectoriels de dimension finie et repr\'esentations de de~{Rham}.
\newblock In {\em Repr\'esentations $p$-adiques de groupes $p$-adiques I}, Ast\'erisque 319, pages 117--186 (2008).

\bibitem{codoniste}
P.~Colmez, G.~Dospinescu, and W.~Nizio{\l}.
\newblock Cohomology of $p$-adic {Stein} spaces.
\newblock {\em Invent. Math.}, 219(3):873--985 (2020).

\bibitem{codoni}
P.~Colmez, G.~Dospinescu, and W.~Nizio{\l}.
\newblock On the geometrization of the $p$-adic local {Langlands} correspondence.
\newblock In {\em Proceedings of the International Conference of Basic Science 2024}, pages 92--102.
\newblock International Press (2025).

\bibitem{dat}
J.-F.~Dat.
\newblock Nonabelian {Lubin--Tate} theory and elliptic representations.
\newblock {\em Invent. Math.}, 169(1):75--152 (2007).

\bibitem{dej}
A.~J.~de~Jong.
\newblock Crystalline {Dieudonn\'e} module theory via formal and rigid geometry.
\newblock {\em Publ. Math. IH\'ES}, 82:5--96 (1995).

\bibitem{delu}
P.~Deligne and G.~Lusztig.
\newblock Representations of reductive groups over finite fields.
\newblock {\em Ann. of Math. (2)}, 103:103--161 (1976).

\bibitem{doca}
G.~Dospinescu and J.~E.~Rodr\'iguez Camargo.
\newblock A {Jacquet--Langlands} functor for $p$-adic locally analytic representations.
\newblock Preprint, \href{https://arxiv.org/abs/2411.17082}{arXiv:2411.17082} (2025).

\bibitem{drin}
V.~G.~Drinfeld.
\newblock Coverings of $p$-adic symmetric regions.
\newblock {\em Funct. Anal. Appl.}, 10:107--115 (1976).

\bibitem{fal}
G.~Faltings.
\newblock A relation between two moduli spaces studied by {Drinfeld}.
\newblock In {\em Algebraic Number Theory and Algebraic Geometry}, pages 115--129.
\newblock Amer. Math. Soc. (2002).

\bibitem{rio}
L.~Fargues.
\newblock The curve.
\newblock In {\em Proc. ICM 2018, Vol.~II}, pages 291--319 (2018).

\bibitem{fafo}
L.~Fargues and J.-M.~Fontaine.
\newblock {\em Courbes et fibr\'es vectoriels en th\'eorie de {Hodge} $p$-adique}.
\newblock Ast\'erisque 406 (2018).

\bibitem{far}
L.~Fargues, A.~Genestier, and V.~Lafforgue.
\newblock {\em L'isomorphisme entre les tours de {Lubin--Tate} et de {Drinfeld}}.
\newblock Progr. Math. 262, Birkh\"auser, Basel (2008).

\bibitem{fasc}
L.~Fargues and P.~Scholze.
\newblock Geometrization of the local {Langlands} correspondence.
\newblock Preprint, \href{https://arxiv.org/abs/2102.13459}{arXiv:2102.13459} (2021).

\bibitem{fon}
J.-M.~Fontaine.
\newblock {\em Groupes $p$-divisibles sur les corps locaux}.
\newblock Ast\'erisque 47--48, Soc. Math. France (1977).

\bibitem{fuka}
K.~Fujiwara and F.~Kato.
\newblock {\em Foundations of rigid geometry. I}.
\newblock EMS Monogr. Math. (2018).

\bibitem{gen}
A.~Genestier.
\newblock {\em Espaces sym\'etriques de {Drinfeld}}.
\newblock Ast\'erisque 234 (1996).

\bibitem{ega2}
A.~Grothendieck.
\newblock \'El\'ements de g\'eom\'etrie alg\'ebrique. II: \'Etude globale \'el\'ementaire de quelques classes de morphismes.
\newblock {\em Publ. Math. IH\'ES}, 4:1--222 (1961).

\bibitem{ega42}
A.~Grothendieck.
\newblock \'El\'ements de g\'eom\'etrie alg\'ebrique. IV: \'Etude locale des sch\'emas et des morphismes de sch\'emas (deuxi\`eme partie).
\newblock {\em Publ. Math. IH\'ES}, 24:1--231 (1965).

\bibitem{ega43}
A.~Grothendieck.
\newblock \'El\'ements de g\'eom\'etrie alg\'ebrique. IV: \'Etude locale des sch\'emas et des morphismes de sch\'emas (troisi\`eme partie).
\newblock {\em Publ. Math. IH\'ES}, 28:5--255 (1966).

\bibitem{ega44}
A.~Grothendieck.
\newblock \'El\'ements de g\'eom\'etrie alg\'ebrique. IV: \'Etude locale des sch\'emas et des morphismes de sch\'emas (quatri\`eme partie).
\newblock {\em Publ. Math. IH\'ES}, 32:5--361 (1967).

\bibitem{ega1}
A.~Grothendieck and J.~A.~Dieudonn\'e.
\newblock {\em \'El\'ements de g\'eom\'etrie alg\'ebrique. I}.
\newblock Grundlehren Math. Wiss. 166, Springer (1971).

\bibitem{ful}
W.~Fulton.
\newblock {\em Intersection theory}.
\newblock Ergeb. Math. Grenzgeb. (3) 2, Springer (1984).

\bibitem{har}
M.~Harris.
\newblock Supercuspidal representations in the cohomology of {Drinfeld} upper half spaces.
\newblock {\em Invent. Math.}, 129(1):75--119 (1997).

\bibitem{haz}
M.~Hazewinkel.
\newblock {\em Formal groups and applications}.
\newblock Pure Appl. Math. 78, Academic Press, New York (1978).

\bibitem{hed}
S.~M.~H.~Hedayatzadeh.
\newblock Explicit isomorphism between {Cartier} and {Dieudonn\'e} modules.
\newblock {\em J. Algebra}, 570:611--635 (2021).

\bibitem{hub}
R.~Huber.
\newblock {\em \'Etale cohomology of rigid analytic varieties and adic spaces}.
\newblock Aspects Math. E30, Vieweg, Wiesbaden (1996).

\bibitem{kott}
R.~E.~Kottwitz.
\newblock Isocrystals with additional structure.
\newblock {\em Compos. Math.}, 56:201--220 (1985).

\bibitem{kuraya}
S.~S.~Kudla, M.~Rapoport, and T.~Yang.
\newblock {\em Modular forms and special cycles on {Shimura} curves}.
\newblock Ann. Math. Stud. 161, Princeton Univ. Press, Princeton, NJ (2006).

\bibitem{laz}
M.~Lazard.
\newblock {\em Commutative formal groups}.
\newblock Lecture Notes in Math. 443, Springer (1975).

\bibitem{leb}
A.-C.~Le~Bras.
\newblock Espaces de {Banach--Colmez} et faisceaux coh\'erents sur la courbe de {Fargues--Fontaine}.
\newblock {\em Duke Math. J.}, 167(18):3455--3532 (2018).

\bibitem{lou}
J.~N.~P.~Louren\c{c}o.
\newblock The {Riemannian} {Hebbarkeitss\"atze} for pseudorigid spaces.
\newblock Preprint, \href{https://arxiv.org/abs/1711.06903}{arXiv:1711.06903} (2017).

\bibitem{mes}
W.~Messing.
\newblock {\em The crystals associated to {Barsotti--Tate} groups}.
\newblock Lecture Notes in Math. 264, Springer (1972).

\bibitem{mus}
G.~A.~Mustafin.
\newblock Nonarchimedean uniformization.
\newblock {\em Math. USSR Sb.}, 34:187--214 (1978).

\bibitem{ravi}
M.~Rapoport and E.~Viehmann.
\newblock Towards a theory of local {Shimura} varieties.
\newblock {\em M\"unster J. Math.}, 7(1):273--326 (2014).

\bibitem{razi}
M.~Rapoport and T.~Zink.
\newblock {\em Period Spaces for $p$-divisible Groups}.
\newblock Ann. Math. Stud. 141, Princeton Univ. Press (1996).

\bibitem{razion}
M.~Rapoport and T.~Zink.
\newblock On the {Drinfeld} moduli problem of $p$-divisible groups.
\newblock {\em Camb. J. Math.}, 5(2):229--279 (2017).

\bibitem{schsur}
P.~Scholze.
\newblock Perfectoid spaces: a survey.
\newblock In {\em Current Developments in Mathematics 2012}, pages 193--227.
\newblock International Press (2013).

\bibitem{psch}
P.~Scholze.
\newblock On the $p$-adic cohomology of the {Lubin--Tate} tower.
\newblock {\em Ann. Sci. \'Ec. Norm. Sup\'er. (4)}, 51(4):811--863 (2018).

\bibitem{scwe}
P.~Scholze and J.~Weinstein.
\newblock Moduli of $p$-divisible groups.
\newblock {\em Camb. J. Math.}, 1(2):145--237 (2013).

\bibitem{berk}
P.~Scholze and J.~Weinstein.
\newblock {\em Berkeley lectures on $p$-adic geometry}.
\newblock Ann. Math. Stud. 207, Princeton Univ. Press, Princeton, NJ (2020).

\bibitem{stacks}
The {Stacks Project Authors}.
\newblock {\em Stacks Project}.
\newblock \url{https://stacks.math.columbia.edu}.

\bibitem{zinc}
T.~Zink.
\newblock {\em Cartier-Theorie kommutativer formaler Gruppen}.
\newblock Teubner-Texte zur Mathematik 68, Teubner, Leipzig (1984).

\bibitem{zin}
T.~Zink.
\newblock The display of a formal $p$-divisible group.
\newblock In {\em Cohomologies $p$-adiques et applications arithm\'etiques (I)}, pages 127--248.
\newblock Soc. Math. France (2002).

\end{thebibliography}
\end{document}